\documentclass[12pt,reqno]{amsart}

\usepackage[colorlinks=true,urlcolor=blue,
bookmarks=true,bookmarksopen=true,citecolor=blue]{hyperref}

\usepackage{mathtools} 
\usepackage{graphicx}
\usepackage[usenames,dvipsnames]{color}
\usepackage[all]{xy}
\usepackage{amsmath}
\usepackage{amsfonts,amssymb}
\usepackage{amsthm}
\usepackage{bbm}
\usepackage{mathrsfs}
\usepackage{marginnote}
\usepackage{pdfsync}
\usepackage{xfrac}
\usepackage{varioref}
\usepackage{a4wide}
\setcounter{tocdepth}{1}

\makeatletter
\DeclareFontFamily{OMX}{MnSymbolE}{}
\DeclareSymbolFont{MnLargeSymbols}{OMX}{MnSymbolE}{m}{n}
\SetSymbolFont{MnLargeSymbols}{bold}{OMX}{MnSymbolE}{b}{n}
\DeclareFontShape{OMX}{MnSymbolE}{m}{n}{
    <-6>  MnSymbolE5
   <6-7>  MnSymbolE6
   <7-8>  MnSymbolE7
   <8-9>  MnSymbolE8
   <9-10> MnSymbolE9
  <10-12> MnSymbolE10
  <12->   MnSymbolE12
}{}
\DeclareFontShape{OMX}{MnSymbolE}{b}{n}{
    <-6>  MnSymbolE-Bold5
   <6-7>  MnSymbolE-Bold6
   <7-8>  MnSymbolE-Bold7
   <8-9>  MnSymbolE-Bold8
   <9-10> MnSymbolE-Bold9
  <10-12> MnSymbolE-Bold10
  <12->   MnSymbolE-Bold12
}{}

\let\llangle\@undefined
\let\rrangle\@undefined
\DeclareMathDelimiter{\llangle}{\mathopen}%
                     {MnLargeSymbols}{'164}{MnLargeSymbols}{'164}
\DeclareMathDelimiter{\rrangle}{\mathclose}%
                     {MnLargeSymbols}{'171}{MnLargeSymbols}{'171}
\makeatother
\renewenvironment{proof}[1][\proofname]{\noindent{\bfseries #1.  }}{\qed}

\newcommand{\N}{\mathbbm{N}}                     
\newcommand{\Z}{\mathbbm{Z}}                     
\newcommand{\R}{\mathbbm{R}}                     
\newcommand{\C}{\mathbbm{C}}                     
\newcommand{\T}{\mathbbm{T}}                     
\newcommand{\A}{\mathbbm{A}}                    
\newcommand{\E}{\mathbbm{E}}                    

\newcommand{\then}{\Rightarrow}                 
\newcommand{\gr}{\mathrm{gr}}            
\newcommand{\spec}{\mathrm{Spec}}               
\newcommand{\crit}{\mathrm{Crit\,}}               
\newcommand{\fix}{\mathrm{Fix}}               
\newcommand{\RFH}{\mathrm{RFH}}    
\newcommand{\HF}{\mathrm{HF}}    
\newcommand{\loc}{\mathrm{loc}} 
\newcommand{\HM}{\mathrm{HM}} 


\newtheorem{thm}{Theorem}[section]               
\newtheorem*{thm*}{Theorem}               
\newtheorem{cor}[thm]{Corollary}        
\newtheorem*{cor*}{Corollary}        
\newtheorem{lem}[thm]{Lemma}            
\newtheorem{prop}[thm]{Proposition}     
\newtheorem{conj}[thm]{Conjecture}      

\theoremstyle{definition}
\newtheorem{defn}[thm]{Definition}      
\newtheorem{rem}[thm]{Remark}           
\newtheorem{ex}[thm]{Example}           
 \newtheorem*{acknowledgement*}{\protect\acknowledgementname}
 \providecommand{\acknowledgementname}{Acknowledgement}

\title{On the existence of infinitely many invariant Reeb orbits}

\author{Will J. Merry and Kathrin Naef} 
\date{} 

\address{Will J.~Merry\\
Department of Mathematics\\
ETH Z\"urich}
\email{\texttt{merry@math.ethz.ch}}

\address{Kathrin Naef\\
Department of Mathematics\\
ETH Z\"urich}
\email{\texttt{kathrin.naef@math.ethz.ch}}

\begin{document}

\begin{abstract}
In this article we extend results of Grove and Tanaka \cite{GroveTanaka1976, GroveTanaka1978,Tanaka1982} on the existence of isometry-invariant geodesics to the setting of Reeb flows and strict contactomorphisms. Specifically, we prove that if $M$ is a closed connected manifold with the property that the Betti numbers of the free loop space $ \Lambda (M)$ are asymptotically  unbounded then for every fibrewise star-shaped hypersurface $\Sigma \subset T^*M$ and every strict contactomorphism $ \varphi \colon \Sigma \to \Sigma$ which is contact-isotopic to the identity, there are infinitely many invariant Reeb orbits. 
\tableofcontents
\end{abstract}
\maketitle

\section{Introduction}
\label{sec:introduction}

The problem of the existence of closed geodesics is one of the oldest and richest fields of study in Riemannian geometry. In 1951 Lyusternik and Fet \cite{LyusternikFet1951}  proved that every closed Riemannian manifold $(Q,g)$ has at least one closed geodesic. In 1969 Gromoll and Meyer \cite{GromollMeyer1969} proved the following remarkable extension: if $Q$ is a closed simply connected manifold with the property that the Betti numbers of the free loop space $ \Lambda (Q)$ are asymptotically unbounded, then every Riemannian metric $g$ on $Q$ has \textbf{infinitely} many embedded closed geodesics.  \\

Suppose now that one is given an isometry $ f $ of a Riemannian manifold $(Q,g)$. A related problem is the existence of \textbf{$f$-invariant geodesics}, that is, geodesics $\gamma \colon \R \to Q$ such that $f( \gamma(s)) = \gamma(s+ \tau)$ for some non-zero $\tau \in \R $ and all $s \in \R$. With this terminology, a closed geodesic is precisely an $ \mathrm{Id}$-invariant geodesic. The problem of the existence of invariant geodesics was first studied by Grove \cite{Grove1973,Grove1973a}. The analogue of the Gromoll-Meyer theorem was proved by Grove and Tanaka \cite{GroveTanaka1976, GroveTanaka1978,Tanaka1982}: if $Q$ is a closed simply connected manifold with the property that the Betti numbers of the free loop space $ \Lambda (Q)$ are asymptotically  unbounded, then for every Riemannian metric $g$ on $Q$ and every isometry $f$ of $(Q,g)$ which is homotopic to the identity, there are infinitely many invariant geodesics.  \\

The Gromoll-Meyer theorem alluded to above can be seen as a special case of a more general result on the existence of closed Reeb orbits on fibrewise star-shaped hypersurfaces. Denote by $ \lambda \in \Omega^1(T^*Q)$ the canonical Liouville form. If $ \Sigma \subset T^*Q$ is a fibrewise star-shaped hypersurface, then the restriction of $\lambda$ to $ \Sigma $ is a contact form. In this setting the corresponding problem concerns the existence of closed Reeb orbits. Using Floer-theoretical methods, McLean \cite{McLean2012}, and independently Hryniewicz and Macarini \cite{HryniewiczMacarini2012}, proved: if $Q$ is a closed  manifold with the property that the Betti numbers of the free loop space $ \Lambda (Q)$ are asymptotically unbounded, then every fibrewise star-shaped hypersurface has infinitely many embedded closed Reeb orbits. \\

The natural generalisation to the contact setting for invariant geodesics was first  proposed by Mazzucchelli \cite{Mazzucchelli2014a,Mazzucchelli2014}. Suppose $ \Sigma \subset T^*Q$ is a fibrewise star-shaped hypersurface. Denote by $ \alpha := \lambda|_{\Sigma}$ the induced contact form. A contactomorphism $ \varphi \colon \Sigma \to \Sigma $ is called \textbf{strict} if $ \varphi^* \alpha = \alpha$. A Reeb orbit $ x \colon \R \to \Sigma$ is $\varphi$-invariant if $ \varphi ( x(s)) = x(s +\tau)$ for some non-zero $\tau \in \R$ and all $ s \in \R$ (this notion only makes sense for strict contactomorphisms). As with the geodesic case, with this terminology a closed Reeb orbit is precisely an $\mathrm{Id}$-invariant Reeb orbit. One can then ask whether there are infinitely many invariant Reeb orbits.  In this paper we will prove the following generalisation of the Grove-Tanaka theorem.

\begin{thm}
\label{thm:grove_tanaka_reeb}
Suppose $Q$ is a closed connected  manifold with the property that the Betti numbers of the free loop space $ \Lambda (Q)$ are asymptotically  unbounded. Then for every fibrewise star-shaped hypersurface $\Sigma \subset T^*Q$ and every strict contactomorphism $ \varphi \colon \Sigma \to \Sigma$ which is contact-isotopic to the identity, there are infinitely many invariant Reeb orbits. 
\end{thm}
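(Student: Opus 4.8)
The plan is to adapt the variational framework of Grove and Tanaka to the Rabinowitz–Floer setting. The central object is the \emph{Rabinowitz action functional} $ \mathcal{A}^{\varphi} $ defined on the space of paths $ x \colon [0,1] \to T^*Q $ with the twisted boundary condition $ x(1) = \varphi(x(0)) $, coupled with a Lagrange multiplier $ \eta \in \R $ enforcing that $ x $ lie on $ \Sigma $. Critical points of $ \mathcal{A}^{\varphi} $ are precisely the $ \varphi $-invariant Reeb orbits (together with the \emph{trivial} critical manifold consisting of constant-in-the-Reeb-direction points fixed by $ \varphi $, which must be excised or accounted for). First I would set up the appropriate Hilbert-manifold model and Morse–Bott transversality, check the $ (\ps) $ / $ (\ps)^* $ compactness conditions for the negative gradient flow — here the fibrewise star-shapedness of $ \Sigma $ and the strictness of $ \varphi $ are what force $ \eta $ to stay bounded along flow lines — and thereby define the invariant Rabinowitz–Floer homology $ \RFH_*(\Sigma, \varphi) $.

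The next step is a continuation/invariance argument: since $ \varphi $ is contact-isotopic to the identity through strict contactomorphisms, one obtains an isomorphism $ \RFH_*(\Sigma, \varphi) \cong \RFH_*(\Sigma, \mathrm{Id}) $, and the latter is computed — following Cieliebak–Frauenfelder–Oancea and the work of McLean and of Hryniewicz–Macarini — in terms of the homology of the free loop space $ \Lambda(Q) $. The hypothesis that the Betti numbers of $ \Lambda(Q) $ are asymptotically unbounded then says that $ \RFH_*(\Sigma, \varphi) $ is infinite-dimensional, or more precisely that it is supported in infinitely many degrees (after a suitable $ S^1 $-equivariant or parametrised construction to kill the contribution of iterates). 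I would then run the standard contradiction: if there were only finitely many $ \varphi $-invariant Reeb orbits, each underlying geometrically distinct orbit contributes a fixed finite-dimensional local Rabinowitz–Floer homology, bounded in each degree by a constant depending only on the orbit (via the Gromoll–Meyer-type local bound on the homology of an isolated critical point), and the iterates of a single orbit can only contribute boundedly in each fixed degree. Summing these finitely many bounded contributions contradicts the unboundedness of $ \RFH_* $.

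The main obstacle — and the part requiring the most care — is the \textbf{handling of iterates and the local homology of iterated invariant orbits}. In the isometry-invariant geodesic problem this is exactly the technical heart of Tanaka's papers: a $ \varphi $-invariant Reeb orbit with $ \varphi(x(s)) = x(s+\tau) $ need \emph{not} give rise to a family of iterates in the naive way a closed orbit does, because iterating the boundary condition changes $ \varphi $ to $ \varphi^k $; one must instead work with the \emph{rotation} structure and analyse how the Conley–Zehnder-type index grows, establishing an analogue of the Bott iteration formula and the Gromoll–Meyer bound $ \dim \HF^{\loc}_*(x^k) \le C $ uniformly in $ k $ and in degree. Controlling this — together with ruling out the degenerate situation where all the invariant orbits are ``trivial'' (constant Reeb trajectories through fixed points of $ \varphi $), which is where connectedness of $ Q $ and the non-triviality of the loop-space homology re-enter — is where the real work lies; the Floer-theoretic scaffolding around it is by now fairly standard.
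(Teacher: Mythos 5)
Your outline captures the broad shape of the argument—Rabinowitz–Floer‐type homology, invariance, a local homology bound per isolated invariant orbit, and the contradiction with the growth of $\RFH_*$—but it diverges from the paper's proof in two places that matter, one of which is a genuine gap.

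The gap is the assertion that $\varphi$ is ``contact-isotopic to the identity \emph{through strict contactomorphisms}.'' That is strictly stronger than the hypothesis of the theorem, and the paper goes out of its way (Remark after Theorem~\ref{thm:actual_theorem}) to avoid it: one only knows that $\varphi$ is contact-isotopic to $\mathrm{Id}$, and the intermediate maps $\varphi_t$ are allowed to be non-strict. The paper handles this by choosing an \emph{admissible path} $\widehat{\varphi}=\{\varphi_t\}$ that is stationary on $[0,1/2]$, encoding it into a time-dependent Hamiltonian $\widetilde{L}^c$ on the extended phase space $\widetilde{M}=M\times T^*\R$, and defining $\HF_*(\widehat{\varphi})$ as a periodic Floer homology there; fixed points of $\Phi^1_{\widetilde{L}^c}$ are translated points of $\varphi_1$, and only the \emph{terminal} map $\varphi_1$ has to be strict. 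This is quite different from a direct twisted-boundary Rabinowitz functional, and the invariance $\HF_*(\widehat{\varphi})\cong\RFH_*(M_1,\lambda_1)$ with matching growth rates is nontrivial (Theorem~\ref{thm:HF=RFH}, imported from prior work), not a consequence of a continuation through strict maps.

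The second divergence concerns the iteration step, which you correctly identify as the heart of the matter. You propose a Bott-type index analysis ``$\dim\HF^{\loc}_*(x^k)\le C$,'' but the paper's approach is different and is its main technical contribution. For a closed $\varphi$-invariant orbit $\gamma$ the relevant critical components $\Gamma_k$ are not iterates of a single Hamiltonian periodic orbit; one keeps $\varphi$ fixed and lets the multiplier $\tau$ jump by $kT$. Proposition~\ref{prop:reduction_to_poincare_map} shows $\HF^{\loc}(\widetilde{L}^c,\Gamma_k)\cong\HF^{\loc}(AP^k,p)\otimes \mathrm{H}(S^1;\Z_2)$, where $P$ is the Poincar\'e return map and $A$ is the ``shifted'' return map built from $\varphi$. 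The uniform bound in $k$ then comes from Theorem~\ref{thm:GG2}, an extension of the Ginzburg–G\"urel persistence theorem to a \emph{two-map} iteration $\varphi\psi^k$ with $D\varphi(0)$ and $D\psi(0)$ commuting (it is precisely the strictness of $\varphi$ that gives the commuting $A$ and $P$ and lets this apply). This replaces the Bott/Gromoll–Meyer machinery and is not something one can cite from the literature. Incidentally, no $S^1$-equivariant or parametrised construction appears in the paper, and the ``trivial'' $\tau=0$ critical set is not excised; it contributes one bounded term inside the positive-growth-rate filtration and causes no trouble.
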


\begin{rem}
If $f : Q \to Q$ is an isometry with respect to $g$, then $f$ lifts to define a strict contactomorphism $\varphi_f$ of the unit cotangent bundle $S_g^*Q$ via the formula
\[
  \varphi_f(q,p) = (f(q), p \circ Df(q)^{-1}).
\]
Thus Theorem \ref{thm:grove_tanaka_reeb} can be seen as a generalisation of the original Grove-Tanaka result. Unfortunately it is not strictly speaking a true generalisation, because the Grove-Tanaka theorem requires only that the  isometry $f$ is homotopic to the identity, whereas in contrast our result requires the lifted contactomorphism $\varphi_f$ to be contact isotopic to the identity (which is the case if $f$ is isotopic to the identity). Aside from this point however, note that Theorem \ref{thm:grove_tanaka_reeb} includes the case of (possibly asymmetric) Finsler metrics:
\end{rem}
\begin{cor}
\label{cor:finsler}
If $Q$ is a closed manifold with the property that the Betti numbers of the free loop space $ \Lambda (Q)$ are asymptotically  unbounded, then for every (possibly asymmetric) Finsler metric $F$ on $Q$ and every isometry $f$ of $(Q,F)$ which is isotopic to the identity, there are infinitely many $f$-invariant Finsler geodesics. 
\end{cor}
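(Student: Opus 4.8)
The plan is to deduce Corollary~\ref{cor:finsler} from Theorem~\ref{thm:grove_tanaka_reeb} through the classical dictionary that realises Finsler geodesic flows as Reeb flows on fibrewise star-shaped hypersurfaces. Since every loop is contained in a single connected component of $Q$, asymptotic unboundedness of the Betti numbers of $\Lambda(Q)$ passes to at least one component $Q_0 \subseteq Q$, and an isometry isotopic to the identity preserves each component; restricting $F$ and $f$ to $Q_0$ we may assume $Q$ is connected. Given a (possibly asymmetric) Finsler metric $F$, the fibrewise Legendre transform $\mathcal L_F$ --- the fibre derivative of $\tfrac12 F^2$ --- is a diffeomorphism of $TQ \setminus 0_Q$ onto $T^*Q \setminus 0_Q$ carrying the indicatrix $\{F=1\}$ onto $\Sigma_F := \{F^*=1\}$, where $F^*(q,p) := \sup_{v \neq 0} p(v)/F(q,v)$ is the dual co-Finsler function. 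Fibrewise strict convexity of $F$ makes $\{F^* \le 1\}$ a fibrewise star-shaped body about the origin, so $\Sigma_F \subset T^*Q$ is a fibrewise star-shaped hypersurface; it is classical that $\alpha := \lambda|_{\Sigma_F}$ is a contact form whose Reeb flow is conjugate under $\mathcal L_F$ to the unit-speed $F$-geodesic flow, and consequently the projection $\pi$ carries Reeb orbits on $\Sigma_F$ bijectively onto unit-speed $F$-geodesics, preserving geometric distinctness. It therefore suffices to produce infinitely many $\varphi_f$-invariant Reeb orbits for a suitable lift $\varphi_f$ of $f$.

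First I would verify that an isometry $f$ of $(Q,F)$ lifts to a strict contactomorphism covering $f$. Take the cotangent lift $\varphi_f(q,p) := (f(q), p \circ Df(q)^{-1})$; like every cotangent lift it satisfies $\varphi_f^* \lambda = \lambda$ on all of $T^*Q$. Substituting the isometry identity $F(f(q), Df(q)v) = F(q,v)$ into the supremum defining $F^*$ yields $F^* \circ \varphi_f = F^*$, hence $\varphi_f(\Sigma_F) = \Sigma_F$, so $\varphi_f^* \alpha = \alpha$ and $\varphi_f$ is strict. Since $\pi \circ \varphi_f = f \circ \pi$ and, by naturality of $\mathcal L_F$ under the isometry, $\mathcal L_F \circ Df = \varphi_f \circ \mathcal L_F$, a $\varphi_f$-invariant Reeb orbit $x$ with $\varphi_f(x(s)) = x(s+\tau)$ projects to an $f$-invariant geodesic $\pi \circ x$, and conversely every $f$-invariant geodesic lifts to such a Reeb orbit; geometrically distinct orbits yield geometrically distinct geodesics.

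The one step that requires a genuine argument is that $\varphi_f$ is contact-isotopic to the identity when $f$ is isotopic to the identity in $\Diff(Q)$; recall that Theorem~\ref{thm:grove_tanaka_reeb} permits the connecting path to pass through non-strict contactomorphisms. Fix an isotopy $(f_t)_{t \in [0,1]}$ with $f_0 = \mathrm{Id}$ and $f_1 = f$, and form the cotangent lifts $\Phi_t(q,p) := (f_t(q), p \circ Df_t(q)^{-1})$, a smooth path of exact symplectomorphisms of $T^*Q \setminus 0_Q$ with $\Phi_t^* \lambda = \lambda$. These need not preserve $\Sigma_F$ for intermediate $t$, so I post-compose with the fibrewise radial retraction $P(q,p) := (q, p/F^*(q,p))$ onto $\Sigma_F$ and set $\varphi_t := (P \circ \Phi_t)|_{\Sigma_F} \colon \Sigma_F \to \Sigma_F$. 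A direct computation with $\lambda = \sum_i p_i\, dq^i$ gives $P^* \alpha = (F^*)^{-1} \lambda$, so, using $\Phi_t^* \lambda = \lambda$,
\[
  \varphi_t^* \alpha \;=\; \frac{1}{F^* \circ \Phi_t}\, \alpha \qquad \text{on } \Sigma_F .
\]
As the conformal factor is positive and smooth, each $\varphi_t$ is a contactomorphism of $(\Sigma_F, \ker \alpha)$. Since $\Phi_0 = \mathrm{Id}$ we get $\varphi_0 = \mathrm{Id}$, and since $\Phi_1 = \varphi_f$ preserves $\Sigma_F$ the retraction $P$ is trivial on its image, so $\varphi_1 = \varphi_f$; thus $(\varphi_t)_{t \in [0,1]}$ is a contact isotopy from $\mathrm{Id}$ to $\varphi_f$.

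With this established, Theorem~\ref{thm:grove_tanaka_reeb} applied to $\Sigma_F$ and $\varphi_f$ produces infinitely many $\varphi_f$-invariant Reeb orbits, which by the first two paragraphs project to infinitely many $f$-invariant $F$-geodesics. I expect the main (though still modest) obstacle to be the bookkeeping in the asymmetric case: $F^2$ is only positively $2$-homogeneous and need not be twice differentiable along the zero section, so the whole construction must be carried out on $TQ \setminus 0_Q$ and $T^*Q \setminus 0_Q$, and one must confirm the asymmetric versions of the two classical inputs used above --- that strict convexity of the $F$-indicatrix yields a smooth fibrewise star-shaped $\Sigma_F$, and that its Reeb flow is indeed the geodesic flow.
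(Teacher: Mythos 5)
Your argument is correct and is exactly the route the paper leaves implicit after the remark preceding the corollary: the fibrewise Legendre transform identifies unit-speed $F$-geodesics with Reeb orbits on the fibrewise star-shaped dual indicatrix $\Sigma_F = \{F^*=1\}$, the cotangent lift $\varphi_f$ is a strict contactomorphism preserving $\Sigma_F$, and radially retracting the lifted diffeotopy $\Phi_t$ onto $\Sigma_F$ produces a contact isotopy from $\mathrm{Id}$ to $\varphi_f$, so Theorem \ref{thm:grove_tanaka_reeb} applies. The paper does not write out a proof of Corollary \ref{cor:finsler}, but the contact-isotopy step you supply --- including the computation $\varphi_t^*\alpha = (F^*\circ\Phi_t)^{-1}\alpha$ via $P^*\lambda = (F^*)^{-1}\lambda$ and $\Phi_t^*\lambda = \lambda$ --- is precisely the missing ingredient and is carried out correctly.
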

\begin{rem}
Corollary \ref{cor:finsler} has been proved independently by Lu in \cite{Lu2014} using different methods.
\end{rem}
A further application of Theorem \ref{thm:actual_theorem} is given in Theorem \ref{thm:magnetic} below.\\

In fact, similarly to how McLean \cite{McLean2012} and Hryniewicz and Macarini \cite{HryniewiczMacarini2012} proved their extension of the Gromoll-Meyer theorem, we will deduce Theorem \ref{thm:grove_tanaka_reeb} from the following more general result. A contact manifold $ (\Sigma, \alpha)$ is \textbf{Liouville-fillable} if $\Sigma $ is the boundary of a Liouville domain $(M_1,  \lambda_1)$, and $\alpha =\lambda_1|_{\Sigma}$.  Given a Liouville domain $(M_1,\lambda_1)$, Cieliebak and Frauenfelder \cite{CieliebakFrauenfelder2009} have associated an invariant $\RFH_*(M_1,\lambda_1)$ called the \textbf{Rabinowitz Floer homology}. In \cite{Wiegel2013}, Weigel introduced the notion of the \textbf{positive growth rate} $\Gamma_+( M_1, \lambda_1) \in \{ -\infty \} \cup [0, + \infty]$ of a Liouville domain $(M_1, \lambda_1)$, which roughly speaking measures the growth of the filtered positive Rabinowitz Floer homology. A finite growth rate indicates polynomial growth, while an infinite growth rate implies super-polynomial (for instance, exponential) growth.

\begin{thm}
\label{thm:actual_theorem}
Suppose that $(\Sigma ,\alpha)$ is a Liouville-fillable contact manifold which admits a filling $(M_1,  \lambda_1)$ with $ \Gamma_+( M_1, \lambda_1)  > 1$. Then every strict contactomorphism $ \varphi \colon \Sigma \to \Sigma$ which is contact-isotopic to the identity has infinitely many invariant Reeb orbits. 
\end{thm}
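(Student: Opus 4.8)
The plan is to construct a Rabinowitz-Floer-type invariant adapted to the strict contactomorphism $\varphi$, in the spirit of the Lagrangian/twisted-loop constructions used for isometry-invariant geodesics, and to show that its growth forces infinitely many invariant Reeb orbits. Concretely, I would pass to the symplectisation $(\Sigma\times\R,d(e^r\alpha))$ and glue the Liouville filling $(M_1,\lambda_1)$ to the negative end, obtaining a complete Liouville manifold $\widehat{M}_1$. Since $\varphi$ is contact-isotopic to the identity and strict, it extends to an exact symplectomorphism $\Phi$ of $\widehat{M}_1$ that is compactly-supported-isotopic to the identity through exact symplectomorphisms; its \emph{mapping torus} / twisted action functional is the right object. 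For a fixed defining Hamiltonian $H$ cutting out $\Sigma$, one considers the $\varphi$-twisted Rabinowitz action functional $\mathcal{A}^\varphi_H$ defined on the space of paths $v\colon[0,1]\to\widehat M_1$ with $v(1)=\Phi(v(0))$ together with a Lagrange multiplier $\eta\in\R$; its critical points are exactly the $\varphi$-invariant Reeb chords on $\Sigma$ (including constants on $\Sigma$ when $\eta=0$, i.e. the fixed-point set of $\varphi$), with period $\eta$. This is the Floer-theoretic avatar of Grove's observation that $f$-invariant geodesics are critical points of the energy functional on the twisted path space.

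The next step is to build the Floer homology $\RFH^\varphi_*(M_1,\lambda_1)$ of this twisted functional, with its action filtration, and to prove the two structural properties one needs: (i) an \textbf{invariance} statement, namely that because $\varphi$ is contact-isotopic to the identity, the \emph{full} homology $\RFH^\varphi_*$ is isomorphic to the untwisted $\RFH_*(M_1,\lambda_1)$ — this uses a continuation argument along the contact isotopy from $\varphi$ to $\mathrm{Id}$, exactly as deformation invariance is proved in Cieliebak--Frauenfelder \cite{CieliebakFrauenfelder2009}; and (ii) a \textbf{spectral/growth} statement, namely that the positive part of the filtered twisted homology grows at the rate prescribed by $\Gamma_+(M_1,\lambda_1)$, again because continuation maps shift action by a bounded amount under a compactly generated isotopy. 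Combining (i) and (ii), the hypothesis $\Gamma_+(M_1,\lambda_1)>1$ gives that the filtered twisted homology has strictly super-linear growth in the action parameter.

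From there the argument is by contradiction and is essentially the Grove--Tanaka iteration argument transported to this setting. Suppose there are only finitely many $\varphi$-invariant Reeb orbits. Each such orbit $x$ with period $\tau$ generates, by iteration $x\mapsto\varphi^{k}(x(\cdot))$ / concatenation, a sequence of critical points of $\mathcal{A}^\varphi_H$ whose actions grow \emph{linearly} in $k$, and — crucially, by the Gromoll--Meyer–type local homology bound, here in the equivariant/twisted form used by Tanaka \cite{Tanaka1982} and Mazzucchelli \cite{Mazzucchelli2014} — the total local Floer homology contributed by all iterates of a single orbit in action window $[0,T]$ grows at most linearly in $T$. With only finitely many underlying orbits, the total filtered homology in $[0,T]$ would then be $O(T)$, contradicting the super-linear growth established above. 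One must separately dispose of the degenerate possibility that $\varphi$ has a continuum of fixed points on $\Sigma$ but no $\varphi$-invariant orbit with $\tau\neq 0$; in that case the fixed-point set already produces, via a Lusternik--Schnirelmann / Morse--Bott argument on the twisted loop space as in Grove \cite{Grove1973}, the required infinitely many invariant orbits (or one perturbs $\varphi$ and runs a limiting argument).

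The main obstacle I anticipate is analytical rather than conceptual: setting up the twisted Rabinowitz Floer homology $\RFH^\varphi_*$ so that it actually satisfies the invariance property (i). The subtlety is that $\varphi$ being strict (not merely contactomorphic) is essential for the twisted action functional to be well-defined on the nose, yet a generic contact isotopy from $\varphi$ to the identity passes through \emph{non}-strict contactomorphisms, so the continuation maps must be constructed through a one-parameter family of (rescaled) action functionals, requiring care with the $C^0$-estimates and the compactness of moduli spaces in the non-compact symplectisation — precisely the technical heart of \cite{CieliebakFrauenfelder2009} in this twisted guise. The Gromoll--Meyer local-homology estimate in the twisted setting is also delicate but is by now standard after Tanaka \cite{Tanaka1982} and the more recent treatments, so I expect it to go through with the usual iteration-of-critical-manifolds bookkeeping.
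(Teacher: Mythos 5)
Your overall strategy is the same as the paper's: set up a Floer theory whose generators (above some action threshold) are the $\varphi$-invariant Reeb orbits, identify its growth rate with $\Gamma_+(M_1,\lambda_1)$, and then argue by contradiction that finitely many invariant Reeb orbits would force at-most-linear growth of the filtered homology, using a uniform bound on the local Floer homology of iterates. The \emph{setup} differs: you propose a twisted Rabinowitz functional with boundary condition $v(1)=\Phi(v(0))$, whereas the paper works with genuine loops in the extended phase space $M\times T^*\R$, using the Hamiltonian $\widetilde{L}^c$ built from the contact isotopy (so that translated points of $\varphi$ become fixed points of $\Phi^1_{\widetilde{L}^c}$) and then invokes the prior identification $\HF_*(\widehat{\varphi})\cong\RFH_*(M_1,\lambda_1)$ together with equality of positive growth rates from \cite{AbbondandoloMerry2014a}. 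Both routes should ultimately work, but the paper's has the advantage that the analytic foundations (including the delicate $C^0$/$L^\infty$-estimates and transversality within the restricted class $\mathcal{J}$) are already established, sidestepping the invariance problem you flag.

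The genuine gap in your proposal is in the local-homology bound, which is where the paper does its new work. You appeal to Tanaka's iteration-of-critical-manifolds bookkeeping and say the twisted Gromoll--Meyer estimate is ``by now standard.'' That is true in the Morse-theoretic loop-space setting, but it is \emph{not} automatic in the Floer setting here. What the paper actually needs, and proves, is a persistence result for the local Floer homology of a \emph{composition} $AP^k$ near a fixed point, where $P$ is the Poincar\'e return map of a closed Reeb orbit and $A$ is the $\varphi$-twisted return map (defined through the Reeb flow and $\varphi$). The original Ginzburg--G\"urel persistence theorem covers only iterates $\psi^k$ of a single map and does not directly apply. The paper's Theorem \ref{thm:GG2} extends it to $\varphi\psi^k$ under the hypothesis that $D\varphi(0)$ and $D\psi(0)$ commute, and the proof (via generating functions, Lemma \ref{lem:baby_linear_algebra}, and Theorem \ref{thm:large_iterates_always_work}) requires substantial new linear algebra and estimates. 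The commutativity of $D A$ and $D P$, which is exactly what makes the extended persistence theorem applicable, is a direct consequence of $\varphi$ being a \emph{strict} contactomorphism (so that it commutes with the Reeb flow); this is the precise place where the strictness hypothesis enters, and is the reason the paper cannot reach the full Conjecture \ref{conj:gromoll_meyer_leafwise}. Your proposal treats this step as routine and therefore misses both the key new ingredient and the structural reason why strictness is essential.
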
 

\begin{rem}
We emphasise that in Theorem \ref{thm:actual_theorem}, we do \textbf{not} require $\varphi$ to be isotopic to the identity through strict contactomorphisms.
\end{rem}

Here is another setting where our results are applicable. Suppose $Q$ is a closed manifold and $\Omega$ is an \textbf{closed} 2-form on $Q$. One should think of $\Omega$ as representing a \textbf{magnetic field}. We use $\Omega$ to build a  \textbf{twisted symplectic form} $\omega = d \lambda + \pi^* \Omega$, on $T^*Q$, where as before $\lambda$ is the canonical Liouville 1-form. Suppose  $H:T^*Q \to \R$ is a \textbf{Tonelli} Hamiltonian: this means that $H$ is smooth function on $T^*Q$ which is $C^2$-strictly convex and superlinear on the fibres of $T^*Q$. We are interested in studying the flow of $\phi_H^t : T^*Q \to T^*Q$ of the symplectic gradient $X_H$ of $H$, taken with respect to the twisted symplectic form $\omega$. For instance, if $H(q,p) = \frac{1}{2} | p |^2 +U(q)$ is a mechanical Hamiltonian of the form kinetic plus potential energy, then $\phi_H^t$ can be thought of as modelling the motion of a charged particle in a magnetic field. We refer the reader to \cite{Ginzburg1996} for an in-depth treatment of magnetic flows in symplectic geometry. 

Given $e > 0$, let $\Sigma_e : = H^{-1}(e) \subset T^*Q$. Since $H$ is autonomous, the flow $\phi_H^t : T^*Q \to T^*Q$ of the symplectic gradient $X_H$ preserves the energy level $\Sigma_e$. A \textbf{magnetic geodesic} $\gamma : \R \to Q$ of energy $e$ is the projection to $Q$ of an orbit of $\phi_H^t|_{\Sigma_e}$.

Let us denote by $\mathcal{G}(H,\Omega)$ the group of \textbf{symmetries} of the system:
\[
  \mathcal{G}(H,\Omega) := \left\{  f \in \mathrm{Diff}(Q) \mid f^*\Omega = \Omega, \text{ and } H(f(q),p) = H(q, p \circ Df(q)), \ \forall \,(q,p) \in T^*Q \right\}.
  \]
Let $\mathcal{G}_0(H,\Omega)$ denote the connected component of $\mathcal{G}(H,\Omega)$ containing $\mathrm{Id}$. For instance, if $H(q,p) = \frac{1}{2}| p |^2 + U(q)$ is a mechanical Hamiltonian, then elements of $\mathcal{G}(H,\Omega)$ are simply the isometries of $(Q,g)$ that preserve the 2-form $\Omega$ and the potential $U$.

Assume now that $\Omega$ is \textbf{exact}. We define the \textbf{strict Ma\~n\'e critical value} $c_0= c_0(H, \Omega)$ by
\begin{equation}
\label{eq:mcv}
  c_0 := \inf_{ \theta} \sup_{q \in Q} H(q, -\theta_q),
\end{equation}
where the infimum\footnote{The fact that one takes $-\theta$ in the definition of $c_0$ is due to our sign conventions.} is over the set of all primitives $\theta$ of $\Omega$.  If $e > c_0$ then $\Sigma_e \subset T^*Q$ is a hypersurface of restricted contact type in the symplectic manifold $(T^*Q, \omega)$ (see Lemma \ref{lem:rct} below). As with the case of isometries earlier, a diffeomorphism $f \in \mathcal{G}(H,\Omega)$ lifts to define a symplectomorphism
 \[
    \phi_f :T^*Q \to T^*Q , \qquad  \phi_f(q, p) = (f(q), p \circ Df(q)^{-1}),
  \] 
which preserves the hypersurface $\Sigma_e$ and whose restriction $\phi_f|_{\Sigma_e}$ lies in $\mathrm{Cont}(\Sigma_e, \ker \,\omega|_{\Sigma_e})$. 

\begin{thm}
\label{thm:magnetic}
 Suppose $Q$ is a closed connected  manifold with the property that the Betti numbers of the free loop space $ \Lambda (Q)$ are asymptotically  unbounded. Suppose $e > c_0(H, \Omega)$. Then given any  symmetry $f \in \mathcal{G}_0(H, \Omega)$, there exist infinitely many invariant magnetic geodesics with energy $e$.
 \end{thm}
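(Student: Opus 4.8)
The plan is to deduce Theorem~\ref{thm:magnetic} from Theorem~\ref{thm:grove_tanaka_reeb}, by choosing a primitive of $\Omega$ adapted simultaneously to the energy level $\Sigma_e$ and to the symmetry $f$, and then straightening the twisted symplectic form by a fibrewise translation. First, since $H$ is Tonelli the symmetry group $\mathcal{G}(H,\Omega)$ is a compact Lie group (as $H$ is fibrewise convex and superlinear, $\mathcal{G}(H,\Omega)$ is contained in the isometry group of an associated Finsler metric, which is compact); in particular $\mathcal{G}_0(H,\Omega)$ is compact and carries a normalised Haar measure $dg$. Because $e > c_0(H,\Omega)$ we may fix a primitive $\theta_0$ of $\Omega$ with $\sup_{q} H(q,-(\theta_0)_q) < e$, and average it: put $\theta := \int_{\mathcal{G}_0(H,\Omega)} g^*\theta_0 \, dg$. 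Then $\theta$ is again a primitive of $\Omega$ (each $g$ preserves $\Omega$), it satisfies $f^*\theta = \theta$ (bi-invariance of $dg$), and, combining Jensen's inequality with the symmetry relation $H(g(q),p) = H(q, p\circ Dg(q))$,
\[
  \sup_{q\in Q} H(q,-\theta_q) \ \le\ \sup_{q\in Q}\int_{\mathcal{G}_0(H,\Omega)} H(g(q),-(\theta_0)_{g(q)})\,dg \ \le\ \sup_{q\in Q} H(q,-(\theta_0)_q) \ <\ e .
\]
Hence, by Lemma~\ref{lem:rct}, $\alpha := \lambda_\theta|_{\Sigma_e}$ is a contact form on $\Sigma_e$, where $\lambda_\theta := \lambda + \pi^*\theta$ satisfies $d\lambda_\theta = \omega$, and the Reeb vector field $R_\alpha$ is positively proportional to $X_H|_{\Sigma_e}$.

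Next I would transport this picture to the standard cotangent bundle. The fibrewise translation $\sigma(q,p) := (q, p - \theta_q)$ obeys $\sigma^*\lambda_\theta = \lambda$, hence is a symplectomorphism $(T^*Q, d\lambda) \to (T^*Q, \omega)$ commuting with the bundle projection, and $\sigma^{-1}$ carries $\Sigma_e$ onto $\widetilde\Sigma := \{\, (q,p) \mid H(q,p-\theta_q) = e \,\}$. Since $(q,p)\mapsto H(q,p-\theta_q)$ is again Tonelli and the inequality above says precisely that the zero section lies in the interior of the corresponding sublevel set, $\widetilde\Sigma$ is a fibrewise star-shaped hypersurface in $(T^*Q,d\lambda)$, and $\sigma$ carries its contact form $\lambda|_{\widetilde\Sigma}$ to $\alpha$, intertwining the Reeb flows. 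Now the cotangent lift $\phi_f$ preserves both $\Sigma_e$ and the canonical $1$-form $\lambda$, and $\phi_f^*\pi^*\theta = \pi^*(f^*\theta) = \pi^*\theta$, so $\phi_f^*\lambda_\theta = \lambda_\theta$; therefore $\varphi := \phi_f|_{\Sigma_e}$ is a \emph{strict} contactomorphism of $(\Sigma_e,\alpha)$, and $\widetilde\varphi := \sigma^{-1}\circ\varphi\circ\sigma$ is a strict contactomorphism of $(\widetilde\Sigma, \lambda|_{\widetilde\Sigma})$. Choosing a path $(f_t)_{t\in[0,1]}$ in $\mathcal{G}(H,\Omega)$ from $\mathrm{Id}$ to $f$ (possible because $f\in\mathcal{G}_0(H,\Omega)$), the conjugated lifts $\sigma^{-1}\circ(\phi_{f_t}|_{\Sigma_e})\circ\sigma$ give a path of (not necessarily strict) contactomorphisms of $\widetilde\Sigma$ from $\mathrm{Id}$ to $\widetilde\varphi$, so $\widetilde\varphi$ is contact-isotopic to the identity --- which, by the Remark following Theorem~\ref{thm:actual_theorem}, is exactly the hypothesis required.

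At this point Theorem~\ref{thm:grove_tanaka_reeb}, applied to the closed connected manifold $Q$ (whose free loop space has asymptotically unbounded Betti numbers by hypothesis), the fibrewise star-shaped hypersurface $\widetilde\Sigma$, and the strict contactomorphism $\widetilde\varphi$, produces infinitely many $\widetilde\varphi$-invariant Reeb orbits. Carrying them back through $\sigma$ yields infinitely many $\varphi$-invariant Reeb orbits of $\alpha$ on $\Sigma_e$; reparametrising these to the flow of $X_H|_{\Sigma_e}$ and projecting to $Q$ then produces infinitely many $f$-invariant magnetic geodesics of energy $e$, as desired.

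The step I expect to require the most care --- essentially the only genuinely new bookkeeping --- is this last reparametrisation-and-projection: one must check that a $\varphi$-invariant Reeb orbit $x$, with $\phi_f(x(s)) = x(s+\tau)$, reparametrises to an orbit $y$ of $X_H|_{\Sigma_e}$ still satisfying $\phi_f(y(t)) = y(t+\sigma)$ with a \emph{constant} shift $\sigma$, and that the resulting map descends infinitely many invariant Reeb orbits to infinitely many geometrically distinct $f$-invariant magnetic geodesics. The observation making this work is that $\phi_f^*H = H$ and $\phi_f^*\alpha = \alpha$ force the positive function $h$ with $X_H|_{\Sigma_e} = h\,R_\alpha$ to satisfy $h\circ\phi_f = h$; feeding this into the ODE defining the reparametrisation shows $s\mapsto s+\tau$ is conjugated to a constant time-shift, while injectivity downstairs follows because $\pi$ intertwines $\phi_f$ with $f$ and each magnetic geodesic has an essentially unique lift to $\Sigma_e$.
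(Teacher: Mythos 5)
Your proof is correct and follows essentially the same strategy as the paper: average a primitive of $\Omega$ over the compact group $\mathcal{G}_0(H,\Omega)$ to obtain an $f$-invariant primitive $\theta$ with $\sup_q H(q,-\theta_q)\le e$ (the paper's Proposition~\ref{prop:invariance}, which cites Maderna for compactness where you sketch a Finsler-isometry argument), observe via Lemma~\ref{lem:rct} that the restriction of $\lambda+\pi^*\theta$ makes $\Sigma_e$ a contact hypersurface on which $\phi_f$ restricts to a strict contactomorphism contact-isotopic to the identity, and then invoke the main machinery. The only real difference is the final reduction step: you straighten the twisted form by the fibrewise translation $\sigma(q,p)=(q,p-\theta_q)$, land on a fibrewise star-shaped hypersurface $\widetilde{\Sigma}\subset(T^*Q,d\lambda)$, and apply Theorem~\ref{thm:grove_tanaka_reeb}; the paper stays in $(T^*Q,\omega)$ and applies Theorem~\ref{thm:actual_theorem} directly, citing the computation of $\RFH_*(\Sigma_e,T^*Q)$ from the literature. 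These two routes are interchangeable, since the fibrewise translation you use is precisely the symplectomorphism underlying that RFH identification, but yours has the virtue of not re-importing the twisted RFH computation as a black box. Your closing paragraph checking that the positive, $\phi_f$-invariant conformal factor between $R_\alpha$ and $X_H|_{\Sigma_e}$ lets one reparametrise invariant Reeb orbits to invariant orbits of $\phi_H^t$ with a constant time shift, and that projection to $Q$ is injective on these, is correct and supplies bookkeeping that the paper silently elides.
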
 

The proof is given in Section \ref{sec:exact_magnetic_flows} below.

\begin{rem}
Instead of assuming that $\Omega$ is exact, one can instead make the weaker assumption that $\Omega$ is \textbf{weakly exact}. This means that the lift $\widetilde{\Omega}$ of $\Omega$ to the universal cover $\widetilde{Q}$ of $Q$ is exact. In this case one can define the \textbf{universal Ma\~n\'e critical value} $c_u = c_u(H, \Omega)$ by first lifting $H$ to a Hamiltonian $\widetilde{H} : T^* \widetilde{Q} \to \R $ and then defining $c_u$ in exactly the same way as in \eqref{eq:mcv}, but for $\widetilde{H}$ and primitives of $\widetilde{\Omega}$ instead. If $\Omega$ is exact then one has $c_u \le c_0$, and in general the inequality can be strict \cite{PaternainPaternain1997a,CieliebakFrauenfelderPaternain2010}. The main result of \cite{Merry2011a, BaeFrauenfelder2010} asserts that for $ e > c_u$, one can still define the Rabinowitz Floer homology $\RFH_*(\Sigma_e, T^*Q)$, and that in fact it holds that  $\RFH_*(\Sigma_e, T^*Q) \cong \RFH_*(S_g^*Q ,T^*Q)$. These results imply that it is possible to extend Theorem \ref{thm:magnetic} to cover this case. The precise statement and full proof can be found in \cite{Naef_thesis}.
\end{rem}

The existence of invariant Reeb orbits can be seen as a special case of the leaf-wise intersection problem. Suppose as above that $( \Sigma, \alpha)$ is a Liouville-fillable contact manifold with filling $(M_1,  \lambda_1)$. Denote by $M$ the non-compact symplectic manifold obtained by gluing $ \Sigma \times [1, +\infty)$ onto $M_1$, and let $ \phi \colon M \to M$ denote a compactly supported Hamiltonian diffeomorphism. A point $x \in \Sigma$ is a \textbf{leaf-wise intersection point} for $\phi$ if $\phi(x)$ belongs to the same Reeb orbit as $x$ does. The leaf-wise intersection problem was introduced by Moser \cite{Moser1978}, and in a series of papers Albers and Frauenfelder \cite{AlbersFrauenfelder2010c,AlbersFrauenfelder2012b,AlbersFrauenfelder2010} showed how Rabinowitz Floer homology can detect leaf-wise intersection points. If $\varphi$ is a contactomorphism of $ \Sigma $ then one can lift $\varphi$ to a  compactly supported Hamiltonian diffeomorphism $\phi $ of $M$. In this setting leaf-wise intersection points of $\phi$ are also called \textbf{translated points} of $\varphi$ by Sandon \cite{Sandon2012}. When $\varphi$ is a strict contactomorphism, a Reeb orbit is invariant if and only if some (and therefore all) of the points on the Reeb orbit are translated points of $\varphi$.  \\

In \cite{AlbersFrauenfelder2012a}, Albers and Frauenfelder asked whether the analogue of the Gromoll-Meyer theorem holds for leaf-wise intersections. The natural conjecture is:

\begin{conj}
\label{conj:gromoll_meyer_leafwise}
Suppose that $(\Sigma ,\alpha)$ is a Liouville-fillable contact manifold which admits a Liouville filling $(M_1,  \lambda_1)$ with $ \Gamma_+( M_1, \lambda_1)  > 1$. Then every compactly supported Hamiltonian diffeomorphism of $(M, d \lambda)$ has leaf-wise intersection points on infinitely many different Reeb orbits.  
\end{conj}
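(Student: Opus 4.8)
\medskip

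\noindent\textbf{Proof proposal.} The strategy is to run the Rabinowitz Floer argument underlying Theorem~\ref{thm:actual_theorem}, but with the Hamiltonian lift of a strict contactomorphism replaced by an arbitrary $\phi = \phi_F^1 \in \Ham_c(M,d\lambda_1)$. Pick a defining Hamiltonian for $\Sigma$ on the symplectisation $M$ and form the perturbed Rabinowitz action functional $\mathcal{A}^F$ of Albers--Frauenfelder \cite{AlbersFrauenfelder2010}, whose critical points are pairs $(x,\eta)$ consisting of a leaf-wise intersection point $x \in \Sigma$ together with a Reeb time-shift $\eta \in \R$ with $\phi(x) = \phi_{\mathrm{Reeb}}^{\eta}(x)$. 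Two critical points whose underlying points $x$ lie on geometrically distinct Reeb orbits represent leaf-wise intersections on different leaves, so it suffices to produce critical values accumulating at $+\infty$ that cannot all be accounted for by a fixed finite collection of Reeb orbits. The first ingredient is the a priori estimate $|\mathcal{A}^F(x,\eta) - \eta| \le \|F\|$, where $\|F\|$ is the Hofer oscillation norm of $F$; this confines critical points of action in a window $(a,b)$ to Reeb orbits of period in $(a - \|F\|,\, b + \|F\|)$.

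The second step is a filtered invariance statement: for $a < b$ outside the perturbed action spectrum, the filtered perturbed groups $\RFH^{(a,b)}_*(M_1,\lambda_1;F)$ are squeezed, via continuation maps along the linear homotopy $sF$ (whose trajectories have action variation bounded by $\|F\|$), between the unperturbed groups with windows $(a - \|F\|, b + \|F\|)$ and $(a + \|F\|, b - \|F\|)$. Passing to the positive part and to growth rates, the hypothesis $\Gamma_+(M_1, \lambda_1) > 1$ then forces $\dim \RFH^{(0,T)}_{+}(M_1,\lambda_1;F)$ to grow faster than any linear function of $T$; this is exactly the quantitative input that drives Theorem~\ref{thm:actual_theorem}.

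Third, suppose for contradiction that $\phi$ has leaf-wise intersections on only finitely many Reeb orbits $\gamma_1,\dots,\gamma_N$, of periods $T_1,\dots,T_N$. After a small generic perturbation of $F$ and of the auxiliary data — carried out within the class of compactly supported Hamiltonians of controlled oscillation, so that the estimates above survive — one may assume $\mathcal{A}^F$ is Morse, so that in each action window of bounded length each $\gamma_i$ carries only finitely many generators, namely the nondegenerate fixed points near $\gamma_i$ of the maps $\phi \circ \phi_{\mathrm{Reeb}}^{-\eta}$ for $\eta$ near the integer multiples $kT_i$. The decisive lemma to establish is then a Gromoll--Meyer-type iteration bound: the local Floer homology of $\mathcal{A}^F$ supported near $\gamma_i$ and associated with the multiplicity-$k$ block has dimension bounded independently of $k$, via the Bott periodicity of the linearised operators along iterates of $\gamma_i$, exactly as in the closed-orbit theory of McLean \cite{McLean2012} and Hryniewicz--Macarini \cite{HryniewiczMacarini2012}. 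Summing over $i$ and over $k \lesssim T / \min_i T_i$ gives $\dim \RFH^{(0,T)}_{+}(M_1,\lambda_1;F) = O(T)$, contradicting the super-linear growth of the second step; hence infinitely many Reeb orbits must carry leaf-wise intersection points.

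The main obstacle is precisely this iteration bound. When $\phi$ is the lift of a strict contactomorphism one can suspend the leaf-wise intersection problem to a genuine closed-orbit problem (the mechanism behind both the Grove--Tanaka theorem and Theorem~\ref{thm:actual_theorem}) and invoke the classical iteration theory directly; for an arbitrary compactly supported Hamiltonian diffeomorphism no such mapping-torus structure is available, and one must instead control the local Floer homology near a Reeb orbit simultaneously under iteration of the orbit and under the fixed perturbation $\phi$, ruling out in particular the creation of arbitrarily many low-index generators clustering near a single orbit as $k \to \infty$. A secondary, purely technical difficulty is to achieve transversality for $\mathcal{A}^F$ without inflating $\|F\|$, so that the Hofer-norm a priori estimates of the first two steps are not lost along the way.
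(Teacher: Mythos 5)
The statement you are proving is a \emph{conjecture}, and the paper explicitly says (just after Conjecture~\ref{conj:gromoll_meyer_leafwise}) that the authors were unable to prove it. Your ``proposal'' therefore cannot be measured against a proof in the paper; what can be measured is whether your strategy matches the authors' analysis of the obstruction, and there you are largely on target. The paper's route is to lift $\varphi$ to a Hamiltonian $\widetilde{L}^c$ on the extended phase space $\widetilde{M} = M\times T^*\R$, identify $\HF_*(\widehat{\varphi})$ with $\RFH_*(M_1,\lambda_1)$ via Theorem~\ref{thm:HF=RFH} with matching positive growth rates, decompose $\mathcal{P}_1(\widetilde{L}^c)$ into isolated action-constant components $\Gamma_k$ attached to each invariant Reeb orbit, and then invoke Lemma~\ref{lem:building_blocks} to bound the filtered rank by $\sum_k \mathrm{rank}\,\HF^{\loc}(\widetilde{L}^c,\Gamma_k)$. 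The whole argument turns on showing this sum grows at most linearly, which is exactly the uniform iteration bound you highlight.

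Where your proposal, like the paper, stops is precisely the iteration bound in the third step. The paper makes the obstruction sharper than you do: Proposition~\ref{prop:reduction_to_poincare_map} reduces $\HF^{\loc}(\widetilde{L}^c,\Gamma_k)$ to $\HF^{\loc}(A\,P^k,p)$ where $P$ is the Reeb Poincar\'e return map and $A$ is the $\varphi$-twisted return map, and then Corollary~\ref{cor:uniform_bound} (a consequence of the extended Ginzburg--G\"urel persistence result, Theorem~\ref{thm:GG2}) supplies the uniform bound \emph{provided the linearizations $D\varphi(0)$ and $D\psi(0)$ commute}. When $\varphi$ is strict this hypothesis is automatic because the germs of $A$ and $P$ commute (the lemma preceding Prop.~\ref{prop:reduction_to_poincare_map}), which is exactly the ``mapping torus structure'' you refer to in your last paragraph. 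For a general compactly supported Hamiltonian diffeomorphism $\phi$ of $(M,d\lambda)$ the leaf-wise intersection problem destroys this commuting structure, and the paper's Remark~\ref{rem:removing_commuting_would_be_real_nice} identifies the missing ingredient as dropping the commuting hypothesis from Corollary~\ref{cor:uniform_bound}. Your appeal to ``Bott periodicity of the linearised operators'' is not a substitute: McLean and Hryniewicz--Macarini both rely on Ginzburg--G\"urel-type persistence of local Floer homology rather than classical Bott iteration formulas, and the persistence argument itself is where the commuting assumption is used. So your outline is a reasonable description of how the proof \emph{ought} to go, but it does not supply the missing iteration lemma, which the paper records as an open problem.
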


Theorem \ref{thm:actual_theorem} is thus the special case of Conjecture \ref{conj:gromoll_meyer_leafwise} when the Hamiltonian diffeomorphism is the lift of a strict contactomorphism. Unfortunately we were unable to prove Conjecture \ref{conj:gromoll_meyer_leafwise}; see Remark \ref{rem:removing_commuting_would_be_real_nice} below for an explanation of where our proof breaks down in the general case. 

\begin{acknowledgement*}
We are very grateful to Marco Mazzucchelli for explaining to us why studying strict contactomorphisms is interesting in this setting, and for numerous helpful discussions on generating functions, and to Viktor Ginzburg for his many detailed and useful comments, and in particular for suggesting Theorem \ref{thm:magnetic} to us. The first author thanks Alberto Abbondandolo for many discussions about the $L^{\infty}$-estimates in Section \ref{sec:proof_of_thm_estimate}.  
\end{acknowledgement*} 

\section{Preliminaries}
\label{sec:preliminaries}

Let $( \Sigma, \alpha)$ denote a Liouville-fillable contact manifold. By definition this means there exists a \textbf{Liouville domain} $(M_1, \lambda_1)$ such that  $(M_1,d \lambda_1)$ is a compact symplectic manifold, $\Sigma = \partial M_1$, and $\alpha = \lambda_1 |_{\Sigma}$. The vector field $Y_1$ on $M_1$ defined by $ \imath_{Y_1} (d \lambda_1) = \lambda_1$ is transverse to $\Sigma$ and points outwards. Since $M_1$ is compact, the flow $ \phi_{Y_1}^s  \colon M_1 \to M_1$ is defined for all $ s \le 0$, and thus induces an embedding 
\begin{equation}
\label{eq:the_embedding_I}
I : \Sigma \times (0,1] \hookrightarrow M_1, \qquad I( x, r):= \phi_{Y_1}^{ \log r}(x).
\end{equation}
Note that $I^* \lambda_1 = r \alpha$, and $I_*( r \partial_r) = Y_1$. We denote by $M$ the completion of $M_1$, defined by 
\[
 M := M_1 \cup_{ \Sigma} ( \Sigma \times [1, +\infty)).
 \] 
 We extend $\lambda_1$ and $Y_1$ to a one-form $\lambda$ and a vector field $Y$ respectively on all of $M$ by setting $\lambda|_{ M_1} : = \lambda_1$, $Y|_{M_1} := Y_1$ and
  \[
 \lambda|_{ \Sigma \times [1, + \infty)} := r \alpha, \qquad Y|_{ \Sigma \times [1, + \infty)} : = r \partial_r.
 \]
Then $(M, d \lambda)$ is an exact symplectic manifold containing $ \Sigma $ as separating hypersurface.  Moreover the embedding $I$ from \eqref{eq:the_embedding_I} extends to define an embedding 
\begin{equation}
\label{eq:the_extended_embedding_I}
I \colon(S \Sigma , d(r \alpha)) \hookrightarrow ( M, d \lambda),
\end{equation}
where $S \Sigma := \Sigma \times (0 , + \infty)$ is the \textbf{symplectisation} of $\Sigma$.  We will always identify $\Sigma $ with $\Sigma \times \{ 1 \} \subset S \Sigma \subset M$.

The \textbf{extended phase space} is the symplectic manifold $(\widetilde{M}, \omega)$ where 
\[
\widetilde{M} : = M \times T^* \R, 
\]
and $\omega $ is the symplectic form
\begin{equation}
\label{eq:sym_form_on_W}
\omega : = d \lambda - d\sigma \wedge d \tau,
\end{equation}
where $(\tau ,\sigma) \in \R \times \R^* \cong T^*\R$.\\

We denote by $R $ the Reeb vector field of $ \alpha$, and $ \varphi_R^s : \Sigma \to \Sigma $ the Reeb flow.  The following definition was introduced by Sandon \cite{Sandon2012}. 
\begin{defn}
Suppose $ \varphi \colon \Sigma \to \Sigma$ is a contactomorphism. Thus there exists a smooth positive function $\rho \colon \Sigma \to (0 ,+ \infty) $ such that $ \varphi^*\alpha = \rho \alpha$. A point $x \in \Sigma$ is a \textbf{translated point} of $\varphi$ if there exists $ \tau \in \R $ such that
\begin{equation}
\label{eq:translated_point}
 \varphi( \varphi_R^{\tau}(x)) = x, \qquad \rho(x) = 1.
\end{equation}
We denote by 
\begin{equation}
\label{eq:spec_varphi}
\spec(\varphi) := \left\{ \tau \in \R \mid \text{ there exists } x \in \Sigma \text{ such that } \eqref{eq:translated_point} \text{ holds,} \right\}
\end{equation}
\end{defn}

\begin{ex}
\label{ex:strict}
Suppose $\varphi$ is a \textbf{strict} contactomorphism, i.e. $\varphi^* \alpha = \alpha$. In this case a translated point is simply a point $x  \in \Sigma$ such that $\varphi( \varphi_R^ \tau(x)) =x $ for some $\tau \in \R$. But since strict contactomorphisms commute with the Reeb flow (as $\varphi_*(R) = R$), we see that if $x$ is a translated point of $\varphi$ then every point on the Reeb orbit $ \{ \varphi_R^s(x) \mid s \in \R\}$ is also a translated point: 
\[
\varphi( \varphi_R^{\tau+s}(x)) = \varphi_R^s(x).
\]
Thus the Reeb orbit $\{ \varphi_R^s(x) \mid s \in \R\}$ is \textbf{$\varphi$-invariant}. Theorem \ref{thm:actual_theorem}, the main result of this paper, gives conditions under which every strict contactomorphism admits infinitely many distinct invariant Reeb orbits. 
\end{ex}

We will now show how to associate to each contactomorphism $\varphi \colon \Sigma \to \Sigma$ which is contact-isotopic to the identity a Hamiltonian diffeomorphism $\Phi $ of $\widetilde{M}$ with the property that its fixed points can be identified with the translated points of $ \varphi$. We first will need to introduce a number of auxiliary functions. \\

In general given a contactomorphism $\varphi$ which is contact-isotopic to the identity, we use the notation $\widehat{\varphi} $ to indicate a smootly parametrized path $\{ \varphi_t \}_{t \in [0,1]}$ such that $\varphi_0 = \mathrm{Id}$ and $\varphi_1 = \varphi$.
\begin{defn}
A path $\widehat{\varphi} = \{ \varphi_t \}_{0 \le t \le 1}$ is called \textbf{admissible} if it is stationary on time $[0,1/2]$:
\begin{equation}
\label{eq:admissible_path}
\varphi_t = \mathrm{Id}, \qquad \text{for all } t \in [0,1/2].
\end{equation}
\end{defn}
\begin{rem}
This requirement \eqref{eq:admissible_path} may seem somewhat artifical; its motivation will become clear in the proof of Lemma \ref{lem:tp_are_fp} below. Note that for every contactomorphism $\varphi : \Sigma \to \Sigma$ which is contact-isotopic to the identity there exists an admissible path $ \widehat{\varphi}$ terminating at $\varphi$: if $ \{ \varphi_t \}_{0 \le t \le 1}$ is any path connecting $ \varphi = \varphi_1$ to $\mathrm{Id} = \varphi_0$, then if $\chi \colon [0,1] \to [0,1]$  is a smooth monotone increasing map with $\chi(\tfrac{1}{2})=0$, the path $ \widehat{\varphi}:= \{ \varphi_{\chi(t)} \}_{0 \le t \le 1}$ is an admissible path.
\end{rem}

Now let $\widehat{\varphi} =  \{ \varphi_t \}_{ 0 \le t \le 1 }$ denote any (not necessarily admissible) smooth path of contactomorphisms from $ \mathrm{Id} = \varphi_0$ to $ \varphi: = \varphi_1$.  Thus by definition there exists a smooth family of positive functions $ \rho_t : \Sigma \to (0 ,+ \infty)$ such that 
\[
\varphi_t^* \alpha = \rho_t \alpha.
\]
The \textbf{contact Hamiltonian} of $\widehat{\varphi}$ is the function $l: \Sigma  \times [0,1] \to \R$ defined by 
\[
l_t  \circ \varphi_t =  \alpha \left( \frac{d}{dt} \varphi_t \right) .
\]
Here, as in many other places in this article, we write $l_t( \cdot) $ for the function $l( \cdot,t)$. Now consider the smooth function 
\begin{equation}
\label{eq:L_is_lifted_l}
L :  S \Sigma \times [0,1 ] \to \R, \qquad L_t(x,r) : = rl_t(x) .
\end{equation}
The Hamiltonian diffeomorphism $\phi_L^s \colon S \Sigma \to S \Sigma$ associated to $L$ is given by 
\begin{equation}
\label{eq:flow_of_L}
\phi_L^s ( x,r ):= \left(\varphi_s(x), \frac{r}{\rho_s(x)} \right).
\end{equation}
\begin{rem}
A point $x \in \Sigma$ is a translated point of $ \varphi_1$ if and only if $  (x,1) \in M$ is a \textbf{leaf-wise intersection point} for $ \phi_L^1$.
\end{rem}

Let us now take this one step further. We will extend $L :S \Sigma \times [0,1] \to \R$ to a function 
\[
	\widetilde{L} :  S \Sigma \times T^* \R  \times [0,1] \to \R.
\]
This requires several preliminary definitions. Define a smooth monotone increasing function $H \colon (0, + \infty) \to [-1,1]$ such that
\begin{equation}
\label{eq:hamiltonian_H}
H(r)  =  \begin{cases}
r-1 , & \text{for all } r \in (\tfrac{1}{2}, \tfrac{3}{2}), \\
  \tfrac{9}{16}, & \text{for all } r \in (\tfrac{7}{4}, + \infty), \\
 - \tfrac{9}{16}, & \text{for all } r \in  (0, \tfrac{1}{4}),
 \end{cases} 
 \qquad \left| \frac{ \partial H}{ \partial r} \right| \le 1,
 \end{equation}
By a slight abuse of notation we denote also by $H$ the function on $S \Sigma$ defined by $H(x,r) = H(r)$. Note that 
\begin{equation}
\label{eq:X_H}
X_H(x,r) = \frac{\partial H }{\partial r}(x,r) R(x).
\end{equation}
Define  $\widetilde{H} \colon S \Sigma \times T^*\R \to \R $ by 
\begin{equation}
\label{eq:widetilde_H}
\widetilde{H} \colon  S \Sigma \times T^* \R  \to \R , \qquad \widetilde{H}(x, r, \tau ,\sigma) := \tau H(x,r) + \tfrac{1}{2} \sigma^2.
\end{equation}
Now let $\kappa:  S^1 \to \R$ denote a smooth
function with
\begin{equation}
\label{eq:cutoff_function_kappa}
\kappa(t)=0\text{ for all } t\in[\tfrac{1}{2},1], \qquad  \text{and} \qquad \int_{0}^{1}\kappa(t)dt=1.
\end{equation}
We use $\kappa$ to modify the function \eqref{eq:widetilde_H}:
\begin{equation}
\label{eq:widetilde_H_kappa}
\widetilde{H}^{\kappa} :  S \Sigma \times T^* \R \times S^1  \to \R , \qquad  H_t^\kappa(x,r,\tau, \sigma) := \tau \kappa(t) H(x,r) + \tfrac{1}{2} \sigma^2.
\end{equation}
and then finally define 
\begin{equation}
\label{eq:widetilde_L}
\widetilde{L} :  S \Sigma \times T^* \R  \times [0,1] \to \R , \qquad \widetilde{L}_t(x,r, \tau, \sigma  ) := \widetilde{H}_t^{\kappa}(x,r, \tau ,\sigma) + L_t(x,r).
\end{equation}

The following lemma is straightforward (compare \cite[Lemma 2.2]{AlbersMerry2013a} and \cite[Appendix A]{AbbondandoloMerry2014a}).

\begin{lem}
\label{lem:tp_are_fp}
Let $\widehat{\varphi} = \{ \varphi_t \}_{0 \le t \le 1}$ denote an admissible path, and let $l_t $ denote its contact Hamiltonian, and define $\widetilde{L}$ as in \eqref{eq:widetilde_L}. Then there is a bijection between the translated points of $\varphi_1$ and the fixed points of $\Phi_{\widetilde{L}}
^1$.
\end{lem}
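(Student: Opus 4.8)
The plan is to unwind the definitions so that the fixed point equation for $\Phi_{\widetilde L}^1$ decouples into the $T^*\R$-components and the $S\Sigma$-components, and then to show that the $T^*\R$-part forces $\sigma$ to be constant and $\tau$ to record the Reeb time shift, while the $S\Sigma$-part becomes exactly the translated point equation \eqref{eq:translated_point}. First I would compute the Hamiltonian vector field of $\widetilde L_t$ on $S\Sigma\times T^*\R$ with respect to $\omega = d\lambda - d\sigma\wedge d\tau$. Since $\widetilde L_t = \tau\kappa(t) H(x,r) + \tfrac12\sigma^2 + L_t(x,r)$ and $H,L_t$ only involve the $S\Sigma$-variables while $\tfrac12\sigma^2$ only involves $\sigma$, the flow splits: in the $(\tau,\sigma)$-directions one gets $\dot\tau = \sigma$, $\dot\sigma = -\kappa(t)H(x,r)$, and in the $S\Sigma$-directions one gets $\dot{(x,r)} = \tau\kappa(t)X_H(x,r) + X_{L_t}(x,r)$, using \eqref{eq:X_H} and \eqref{eq:flow_of_L}. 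The key structural point is that $X_H$ is a (fibrewise constant) multiple of the Reeb vector field $R$, so on $\Sigma\times\{1\}$, where $\partial_r H = 1$, the term $\tau\kappa(t)X_H$ is exactly $\tau\kappa(t)R$.

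Next I would exploit admissibility. Because $\widehat\varphi$ is stationary on $[0,1/2]$, we have $L_t\equiv 0$ there, and because $\kappa$ is supported in $[0,1/2]$ (from \eqref{eq:cutoff_function_kappa}), the Reeb term $\tau\kappa(t)X_H$ is supported in $[0,1/2]$ as well; conversely $L_t$ is the only active term on $[1/2,1]$. So the time-$1$ flow factors as: first flow on $[0,1/2]$ under $\tau\kappa(t)X_H + \tfrac12\sigma^2$ alone (which, since $\sigma$ is constant there — as $\kappa$ vanishing is not what keeps $\sigma$ constant, rather I should note $\dot\sigma = -\kappa(t)H$, so $\sigma$ does change; let me instead track it carefully), then flow on $[1/2,1]$ under $X_{L_t} + \tfrac12\sigma^2$. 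On $[1/2,1]$ one has $\kappa\equiv 0$ so $\dot\sigma = 0$ and $\dot\tau=\sigma$; on $[0,1/2]$ the $S\Sigma$-flow is $\phi^{\tau\int_0^t\kappa}_{X_H}$ composed appropriately. Restricting everything to the hypersurface $\Sigma\times\{1\}$ where $H=0$ (note $H(1)=0$ and $\partial_r H(1)=1$), the equation $\dot\sigma=-\kappa(t)H = 0$ shows $\sigma$ is in fact constant along any trajectory that stays on $\Sigma\times\{1\}$, and the $r$-component: one checks $\phi_L^s$ and the $H$-flow both preserve $r=1$ only in combination with the translated-point condition $\rho_1(x)=1$. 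This is the delicate bookkeeping step.

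Concretely, for a point $(x,1,\tau_0,\sigma_0)$ to be fixed by $\Phi_{\widetilde L}^1$: the $S\Sigma$-flow over $[0,1/2]$ moves $x$ by $\varphi_R^{\tau_0}$ (using that $\int_0^{1/2}\kappa = 1$ and $X_H = R$ on $\Sigma\times\{1\}$) while rescaling $r$; the $S\Sigma$-flow over $[1/2,1]$ applies $\phi_L^1 = (\varphi_1(\cdot), \cdot/\rho_1(\cdot))$ by \eqref{eq:flow_of_L}. The composition returns to $(x,1)$ iff $\varphi_1(\varphi_R^{\tau_0}(x)) = x$ and the two $r$-rescalings cancel, and tracking the $r$-coordinate (it must start and end at $1$) yields $\rho_1(x)=1$, which is precisely \eqref{eq:translated_point}. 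Meanwhile the $(\tau,\sigma)$-equations $\dot\tau=\sigma$, $\dot\sigma = -\kappa(t)H(x,r)$ with $H$ vanishing along the trajectory force $\sigma_0$ arbitrary-but-then-$\tau$ drifts unless $\sigma_0 = 0$, pinning down $\sigma_0=0$ and leaving $\tau_0$ free to be the translation time. Thus the map $x\mapsto (x,1,\tau,0)$ gives the claimed bijection, with $\tau\in\spec(\varphi_1)$ the associated Reeb time. I expect the main obstacle to be the careful verification that the $r$-coordinate genuinely returns to $1$ exactly when $\rho_1(x)=1$ — i.e.\ that the scaling factors from the $H$-flow segment and the $L$-flow segment interact correctly — and relatedly ensuring the trajectory stays inside the region where $H(r) = r-1$ so that the simple formulas for $X_H$ and the flow are valid; this is exactly the point of the cutoffs in \eqref{eq:hamiltonian_H} and the normalisation $\int_0^1\kappa = 1$, and making that rigorous is the crux.
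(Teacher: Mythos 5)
You take essentially the same route as the paper: compute $X_{\widetilde L}$, use the disjoint time-supports of $\kappa$ and of the nontrivial part of $\widehat\varphi$ to factor $\Phi_{\widetilde L}^1$ into an $\widetilde H$-segment on $[0,1/2]$ followed by an $L$-segment on $[1/2,1]$, and read off the fixed-point conditions from the resulting explicit formula. This is precisely what the paper does via \eqref{flow_of_Phi_H} and \eqref{eq:flow_of_Phi_widetilde_L}.

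However, there is a genuine confusion at the point you yourself flag as ``the delicate bookkeeping step'' and ``the crux'', and it is not resolved. You write that the $\widetilde H$-segment ``moves $x$ by $\varphi_R^{\tau_0}$ \dots while rescaling $r$'', and that the argument hinges on ``the two $r$-rescalings cancel[ling]''. This is wrong: the vector field $X_{\widetilde H} = \tau X_H - \sigma\partial_\tau + H\partial_\sigma$ has \emph{no} $\partial_r$-component, because by \eqref{eq:X_H} one has $X_H = H'(r)R$, which is tangent to the slices $\Sigma\times\{r\}$. So $r$ is constant along the entire $\widetilde H$-segment of the flow, and there is only one $r$-rescaling (from $\phi_L$), not two. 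Once you record this, the argument closes cleanly with no bookkeeping difficulty: since $r$ is constant on $[0,1/2]$, the equation $\dot\sigma = \kappa(t)H(r)$ integrates to $\sigma(1) = \sigma_0 + H(r_0)\int_0^1\kappa = \sigma_0 + H(r_0)$, so periodicity forces $H(r_0)=0$ and hence $r_0 = 1$ by the strict monotonicity of $H$ in \eqref{eq:hamiltonian_H}; then the $\tau$-equation forces $\sigma_0 = 0$; and the $r$-component of $\phi_L^1$, namely $r_0 / \rho_1(\varphi_R^\tau(x))$, returns to $r_0=1$ precisely when $\rho_1(\varphi_R^\tau(x)) = 1$, matching \eqref{eq:translated_point} once combined with $\varphi_1(\varphi_R^\tau(x)) = x$. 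Your secondary worry about whether the trajectory stays in the region where $H(r)=r-1$ also evaporates once you see $r$ is constant on the $\widetilde H$-segment: you only need $H(r_0)=0\Rightarrow r_0=1$, which holds on the whole domain because $H$ is monotone with a unique zero at $r=1$. (Minor point: your signs $\dot\tau=\sigma$, $\dot\sigma=-\kappa H$ disagree with the paper's $\dot\tau=-\sigma$, $\dot\sigma = \kappa H$, which follow from the convention $\imath_{X}\omega=-d(\cdot)$; this doesn't affect the argument's structure.)
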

\begin{proof}
We first need to compute the Hamiltonian flow of the function $\widetilde{H}$ defined in \eqref{eq:widetilde_H}. The Hamiltonian vector field $X_{\widetilde{H}}$ is given by
\[
X_{\widetilde{H}}(x,r,\tau,\sigma) = \tau  X_H(x,r) -  \sigma \partial_{ \tau}+  H(x,r) \partial_{\sigma}.
\]
Using \eqref{eq:X_H}, we see that a path $ s \mapsto  (x(s), r(s), \tau(s) , \sigma(s)) $ is an orbit of $X_{\widetilde{H}}$ if and only if  
\[
\begin{split}
x'(s) &= \tau(s) H'(r(s)) R(x(s)) \\
r'(s) & = 0, \\
\tau'(s) & = - \sigma(s)\\
\sigma'(s) & = H(r(s)).
\end{split}
\]
Thus the flow $\Phi_{\widetilde{H}}^s $ of $X_{\widetilde{H}}$ is given by
\begin{equation}
\label{flow_of_Phi_H}
\Phi_{\widetilde{H}}^s (  x, r, \tau ,\sigma) = \left( \varphi_R^{ \tau H'(r) s}(x), r,  \tau - s\sigma - \frac{1}{2} H(r) s^2, \sigma + sH(r) \right).
\end{equation}
Since the cutoff functions $\kappa $ and $\chi$ have disjoint time support, up to reparametrisation the flow $\Phi_{ \widetilde{L}}^s$ of $\widetilde{L}$ first follows the flow of $X_{\widetilde{H}}$ and then follows the flow of the function $L$ (thought of as a function on $ S \Sigma \times T^* \R \times [0,1]$). 

However, when we regard $L$ as a function on $ S \Sigma \times T^*\R$, since $L$ does not depend on the $\tau$ and $\sigma$ variables, the Hamiltonian flow of $L$ on $S \Sigma \times T^*\R $ will preserve those coordinates.  Thus we see that the Hamiltonian flow of $\widetilde{L}$ is given by:
\begin{equation}
\label{eq:flow_of_Phi_widetilde_L}
\Phi_{\widetilde{L}}^s  \begin{pmatrix} 
x \\ r \\ \tau \\ \sigma \end{pmatrix} 
= \begin{pmatrix}
 \varphi_s \left( \varphi_R^{ \tau H'(r) \int_0^s \kappa(a) \,da }(x) \right) \\
 r \left( \rho_s ( \varphi_R^{ \tau }(x) )\right)^{-1} \\
 \tau - \sigma \int_0^s \kappa(a) \, da - \tfrac{1}{2} H(r) \left( \int_0^s \kappa(a) \,da  \right)^2\\ 
 \sigma + H(r) \int_0^s \kappa(a) \,da  
 \end{pmatrix}
\end{equation}
The $\sigma$-component of \eqref{eq:flow_of_Phi_widetilde_L} tells us that if $(x,r,\tau,\sigma)$ is a fixed point then $H(x,r) = 0$, and so we must have $r=1$. The $\tau$-component tells us that $\sigma = 0$. Then comparing the $x$-component and the $r$-component of \eqref{eq:flow_of_Phi_widetilde_L} with \eqref{eq:translated_point}, we see that $(x, 1, \tau ,0)$ is a fixed point of $\Phi_{\widetilde{L}}^1$ if and only if $ x$ is a translated point of $\varphi_1$. This completes the proof.
\end{proof} 

We would like to extend the function $L$ from \eqref{eq:L_is_lifted_l} to a Hamiltonian defined on all of $M$, and similarly the function $\widetilde{L}$ from \eqref{eq:widetilde_L} to a Hamiltonian defined on  all of $\widetilde{M}$.  This is  easy to accomplish, but we wish to do so in such a way that all $1$-periodic orbits of $X_{\widetilde{L}}$ are left completely  unchanged. This will require a little bit of care; the treatment here follows that of \cite{AlbersMerry2013a}. Given a constant $c >0$, let $\beta_c \in C^{\infty}([0,\infty),[0,1])$
denote a smooth function such that 
\begin{equation}
\label{eq:cutoff_function_beta}
\beta_c(r)=\begin{cases}
1, & r\in[e^{-c},e^c],\\
0, & r\in[0,e^{-2c}]\cup[e^c+1,+\infty),
\end{cases}
\end{equation}
and such that 
\[
0\leq\beta_c'(r)\leq2e^{2c}, \qquad \text{for } r \in [e^{-2c},e^{-c}].
\]
We now consider the function $L^c :  M \times [0,1] \to \R$ defined by
\[
L^c_t(z)= \begin{cases}
\beta_c (r)L_t(x,r) , & z = (x,r) \in  S \Sigma \subset M, \\
0, & z \in  M \setminus S \Sigma.
\end{cases}
\]
The Hamiltonian flow $ \phi_{L^c}^s : M \to M$ of $ L^c$ agrees with that of $\phi_L^s \colon S \Sigma \to S \Sigma $ on the neighbourhood $ \Sigma \times ( e^{-c},e^c) $ of $\Sigma \subset M$.  Next, since the Hamiltonian $H$ defined in \eqref{eq:hamiltonian_H} is constant on $(0, 1/4)$, we can extend $\widetilde{H}$ to all of $ M$ by defining 
\[
\widetilde{H}(z) = - \frac{9}{16} , \qquad \text{for all } z \in M \setminus S \Sigma.
\]
By a slight abuse of notation we continue to denote this extended function also by $\widetilde{H}$. Having done this we extend the modified function $\widetilde{H}^{\kappa} : \widetilde{M} \times S^1 \to \R$ from \eqref{eq:widetilde_H_kappa} similarly and then define as before
\begin{equation}
\label{eq:def_of_Lc}
\widetilde{L}^c  :\widetilde{M} \times [0,1] \to \R , \qquad \widetilde{L}^c_t(z, \tau, \sigma) := \widetilde{H}_t^{\kappa}(x,r, \tau ,\sigma) + L^c_t(z) .
\end{equation}
We shall show that provided the constant $c >0 $ is sufficiently large, the $1$-periodic orbits of $X_{\widetilde{L}^c}$ are unchanged. The following argument is taken from \cite[Proposition 2.5]{AlbersMerry2013a}. Suppose $\widetilde{z}(t) = (x(t),r(t),\tau(t), \sigma(t))$ is a 1-periodic orbit of $X_{\widetilde{L}^c}$. As before we see that $\tau(t)\equiv \tau$ is constant and $\sigma(t) \equiv 0$. Moreover we know that $r(t) = 1$  for all $t \in[0, 1/2]$. Thus if we set
\[
S:=\left\{ t\in S^1 \mid r(t)\in(e^{-c},e^c)\right\} 
\]
 then $S$ is a non-empty open interval containing the interval $[0,\tfrac12]$. Let $S_0 \subseteq S$ denote the connected component containing
0. We show that $S_0$ is closed, whence $S_0=S=[0,1]$. 
If $x(t)\in\Sigma\times(e^{-c},e^c)$ and $t\in[\tfrac12,1]$ then $r(t)$
satisfies the equation
\[
\dot{r}(t)=-\frac{\dot{\rho}_t(x(t))}{\rho_t^2(x(t))}\cdot r(t).
\]
Define a constant $C(\widehat{\varphi}) \ge 0$ by 
\begin{equation}
\label{eq:C_phi}
C(\widehat{\varphi}):=\max_{t\in[0,1]}\int_0^t\max_{x\in\Sigma}\left| \frac{\dot{\rho}_{\tau}(x)}{\rho_{\tau}(x)^{2}} \right| d\tau.
\end{equation}
We see that for $t\in S_0 \cap[\tfrac12,1]$
it holds that 
\[
e^{-C}\le r(t)\leq e^C.
\]
In particular, provided we choose the constant $c$ to satisfy $c>C(\widehat{\varphi})$ then we see that $S_{0}$ is closed. We have proved:
\begin{lem}
\label{lem:where_the_orbits_live}
If $c > C(\widehat{\varphi})$,  then every 1-periodic orbit of $X_{\widetilde{L}^c}$ has image contained in $\Sigma \times (e^{-c},e^c) \times O_{\R}$, where $O_{\R} \subset T^*\R$ denotes the zero section. In particular, every 1-periodic orbit is contained in the subset  $ \{ \widetilde{L}^c \equiv \widetilde{L} \} \subset \widetilde{M}$.
\end{lem}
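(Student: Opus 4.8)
The plan is to pin down, coordinate by coordinate, an arbitrary $1$-periodic orbit $\widetilde z(t)=(x(t),r(t),\tau(t),\sigma(t))$ of $X_{\widetilde L^c}$, following the template of \cite[Proposition~2.5]{AlbersMerry2013a} sketched in the discussion preceding the statement. First I would note that, since $L^c$ depends only on the $M$-variables $(x,r)$, the $\tau$- and $\sigma$-components of $X_{\widetilde L^c}$ are governed purely by $\widetilde H^\kappa$, so that $\dot\tau=-\sigma$ and $\dot\sigma=\kappa(t)H(x(t),r(t))$, exactly as in the proof of Lemma~\ref{lem:tp_are_fp} but with the cutoff $\kappa$ inserted. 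On $[\tfrac12,1]$ the function $\kappa$ vanishes, so $\sigma$ is constant there; on $[0,\tfrac12]$ the path $\widehat\varphi$ is stationary, so $L^c_t\equiv 0$ and, by \eqref{eq:X_H}, the $r$-coordinate is constant. Integrating $\dot\sigma$ over $[0,\tfrac12]$ then gives $\sigma(\tfrac12)-\sigma(0)=H(r(0))\int_0^{1/2}\kappa=H(r(0))$, and combining this with $1$-periodicity and the constancy of $\sigma$ on $[\tfrac12,1]$ forces $H(r(0))=0$, i.e. $r\equiv 1$ on $[0,\tfrac12]$ (since $H$ is monotone with $H(1)=0$). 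Then $\dot\sigma\equiv 0$ on all of $S^1$, so $\sigma$ is globally constant, and $\dot\tau=-\sigma$ together with periodicity forces $\sigma\equiv 0$ and $\tau$ constant; in particular the $T^*\R$-component of the orbit lies on the zero section $O_\R$.

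The substantive step is the confinement of $r(t)$. I would set $S:=\{t\in S^1\mid r(t)\in(e^{-c},e^c)\}$, which is open and, by the previous step, contains $[0,\tfrac12]$, and let $S_0\subseteq S$ be the connected component through $0$; the aim is to show $S_0$ is closed, whence $S_0=S^1$ by connectedness of the circle. For $t\in S_0\cap[\tfrac12,1]$ we have $(x(t),r(t))\in\Sigma\times(e^{-c},e^c)$, so $\beta_c(r(t))=1$ and $\beta_c'(r(t))=0$; hence $X_{\widetilde L^c}$ agrees there with $X_{\widetilde L}$, and \eqref{eq:flow_of_L} yields that $r$ solves the linear ODE $\dot r(t)=-\tfrac{\dot\rho_t(x(t))}{\rho_t(x(t))^2}\,r(t)$. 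Estimating $\tfrac{d}{dt}\log r$ by the integrand appearing in \eqref{eq:C_phi} and integrating forward from $t=\tfrac12$ and backward from $t=1$ (with $r(\tfrac12)=r(1)=1$), I get $e^{-C(\widehat\varphi)}\le r(t)\le e^{C(\widehat\varphi)}$ on all of $S_0$. Since $c>C(\widehat\varphi)$ these inequalities are strict at the endpoints, so $\overline{S_0}\subseteq S$; as $S_0$ is a connected component of the open set $S$, this makes $S_0$ closed (it is already open), hence $S_0=S^1$ and $r(t)\in(e^{-c},e^c)$ for every $t$.

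Combining the two steps gives the first assertion, since the orbit then satisfies $x(t)\in\Sigma$, $r(t)\in(e^{-c},e^c)$ and $(\tau(t),\sigma(t))\in O_\R$. The second assertion is immediate: on $\Sigma\times(e^{-c},e^c)\times O_\R$ one has $\beta_c\equiv 1$, hence $L^c\equiv L$ and $\widetilde L^c\equiv\widetilde L$, so the orbit is contained in $\{\widetilde L^c\equiv\widetilde L\}$.

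The only point that is not routine bookkeeping is the open-versus-closed argument in the second paragraph. The ODE for $r$ is valid only where $\beta_c(r)=1$, so a priori one cannot bound $r$ on all of $S^1$; one can merely propagate the bound along the single component $S_0$ that is anchored at $[0,\tfrac12]$, and it is precisely the strict inequality $C(\widehat\varphi)<c$ that makes $S_0$ closed (while it is already open) and lets connectedness of $S^1$ finish the proof. A secondary subtlety is that on the part of $S_0$ near $t=1$ the ODE has to be integrated backwards from $t=1$, using periodicity $r(1)=r(0)=1$, rather than forwards from $t=\tfrac12$.
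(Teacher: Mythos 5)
Your proof is correct and follows essentially the same route as the paper's (which the paper attributes to \cite[Proposition~2.5]{AlbersMerry2013a}): first pin down $\tau$ constant, $\sigma\equiv 0$, $r\equiv 1$ on $[0,\tfrac12]$, then run the open-and-closed argument for the component $S_0$ of $S=\{t:r(t)\in(e^{-c},e^c)\}$ through $0$, using the ODE for $r$ on $S_0\cap[\tfrac12,1]$ and the choice $c>C(\widehat\varphi)$ to show $\overline{S_0}\subset S$. You supply somewhat more detail than the paper in the first step (the paper just asserts it ``as before''), and your remark about integrating backward from $t=1$ as well as forward from $t=\tfrac12$ is a small but genuine clarification of how the bound propagates to both ends of the arc $S_0$.
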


\section{Floer homology on the extended phase space}
\label{sec:Floer_homology}

We denote by $ \Lambda (\widetilde{M}) $ the space of all smooth loops $ \widetilde{z} : S^1 \to \widetilde{M}$. We typically write $ \widetilde{z}(t)=(z(t), \tau(t), \sigma(t))$, so that $z  \colon S^1 \to M $ is a loop in $M $ and $(\tau(t), \sigma(t))$ is a loop in $T^*\R$. 

\begin{defn}
We denote by $\A_{\widetilde{L}^c} : \Lambda(\widetilde{M}) \to \R$ the classical Hamiltonian action functional associated to the Hamiltonian $\widetilde{L}^c$ from \eqref{eq:def_of_Lc}, defined by
\[
\A_{\widetilde{L}^c}(\widetilde{z}) := \int_{S^1}z^* \lambda - \int_{S^1}\langle \dot{\tau}, \sigma \rangle  \, dt -\int_{S^1}\widetilde{L}^c_t( \widetilde{z}) \, dt,
\]
where we wrote $\widetilde{z}=(z, \tau ,\sigma) $ as above. 
\end{defn} 

The critical points of $ \A_{ \widetilde{L}^c}$ are precisely the contractible 1-periodic orbits of $X_{\widetilde{L}^c}$. The aim of this section is to explain how to construct the \textbf{Floer homology groups $\HF_*(\A_{\widetilde{L}^c})$} associated to $\A_{\widetilde{L}^c}$. The construction is very standard, apart from in two respects. Namely, the Hamiltonian $\widetilde{L}^c $ is \textbf{not} coercive. As a result obtaining the $L^{\infty}$-bounds required to define the boundary operator  is rather involved. This difficulty was solved by  Abbondandolo and the first author in \cite{AbbondandoloMerry2014a}. The setting in this paper is slightly different though, and thus we will go through the compactness statements in detail below, see Section \ref{sec:proof_of_thm_estimate}. Secondly, there is the question of transversality. The compactness statements proved in Section \ref{sec:proof_of_thm_estimate} require us to work with almost complex structures of a specific form, introduced in Definition \ref{defn:tau_dep_acs} below. Thus one needs to know that transversality can be achieved within this class of almost complex structures. This is by no means obvious, but the proof in \cite[Section 6]{AbbondandoloMerry2014a} carries through verbatim here, and hence we will not dwell on this issue.

Here is the aformentioned class of almost complex structures that we will work with.
\begin{defn}
\label{defn:tau_dep_acs}
We denote by $\mathcal{J}$ the set of smooth families 
\[
\mathrm{J} =  \{ J_t(\cdot, \tau) \}_{ (t, \tau) \in S^1 \times \R}
\]
  of almost complex structures on $M$, which are compatible with $d \lambda$, meaning that for each $(t,z,\tau) \in S^1 \times M \times \R$,
\[
\langle \cdot,\cdot \rangle_{J_t(z,\tau)} := d \lambda_z(J_t(z, \tau)\cdot,\cdot), 
\]
defines a Riemannian metric on $T_z M$, whose associated norm is denoted by $|\cdot|_{J_t(z,\tau)}$ (\textbf{warning:} this sign convention is slightly unusual). In addition we require that
\begin{equation}
\label{eq:acs_bounded}
  \sup_{ (t,\tau) \in S^1 \times \R } \| J_t(\cdot , \tau) \|_{C^k} < + \infty, \qquad \forall \, k \in \N,
\end{equation}  
where $\| \cdot \|_{C^k} $  is the norm taken with respect to some background metric on $M$.  Finally we require that $\mathrm{J}$ is of \textbf{contact type at infinity}, which means that there exists $r_0> 2$ such that the pulled back almost complex structure $I^*(J_t(\cdot ,\tau))$ of $J_t(\cdot, \tau)$ on $\Sigma \times (r_0,+\infty)$ is independent of both $t\in S^1$ and $\tau \in \R$, and satisfies
\begin{equation}
\label{ct}
dr \circ I^*(J_t(\cdot ,\tau)) = r \alpha \qquad \text{on} \quad \Sigma \times (r_0,+\infty).
\end{equation}
Given $ \mathrm{J }\in \mathcal{J}$ we then consider the loop $\widetilde{J}_t$ of almost complex structures on $\widetilde{M}$ which is defined for $\widetilde{z} = (z, \tau, \sigma) \in \tilde{M}$ by
\begin{equation}
\label{special form of acs}
\widetilde{J}_t(\widetilde{z}) = J_t (z,\tau) \oplus \begin{pmatrix} 0 & 1 \\ -1 & 0 \end{pmatrix}  : T_zM \oplus T_{(\tau,\sigma)}T^* \R \to T_zM \oplus T_{( \tau, \sigma)}T^*\R.
\end{equation}
Thus $\widetilde{J}_t$, $t \in S^1$, is a loop of almost complex structures compatible with $\omega$. The corresponding metric 
\[
\langle \cdot ,\cdot \rangle_{\widetilde{J}_t(\widetilde{z})} := \omega_{\tilde{z}} ( \widetilde{J}_t (\widetilde{z})\cdot, \cdot)
\]
is the product metric of $\langle \cdot,\cdot\rangle_{J_t(z, \tau)}$ with the Euclidean metric of $T^* \R \cong \R^2$. 
\end{defn}

Fix $\mathrm{J} \in \mathcal{J}$. We denote by $\llangle \cdot, \cdot \rrangle_{\mathrm{J}}$ the induced $L^2$-inner product on $ \Lambda (\widetilde{M})$ arising from $\left\langle \cdot,\cdot \right\rangle_{\widetilde{J}_t(\widetilde{z})} $. 
The $L^2$-gradient of $\A_{\widetilde{L}^c}$ has the form
\begin{equation}
\label{eq:the_gradient}
\nabla \A_{\widetilde{L}^c} (\widetilde{z}) =  \begin{pmatrix}
J_t(z,\tau) \left(z' - \tau \kappa(t)  X_ H(z) - X_{L^c_t}(z) \right) \\ \sigma' - \kappa(t) H(z) \\ -\tau' + \sigma 
\end{pmatrix},
\end{equation}
for $\widetilde{z}=(z,\tau, \sigma) \in \Lambda (\widetilde{M})$. Thus the Floer negative gradient equation for $\A_{\widetilde{L}^c}$, that is,
\[
\frac{d\widetilde{u}}{ds}  + \nabla \A_{ \widetilde{L}^c} (\widetilde{u})=0, \qquad \text{for } \widetilde{u} : \R \rightarrow \Lambda(\widetilde{M}),
\]
is the following system of PDEs 
\begin{equation}
\label{eq:floer}
\begin{split}
\partial_s u + J_{t}(u,\eta) \left( \partial_t u - \eta \kappa(t) X_H(u)- X_{L^c_t}(u) \right) &= 0, \\
\partial_s \eta + \partial_t \zeta -  \kappa(t) H(u) &= 0, \\
\partial_s \zeta - \partial_t \eta + \zeta &= 0.
\end{split}
\end{equation}
for
\[
\widetilde{u} = (u,\eta,\zeta) : \R \times S^1 \rightarrow M \times T^* \R = \widetilde{M}.
\]
We are interested in finite-energy solutions of the above system, that is in solutions $\widetilde{u} = (u,\eta,\zeta)$ for which the quantity
\begin{equation}
\label{eq:energy}
\E(\widetilde{u}) := \int_{- \infty}^{ + \infty} \int_{S^1}  \llangle \partial_s \widetilde{u} , \partial_s \widetilde{u}  \rrangle_{ \mathrm{J}} \, ds \, dt 
\end{equation}
is finite. Note that as $\widetilde{u}$ is a \textbf{negative} gradient flow line, one has
\[
\begin{split}
\E(\widetilde{u}) = - \int_{- \infty}^{ +\infty} \frac{d}{ds} \A_{\widetilde{L}^c}(\widetilde{u}(s,\cdot))\, ds &= 
\lim_{s\rightarrow -\infty}  \A_{\widetilde{L}^c}(\widetilde{u}(s,\cdot)) - \lim_{s\rightarrow +\infty} \A_{\widetilde{L}^c}(\widetilde{u}(s,\cdot)) \\ &= \sup_{s\in \R} \A_{\widetilde{L}^c}(\widetilde{u}(s,\cdot)) - \inf_{s\in \R}\A_{\widetilde{L}^c}(\widetilde{u}(s,\cdot)).
\end{split}
\]
\begin{defn}
We define the \textbf{action spectrum} of $\A_{\widetilde{L}^c}$ to be its set of critical values:  
\[
\spec( \A_{\widetilde{L}^c}) := \A_{\widetilde{L}^c}(\crit \A_{\widetilde{L}^c}).
\]
\end{defn}

\begin{lem}
\label{lem:relating_spectrums}
Let $\widehat{\varphi}$ denote an admissible path terminating at $\varphi$. Then if $c > C( \widehat{\varphi})$ one has 
\[
\spec( \A_{\widetilde{L}^c}) = \spec ( \varphi_1).
\]
\end{lem}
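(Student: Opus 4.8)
The plan is to establish the claimed equality of spectra by tracking, via Lemma~\ref{lem:tp_are_fp} and Lemma~\ref{lem:where_the_orbits_live}, exactly which value of $\A_{\widetilde{L}^c}$ is attained at the $1$-periodic orbit corresponding to a given translated point. By Lemma~\ref{lem:where_the_orbits_live}, since $c > C(\widehat\varphi)$, every $1$-periodic orbit $\widetilde z$ of $X_{\widetilde L^c}$ lies in the region $\Sigma \times (e^{-c},e^c) \times O_{\R}$ where $\widetilde L^c \equiv \widetilde L$, so we may as well compute with $\widetilde L$ rather than its cut-off. First I would recall from the proof of Lemma~\ref{lem:tp_are_fp} (and its $c$-version preceding Lemma~\ref{lem:where_the_orbits_live}) that such an orbit has the form $\widetilde z(t) = (x(t), 1, \tau, 0)$, where $\tau \in \spec(\varphi_1)$ is the translation time of the associated translated point $x := x(0)$, and where the loop $x(t)$ in $\Sigma$ is explicitly $x(t) = \varphi_R^{\tau H'(1)\int_0^t \kappa}\,(x)$ on $[0,\tfrac12]$ followed by the $\varphi_s$-evolution on $[\tfrac12,1]$; in particular $r(t)\equiv 1$ and $\sigma(t)\equiv 0$, and on $[0,\tfrac12]$ the loop is a reparametrised Reeb orbit while on $[\tfrac12,1]$ it is governed by $L$ alone.

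Next I would plug this orbit into the action functional
\[
\A_{\widetilde L^c}(\widetilde z) = \int_{S^1} z^*\lambda - \int_{S^1}\langle \dot\tau,\sigma\rangle\,dt - \int_{S^1}\widetilde L^c_t(\widetilde z)\,dt
\]
and simplify term by term. Since $\sigma \equiv 0$ and $\dot\tau \equiv 0$, the middle term vanishes. Because $r \equiv 1$, we have $z^*\lambda = x^*\alpha$ along the loop, and $\widetilde L^c_t(\widetilde z) = \widetilde H^\kappa_t(x,1,\tau,0) + L^c_t(x,1) = \tau\kappa(t)H(1) + l_t(x(t)) \cdot 1 = l_t(x(t))$ using $H(1)=0$ and $L_t(x,r)=r l_t(x)$. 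It remains to show $\int_{S^1} x^*\alpha - \int_0^1 l_t(x(t))\,dt = \tau$. On $[0,\tfrac12]$, $x(t)$ traces a Reeb orbit of total Reeb-time $\tau$ (here I use $H'(1)=1$ from \eqref{eq:hamiltonian_H} and $\int_0^1\kappa = 1$, $\kappa \equiv 0$ on $[\tfrac12,1]$), so $\int_0^{1/2} x^*\alpha = \tau$ because $\alpha(R)=1$; and $l \equiv 0$ there since $\varphi_t = \mathrm{Id}$ forces the contact Hamiltonian to vanish on $[0,\tfrac12]$ by admissibility. On $[\tfrac12,1]$, the defining relation $l_t \circ \varphi_t = \alpha(\tfrac{d}{dt}\varphi_t)$ together with $x(t) = \varphi_t(\varphi_R^\tau(x))$ gives $\dot x(t) = \tfrac{d}{dt}\varphi_t$ evaluated at $\varphi_R^\tau(x)$, whence $\alpha(\dot x(t)) = l_t(x(t))$, so the two terms cancel exactly on $[\tfrac12,1]$. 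Combining, $\A_{\widetilde L^c}(\widetilde z) = \tau$.

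This computation shows $\spec(\A_{\widetilde L^c}) \subseteq \spec(\varphi_1)$, and conversely, since Lemma~\ref{lem:tp_are_fp} (in its $c$-version) gives a bijection between translated points of $\varphi_1$ and $1$-periodic orbits of $X_{\widetilde L^c}$, and the translation time $\tau$ is exactly recovered as the action, every $\tau \in \spec(\varphi_1)$ arises as the action of some critical point; hence equality. The only genuinely delicate point I anticipate is bookkeeping the reparametrisations hidden in ``up to reparametrisation the flow $\Phi_{\widetilde L}^s$ first follows $X_{\widetilde H}$ and then $L$'': one must check that the action integral, being independent of the parametrisation of the loop, is insensitive to this, and that the integrand really does split as the sum of a pure-Reeb contribution on $[0,\tfrac12]$ and a pure-$L$ contribution on $[\tfrac12,1]$ without cross terms — this is where admissibility of $\widehat\varphi$ (stationarity on $[0,\tfrac12]$) and the disjoint time supports of $\kappa$ and $\chi$ are essential. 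Everything else is a routine substitution.
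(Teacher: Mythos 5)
Your approach is the same as the paper's: identify the critical points via Lemma~\ref{lem:tp_are_fp} and Lemma~\ref{lem:where_the_orbits_live}, and then compute $\A_{\widetilde{L}^c}(\widetilde{z}) = \tau$ directly by splitting the integrals according to the disjoint time-supports of $\kappa$ and the admissible path. That is the right strategy and the paper does exactly this.

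However, there is one genuine error in your write-up: the claim that a $1$-periodic orbit has the form $\widetilde{z}(t) = (x(t), 1, \tau, 0)$ with $r(t) \equiv 1$ for \emph{all} $t$. This is false in general. From \eqref{eq:flow_of_Phi_widetilde_L}, the $r$-component of the orbit through the fixed point $(x, 1, \tau, 0)$ is $r(t) = \bigl(\rho_t(\varphi_R^{\tau}(x))\bigr)^{-1}$, which equals $1$ on $[0, \tfrac12]$ (by admissibility), but on $[\tfrac12, 1]$ it is $1/\rho_t$ at a point, which need not be $1$ unless the entire path $\widehat{\varphi}$ (not just its endpoint $\varphi_1$) is strict. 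The remark after Theorem~\ref{thm:actual_theorem} emphasises precisely that the interpolating path is \emph{not} assumed strict, so this is not a degenerate case you can wave away. Your simplifications $z^*\lambda = x^*\alpha$ and $\widetilde{L}^c_t(\widetilde{z}) = l_t(x(t))$ on $[\tfrac12, 1]$ therefore each drop a factor of $r(t)$.

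You are rescued by a cancellation you did not invoke: since $\lambda = r\alpha$ on $S\Sigma$ and $L_t(x, r) = r\, l_t(x)$ is fibrewise linear, one in fact has $\lambda(X_{L})(z) = L_t(z)$ for any $z = (x, r) \in S\Sigma$, without assuming $r = 1$. Consequently $\int_{1/2}^{1}\bigl(z^*\lambda - L_t(z)\bigr)\, dt = \int_{1/2}^{1} r(t)\bigl(\alpha(\dot{x}(t)) - l_t(x(t))\bigr)\, dt = 0$, and the $r$-factors cancel. So the final answer is correct, but the intermediate justification is not; the paper's proof keeps $r(t)$ general and uses exactly the degree-one homogeneity of $L_t$ in $r$ (the identity $\lambda(X_L) - L_t = 0$) at this step, which is the clean way to phrase it. Your $[0, \tfrac12]$ analysis and the final bijectivity argument are fine.
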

\begin{proof}
Suppose $\widetilde{z}$ is a critical point of $ \A_{\widetilde{L}^c}$ for some $c > C( \widehat{\varphi})$. Then by Lemma \ref{lem:tp_are_fp} and Lemma \ref{lem:where_the_orbits_live}, we can write  $\widetilde{z}(t) = (x(t), r(t), \tau, 0)$, such that if $p := x(0)$ then $\varphi_1(  \varphi_R^{\tau}(p))= p $. It thus suffices to show that
\begin{equation}
\label{eq:action_value}
\A_{\widetilde{L}^c}(\widetilde{z}) = \tau.
\end{equation}
For this we compute:
\begin{align*}
\A_{\widetilde{L}^c}(\widetilde{z}) & = \int_{S^1}z^* \lambda - \int_{S^1}\underbrace{\langle \dot{\tau}, \sigma \rangle }_{ = 0} \, dt -\int_{S^1}\widetilde{L}^c_t( \widetilde{z}) \, dt \\
& = \int_{S^1}z^* \lambda  - \tau \int_0^{1/2} \kappa(t)\underbrace{H(r(t))}_{=0} \,dt - \int_{1/2}^1 L^c_t( z) \, dt \\
& = \tau \underbrace{\int_0^{1/2} \kappa(t) \alpha(R(x(t))) \,dt}_{= 1}+ \int_{1/2}^1 \underbrace{\lambda(X_{L}(z)) - L_t(z)}_{=0}dt \\
& = \tau.
\end{align*}
\end{proof}

The key compactness statement, which is very similar to \cite[Proposition 1.1]{AbbondandoloMerry2014a}, is the following result.

\begin{thm}
\label{thm:estimate}
Fix $\mathrm{J} \in \mathcal{J}$. Then for any $A\in \R$ there is a number $C=C(A)$, such that for every solution $\widetilde{u} = (u,\eta,\zeta)$ of the Floer equation \eqref{eq:floer} with
\[
| \A_{\widetilde{L}^c}(\widetilde{u}(s))| \leq A \qquad \text{for all }s\in \R,
\]
one has
\[
\|\eta\|_{L^{\infty}(\R\times S^1)} \leq C \qquad \|\zeta\|_{L^{\infty}(\R\times S^1)} \leq C, \qquad u(\R \times S^1) \subset ( M_1 \cup_{ \Sigma } ( \Sigma \times (1,r_0]).
\]

\end{thm}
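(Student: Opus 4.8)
The plan is to establish the three bounds in stages, exploiting the special product structure of the Floer equation \eqref{eq:floer} and the contact-type behaviour at infinity. First I would handle the $(\eta,\zeta)$-component. The second and third equations in \eqref{eq:floer} form a linear inhomogeneous system for $(\eta,\zeta)$ whose inhomogeneity involves only $\kappa(t)H(u)$, and $|H| \le \tfrac{9}{16}$ everywhere by \eqref{eq:hamiltonian_H}. Since the action bound $|\A_{\widetilde{L}^c}(\widetilde{u}(s))| \le A$ controls the asymptotic values of $\eta$ at $s \to \pm\infty$ (these converge to critical points, where $\sigma = 0$ and $\tau$ takes a value in $\spec(\varphi_1) \cap [-A,A]$ by Lemma \ref{lem:relating_spectrums}), one can propagate this to an $L^\infty$-bound on $\eta$ and $\zeta$ over all of $\R \times S^1$. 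Concretely, I would combine the energy identity $\E(\widetilde{u}) = \sup_s \A_{\widetilde{L}^c} - \inf_s \A_{\widetilde{L}^c} \le 2A$ with the linear ODE-type estimates for the $(\eta,\zeta)$-system (the damping term $+\zeta$ in the third equation and the bounded forcing give exponential control), exactly as in \cite[Proposition 1.1]{AbbondandoloMerry2014a}.

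Next comes the confinement statement $u(\R \times S^1) \subset M_1 \cup_\Sigma (\Sigma \times (1,r_0])$, which is the heart of the theorem. Here I would use the standard maximum-principle / monotonicity argument on the symplectisation end $\Sigma \times (r_0,+\infty)$: because $\mathrm{J}$ is of contact type at infinity \eqref{ct}, the function $r \circ u$ (where defined) is subharmonic away from the region where the Hamiltonian terms act, and those Hamiltonian terms are arranged to vanish or be harmless for large $r$ — note $\beta_c$ cuts off $L^c$, and $X_H$ is supported where $r < \tfrac{7}{4}$ by \eqref{eq:hamiltonian_H}, so choosing $r_0 > 2$ ensures $X_H$ and $X_{L^c}$ both vanish on $\Sigma \times (r_0,+\infty)$. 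Hence on that end the Floer map $u$ is honestly $\widetilde{J}$-holomorphic with contact-type $\widetilde{J}$, and $r \circ u$ satisfies a weak maximum principle. Since the asymptotic orbits live in $\Sigma \times (e^{-c},e^c)$ by Lemma \ref{lem:where_the_orbits_live} — hence well inside $\{r \le r_0\}$ once $r_0$ is taken larger than $e^c$, or more honestly since $r_0$ is fixed first and one argues the orbits have $r$-coordinate $1$ — the maximum principle forbids $u$ from entering $\{r > r_0\}$. I would spell this out via the standard argument: if $\max (r \circ u) > r_0$ were attained at an interior point, subharmonicity plus the open mapping principle gives a contradiction; the finite-energy and asymptotic conditions rule out the maximum escaping to $s = \pm\infty$.

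The main obstacle, and the reason this is not entirely routine, is that $\widetilde{L}^c$ is \textbf{not coercive}: the term $\tau\kappa(t)H(x,r)$ is linear in $\tau$, so a priori $\eta = \tau \circ u$ could be large, and then the term $\eta\kappa(t)X_H(u)$ in the first Floer equation is not obviously controlled — this is precisely why the $\eta$-bound must be obtained \emph{before} the confinement statement, and why the argument is coupled rather than a simple two-step decoupling. The delicate point is to get the $\eta$-bound using only the action bound (not an a priori $C^0$-bound on $u$), which forces one to use the precise form of the action functional and the fact that on the relevant region $H$ is bounded; this is the content transplanted from \cite{AbbondandoloMerry2014a}, and I would follow that argument closely, checking that the minor differences in the present setup (the extra cutoff $\beta_c$ and the specific $H$ from \eqref{eq:hamiltonian_H}) do not affect the estimates. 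Once $\|\eta\|_{L^\infty}$ is in hand, the term $\eta\kappa(t)X_H(u)$ is a bounded perturbation supported in $\{r < \tfrac{7}{4}\}$, and the maximum principle on the end goes through as described.
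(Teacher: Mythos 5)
Your overall strategy — derive $L^\infty$-bounds on $(\eta,\zeta)$ and a maximum-principle confinement for $u$, both following \cite{AbbondandoloMerry2014a}, while minding the non-coercivity of $\widetilde{L}^c$ — is the same strategy as the paper's, and you correctly identify the $\eta$-bound as the delicate point. However, two parts of your argument are off.

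First, your description of the mechanism for bounding $(\eta,\zeta)$ is not accurate. You propose ``linear ODE-type estimates'' with ``exponential control from damping'' and ``propagation from the asymptotic critical values.'' The damping term $+\zeta$ in \eqref{eq:floer} is only used to show that the mean $\zeta_\circ(s) := \int_{S^1}\zeta(s,t)\,dt$ vanishes identically (it satisfies $\zeta_\circ'+\zeta_\circ=0$ with $\zeta_\circ'\in L^2(\R)$). The actual route to the $L^2(S^1)$-bound on $\zeta(s,\cdot)$ for \emph{every} $s$ uses Chebyshev's inequality on the energy density to find, within distance $1$ of any $s$, a slice $s_0$ where $\|\nabla\A_{\widetilde{L}^c}(\widetilde{u}(s_0))\|_{L^2}\le\sqrt{A}$, then the Poincar\'e inequality (valid because $\zeta(s_0,\cdot)$ has zero mean) on that slice, then integration from $s_0$ to $s$. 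The $\tau$-bound is extracted from the structure of the action functional itself on a loop with small gradient and bounded action (Lemmas \ref{lem:L_infty_1} and \ref{lem:L_infty_2}): one needs the inequality $\lambda(X_H)-H\geq\tfrac{3}{4}$ near $\Sigma$, not any propagation argument. Finally, the passage from $L^2(S^1)$-per-slice bounds to $L^\infty(\R\times S^1)$ is an elliptic bootstrap via the Calder\'on--Zygmund estimate for the Cauchy--Riemann operator $\bar\partial(\eta+i\zeta)=\kappa H(u)+i\zeta$, not any ODE estimate. ``Propagation from the asymptotics'' would not work here: a priori a flow line could pass through intermediate loops with large $\tau$ before converging, and ruling this out is precisely the content of Lemma \ref{lem:L_infty_2}.

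Second, your logical ordering is reversed. You argue that the $\eta$-bound must be obtained \emph{before} the confinement, because $\eta\kappa(t)X_H(u)$ is otherwise uncontrolled. But on the end $\Sigma\times(r_0,+\infty)$ where the maximum principle is applied, $X_H\equiv 0$ (since $H$ is constant there by \eqref{eq:hamiltonian_H}) and $X_{L^c}\equiv 0$ (since $\beta_c$ vanishes), so the $u$-equation there is purely $\widetilde{J}$-holomorphic with contact-type $\widetilde{J}$, and the maximum principle needs no information about $\eta$ whatsoever. It is the other way around: the $\tau$-estimate of Lemma \ref{lem:L_infty_2} explicitly \emph{assumes} $z(S^1)\subset M_1\cup_\Sigma(\Sigma\times(1,r_0])$, because it uses the bound $N=r_0\|\lambda\|_{L^\infty(M_1)}$ on a compact region. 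So confinement is logically prior to (or independent from) the $\eta$-bound, not subsequent to it.
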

The proof is deferred to Section \ref{sec:proof_of_thm_estimate} below. 
\begin{defn}
We say that an admissible path $\widehat{\varphi} = \{ \varphi_t \}_{0 \le t \le 1}$ is \textbf{non-degenerate} if the action functional $\A_{\widetilde{L}^c}$ is a Morse function for some (and hence any) $c > C(\widehat{\varphi})$.  By combining \cite[Theorem 1.4]{AlbersMerry2013a} and \cite[Section 6]{AbbondandoloMerry2014a} we see that a generic admissible path is non-degenerate.
\end{defn}

As mentioned earlier, the following theorem can be proved in exactly the same way as \cite{AbbondandoloMerry2014a}. 
\begin{thm}[\cite{AbbondandoloMerry2014a}]
\label{thm:transversality}
The set $\mathcal{J}$ introduced in Definition \ref{defn:tau_dep_acs} is rich enough for transversality to hold: there exists a comeagre subset $\mathcal{J}_{\mathrm{reg}}(\widetilde{L}^c) \subset \mathcal{J}$ such that for $ \mathrm{J } \in \mathcal{J}_{\mathrm{reg}}(\widetilde{L}^c)$ the linearisation of the problem \eqref{eq:floer} is onto.
\end{thm}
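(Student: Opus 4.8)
The plan is to run the standard Sard--Smale transversality scheme, following \cite[Section 6]{AbbondandoloMerry2014a} essentially verbatim; the only non-standard feature is that the perturbation class $\mathcal{J}$ is restricted (the split form \eqref{special form of acs}, contact type at infinity \eqref{ct}, $\tau$-dependence but $\sigma$-independence, and the uniform bounds \eqref{eq:acs_bounded}), so the crucial point is to verify that this class is nonetheless large enough to kill cokernels. First I would fix two asymptotic critical points $\widetilde{z}_\pm \in \crit \A_{\widetilde{L}^c}$ (all of them non-degenerate, once the admissible path $\widehat{\varphi}$ is chosen non-degenerate) and, after choosing suitable Sobolev completions, assemble the universal moduli space
\[
  \mathcal{M}(\widetilde{z}_-,\widetilde{z}_+;\mathcal{J}^{\varepsilon}) := \left\{ (\widetilde{u},\mathrm{J}) \mid \mathrm{J} \in \mathcal{J}^{\varepsilon}, \ \widetilde{u} \text{ solves } \eqref{eq:floer}, \ \lim_{s\to\pm\infty}\widetilde{u}(s,\cdot) = \widetilde{z}_\pm \right\},
\]
where $\mathcal{J}^{\varepsilon}$ is a Banach manifold of $C^{\varepsilon}$-perturbations (Floer's trick) of a fixed reference element of $\mathcal{J}$, taken \emph{inside} the class $\mathcal{J}$. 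The defining section is $\mathcal{F}(\widetilde{u},\mathrm{J}) = \partial_s\widetilde{u} + \nabla\A_{\widetilde{L}^c}(\widetilde{u})$, and the goal is to show its vertical differential, $(\xi,Y) \mapsto D_{\widetilde{u}}\xi + \widehat{Y}(\widetilde{u})$, is surjective at every zero. Here $D_{\widetilde{u}}$ is the usual Fredholm linearised Floer operator (Fredholm precisely because the asymptotics are non-degenerate), $Y$ ranges over admissible infinitesimal variations of $\mathrm{J}$ — that is, $\tau$-dependent sections of $\mathrm{End}(TM)$ anticommuting with $J$ and symmetric for the metric, vanishing on $\Sigma\times(r_0,+\infty)$ — and $\widehat{Y}(\widetilde{u})$ has $M$-component $Y_t(u,\eta)\big(\partial_t u - \eta\kappa(t)X_H(u) - X_{L^c_t}(u)\big)$ and vanishing $T^*\R$-component.

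Surjectivity is equivalent to: any $\zeta$ orthogonal in $L^2$ to the range of this differential is zero. Such a $\zeta$ lies in $\Ker D_{\widetilde{u}}^*$, hence is smooth by elliptic regularity and, by Aronszajn's unique continuation, vanishes everywhere as soon as it vanishes on an open set; in addition $\int \langle \zeta, \widehat{Y}(\widetilde{u})\rangle = 0$ for every admissible $Y$. The heart of the argument is a somewhere-injectivity statement: since $\widetilde{u}$ is non-constant ($\widetilde{z}_- \ne \widetilde{z}_+$) and, by Theorem \ref{thm:estimate}, its $M$-component $u$ lands in the compact region $M_1 \cup_\Sigma (\Sigma\times(1,r_0])$ where $J$ is entirely unconstrained, the usual argument (as in \cite{AbbondandoloMerry2014a}, modelled on the Floer--Hofer--Salamon/Salamon-lectures doubling trick) produces a point $(s_0,t_0)$ with $\partial_s u(s_0,t_0) \ne 0$, with $u(s_0,t_0)$ disjoint from the asymptotic orbits, and with $u(s,t_0) \ne u(s_0,t_0)$ for $s$ near but $\ne s_0$. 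One then chooses $Y$ supported near $u(s_0,t_0)$ in $M$ and, exploiting that elements of $\mathcal{J}$ may depend arbitrarily on $\tau$, also localised near $\tau = \eta(s_0,t_0)$; the pairing condition forces the $M$-component of $\zeta$ to vanish near $(s_0,t_0)$, hence everywhere by unique continuation, and since the $T^*\R$-part of the Floer equation (and of $D_{\widetilde{u}}^*$) is a decoupled injective constant-coefficient operator once the $M$-component is gone, the remaining components of $\zeta$ vanish too. This makes $\mathcal{M}(\widetilde{z}_-,\widetilde{z}_+;\mathcal{J}^{\varepsilon})$ a Banach manifold whose projection to $\mathcal{J}^{\varepsilon}$ is Fredholm of index $\ind D_{\widetilde{u}}$; Sard--Smale gives a comeagre set of regular values, and intersecting over the countably many pairs of critical points, together with a Taubes-type argument to pass from the $C^{\varepsilon}$-topology back to the $C^{\infty}$-topology on $\mathcal{J}$, yields the desired comeagre $\mathcal{J}_{\mathrm{reg}}(\widetilde{L}^c) \subset \mathcal{J}$.

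The main obstacle is exactly the somewhere-injectivity/localisation step: one must make sure that insisting on the trivial split extension \eqref{special form of acs} and on contact type at infinity \eqref{ct} does not remove the freedom needed to perturb away a cokernel. Two facts rescue this, and they are what distinguishes the present setting from textbook Hamiltonian-Floer transversality: (i) Theorem \ref{thm:estimate} confines all relevant Floer cylinders to the compact region where no constraint on $J$ is imposed, so \eqref{ct} is never felt along solutions; and (ii) $\mathcal{J}$ allows arbitrary dependence on $\tau\in\R$, which compensates for the absence of fibre-direction perturbations — and since the $T^*\R$-component of \eqref{eq:floer} is linear and decoupled, no fibre-direction perturbation is needed in the first place. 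Once these points are in place, the proof of \cite[Section 6]{AbbondandoloMerry2014a} applies word for word.
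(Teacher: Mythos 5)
The paper provides no proof of this theorem: it simply cites \cite[Section 6]{AbbondandoloMerry2014a} and asserts that the argument there carries through verbatim. Your sketch is a faithful reconstruction of that cited argument and correctly isolates the two points that make the restricted class $\mathcal{J}$ workable: Theorem \ref{thm:estimate} confines Floer cylinders to the compact part of $M$, where the contact-type condition \eqref{ct} imposes no constraint, and the unrestricted $\tau$-dependence of $J$ compensates for the absence of perturbations in the $T^*\R$-directions.

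One step should be phrased more carefully, though. You say the $T^*\R$-part of $D_{\widetilde u}^*$ is ``a decoupled injective constant-coefficient operator once the $M$-component is gone.'' The decoupled asymptotic operator acting on loops $(\hat\eta,\hat\zeta)\colon S^1\to\R^2$, namely $A(\hat\eta,\hat\zeta)=(-\partial_t\hat\zeta,\ \partial_t\hat\eta-\hat\zeta)$, has a one-dimensional kernel (constant $\hat\eta$, vanishing $\hat\zeta$), so the decoupled $T^*\R$-problem is degenerate at the asymptotes and its adjoint has no a priori reason to be injective; nondegeneracy of $\crit\A_{\widetilde L^c}$ is recovered only through the coupling to the $M$-variables. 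What actually finishes the argument — and what you should appeal to — is that the $M$-component of $D_{\widetilde u}^*\zeta=0$ contains, even after setting $\zeta_M\equiv 0$, the coupling term $\kappa(t)\,\nabla H(u)\,\zeta_\eta$ (the formal adjoint of the $-\kappa(t)\,dH(u)[\xi]$ term in the linearised $\eta$-equation). Its vanishing forces $\zeta_\eta\equiv 0$ on $\R\times(0,\tfrac12)$ wherever $\nabla H(u)\neq 0$; from there the CR-type system for $(\zeta_\eta,\zeta_\sigma)$ and unique continuation — applied to the full $\zeta$, not to $\zeta_M$ alone, since the equation for $\zeta_M$ contains $\zeta_\eta$ — complete the proof. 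This is the same route as in \cite[Section 6]{AbbondandoloMerry2014a}; your reconstruction is essentially right but needs this extra care at that last step.
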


Theorems \ref{thm:estimate} and \ref{thm:transversality} imply that for a non-degenerate admissible path we can speak of the filtered  \textbf{Floer homology} $\HF^{(a,b)}_*(\A_{ \widetilde{L}^c})$ for $a ,b \in (- \infty, + \infty] \setminus \spec(\A_{ \widetilde{L}^c})$, $a < b$. This is the homology of a chain complex whose generators are the 1-periodic orbits $\widetilde{z}$ of $X_{\widetilde{L}^c}$ with action $ \A_{ \widetilde{L}^c}(\widetilde{z}) \in (a,b)$. The boundary operator is defined by counting ``rigid" negative gradient flow lines $ \widetilde{u}$ of $\A_{ \widetilde{L}^c}$ (i.e. Fredholm index one) connecting different 1-periodic orbits of $X_{\widetilde{L}^c}$. This  Floer homology depends only on the  admissible path $\widehat{\varphi}$ and the filling $(M_1, d \lambda_1)$ of our contact manifold $(\Sigma, \alpha)$. We will use the shorthand  notation 
\[
\HF^{(a,b)}_*( \widehat{\varphi}) := \HF^{(a,b)}_*( \A_{ \widetilde{L}^c})
\]
to denote this Floer theory.  We abbreviate $\HF^a_*(\widehat{\varphi}) : = \HF^{(- \infty, a)}_*(\widehat{\varphi})$ and $\HF_*( \widehat{\varphi}) : = \HF^{ + \infty}_*(\widehat{\varphi})$, and we always tacitly assume that the endpoints of the action windows do not belong to $\spec( \A_{\widetilde{L}^c})$, even if this is not explicitly said. \\

The filtered Floer homology is stable under sufficiently small perturbations. This allows us to extend the definition of $\HF_*^{(a,b)}( \widehat{\varphi})$ to the case where the admissible $ \widehat{\varphi}$ is not necessarily non-degenerate. Namely, after making a $C^{\infty}$-small perturbation, one obtains a new admissible path $\widehat{\varphi}'$ that is non-degenerate.  The aforementioned stability property implies that one can unambiguously define
 \begin{equation}
 \label{eq:degnerate_case}
 \HF_*^{(a,b)}( \widehat{\varphi}) := \HF_*^{(a,b)}( \widehat{\varphi}').
 \end{equation}

Given $ a < b$ and $a' < b'$ such that $a < a'$ and $b < b'$, there is a well defined map $\HF_*^{(a,b)}(\widehat{\varphi}) \to \HF_*^{(a',b')}( \widehat{\varphi}) $. We now use these maps to define the \textbf{positive growth rate}. 

\begin{defn}
\label{defn:growth_rate}
Let $a_0$ denote any finite real number not belonging to $\spec( \A_{ \widetilde{L}^c})$, and let $\{a_k \}_{ k = 0 ,1 ,2 ,\dots }$ be any sequence of real numbers $a_0 < a_1 <a_2 < \dots$ such that $a_k \to \infty$ and such that $a_k \notin \mathrm{Spec}(\A_{\widetilde{L}^c})$. We define the \textbf{positive growth rate} of $\widehat{\varphi}$ to be 
\[
\Gamma_+(\widehat{\varphi}) := \limsup_{k \to +\infty} \frac{\log \left( \dim \left( \mathrm{im} \left[ \HF^{(a_0, a_k)}_*( \widehat{\varphi}) \to \HF_*^{(a_0, + \infty)}(\widehat{\varphi}) \right] \right) \right) }{\log k}.
\]
The number $\Gamma_+( \widehat{\varphi})$ takes values in $\{ - \infty\} \cup [0, + \infty]$  and does not depend on the choice of $a_0$ and the $a_k$. The word ``positive'' is a slight misnomer (as $a_0$ does not need to be positive); nevertheless the motivation for the choice of name will shortly become clear. 
\end{defn}

Theorem \ref{thm:actual_theorem} is stated in terms of the \textbf{Rabinowitz Floer homology} of the Liouville domain $(M_1,\lambda_1)$. Rabinowitz Floer homology was discovered by Cieliebak and Frauenfelder \cite{CieliebakFrauenfelder2009}, and has since generated many applications. We refer the reader to the survey paper \cite{AlbersFrauenfelder2012a} and the references therein for more information. We will not define Rabinowitz Floer homology here, but instead list the properties that we need:

\begin{enumerate}
  \item The Rabinowitz Floer homology is an invariant of a Liouville domain $(M_1, \lambda_1)$. The underlying chain complex is  a free $\Z_2$-module generated by closed orbits in $\Sigma := \partial M_1$ of the Reeb vector field $R$ arising from the contact form $ \alpha := \lambda_1|_{\Sigma} $, together with their inverse parametrisations, and the points of $\Sigma$, interpreted as constant loops. 
  \item The Rabinowitz Floer homology $\RFH_*(M_1,\lambda_1)$ is equipped with an $\R$-filtration, where the subcomplex $\RFH_*^{(a,b)}(M_1, \lambda_1)$  is generated by those orbits with period in $(a,b)$. 
  \item The \textbf{positive} Rabinowitz Floer homology is defined as
  \[
  \RFH_*^+(M_1,\lambda_1) := \RFH^{(-\varepsilon , + \infty)}_*(M_1, \lambda_1),
  \]
  where $ \varepsilon $ is any sufficiently small positive number. 
  \item \cite[Proposition 1.4]{CieliebakFrauenfelderOancea2010} There is a long exact sequence relating $\RFH_*(M_1, \lambda_1)$ with the \textbf{symplectic homology} of $(M_1, \lambda_1)$:
  \[
  \dots \to \mathrm{H}^{-*+n}(M_1, \Sigma ;\Z_2) \to \mathrm{SH}_*(M_1, \lambda_1) \to \RFH_*(M_1, \lambda_1) \to \mathrm{H}^{-*+ 1 +n}(M_1, \Sigma) \to \dots  
  \] 
\item The \textbf{positive growth rate} $\Gamma_+(M_1, \lambda_1)$ is defined in a similar way to Definition \ref{defn:growth_rate}, and was first introduced by Weigel \cite{Wiegel2013}. Namely, one chooses an increasing  sequence $\{ a_k \}_{k \in \N} $ of positive real numbers such that $a_k \to \infty$ and defines 
\[
\Gamma_+(M_1, \lambda_1) := \limsup_{k \to +\infty} \frac{\log \left( \dim \left( \mathrm{im} \left[ \RFH^{(-\varepsilon, a_k)}_*( M_1, \lambda_1) \to \RFH_*^+(M_1, \lambda_1) \right] \right) \right) }{\log k}.
\]
This number takes values in $\{ - \infty\} \cup [0, +\infty]$. It follows from a result of McLean \cite{McLean2011} and the long exact sequence above that the positive growth rate is invariant under Liouville isomorphism.
\item  Let $(Q,g)$ denote a closed Riemannian manifold, and let $D_g^*Q$ denote the unit disk bundle. Let $\lambda$ denote the canonical Liouville 1-form and $\lambda_1 := \lambda|_{D^*_gQ} $. Then $(D^*_g Q, \lambda_1)$ is a Liouville domain. It follows from \cite{Viterbo1996,SalamonWeber2006,AbbondandoloSchwarz2006} and \cite{Gromov1978} that if the function  $ k \mapsto \mathrm{rank\,H}_k(\Lambda(Q); \Z_2)$ is asymptotically unbounded then one has  $\Gamma_+(D^*_gQ, \lambda_1) > 1$. 
\item Combining the last two points, we see that if $Q$ is a closed manifold such that  he function  $ k \mapsto \mathrm{rank\,H}_k(\Lambda(Q); \Z_2)$ is asymptotically unbounded then for any fibrewise star-shaped hypersurface $\Sigma \subset T^*Q$, if $D(\Sigma)$ denotes the compact region bounded by $\Sigma$ then $\Gamma_+(D(\Sigma), \lambda|_{D(\Sigma)}) > 1$. 
\end{enumerate}
The reason we are interested in Rabinowitz Floer homology is the following result, which is the main theorem in \cite{AbbondandoloMerry2014a}.
\begin{thm}
\label{thm:HF=RFH}
Given any non-degenerate admissible path $\widehat{\varphi}$, there is a canonical isomorphism between $\HF_*( \widehat{\varphi})$ and the Rabinowitz Floer homology of the pair $ (M_1, \lambda_1)$:
\[
\HF_*( \widehat{\varphi})  \cong \RFH_*(M_1, \lambda_1).
\]
Moreover one has
\begin{equation}
\label{eq:growth_rates_equal}
\Gamma_+( \widehat{\varphi}) = \Gamma_+(M_1, \lambda_1).
\end{equation}
\end{thm}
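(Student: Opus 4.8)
The plan is to deduce the theorem from the special case of the constant path and a continuation argument, following \cite{AbbondandoloMerry2014a} essentially verbatim. First I would treat the base case $\varphi = \mathrm{Id}$. For the constant path $\widehat{\mathrm{Id}} = \{\mathrm{Id}\}_{t\in[0,1]}$ one has $\rho_t \equiv 1$, hence $l_t \equiv 0$, hence $L \equiv 0$ and $L^c \equiv 0$, so that $\widetilde{L}^c$ reduces to the modified Hamiltonian $\widetilde{H}^{\kappa}$ of \eqref{eq:widetilde_H_kappa}. The associated action functional $\A_{\widetilde{H}^{\kappa}}$ is, after the standard reparametrisation in the $T^*\R$-variables, exactly the ($\kappa$-rescaled) Rabinowitz action functional on the extended phase space in the sense of \cite{CieliebakFrauenfelder2009, BaeFrauenfelder2010}: its critical points are the pairs (closed Reeb orbit, period) together with the constant loops, and by Lemma \ref{lem:relating_spectrums} applied with $\varphi = \mathrm{Id}$ the action of such a critical point equals the period of the underlying Reeb orbit. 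One therefore obtains $\HF_*(\widehat{\mathrm{Id}}) \cong \RFH_*(M_1,\lambda_1)$, with the $\R$-action filtration on the left matching the period filtration on the right.

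For a general admissible path $\widehat{\varphi}$ terminating at a contactomorphism $\varphi$ which is contact-isotopic to the identity, I would connect $\widehat{\varphi}$ to $\widehat{\mathrm{Id}}$ through a smooth, compactly parametrised family $\{\widehat{\varphi}^{\,r}\}_{r\in[0,1]}$ of admissible paths, obtained by reparametrising a contact isotopy from $\varphi$ to $\mathrm{Id}$ and inserting the cut-off $\chi$ as in the Remark following the definition of admissible path. Choosing $c$ larger than $\sup_{r\in[0,1]} C(\widehat{\varphi}^{\,r})$, which is finite by continuity of $r \mapsto C(\widehat{\varphi}^{\,r})$ in \eqref{eq:C_phi}, Lemma \ref{lem:where_the_orbits_live} applies uniformly along the family, and the estimate of Theorem \ref{thm:estimate}, applied to the $s$-dependent interpolating Hamiltonians (which again form a compact family of the required type), furnishes $L^{\infty}$-bounds on Floer cylinders depending only on the action bound $A$ and on $\mathrm{J}$. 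This is precisely the input needed to define the continuation morphisms between the Floer complexes of $\widetilde{L}^c$ for $\widehat{\varphi}$ and for $\widehat{\mathrm{Id}}$; the usual homotopy-of-homotopies argument then shows that these morphisms are mutually inverse isomorphisms on total Floer homology. Composing with the base case gives $\HF_*(\widehat{\varphi}) \cong \HF_*(\widehat{\mathrm{Id}}) \cong \RFH_*(M_1,\lambda_1)$, which is the first assertion.

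For the growth-rate equality \eqref{eq:growth_rates_equal}, the point is that the continuation morphisms above need not respect the action filtration, but they shift it by at most a constant $\delta$ depending only on the chosen family $\{\widehat{\varphi}^{\,r}\}$ (the standard estimate bounding the change of action along an $s$-dependent negative gradient flow line by $\int |\partial_s \widetilde{L}^c_r|$). Hence for every admissible window one obtains a factorisation $\HF^{(a_0-\delta,\,a-\delta)}_*(\widehat{\mathrm{Id}}) \to \HF^{(a_0,\,a)}_*(\widehat{\varphi}) \to \HF^{(a_0+\delta,\,a+\delta)}_*(\widehat{\mathrm{Id}})$ compatible with the maps into the respective total homologies. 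Since $\Gamma_+$ is a $\limsup$ over $a_k \to +\infty$ divided by $\log k$, and is independent of the basepoint $a_0$ by Definition \ref{defn:growth_rate}, the bounded shift of the endpoints is invisible to it, so $\Gamma_+(\widehat{\varphi}) = \Gamma_+(\widehat{\mathrm{Id}})$. Finally, taking $a_0 = -\varepsilon$ for small $\varepsilon > 0$ in Definition \ref{defn:growth_rate} and using the matching of filtrations from the base case identifies $\HF^{(-\varepsilon,a_k)}_*(\widehat{\mathrm{Id}}) \to \HF^{(-\varepsilon,+\infty)}_*(\widehat{\mathrm{Id}})$ with $\RFH^{(-\varepsilon,a_k)}_*(M_1,\lambda_1) \to \RFH^+_*(M_1,\lambda_1)$, whence $\Gamma_+(\widehat{\mathrm{Id}}) = \Gamma_+(M_1,\lambda_1)$.

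The main obstacle is the non-coercivity of $\widetilde{L}^c$: without uniform $L^{\infty}$-control on Floer cylinders neither the boundary operator, nor the continuation morphisms used above, nor their chain-homotopies would be defined. This is exactly the content of Theorem \ref{thm:estimate}, proved in Section \ref{sec:proof_of_thm_estimate}, together with the transversality statement Theorem \ref{thm:transversality}. Granting these two inputs, every remaining step is a routine instance of the standard invariance machinery for filtered Floer homology, which is why the theorem can be established in exactly the same way as \cite{AbbondandoloMerry2014a}.
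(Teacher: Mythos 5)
The paper does not prove Theorem \ref{thm:HF=RFH}; it attributes it verbatim to \cite{AbbondandoloMerry2014a} (``which is the main theorem in \cite{AbbondandoloMerry2014a}'') and gives no argument of its own. Your sketch is therefore being compared against a citation, not against an internal proof, and as a reconstruction of the cited argument it is sound and consistent with what the paper intends. The two load-bearing ingredients you correctly identify are (i) the base case $\widehat{\mathrm{Id}}$, where $L\equiv 0$ reduces $\widetilde L^c$ to the cut-off Rabinowitz-type Hamiltonian $\widetilde H^{\kappa}$ on the extended phase space, so that critical points are closed Reeb orbits (and constants) with action equal to period by Lemma \ref{lem:relating_spectrums}, and (ii) a continuation argument along a compactly parametrised family of admissible paths, made possible by the uniform $L^{\infty}$-bounds of Theorem \ref{thm:estimate} and the transversality of Theorem \ref{thm:transversality}. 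The growth-rate equality \eqref{eq:growth_rates_equal} via a bounded filtration shift and the $a_0$-independence asserted in Definition \ref{defn:growth_rate} is the standard interleaving argument and is correct.

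Two small points worth flagging, neither a gap. First, the constant path $\widehat{\mathrm{Id}}$ is maximally degenerate (the whole of $\Sigma$ appears as a Morse--Bott family of constant orbits), so the base-case identification $\HF_*(\widehat{\mathrm{Id}})\cong\RFH_*(M_1,\lambda_1)$ already lives in the Morse--Bott / perturbed regime handled by \eqref{eq:degnerate_case} and is itself the nontrivial content of \cite{AbbondandoloMerry2014a}, rather than a triviality one proves before applying that reference. Second, Theorem \ref{thm:estimate} is stated only for $s$-independent data; extending it to the $s$-dependent continuation equation requires rerunning Section \ref{sec:proof_of_thm_estimate} with the extra $\partial_s\widetilde L^c$ terms, which is routine but is an additional step, not a literal citation. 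You acknowledge both implicitly; making them explicit would tighten the write-up.
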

The following result is the main one of this paper. Theorem \ref{thm:actual_theorem} is an immediate consequence of it, Theorem \ref{thm:HF=RFH} and point (7) above. 

\begin{thm}
\label{thm:infinitely_many}
Let $(\Sigma, \alpha)$ denote a Liouville fillable contact manifold. Suppose $\varphi$ is a strict contactomorphism with the property that there are only finitely many invariant Reeb orbits. Then if $(M_1,\lambda_1)$ is any Liouville filling of $(\Sigma, \alpha)$ one has
\[
\Gamma_+(M_1, \lambda_1) \le 1.
\]
\end{thm}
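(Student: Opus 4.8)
The plan is to prove the estimate on the Floer side of Theorem~\ref{thm:HF=RFH} and then transport it: we show that the hypothesis on $\varphi$ forces the filtered Floer homology of an admissible path $\widehat{\varphi}$ terminating at $\varphi$ to grow at most linearly in the action parameter, which gives $\Gamma_+(\widehat{\varphi})\le 1$, and then we use $\Gamma_+(\widehat{\varphi})=\Gamma_+(M_1,\lambda_1)$.

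First I would fix an admissible path $\widehat{\varphi}=\{\varphi_t\}$ terminating at $\varphi$ and a constant $c>C(\widehat{\varphi})$. Combining Theorem~\ref{thm:HF=RFH} with the definition \eqref{eq:degnerate_case} of the degenerate case — which cannot be avoided here, since a strict $\varphi$ with genuine invariant Reeb orbits makes $\A_{\widetilde{L}^c}$ non-Morse — gives $\Gamma_+(\widehat{\varphi})=\Gamma_+(M_1,\lambda_1)$, so it suffices to bound $\Gamma_+(\widehat{\varphi})$. Next I would describe $\spec(\A_{\widetilde{L}^c})$, which equals $\spec(\varphi)$ by Lemma~\ref{lem:relating_spectrums}. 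Since $\varphi$ is strict, each point of a closed invariant Reeb orbit $\gamma$ of minimal period $T$ is a translated point for precisely the $\tau\in-\tau_0+T\Z$, where $\tau_0$ denotes the shift of $\varphi$ along $\gamma$, while a non-closed invariant Reeb orbit contributes a single value to $\spec(\varphi)$. Hence finitely many invariant Reeb orbits force $\spec(\varphi)$ to be a finite union of arithmetic progressions; in particular $\spec(\varphi)$ is discrete, and the number of its points in any interval $(a_0,a)$ is at most $C_1(a-a_0)+C_2$, with $C_1,C_2$ depending only on the number of invariant Reeb orbits and their periods.

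The core of the argument is the linear bound $\dim\HF^{(a_0,a)}_*(\widehat{\varphi})\le C_3\,a+C_4$ for $a_0,a\notin\spec(\varphi)$, $a>a_0$. By \eqref{eq:degnerate_case} and the stability of filtered Floer homology, $\HF^{(a_0,a)}_*(\widehat{\varphi})=\HF^{(a_0,a)}_*(\widehat{\varphi}')$ for any sufficiently $C^\infty$-small non-degenerate admissible perturbation $\widehat{\varphi}'$ of $\widehat{\varphi}$ (how small may depend on $a$, which is fixed here). By Lemmas~\ref{lem:tp_are_fp} and~\ref{lem:where_the_orbits_live} the $1$-periodic orbits of $X_{\widetilde{L}^c}$ occur in families, one for each pair consisting of an invariant Reeb orbit $\gamma$ and an admissible value $\tau\in\spec(\varphi)$, the family being parametrised by $\gamma$ and having action exactly $\tau$ (by the computation in the proof of Lemma~\ref{lem:relating_spectrums}); hence the number of families meeting $(a_0,a)$ is at most $m\bigl(C_1(a-a_0)+C_2\bigr)$, where $m$ is the number of invariant Reeb orbits. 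I would then choose $\widehat{\varphi}'$ so small that every generator of its Floer complex lies in a prescribed small neighbourhood of these families — a compactness argument, using Lemma~\ref{lem:where_the_orbits_live} to confine the $r$, $\tau$ and $\sigma$ coordinates — and, near each invariant Reeb orbit, equal to a standard Morse--Bott perturbation, which is possible within the class of admissible paths by the perturbation theory of \cite{AlbersMerry2013a, AbbondandoloMerry2014a}. Then the Floer complex of $\widehat{\varphi}'$ has at most a fixed multiple of $m\bigl(C_1(a-a_0)+C_2\bigr)$ generators with action in $(a_0,a)$, and since $\dim\HF^{(a_0,a)}_*(\widehat{\varphi}')$ is bounded by that number of generators, the bound $\dim\HF^{(a_0,a)}_*(\widehat{\varphi})\le C_3\,a+C_4$ follows. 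Finally, taking $a_k\in(k,k+1)\setminus\spec(\varphi)$ (possible because $\spec(\varphi)$ is discrete) and using $\dim\,\mathrm{im}\bigl[\HF^{(a_0,a_k)}_*(\widehat{\varphi})\to\HF^{(a_0,+\infty)}_*(\widehat{\varphi})\bigr]\le\dim\HF^{(a_0,a_k)}_*(\widehat{\varphi})\le C_3(k+1)+C_4$, Definition~\ref{defn:growth_rate} gives $\Gamma_+(\widehat{\varphi})\le\limsup_{k}\frac{\log\bigl(C_3(k+1)+C_4\bigr)}{\log k}=1$, whence $\Gamma_+(M_1,\lambda_1)\le 1$.

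The step I expect to be the main obstacle is the uniform control in this estimate: one needs both that an arbitrarily small perturbation produces no critical points outside a small neighbourhood of the unperturbed families, and — more delicately — that within the fairly rigid class of admissible paths the perturbation can be arranged to split each Morse--Bott family into a bounded number of non-degenerate orbits. This is exactly where the perturbation and transversality theory of \cite{AlbersMerry2013a, AbbondandoloMerry2014a} is needed, and it is also why the argument does not obviously reach the full statement of Conjecture~\ref{conj:gromoll_meyer_leafwise}, where the translated points of a general compactly supported Hamiltonian diffeomorphism need not organise into finitely many Reeb orbits. A minor additional point: a non-closed invariant Reeb orbit yields a non-compact Morse--Bott family, but such a family still contributes only a bounded number of non-degenerate orbits after perturbation, so the linear estimate is unaffected.
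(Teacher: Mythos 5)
The overall reduction is correct: you correctly note that $\spec(\varphi)$ consists of finitely many arithmetic progressions, so the number of action values in $(a_0,a)$ grows linearly in $a$, and that a linear bound on $\dim\HF^{(a_0,a)}_*(\widehat{\varphi})$ gives $\Gamma_+(\widehat{\varphi})\le 1$, whence $\Gamma_+(M_1,\lambda_1)\le 1$ via Theorem~\ref{thm:HF=RFH}. This is the same high-level strategy the paper uses (it is exactly Lemma~\ref{lem:building_blocks} applied to the decomposition of $\mathcal{P}_1(\widetilde{L}^c)$ into the components $\Delta_j$ and $\Gamma_{j,k}$).

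However, the step you yourself flag as the main obstacle is a genuine gap, and it is fatal as written. You assert that after a small perturbation ``each Morse--Bott family'' splits into a bounded number of non-degenerate orbits. But the families $\Gamma_{j,k}$ are only assumed \emph{isolated} in $\mathcal{P}_1(\widetilde{L}^c)$, not Morse--Bott: the closed invariant Reeb orbit $\gamma_j$ (equivalently, the fixed point of the twisted Poincar\'e map $AP^k$) can be degenerate. For an isolated degenerate critical point of a $C^\infty$ function, the number of critical points that bifurcate under an arbitrarily small generic perturbation has no a priori bound; and even if for each fixed $k$ the number were finite, nothing in your argument controls its growth in $k$. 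Since the perturbation $\widehat{\varphi}'$ you need at action level $a$ depends on $a$, and the degeneracy of $\Gamma_{j,k}$ depends on $k$, the ``bounded number of generators per family'' claim is precisely what must be proved, not assumed, and it does not follow from transversality theory in \cite{AlbersMerry2013a,AbbondandoloMerry2014a}. (That theory gives you \emph{some} non-degenerate perturbation within the admissible class; it gives no uniform count.)

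The paper circumvents this by not counting generators at all but instead bounding ranks of \emph{local Floer homology}, which is a perturbation-invariant quantity: Lemma~\ref{lem:building_blocks} reduces the filtered rank to $\sum_k \mathrm{rank\,}\HF^{\loc}(\widetilde{L}^c,\Gamma_{j,k})$, Proposition~\ref{prop:reduction_to_poincare_map} identifies $\HF^{\loc}(\widetilde{L}^c,\Gamma_{j,k})\cong\HF^{\loc}(AP^k,p)\otimes\mathrm{H}(S^1;\Z_2)$, and the uniform bound $\mathrm{rank\,}\HF^{\loc}(AP^k,p)\le C$ is Corollary~\ref{cor:uniform_bound_for_us}, whose proof requires the extension of Ginzburg--G\"urel's persistence theorem (Theorem~\ref{thm:GG2}). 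Your proposal omits all of Sections~\ref{sec:local_floer_homology} and~\ref{sec:GG}, which are the technical core of the paper. To repair your argument you would either need to insert the local Floer homology machinery (at which point you recover the paper's proof), or independently establish that the number of orbits bifurcating from $\Gamma_{j,k}$ under a suitably chosen perturbation is uniformly bounded in $k$ --- but that statement is essentially equivalent to the persistence result and would need its own proof via generating functions, as in Section~\ref{sec:generating_functions}.
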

 We will prove Theorem \ref{thm:infinitely_many} in Section \ref{sec:local_floer_homology} below.

\section{Local Floer homology}
\label{sec:local_floer_homology}

Our main tool for proving Theorem \ref{thm:infinitely_many} uses the idea of \textbf{local Floer homology}. The idea behind local Floer homology dates back to Floer, and was first systematically exploited in \cite{CieliebakFloerHoferWysocki1996}. Much later Ginzburg used local Floer homology with spectacular success to prove the Conley Conjecture for symplectically aspherical manifolds \cite{Ginzburg2010} (it has since been proved in numerous other situations by Ginzburg, G\"urel and Hein). In this paper we use a minor extension of a result of Ginzburg and G\"urel \cite{GinzburgGurel2010} on the so-called ``persistence of local Floer homology''.  

\begin{rem}
The idea of using Ginzburg and G\"urel's result to prove Gromoll-Meyer type results is not new; Ginzburg and G\"urel themselves indicate such results should be possible \cite[p326]{GinzburgGurel2010}. Moreover as mentioned in the Introduction, both McLean \cite{McLean2012} and Hryniewicz-Macarini \cite{HryniewiczMacarini2012} use this same persistence property to prove related results.
\end{rem}

\subsection{The definition of local Floer homology}
\label{subsec:def_of_local_FH}
The local Floer homology groups are valid in far more general situations than the restricted setting outlined in the previous setting. In fact, the local Floer homology groups can essentially \textbf{always} be defined, whereas in general to speak of the standard (Hamiltonian) Floer homology one needs to make additional assumptions on either the symplectic manifold or the Hamiltonian. For instance, we are always concerned with non-compact symplectic manifolds, and in this case one needs to impose conditions on the behaviour of the Hamiltonians at infinity. \\

Nevertheless, for the sake of a uniform presentation thoughout this section we assume that $(W^{2n},\omega)$ is a \textbf{symplectically atoroidal} manifold. This means that for any smooth map $u : \T^2 \to W$, one has $\int_{\T^2}u^*\omega = 0$. In addition for simplicity we will assume that $c_1(TW)$ is torsion. Suppose $L \in C^\infty(W \times S^1, \R)$. We denote by 
\[
\A_L : \Lambda (W) \to \R, \qquad \A_L(z) := \int_{[0,1] \times S^1} \bar{z}^* \omega - \int_{S^1} L_t(z)\,dt,
\]
where $\bar{z} : [0,1] \times S^1 \to W$ is a family of loops such that $z(0,t) = z(t)$ and $z(1,t) = z_{\mathrm{ref}}(t)$ is some fixed reference loop belonging to the same free homotopy class as $z$. We denote by $\mathcal{P}_1(L) = \crit \A_{L}$ the set of 1-periodic orbits of $X_L$.
\begin{defn}
A subset $\Gamma \subset \mathcal{P}_1(L)$ is said to be \textbf{action-constant} if 
\[
w,z \in \Gamma \qquad \then \qquad \A_{L}(w)=\A_L(z). 
\]
Note that if the subset $\Gamma$ is connected (as a subset of $\Lambda( W)$) then it is automatically action-constant.
\end{defn}
We denote by 
\[
\mathrm{gr}(\Gamma) := \left\{ (z(t), t) \mid  \, z \in \Gamma , t \in S^1\right\} \subset W \times S^1
\]
the graph of the elements of $\Gamma$. Similarly we denote by 
\begin{equation}
\label{eq:ev_Gamma}
P(\Gamma) := \left\{ z(0)  \mid z \in \Gamma \right\} .
\end{equation}
Thus $P(\Gamma) \subset \mathrm{Fix}( \phi_L^1)$. Going the other way, given a Hamiltonian diffeomorphism $\phi$ and a subset $P \subset \mathrm{Fix}(\phi)$, a choice of Hamiltonian $L$ generating $\phi$ gives rise to a subset $\Gamma_L(P) \subset \mathcal{P}_1(L)$ given by 
\[
\Gamma_L(P) = \left\{ t \mapsto \phi_L^t(x) \mid x \in P \right\}.
\]
\begin{defn}
We say that $\Gamma$ is an \textbf{isolated} subset of $\mathcal{P}_1(L)$ if there exists an open precompact subset $N \subset  W \times S^1$ containing $\mathrm{gr}(\Gamma)$ such that if $w \in \mathcal{P}_1(L)$ is any contractible 1-periodic orbit of $X_L$ then
\[
\mathrm{gr}(w) \cap N \ne \emptyset \qquad \then \qquad w  \in \Gamma.
\]
In the case where $L$ is an autonomous Hamiltonian one can equivalently take $N \subset W$ and replace the condition above with the assertion that if $w(S^1) \subset N \ne \emptyset$ then $w \in \Gamma$. 
\end{defn}
\begin{rem}
Suppose $\phi$ is a Hamiltonian diffeomorphism of $(W, \omega)$ and $P \subset \mathrm{Fix}(\phi)$. Then if $L_1$ and $L_2$ are two different Hamiltonians that generate $\phi$ and $\Gamma_1$ and $\Gamma_2$ the corresponding subsets of $\mathcal{P}_1(L_1)$ and $\mathcal{P}_1(L_2)$ such that
\[
P(\Gamma_1 ) = P = P(\Gamma_2),
\]
then $\Gamma_1$ is an isolated subset of $\mathcal{P}_1(L_1)$ if and only if $\Gamma_2$ is an isolated subset of $\mathcal{P}_1(L_2)$. Thus it makes sense to say that a subset $P \subset \fix(\phi)$ is isolated if the corresponding subset $\Gamma_L(P)$ is isolated in $\mathcal{P}_1(L)$ for any Hamiltonian $L$ generating $\phi$.
\end{rem} 

Following McLean \cite{McLean2012}, we  define the local Floer homology $\HF^{ \loc}_*(L ,\Gamma)$ associated to an action-constant isolated subset of $\mathcal{P}_1(L)$. We need the following three facts about such a subset $\Gamma$:\\

\begin{enumerate}
  \item Let $\mathrm{pr} :W \times S^1 \to W $ denote the projection onto the first factor, and let $U := \mathrm{pr}(N)$. Given $\delta > 0$, let $\mathcal{F}(U,\delta)$ denote the set of all smooth functions 
  \[
  \mathcal{F}(U, \delta) := \left\{  F \in C^\infty(W \times S^1 ) \mid \mathrm{supp}(F) \subset U \times S^1 \subset N , \ \| F \|_{C^1( W \times S^1)} < \delta \right\}.
  \]
  Then there exists $\delta_0 >0 $ with the property that if $0 < \delta < \delta_0$ and $F \in \mathcal{F}(U,\delta)$, then if $w$ is any 1-periodic orbit of $X_{L+F}$, one has:
  \[
\gr( w) \cap N \ne \emptyset \qquad \then \qquad \mathrm{gr}(w) \subset N.
\] 
\item For all $\delta>0 $ there exists $F \in \mathcal{F}(U,\delta)$ such that if $w$ is any 1-periodic orbit $w$ of $X_{L+F}$, one has
\[
\gr( w) \cap N \ne \emptyset \qquad \then \qquad w \text{ is non-degenerate},
\] 
that is, $1$ is not an eigenvalue of the linear map $D \phi_{L+F}^1(w(0)) \colon T_{w(0)} W \to T_{w(0)} W$.
\item Suppose $J = \{ J_t \}_{t \in S^1}$ is a family of almost complex structures on $W$ that are $\omega$-compatible. Suppose $N$ is an isolating neighbourhood, and $F \in \mathcal{F}(U,\delta)$, where $U = \mathrm{pr}(N)$. Let $\mathcal{M}(L,F,J,N)$ denote the set of all finite energy maps $u : \R \times S^1 \to U$ which satisfy the Floer equation $ \partial_s u + J_t(u)( \partial_t u - X_{L+F}(t,u)) = 0$. Then the following holds: suppose $N_1 \subset N_2$ are two isolating neighbourhoods of $\Gamma$, with corresponding sets $U_j := \mathrm{pr}(N_j)$. Then there exists $\delta_1 >0 $ such that if $0 < \delta < \delta_1$ and $F \in \mathcal{F}(U_1, \delta)$ then 

\[
\mathcal{M}(L,F,J,N_1)= \mathcal{M}(L,F,J,N_2).
\]
\end{enumerate}

\label{page:explanation}For instance, to prove (1), we argue by contradiction:  If the conclusion is false then we can find sequences $\delta_k \to 0$, elements $F_k \in \mathcal{F}(U,\delta_k)$, and 1-periodic orbits $z_k$ of $X_{L+F_k}$ whose graphs intersect $\partial N$. Since $\sup_{ k \in \Z } \| z_k '\|_{L^2(S^1)} < + \infty$, by combining the Sobolev embedding $W^{1,2}(S^1,W) \hookrightarrow C^0(S^1,W)$ and applying the Arzela-Ascoli Theorem, we deduce that (after possibly passing to a subsequence) there exists $w \in C^0(S^1 ,W)$ such that $z_k \stackrel{C^0}{\to} w$.  Then $w$ is necessarily a 1-periodic orbit of $X_L$, and since $N$ was an isolating neighbourhood for $\Gamma$, in fact $w  \in \Gamma$. But then as $z =w $ is the limit of the $z_k$'s, we also see that $\mathrm{gr}(\Gamma)$ intersects the boundary of $N$. This is a contradiction. The proof of (3) is similar: if as before one finds a sequence $\delta_k \to 0$, a sequence $F_k \in \mathcal{F}(U_1, \delta_k)$, and a sequence $u_k \in \mathcal{M}(L,F_k, J_k, N_2) \setminus \mathcal{M}(L,F_k, J_k, N_1)$ then in the limit Gromov compactness tells us we find an element $u \in \mathcal{M}(L , 0 ,J, N)$. Such a flow line is necessarily constant, and this contradicts the assumption that $N_1$ is an isolating neighbourhood of $\Gamma$.  Actually strictly speaking this argument is not entirely rigorous; a more sophisticated compactness result than the standard Gromov compactness is required in order to deal with the case where the Hamiltonian $L$ is degenerate. See \cite[p1909]{McLean2012} for more details. Finally, (2) can either be proved via a standard Sard-Smale transversality argument, or by a local construction as in  \cite[Theorem 9.1]{SalamonZehnder1992}. \\

The upshot of points (1), (2) and (3) is the following. Fix $\delta>0$ sufficiently small that (1) and (3) hold, and choose $F \in \mathcal{F}(U,\delta)$ such that (2) holds. Define 
\[
\mathrm{CF}^{\loc}_*( L,F,N) := \bigoplus_{w} \Z_2 \left\langle w \right\rangle,
\]
where the sum is over all 1-periodic orbits $w$ of $X_{L+F}$ whose graph intersect $N$. Fix a generic loop $J = \{ J_t \}_{t \in S^1}$ of $\omega$-compatible almost complex structures, and define a boundary operator $\partial $ on $\mathrm{CF}^{\loc}_*( L,F,N)$ as the linear operator
\[
\left\langle w \right\rangle  \mapsto \sum_{w'} n(w, w') \left\langle w' \right\rangle,
\] 
where the matrix coefficient $n(w,w')$ is the number of ``rigid'' (i.e. Fredholm index 1) elements of $\mathcal{M}(L,F,J,N)$ connecting $w$ to $w'$. The resulting homology is denoted by 
\[
\HF^{\loc}_*(L, \Gamma)
\]
and called the \textbf{local Floer homology} of $L$ at $\Gamma$. As the notation suggests, these groups are independent of the various auxilliary choices made; this is proved using a suitable $s$-dependent version of statement (3) above.  

\begin{rem}
Transversality in the local setting can be attained within a certain class of almost complex structures if that class is also rich enough for transversality to hold in the construction of the full Floer homology groups. This is important, since it shows that in our setting when constructing local Floer homology groups we are free to use almost complex structures $\mathrm{J} \in \mathcal{J}$ (cf. Definition \ref{defn:tau_dep_acs} and Theorem \ref{thm:transversality}).
\end{rem}

The following result follows essentially from the definition.

\begin{lem}
\label{lem:building_blocks}
Let $L$ be a Hamiltonian with the property that the full Floer homology groups $\HF(L)$ are well defined. Suppose also that $\mathcal{P}_1(L)$ can be written as a disjoint union of isolated action-constant sets $\{ \Gamma_k \}_{k \in \N}$. Set $c_k := \A_L (\Gamma_k)$. Then for any interval $(a,b) \subset \R$, one has
\[
  \mathrm{rank\,}\HF^{(a,b)}(L) \le \sum_{ k \colon c_k \in (a,b)} \mathrm{rank\,}\HF^{\loc}(L , \Gamma_k).
\]
\end{lem}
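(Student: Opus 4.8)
The plan is to compute $\HF^{(a,b)}(L)$ from a nondegenerate perturbation of $L$ that is localized near the sets $\Gamma_k$, and then to filter the resulting chain complex by the value of the action; the building blocks of the associated graded will be precisely the local complexes $\mathrm{CF}^{\loc}(L,\Gamma_k)$. Concretely, for each $k$ with $c_k\in(a,b)$ I would first fix a nested pair of precompact isolating neighbourhoods $N^1_k\subset N^2_k$ of $\Gamma_k$, chosen so that the $N^2_k$ are pairwise disjoint, and then pick $\delta>0$ small enough that the three facts (1), (2) and (3) on page~\pageref{page:explanation} hold for all of these pairs simultaneously. Using (2) I would choose perturbations $F_k\in\mathcal F(\mathrm{pr}(N^1_k),\delta)$ with pairwise disjoint supports that make the orbits near $\Gamma_k$ nondegenerate, and set $L':=L+\sum_kF_k$. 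By (1) the $1$-periodic orbits of $X_{L'}$ that meet $N^1_k$ have their graphs contained in $N^1_k$; call this finite set $\Gamma'_k$. Since the $F_k$ are $C^1$-small, $\A_{L'}$ differs from $\A_L$ by $O(\delta)$ on loops in $\bigcup_k N^1_k$, so every orbit in $\Gamma'_k$ has action within $O(\delta)$ of $c_k$; as the finitely many values $\{c_k:c_k\in(a,b)\}$ are distinct and distinct from $a$ and $b$, after shrinking $\delta$ these action intervals (attached to distinct $c_k$) become pairwise disjoint and contained in $(a,b)$, and $a,b\notin\spec(\A_{L'})$. Finally — here one uses that $\HF(L)$ is well defined, so that a small enough perturbation creates no $1$-periodic orbits far from $\mathcal P_1(L)$ — the orbits of $X_{L'}$ with action in $(a,b)$ are exactly $\bigsqcup_{k:c_k\in(a,b)}\Gamma'_k$, and $\HF^{(a,b)}(L')=\HF^{(a,b)}(L)$.

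Next I would run the action filtration on $C:=\mathrm{CF}^{(a,b)}(L')$. Let $c^{(1)}<c^{(2)}<\cdots$ be the distinct values in $\{c_k:c_k\in(a,b)\}$ and pick $a=e_0<e_1<\cdots$ with $e_i\notin\spec(\A_{L'})$ so that every $\Gamma'_k$ with $c_k=c^{(i)}$ has action in $(e_{i-1},e_i)$. Because the Floer differential strictly decreases the action, each $C_i:=\mathrm{CF}^{(a,e_i)}(L')$ is a subcomplex, and the subquotient $C_i/C_{i-1}=\mathrm{CF}^{(e_{i-1},e_i)}(L')$ is generated by $\bigsqcup_{k:c_k=c^{(i)}}\Gamma'_k$. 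The key claim is that its differential is the \emph{local} one: any Floer trajectory of $X_{L'}$ between orbits of $\bigsqcup_{k:c_k=c^{(i)}}\Gamma'_k$ has connected image and energy $O(\delta)$ (the action drop is $O(\delta)$), so by a compactness argument of exactly the kind used for facts (1) and (3) it lies over a single $N^1_k$ and cannot escape it, hence is one of the trajectories counted in $\mathrm{CF}^{\loc}(L,F_k,N^1_k)$, and conversely. Using moreover that two $\Gamma_k$ with the same action admit disjoint isolating neighbourhoods — so the local homology of their union splits as the direct sum — we obtain $H\big(C_i/C_{i-1}\big)=\bigoplus_{k:c_k=c^{(i)}}\HF^{\loc}(L,\Gamma_k)$.

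To conclude I would use the elementary fact that, for a complex $C$ exhausted by subcomplexes $0=C_0\subset C_1\subset\cdots$ with $\bigcup_iC_i=C$, one has $\rank H(C)\le\sum_i\rank H(C_i/C_{i-1})$: the long exact sequence of each pair $(C_i,C_{i-1})$ gives $\rank H(C_i)\le\rank H(C_{i-1})+\rank H(C_i/C_{i-1})$, induction bounds $\rank H(C_i)$ by the partial sums, and since $H(C)=\varinjlim_iH(C_i)$ is a direct limit of groups of uniformly bounded rank its rank is bounded by the same number (so the argument also covers the case of infinitely many $\Gamma_k$ in the window). Feeding in the computation of $H(C_i/C_{i-1})$ from the previous paragraph yields
\[
\rank\HF^{(a,b)}(L)\ =\ \rank H(C)\ \le\ \sum_i\ \sum_{k:c_k=c^{(i)}}\rank\HF^{\loc}(L,\Gamma_k)\ =\ \sum_{k:c_k\in(a,b)}\rank\HF^{\loc}(L,\Gamma_k),
\]
which is the asserted inequality (and there is nothing to prove if the right-hand side is $+\infty$).

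The step I expect to be the real obstacle is the middle one: making rigorous that the action-window subquotients of the perturbed complex genuinely coincide with the local Floer complexes, i.e. that Floer cylinders between nearby perturbed orbits cannot leak out of their isolating neighbourhoods. This is precisely where the compactness inputs behind (1)--(3) and the freedom to shrink the perturbation are needed, and — as already the case of a degenerate $L$ shows — it requires a somewhat more careful compactness statement than plain Gromov compactness, in the spirit of \cite{McLean2012}.
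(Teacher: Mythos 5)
The paper offers no proof of this lemma, merely asserting that it ``follows essentially from the definition''; your proposal is a correct and careful execution of the standard argument it has in mind: perturb near the $\Gamma_k$, filter the perturbed Floer complex by action, identify the graded pieces with the local complexes via the compactness facts (1)--(3), and apply rank subadditivity from the long exact sequences of the filtration pairs. The one place to be slightly careful is that even though the graph-neighbourhoods $N^1_k$ can be taken pairwise disjoint in $W\times S^1$, their projections $U_k=\mathrm{pr}(N^1_k)$ to $W$ need not be; to form $L'=L+\sum_k F_k$ cleanly one should arrange for the $F_k$ themselves to have pairwise disjoint supports in $W\times S^1$ (not merely supports in $U_k\times S^1$), which is always possible and is what is implicitly meant in the definition of $\mathcal F(U,\delta)$.
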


In fact, up to a grading shift, the groups $\HF^{\loc}_*(L,\Gamma)$ depend only on $\phi =  \phi_L^1$ and the set $P=  P(\Gamma) \subset \mathrm{Fix}(\phi_L^1)$ from \eqref{eq:ev_Gamma}. Thus we will often use the notation $\HF^{\loc}(\phi, P)$ instead. Here the lack of ``$*$'' is meant to serve as a reminder that the grading is now only defined up to a shift. \\

\begin{ex}
\label{ex:morse_bott_component}
Suppose that $\Gamma \subset \mathcal{P}_1(L)$ is a \textbf{Morse-Bott} component. This means that $P = P(\Gamma)$ is a compact submanifold of $W$ with the property that
\begin{equation}
\label{eq:morse_bott}
T_x P = \ker \left( D \phi_L^1 (x) -I \right), \qquad \text{for all } x \in P. 
\end{equation}
Such a component is necessarily isolated, and each connected component is action-constant. Then a result of Biran-Polterovich-Salamon \cite[Theorem 5.2.2]{BiranPolterovichSalamon2003} tells us that 
\begin{equation}
\label{eq:morse_singular}
\HF^{\loc}(L ,\Gamma) \cong \mathrm{H}^{\mathrm{sing}}(P ; \Z_2).
\end{equation}
\end{ex}

\begin{rem}
\label{rem:no_star_means_no_grading}
In equation \eqref{eq:morse_singular}, we have adopted the following convenient convention: if an equality between two different homology groups is written without the $*$'s, this should be understood to mean that the equality is true up to a grading shift.
\end{rem}

In the next section we will use the following additional results about local Floer homology. Both of them are very standard, although for the convenience of the reader we provide sketches of the proofs. 

\begin{lem}
\label{lem:master_invariance}
Suppose $\{ \omega_s = \omega + d \zeta_s \}_{s \in [0,1]}$ is an exact deformation of symplectic forms. Supppose $ \{ L_s \}_{s \in [0,1]}$ is a family of Hamiltonians, and denote by $\phi_{L_s ;\omega_s}^t : W \to W$ the flow of the symplectic gradient $X_{L_s ;\omega_s}$ with respect to the symplectic form $\omega_s$. Suppose $P \subset \bigcap_{ s \in [0,1]} \mathrm{Fix}( \phi_{L_s ; \omega_s}^1)$ is a common set of fixed points, which is uniformly isolated in the sense that there exists a subset $N \subset W \times S^1$ such that for each $s \in [0,1]$, $N$ is an isolating neighbourhood of the set $\Gamma_{L_s ; \omega_s}(P) \subset \mathcal{P}_1(L; \omega_s)$. Assume in addition that $\Gamma_{L_s; \omega_s}(P)$  is action-constant subset (with the same constant for each $s$).   Then 
\[
\HF^{\loc}_{\omega_0}(\phi_{L_0 ;\omega_0}^1 , P) \cong \HF^{\loc}_{\omega_1}(\phi_{L_1 ; \omega_1}^1 , P),
\]
where $\HF^{\loc}_{\omega_0}$ denotes the local Floer homology defined using the symplectic form $\omega_0$ etc.
\end{lem}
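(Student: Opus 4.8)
The plan is to realize the whole family $\{\omega_s, L_s\}$ as a single Floer problem on a larger space and invoke a continuation/invariance argument, but in the \emph{local} setting only. First I would observe that since the deformation is exact, $\omega_s = \omega + d\zeta_s$, I may choose a smooth family of primitives and, shrinking to a small tubular neighbourhood $U = \mathrm{pr}(N)$ of $P(\Gamma)$ where nothing interesting happens at infinity, treat all the geometry as happening in a fixed precompact set. The standard trick (cf.\ the cobordism argument for continuation maps) is to consider the $s$-dependent Floer equation on $\R \times S^1$ with a homotopy of data interpolating between $(\omega_0, L_0, J_0)$ and $(\omega_1, L_1, J_1)$, where the homotopy is chosen to be constant for $|s|$ large. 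The key point, supplied by the hypothesis, is that $N$ is an isolating neighbourhood for $\Gamma_{L_s;\omega_s}(P)$ \emph{for every} $s \in [0,1]$, and the action of the relevant orbit set is the \emph{same} constant for each $s$; this is exactly what is needed to run the $s$-dependent version of compactness statement (3) uniformly along the homotopy.

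Concretely, the steps I would carry out are: (i) fix the isolating neighbourhood $N$ and the common value $a := \A_{L_s;\omega_s}(\Gamma_{L_s;\omega_s}(P))$, and for $\delta$ small enough choose generic perturbations $F_s \in \mathcal{F}(U,\delta)$ nondegenerating the endpoints as in (2); (ii) build the continuation map $\HF^{\loc}_{\omega_0}(\phi^1_{L_0;\omega_0},P) \to \HF^{\loc}_{\omega_1}(\phi^1_{L_1;\omega_1},P)$ by counting rigid solutions of the $s$-dependent equation with image constrained to $U$; (iii) show this is well defined by proving the a priori $C^0$-bound keeping solutions inside $N$ — here one argues by contradiction exactly as on page~\pageref{page:explanation}: a sequence of solutions escaping $N$ would, by the uniform energy bound (coming from the uniform action bound, which uses that $a$ is $s$-independent so there is no "action leak" as $s$ varies) and Gromov compactness, converge to a (possibly broken) solution of one of the unperturbed $s$-fixed equations whose graph meets $\partial N$, contradicting that $N$ isolates $\Gamma_{L_s;\omega_s}(P)$; (iv) the usual gluing/homotopy-of-homotopies argument shows the continuation map in one direction composed with the one in the other is chain-homotopic to the identity, giving the isomorphism. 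Finally I would note the grading caveat already flagged in the paper (Remark~\ref{rem:no_star_means_no_grading}): the isomorphism is only claimed up to a grading shift, so I need not track Conley--Zehnder indices carefully, which removes the one genuinely delicate bookkeeping issue.

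The main obstacle I expect is the compactness step (iii) in the presence of a \emph{degenerate} Hamiltonian, precisely the subtlety the authors already acknowledge: plain Gromov compactness is not quite enough, and one must invoke the more refined compactness for Floer trajectories near a degenerate orbit set (the argument of McLean, \cite[p1909]{McLean2012}, or equivalently the machinery behind Ginzburg--G\"urel's persistence results). A secondary technical point is checking that the $s$-dependent metrics $\langle\cdot,\cdot\rangle_{\widetilde J_t}$ built from $\omega_s$ still give a uniform geometry on the precompact $\overline{U}$ so that the Sobolev/Arzel\`a--Ascoli input is legitimate uniformly in $s$; but since everything is confined to a compact set this is routine. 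I would therefore present (iii) carefully and treat (i), (ii), (iv) as standard, referencing the analogous arguments already sketched in the text.
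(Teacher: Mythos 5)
Your high-level architecture (continuation map, $C^0$-bound by contradiction, gluing to get an isomorphism) matches the paper's, but you have misidentified where the real difficulty lies, and as written your step (iii) does not close. The paper first reduces, via an \emph{adiabatic} argument, to the case where the whole homotopy $(\zeta_s, L_s, J_s)$ is $\delta$-small in the sense of \eqref{eq:send_to_zero}; you instead attempt a single continuation map over the full interval $[0,1]$. The reason this matters is your claim that the uniform energy bound ``comes from the uniform action bound'' because $a$ is $s$-independent. That reasoning accounts only for the Hamiltonian's $s$-dependence. It ignores the \emph{extra} term produced by the $s$-dependence of the symplectic form itself: differentiating the area term $\int_{[0,1]\times S^1}\bar u(s,\cdot)^*(\omega + d\zeta_s)$ in $s$ produces a contribution $\int_{S^1}(\partial_s\zeta_s)_{u(s,t)}[\partial_t u(s,t)]\,dt$ which is \emph{linear in $\partial_t u$}. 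Via the Floer equation $\partial_t u = X_{L_s}(u) - J^{-1}\partial_s u$, this is controlled only by something of the form $C\,\|\partial_s\zeta_s\|_{L^\infty}\,(\E(u)+1)$, not by a constant. One therefore obtains an a priori estimate of the shape
\[
\E(u)\;\le\;\A_{L_0}(u(-\infty)) - \A_{L_1}(u(+\infty)) + C\delta\bigl(1 + \E(u)\bigr),
\]
which yields a genuine bound on $\E(u)$ only after absorbing the last term, and that requires $C\delta < 1$. For a general exact homotopy $\{\zeta_s\}$ this smallness fails, so your single-shot continuation does not have an a priori energy bound, and Gromov compactness cannot be invoked. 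The fix is exactly the paper's adiabatic reduction: subdivide $[0,1]$ into finitely many pieces on each of which the $C^2$/$C^1$/$L^\infty$ norms in \eqref{eq:send_to_zero} are below the threshold $\delta$, run your argument on each piece, and compose. The rest of your sketch (generic split perturbations, the contradiction argument in $N_1\subset N_2$, the degenerate-orbit compactness subtlety à la McLean, the grading caveat) is in line with the paper; it is the missing energy estimate and the adiabatic decomposition that you need to add.
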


\begin{proof}[Proof. (Sketch)]
By using an adiabatic argument, it suffices to prove the following statement: assume $P$ is an isolated set of fixed points for $ \phi_{L;\omega}^1$, and assume $\Gamma_L(P) $ is an action-constant subset of $\mathcal{P}_1(L ,\omega)$. Fix a family $J = \{ J_t \}_{t \in S^1}$ of $\omega$-compatible almost complex structures, and fix two isolating neighbourhoods $N_1 \subset N_2 \subset W \times S^1$ of $\Gamma_L(P)  $. Then there exists a constant $\delta > 0 $ with the property that if we are given:
\begin{itemize}
\item A family $\{ \zeta_s \}_{s \in \R} \subset \Omega^1 (W)$ of 1-forms, 
\item A family $\{ L_s \}_{s \in \R} \subset C^{ \infty}(W \times S^1)$ of smooth functions,
\item A family  $\{ J_{s,t} \}_{ s \in \R} $ of families of $( \omega + d \zeta_s)$-compatible almost complex structures 
\end{itemize}
such that all families are independent of $s$ for $s \notin [0,1]$ and such that
\begin{equation}
\label{eq:send_to_zero}
\| L_s -L \|_{C^2} + \| J_s - J \|_{C^1} + \| \partial_s L_s \|_{C^2} + \| \partial_s J_s \|_{C^1} + \| \partial_s \zeta_s \|_{L^\infty} < \delta,  
\end{equation}
then any finite energy solution $u : \R \times S^1 \to W$ of the $s$-dependent problem $\partial_s u + J_{s,t}(u)( \partial_t u - X_{L_s  ;\omega_s}(u)) = 0$ with $u( \R \times S^1) \subset N_2$ actually satisfies $u( \R \times S^1) \subset N_1$. The argument is again by contradiction, and the key point is that by making the left-hand side of \eqref{eq:send_to_zero} arbitarily small, one can also make the energy of such a finite energy solution arbitrarily small. The only difference between this statement and the argument sketched \vpageref{page:explanation} is the fact that the symplectic form now additionally depends on $s$, and this gives rise to an extra potentially problematic term in the energy computation. Luckily, this extra term turns out not to be problematic at all, since we have the following estimate, where the constant $C$ changes from line to line:
\[
\begin{split}
\left| \int_{- \infty}^{+ \infty} \frac{\partial}{\partial s} \left( \int_{ [0,1] \times S^1} \bar{u}(s, \cdot)^* ( \omega + d \zeta_s)  \right)  \,ds \right|  & \le C \delta \left( \int_0^1 \left( \int_{S^1} | \partial_t u(s,t) | \,dt  \right)^2 +1 \right) \,ds \\
& \le C \delta \left( \int_{- \infty}^{+ \infty} \| \partial_t u\|_{J_{s,t}}^2 \,ds\,dt  + 1\right) \\
& \le  C \delta \left( \int_{- \infty}^{+ \infty} \| J_{s,t}(u) \partial_s u + X_{L_s ; \omega_s} (u) \|^2_{ J_{s,t}} \,ds\,dt  + 1\right) \\
& \le  C \delta \left( \E( u) + 1 \right).
\end{split}
\]
The upshot is that \eqref{eq:send_to_zero} implies that for any finite energy solution $u : \R \times S^1 \to W$ of the $s$-dependent problem $\partial_s u + J_{s,t}(u)( \partial_t u - X_{L_s ;\omega_s}(u)) = 0$ with $u( \R \times S^1) \subset N_2$ one gets an estimate of the form
\begin{equation}
\label{eq:energy_on_both_sides}
\E (u) \le \A_{L_0 }(u (- \infty)) - \A_{L_1}(u ( + \infty)) + C \delta ( 1  + \E(u)).
\end{equation}
Now the argument proceeds as in the one sketched \vpageref{page:explanation} (since we can carry the energy term on the right-hand side of \eqref{eq:energy_on_both_sides} to the left, provided $C \delta < 1$).
\end{proof} 

\begin{lem}
\label{lem:kunneth}
Suppose $(W, \omega) = (W_1 \times W_2, \omega_1 \oplus \omega_2)$ is a product symplectic manifold and $\phi = ( \phi_1, \phi_2)$ is a product Hamiltonian diffeomorphism. Assume $P_1$ is an isolated subset of fixed points for $\phi_1$ and $P_2$ is an isolated subset of fixed points for $\phi_2$. Then the K\"unneth formula holds:
\[
\HF^{\loc}_ \omega( \phi, P) \cong \HF^{\loc}_{\omega_1}(\phi_1, P_1) \otimes \HF^{ \loc}_{\omega_2}(\phi_2 , P_2).
\]
\end{lem}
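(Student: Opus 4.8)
The plan is to prove the Künneth formula for local Floer homology by working directly on the chain level, following the standard template for Künneth isomorphisms in Floer theory but localising everything near the given fixed-point sets. First I would fix a product Hamiltonian $L = L^{(1)} \oplus L^{(2)}$ generating $\phi = (\phi_1, \phi_2)$, choose isolating neighbourhoods $N_1 \subset W_1 \times S^1$ for $\Gamma_{L^{(1)}}(P_1)$ and $N_2 \subset W_2 \times S^1$ for $\Gamma_{L^{(2)}}(P_2)$, and observe that $N := N_1 \times_{S^1} N_2$ (the fibre product over the $S^1$-coordinate) is an isolating neighbourhood for $\Gamma_L(P)$, where $P = P_1 \times P_2$. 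The point here is that a $1$-periodic orbit of $X_L = X_{L^{(1)}} \oplus X_{L^{(2)}}$ is exactly a pair consisting of a $1$-periodic orbit of $X_{L^{(1)}}$ and one of $X_{L^{(2)}}$, so the generators of $\mathrm{CF}^{\loc}(L,\Gamma)$ biject with pairs of generators of the two factor complexes.

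Next I would make the perturbations and almost complex structures split: choose $F = F_1 \oplus F_2$ with $F_i \in \mathcal{F}(\mathrm{pr}(N_i), \delta)$ achieving non-degeneracy in each factor (so $L+F$ is non-degenerate near $\Gamma$), and take $J = J^{(1)} \oplus J^{(2)}$ a product of $\omega_i$-compatible families. With these product choices, the Floer equation $\partial_s u + J_t(u)(\partial_t u - X_{L+F}(t,u)) = 0$ on $W_1 \times W_2$ decouples into the two factor Floer equations for $u = (u_1, u_2)$. The crucial localisation input is the remark following statement (3) \vpageref{page:explanation}: for $\delta$ small, every finite-energy Floer trajectory whose graph meets $N$ in fact stays inside $N$, and likewise in each factor — so a product trajectory staying in $N$ is exactly a pair of factor trajectories staying in $N_1$ and $N_2$ respectively. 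The index is additive under this splitting (the linearised operator is block-diagonal), so rigid product trajectories are precisely pairs $(u_1, u_2)$ where the total index is $1$; since each factor index is at least $0$ along a connecting orbit and each rigid connecting trajectory comes in a free $\R$-family unless it is stationary, the index-$1$ product trajectories are those with one factor rigid and the other stationary. This yields exactly the tensor-product differential $\partial(\langle w_1\rangle \otimes \langle w_2\rangle) = \partial_1 \langle w_1 \rangle \otimes \langle w_2 \rangle + \langle w_1 \rangle \otimes \partial_2 \langle w_2 \rangle$ (signs being irrelevant over $\Z_2$), so $\mathrm{CF}^{\loc}(L,\Gamma) \cong \mathrm{CF}^{\loc}(L^{(1)},\Gamma_1) \otimes \mathrm{CF}^{\loc}(L^{(2)},\Gamma_2)$ as chain complexes.

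Taking homology and invoking the algebraic Künneth theorem over the field $\Z_2$ (so there is no $\mathrm{Tor}$ term) gives $\HF^{\loc}_\omega(\phi, P) \cong \HF^{\loc}_{\omega_1}(\phi_1,P_1) \otimes \HF^{\loc}_{\omega_2}(\phi_2,P_2)$, as required — up to an overall grading shift, consistent with Remark \ref{rem:no_star_means_no_grading}. The independence of the local Floer homology from the auxiliary data (established earlier via $s$-dependent continuation) lets us pass from these particular product choices to the general definition, so the isomorphism is canonical in the appropriate sense.

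The main obstacle I anticipate is the compactness/decoupling step: one must be sure that the product structure is genuinely preserved at the level of \emph{finite-energy} trajectories confined to $N$, and in particular that a trajectory cannot "leak" energy between the factors or escape $N$ through the boundary while one factor is exploring regions outside $N_i$. This is handled by choosing the two isolating neighbourhoods nested ($N_i^{\mathrm{small}} \subset N_i^{\mathrm{large}}$) and applying statement (3) in each factor, exactly as on \vpageref{page:explanation}, but one should be a little careful that the product of two isolating neighbourhoods is again isolating and that the relevant $\delta$ can be chosen uniformly. As with the degenerate case discussed there, when $L$ is not itself non-degenerate the naive Gromov compactness argument needs the refined compactness of \cite[p1909]{McLean2012}, and that refinement must be applied factorwise; this is the one place where the argument is not entirely formal.
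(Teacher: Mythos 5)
Your argument is the same one the paper gives, just fleshed out: choose a split Hamiltonian, split perturbation $F = F_1 \oplus F_2$, split almost complex structure $J = J_1 \oplus J_2$, and a product isolating neighbourhood, so that the local Floer complex factors as a tensor product of the two factor complexes and the algebraic K\"unneth theorem over $\Z_2$ finishes the proof. The paper simply cites \cite[Lemma 2.10]{McLean2012} and \cite[Property (LF4), Section 3.2]{GinzburgGurel2010} and records the split-data observation as making the claim ``essentially obvious''; your elaboration (including the index bookkeeping ruling out both factors nonconstant and the factorwise application of McLean's refined compactness) is the correct way to fill in that sketch.
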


This lemma is stated in \cite[Lemma 2.10]{McLean2012} (see also \cite[Property (LF4), Section 3.2]{GinzburgGurel2010}. The point is that we can choose a split perturbation $F= (F_1,F_2)$ to define the local Floer complex, and also work with a split product $J = (J_1, J_2)$ of almost complex structures. Then the statment is essentially obvious.

\subsection{Local Floer homology of invariant Reeb orbits}
\label{subsec:local_FH_of_invariant_Reeb_orbits}

Let us now consider again the situation we are primarily interested in. We adopt the notation introduced in Sections \ref{sec:preliminaries} and \ref{sec:Floer_homology}. Let $\widehat{\varphi} = \{ \varphi_t \}_{0 \le t \le 1}$ denote an admissible path of contactomorphisms, and assume that the time-1 map $\varphi = \varphi_1$ is a \textbf{strict} contactomorphism. Thus if we write $\varphi_t^* \alpha = \rho_t \alpha$ for a positive family $\rho_t : \Sigma \to (0, +\infty)$ of smooth functions, then $\rho_1 \equiv 1$. We emphasise that we do \textbf{not} assume that the entire path $\widehat{\varphi}$ is strict, only that the terminal map is. Thus $\rho_t $ is not necessarily identically equal to $1$ for $0 < t < 1$.  Fix some $c $ strictly greater than the constant $C( \widehat{\varphi})$ defined in \eqref{eq:C_phi}, and denote by $\widetilde{L}^c :W  \times S^1 \to \R$ the function defined in \eqref{eq:def_of_Lc}. \\

As explained in Example \ref{ex:strict}, the assumption that the terminal map $\varphi$ is a strict contactomorphism implies that critical points of the corresponding action function $\A_{\widetilde{L}^c}$ are never isolated. Indeed, suppose $\gamma : \R \to \Sigma$ is a $\varphi$-invariant Reeb orbit, and set $p = \gamma(0)$, so that $\gamma(s) = \varphi_R^s(p)$, and there exists $\eta \in \R$ such that $\varphi(\gamma(s)) = \gamma(s - \eta)$ for all $s \in \R$.

Let us assume that $\gamma$ is isolated in the set of all invariant Reeb orbits for $\varphi$. Then there are two possible pictures, depending as to whether the orbit $\gamma$ is closed or not. Let us first cover the case where $\gamma$ is \textbf{not} a closed orbit.  Then (cf. Lemma \ref{lem:relating_spectrums}) we have a component $\Gamma \cong \R \subset \mathcal{P}_1(\widetilde{L}^c) = \crit \A_{\widetilde{L}^c}$:
\begin{equation}
\label{eq:component_from_invariant_orbit}
\Gamma = \left\{ (z_s ,  \eta , 0) \subset \Lambda (W) \mid  s \in \R \right\},
\end{equation}
where $z_s (t) = (x_s (t), r_s (t)) \in S \Sigma$, and
\begin{equation}
\label{eq:the_loop_x_theta}
x_s (t )= \begin{dcases}
\gamma \left( s  +  \eta  \int_0^t \kappa(a) \,da \right), & 0 \le t \le 1/2 ,\\
\varphi_t(\gamma(s + \eta)), & 1/2 \le t   \le 1.
\end{dcases},
\end{equation}
and 
\begin{equation}
\label{eq:r_theta}
r_s (t) = \begin{cases}
1, & 0 \le t \le 1/2, \\
\rho_t(\gamma(s + \eta)), & 1/2 \le t \le 1.
\end{cases}
\end{equation}
Note that with this convention one has $z_s(0) = (\gamma(s), 1)$. \\

Things get particularly interesting when the invariant Reeb orbit $\gamma : \R \to \Sigma$ is \textbf{closed}, say of minimal period $T > 0$.  In this case the component $\Gamma$ described above is diffeomorphic to the circle rather than the real line. Moreover by iterating the Reeb orbit we get a family $\{ \Gamma_k \}_{k \in \Z}$ of components of $\mathcal{P}_1(\widetilde{L}^c)$. More precisely, for each integer $k \in \Z$ there is a component $\Gamma_k \cong S^1 \subset \mathcal{P}_1(\widetilde{L}^c)$, which corresponds to travelling round the loop $\gamma$ $k$ times (where negative $k$ should be interpreted as going $k$ times backwards along $\gamma$), and then following the loop described above. Explicitly, 
\begin{equation}
\label{eq:Gamma_k}
\Gamma_k = \left\{ (z_{s,k}, \eta + kT  , 0) \subset \Lambda (W) \mid s \in \R / T \Z \right\},
\end{equation}
where as before $z_{s ,k}(t) = (x_{s, k}(t), r_{s ,k}(t)) \in S \Sigma$, and 
\[
x_{s,k}(t) = \begin{dcases}
\gamma \left( s + (\eta +  kT) \int_0^t \kappa(a)\,da \right), & 0 \le t \le 1/2,\\
\varphi_t( \gamma( s + \eta )), & 1/2 \le t \le 1.
\end{dcases}
\]
and $r_{s, k} = r_s$ is the loop 
\[
r_{s,k}(t) = \begin{cases}
1, & 0 \le t \le 1/2, \\
\rho_t(\gamma(s + \eta)), & 1/2 \le t \le 1.
\end{cases}
\]
It is important to understand that the loop $x_{s, k}$ is \textbf{not} simply the $k$th iterate of the loop $x_{s,1}$.  \\

Let $D \subset \Sigma$ denote an embedded hypersurface which is transverse to $\gamma$ at $p := \gamma(0)$. Then the \textbf{Poincar\'e  map} $P \colon U \to D$ is well defined, where $U \subset D$ is a small neighbourhood of $p$. Explicitly, 
\begin{equation}
\label{eq:first_return_P_gamma}
P(x) = \varphi_R^{s(x)}(x), \qquad s(x) := \inf \{s > 0 \mid \varphi_R^s(x) \in D \}.
 \end{equation}
Since $\varphi$ commutes with the Reeb flow, the hypersurface $\varphi^{-1}(D)$ is again transverse to $\gamma$ at the point $\gamma( \eta)$. Thus there is a well defined map $A \colon U \to D$
\begin{equation}
\label{eq:the_varphi_poincare_map}
A(x) = \varphi \left( \varphi_R^{s_{\varphi}(x)}(x) \right),
\end{equation}
where $s_{\varphi} : D \to \R$ is defined by
\[
s_{\varphi}(x) := \inf \left\{ s > 0 \mid  \varphi_R^s(x) \in  \varphi^{-1}(D) \right\}.
\]
Both $P$ and $A$ fix the point $p$. It is well known that $P$ can be seen as a Hamiltonian diffeomorphism of the embedded hypersurface $D$. In fact, the same is true of $A$; we will prove this in Lemma \ref{lem:its_a_ham} below. For now let us just note the following observation.

\begin{lem}
The germs of $P$ and $A$ commute. That is, on a suitably small neighbourhood $U$ of $p$, both $P A$ and $A P$ are defined and equal. 
\end{lem}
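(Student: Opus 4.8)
The plan is to exploit the fact that $\varphi$ commutes with the Reeb flow, which is precisely the hypothesis that singles out strict contactomorphisms among general ones. Recall from \eqref{eq:first_return_P_gamma} that $P(x) = \varphi_R^{s(x)}(x)$ for the first-return time $s(x)$ to $D$, and from \eqref{eq:the_varphi_poincare_map} that $A(x) = \varphi(\varphi_R^{s_\varphi(x)}(x))$ for the first-return time $s_\varphi(x)$ to $\varphi^{-1}(D)$. Both fix $p=\gamma(0)$, so all four compositions $PA$, $AP$, $PP$, $AA$ are defined on some sufficiently small neighbourhood $U$ of $p$; the only content is the equality $PA=AP$ as germs at $p$.

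First I would record the key commutation: since $\varphi_*(R)=R$, we have $\varphi \circ \varphi_R^s = \varphi_R^s \circ \varphi$ for all $s\in\R$, and consequently $\varphi_R^s$ maps $\varphi^{-1}(D)$ to $\varphi^{-1}(\varphi_R^s(D))$, etc. The cleanest route is to observe that both $PA$ and $AP$ are of the form ``flow along $R$ for some time, then apply $\varphi$'' (since $\varphi$ commutes past any Reeb flow, one can always push all the $\varphi$'s to the outside), and that they land in $\varphi^{-1}(D)$; then uniqueness of the first return determines the time. Concretely, write $AP(x) = \varphi\big(\varphi_R^{s_\varphi(P(x))}(P(x))\big) = \varphi\big(\varphi_R^{s_\varphi(P(x)) + s(x)}(x)\big)$, and $PA(x) = \varphi_R^{s(A(x))}(A(x)) = \varphi_R^{s(A(x))}\varphi\big(\varphi_R^{s_\varphi(x)}(x)\big) = \varphi\big(\varphi_R^{s(A(x)) + s_\varphi(x)}(x)\big)$, using the commutation in the last step. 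So it suffices to show $s_\varphi(P(x)) + s(x) = s(A(x)) + s_\varphi(x)$ for $x$ near $p$. For $x=p$ itself all the return times are the period $T$ (or its relevant fraction), so both sides equal the same value, and a continuity/uniqueness argument extends this to a neighbourhood.

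The structural way to see the time identity is this. Let $\psi(x) := \varphi(\varphi_R^{s_\varphi(x)}(x)) = A(x)$; the orbit $\{\varphi_R^s(x)\}$ and the orbit $\{\varphi_R^s(A(x))\}$ are translates of one another under $\varphi$ composed with a Reeb shift, precisely because $\varphi$ sends the Reeb orbit of $x$ to the Reeb orbit of $\varphi(x)=\varphi_R^{-\eta}$-translate and back. Both $PA$ and $AP$ produce, starting from $x$, the \emph{next} point after going once around that lies in $D$ and equals $\varphi$ of the next point in $\varphi^{-1}(D)$ — i.e. both produce the unique point obtained by flowing from $x$ until one has simultaneously completed one full ``$\varphi$-return'' and one full ``$R$-return'', which near $p$ is completing one loop around $\gamma$. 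By the implicit function theorem the return maps and return times are smooth near $p$, the iterated first-return time is an additive cocycle along the flow, and the two orderings of ``do a $D$-return'' and ``do a $\varphi^{-1}(D)$-return'' give the same total elapsed time; hence the germs agree.

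The main obstacle — and it is a mild one — is bookkeeping the Reeb-shift by $\eta$ correctly: $\varphi$ does not fix $D$, it sends $D$ to a hypersurface through $\gamma(\eta)$ rather than $p$, so one must be careful that ``first return to $\varphi^{-1}(D)$'' and ``first return to $D$'' are genuinely interleaved in the right way and that the base point $p$ really is fixed by both $PA$ and $AP$ (it is, since $\varphi(\varphi_R^T(p)) = \varphi(p) = \varphi_R^{-\eta}(p)$ wait — one checks directly from $\varphi(\gamma(s))=\gamma(s-\eta)$ that $A(p)=p$ and $P(p)=p$). Once the base point and the additivity of return times are pinned down, the commutation $\varphi\varphi_R^s = \varphi_R^s\varphi$ does all the real work, and the identity $s_\varphi(P(x))+s(x) = s(A(x))+s_\varphi(x)$ follows from uniqueness of first-return times applied to the single orbit through $x$.
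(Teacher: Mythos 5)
Your proposal takes essentially the same route as the paper: push $\varphi$ through the Reeb flow so that both $PA$ and $AP$ have the form $x \mapsto \varphi(\varphi_R^{g_i(x)}(x))$ with $g_1(x) = s(A(x)) + s_\varphi(x)$ and $g_2(x) = s_\varphi(P(x)) + s(x)$, and then argue $g_1 = g_2$. One small caution: the remark that the equality holds at $x=p$ and "a continuity argument extends this to a neighbourhood" is not by itself a proof — two smooth functions agreeing at a single point need not agree nearby. What actually closes the argument (and what you gesture at in your final paragraph) is uniqueness of the return time: for each $x$ near $p$, the path $r \mapsto \varphi_R^{r g(x)}(x)$, $r \in [0,1]$, starting in $D$, ending at a point whose $\varphi$-image lies in $D$, and crossing $D$ exactly once in between, is unique after shrinking $U$; since both $g_1(x)$ and $g_2(x)$ produce such a path, they coincide. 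That is precisely the paper's argument, so your proposal is correct but should lean on the uniqueness of the return trajectory rather than on pointwise agreement plus continuity.
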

\begin{proof}
Since $\varphi$ commutes with the Reeb flow, both $P A$ and $A P $ are maps $U \to D$ of the form $x \mapsto \varphi( \varphi_R^{g_i(x)}(x))$ for appropriate functions $g_1$ and $g_2$. Explicitly, 
\[
g_1(x) = s( A(x)) + s_{ \varphi}(x), \qquad g_2(x) = s_{\varphi}(P(x)) + s(x).
\]
Fix $x \in D$ and consider for $i=1,2$ the path $\delta_i : [0 ,1 ] \to \Sigma$ given by $\delta_i (r) = \varphi_R^{r g_i(x)}(x)$. This path has the property that $\delta_i(0) \in D$, $\varphi(\delta_i(1)) \in D$, and there exists a unique $c_i \in (0,1)$ such that $\delta_i(c_i) \in D$. Up to shrinking $U$, there is at most one such path, and hence in particular $\delta_1(1) = \delta_2(1)$. Thus $g_1 \equiv g_2$ as claimed.
\end{proof}

The assumption that $\gamma$  is isolated in the set of all invariant Reeb orbits for $\varphi$ implies that $p \in D$ is an isolated fixed point of $ A P^k $ for each $k \in \Z$. In particular, the local Floer homology groups $\HF^{\loc}(A P^k,p)$ are well defined (here we should really write $\HF^{\loc}(A P^k,\{ p\})$ to be consistent with our previous notation). The next result should be compared with \cite[Lemma 3.4]{McLean2012} and \cite[Proposition 6.1]{HryniewiczMacarini2012}.
\begin{prop}
\label{prop:reduction_to_poincare_map}
One has
\begin{equation}
\label{eq:the_iso_between_the_two_local_FHs}
\HF^{\loc}( \widetilde{L}^c , \Gamma_k) \cong  \HF^{\loc}( A P^k ,p) \otimes \mathrm{H}(S^1 ; \Z_2).
\end{equation}
\end{prop}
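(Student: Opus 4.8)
The plan is to reduce the computation of $\HF^{\loc}(\widetilde{L}^c,\Gamma_k)$ to a product via a sequence of localisations, peeling off the uninteresting factors one at a time. First I would analyse the structure of the component $\Gamma_k \cong S^1$ described in \eqref{eq:Gamma_k}. The set $P(\Gamma_k) \subset \fix(\Phi_{\widetilde{L}^c}^1)$ is a circle (parametrised by $s \in \R/T\Z$), sitting inside the hypersurface $\{r=1,\sigma=0\}$ with $\tau = \eta + kT$ fixed. I would set up a local symplectic chart near a point $z_{0,k}(0) = (\gamma(0),1,\eta+kT,0)$ of the form $D \times (\text{Reeb direction}) \times T^*\R$, where $D \subset \Sigma$ is the transverse hypersurface from \eqref{eq:first_return_P_gamma}. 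The $T^*\R$ factor carries the $(\tau,\sigma)$ coordinates and the Hamiltonian $\widetilde{H}^\kappa$ restricted there is essentially $\tfrac12\sigma^2$ near the orbit (since $H(x,r)=0$ there), whose time-1 flow has an isolated fixed point at the origin with $\HF^{\loc} \cong \mathrm{H}(\mathrm{pt})$; this is the K\"unneth factor that disappears.

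Next I would split the $\Sigma$-directions. Using that $\varphi$ commutes with the Reeb flow and that the orbit is closed, the return construction identifies the $S^1$-worth of fixed points as a Morse--Bott circle in the Reeb/time direction, contributing $\mathrm{H}(S^1;\Z_2)$ by Example \ref{ex:morse_bott_component} (equation \eqref{eq:morse_singular}), while the transverse directions are governed by a Hamiltonian diffeomorphism germ of $D$. The key identification is that this transverse germ is precisely (conjugate to) $A P^k$: travelling $k$ times around $\gamma$ under the Reeb flow produces $P^k$ (the $k$th iterate of the Poincar\'e return map), and the terminal application of $\varphi_t$ over $t \in [1/2,1]$ in \eqref{eq:Gamma_k} produces the extra factor $A$ from \eqref{eq:the_varphi_poincare_map}. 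Here I would invoke Lemma \ref{lem:its_a_ham} (that $A$ is a Hamiltonian diffeomorphism of $D$) and the preceding lemma (that the germs of $P$ and $A$ commute), so that $AP^k$ is a well-defined Hamiltonian germ with isolated fixed point $p$, and its local Floer homology $\HF^{\loc}(AP^k,p)$ makes sense. Applying the K\"unneth formula Lemma \ref{lem:kunneth} to the (local) splitting then gives
\[
\HF^{\loc}(\widetilde{L}^c,\Gamma_k) \cong \HF^{\loc}(AP^k,p) \otimes \mathrm{H}(S^1;\Z_2) \otimes \mathrm{H}(\mathrm{pt};\Z_2),
\]
and the last factor is trivial, yielding \eqref{eq:the_iso_between_the_two_local_FHs}.

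The main obstacle, and the step requiring genuine care, is making the splitting of the $\Sigma$-directions honest: locally near the closed Reeb orbit the flow of $X_{\widetilde{L}^c}$ does not literally decouple into a product of a circle-rotation and the transverse map, because the reparametrisation by $\int_0^t\kappa(a)\,da$ in \eqref{eq:Gamma_k} and the $\tau$-dependence of the Reeb displacement intertwine the base $S^1$ with the transverse dynamics. I would handle this by a deformation argument: use Lemma \ref{lem:master_invariance} to connect $\widetilde{L}^c$ (through a family of Hamiltonians keeping $P(\Gamma_k)$ fixed and uniformly isolated, with constant action) to a normal-form Hamiltonian that is genuinely split near $\Gamma_k$ — for instance straightening the Reeb flow near $\gamma$ so that the return/time direction is an honest $S^1$-factor and the Hamiltonian becomes a sum of a function of the $S^1$-coordinate, the transverse-generating Hamiltonian for $AP^k$, and $\tfrac12\sigma^2$. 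Verifying that such a deformation exists with the required uniform isolation and action-constancy, and that the class $\mathcal{J}$ of almost complex structures from Definition \ref{defn:tau_dep_acs} is compatible with taking a split $J$ in the local chart, is where the real work lies; once that is in place, Lemmas \ref{lem:kunneth} and \ref{lem:master_invariance} together with Example \ref{ex:morse_bott_component} finish the proof.
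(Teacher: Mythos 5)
The general strategy here is right in spirit — reduce to a local normal form near $\gamma$ via Lemma \ref{lem:master_invariance}, split off a K\"unneth factor (Lemma \ref{lem:kunneth}) carrying the transverse dynamics $AP^k$, and handle the rest by the Morse--Bott computation of Example \ref{ex:morse_bott_component} — and you correctly flag the two key lemmas and the identification of $AP^k$ as the transverse germ. But the three-factor decomposition you propose does not work, and the error is already visible in your treatment of the $T^*\R$ factor.

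You claim that near the orbit the $T^*\R$-part of the Hamiltonian is ``essentially $\tfrac12 \sigma^2$ (since $H(x,r)=0$ there)'', and that the time-one flow of $\tfrac12\sigma^2$ has an isolated fixed point at the origin contributing $\mathrm{H}(\mathrm{pt};\Z_2)$. This is false: the time-one map of $\tfrac12\sigma^2$ on $T^*\R$ (with the sign conventions of \eqref{eq:sym_form_on_W} and \eqref{flow_of_Phi_H}) is $(\tau,\sigma) \mapsto (\tau - \sigma, \sigma)$, whose fixed set is the entire $\tau$-axis $\{\sigma = 0\}$ — non-compact and certainly not an isolated point. The statement ``$H(x,r)=0$'' holds only \emph{on} the circle of fixed points, not on any neighbourhood; in a neighbourhood the coupling term $\tau\,(r-1)$ is nonzero, and it is precisely this cross-term between the $T^*\R$ coordinate $\tau$ and the symplectisation coordinate $r$ that produces the nondegeneracy. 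Concretely, $\sigma' = r-1$ forces $r=1$ at fixed points and $\tau' = -\sigma$ forces $\sigma = 0$, and then $\tau$ is pinned down by the Reeb displacement in the $t$-direction. Discard the coupling and the fixed set becomes non-isolated in the $\tau$-direction; keep it and you cannot split $(\tau,\sigma)$ off as a standalone symplectic K\"unneth factor.

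What actually happens in the paper is a \emph{two}-factor split. After deforming the Hamiltonian and the symplectic form (via a homotopy of the reparametrisation factor $b(q,t)$ from \eqref{eq:reparam_factor} and the function $a$ from Lemma \ref{lem:mapping_cylinder}) one arrives at a map $\Psi_1(q,t,r,\tau,\sigma) = (AP^k(q),\,\Psi_1'(t,r,\tau,\sigma))$ which is a product with respect to $\omega_1 = d\theta \oplus (T\,dr\wedge dt - d\sigma\wedge d\tau)$. The \emph{entire} four-dimensional block $(t,r,\tau,\sigma)$ — longitudinal Reeb direction, radial direction, and $T^*\R$ together — is one K\"unneth factor, and the circle $S_k'$ is Morse--Bott inside it: the kernel of $D\Psi_1' - I$ at a point of $S_k'$ is exactly the one-dimensional $t$-direction, matching $\dim S_k' = 1$. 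That gives $\mathrm{H}(S^1;\Z_2)$, with no separate $\mathrm{H}(\mathrm{pt})$ factor to discard. So the fix is not to decouple $T^*\R$ from the Reeb/radial directions, but to accept that the $(t,r,\tau,\sigma)$-block is irreducibly coupled and to verify the Morse--Bott condition for the full four-dimensional factor directly.
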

\begin{rem}
Recall from Remark \ref{rem:no_star_means_no_grading} that the lack of $*$'s in \eqref{eq:the_iso_between_the_two_local_FHs} should be understood to mean that the isomorphism is \textbf{not} grading preserving. Indeed, the local Floer homology groups $\HF^{\loc}(A P^k, p)$ are themselves only defined up to a shift in grading.
\end{rem}

In order to prove Proposition \ref{prop:reduction_to_poincare_map}, we will localise the problem inside a tubular neighbourhood of $\gamma$.  This is done by the following lemma, whose proof can be found for instance in \cite[Lemma 5.2]{HryniewiczMacarini2012}.

\begin{lem}
\label{lem:local_model}
Let $\gamma : \R  \to \Sigma$ denote a closed Reeb orbit of minimal period $T>0$. There exists a tubular neighbourhood $N \cong B \times S^1 $ of $\gamma( \R)$, where $B \subset \R^{2n-2}$ is a small ball centred about the origin, such that, if we use coordinates $(q,t) \in B \times S^1$, one has
\begin{enumerate}
  \item $\alpha|_N = \theta - K d t$, where $\theta$ is the standard contact form on $B \subset \R^{2n-2}$ and $K : B \times S^1 \to \R$ is a smooth Hamiltonian such that $K(0,t)  \equiv - T$ and $dK_t(0) \equiv 0$, and finally such that
 \begin{equation}
\label{eq:never_zero} 
 \theta_q (X^B_{K}(q,t))-  K(q,t)     \ne 0, \text{ for all } (q,t ) \in B \times S^1,
  \end{equation}
  where $X^B_K$ denotes the symplectic gradient of $K_t$ with respect to $d \theta$.
\item $\gamma(s) = (0,s/T)$ for all $s \in \R$, where $s/T$ should be read modulo $T$.
  \end{enumerate}
\end{lem}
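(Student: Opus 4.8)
The plan is to put $\alpha$ into the stated normal form by a Moser-type argument based at the linearised data along $\gamma$. First I would reparametrise so that $\gamma\colon \R/T\Z \to \Sigma$ with $\dot\gamma = R\circ\gamma$. Since $\dot\gamma = R \notin \ker\alpha$, there is a splitting $T\Sigma|_\gamma = \R\langle R\rangle \oplus (\ker\alpha)|_\gamma$, so the normal bundle of $\gamma(\R/T\Z)$ in $\Sigma$ is the symplectic vector bundle $\big((\ker\alpha)|_\gamma,\, d\alpha\big)$ over the circle; as $\Sp(2n-2,\R)$ is connected this bundle is symplectically trivial. Fixing a symplectic trivialisation and using the exponential map of an auxiliary metric adapted to the splitting, one obtains an embedding $\Psi_0\colon B\times S^1 \hookrightarrow \Sigma$, with $S^1 = \R/\Z$, $t = s/T$, and $B\subset\R^{2n-2}$ a small ball, such that $\Psi_0(0,t) = \gamma(tT)$, $d\Psi_0|_{(0,t)}(\partial_t) = T\,R(\gamma(tT))$, and $d\Psi_0|_{(0,t)}$ sends $\partial_{q_1},\dots,\partial_{q_{2n-2}}$ to the chosen symplectic frame of $\ker\alpha$. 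Write $\alpha_0 := \Psi_0^*\alpha = \beta_t - f\,dt$, where $\beta_t$ is the $t$-family of $1$-forms on $B$ obtained by restriction to the slices and $f := -\alpha_0(\partial_t)$. From the normalisations of $\Psi_0$ together with $\alpha(R) = 1$ and $\iota_R\,d\alpha = 0$ one checks directly that $\beta_t(0) = 0$, $f(0,t)\equiv -T$, $d_q f(0,t)\equiv 0$, and that the fibrewise differential $d_q\beta_t$ coincides with the standard (constant-coefficient) symplectic form $d\theta$ at $q=0$, for every $t$.

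I would then straighten $\alpha_0$ in two steps. (i) A Moser argument, parametrised by $t\in S^1$ and relative to the origin, interpolating the fibrewise symplectic forms $d_q\beta_t$ and $d\theta$, yields a smooth family of fibrewise diffeomorphisms $\psi_t$ with $\psi_t(0)=0$, $d\psi_t(0)=\mathrm{id}$ and $\psi_t^*(d_q\beta_t)=d\theta$ on a smaller ball. The fibre-preserving diffeomorphism $G(q,t):=(\psi_t(q),t)$ then carries $\alpha_0$ into $\theta + d_q g_t - K\,dt$, where $\{g_t\}$ is a smooth $t$-family of functions on $B$ with vanishing $1$-jet at the origin, and where already $K(0,t)\equiv -T$, $d_q K(0,t)\equiv 0$, because $G$ fixes $\{0\}\times S^1$ and is tangent to the identity there. (ii) The fibrewise-exact remainder $d_q g_t$ is removed by composing with a further fibre-preserving diffeomorphism $\mathcal H(q,t):=(\mathcal S_t(q),t)$, in which each $\mathcal S_t$ is a symplectomorphism of $(B, d\theta)$ fixing and tangent to the identity at $0$, built from a generating function so that $\mathcal S_t^*(\theta + d_q g_t) = \theta$; such $\mathcal S_t$ exist and depend smoothly on $t$ by the standard parametrisation of exact symplectomorphisms near the identity by their generating functions, together with a contraction argument. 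After this, $\alpha_0$ has the form $\theta - K\,dt$ with $K(0,t)\equiv -T$ and $d_q K(0,t)\equiv 0$. The step I expect to require the most care is (ii): a bare Liouville form need not be diffeomorphic to the fixed form $\theta$, so achieving the normal form with $\theta$ genuinely standard --- rather than merely ``some primitive of $d\theta$'' --- relies on the extra $S^1$-direction, into whose Hamiltonian $K$ the left-over exact term is absorbed; keeping all the $q=0$ normalisations alive through the successive coordinate changes is the bookkeeping one has to watch. A complete treatment is \cite[Lemma 5.2]{HryniewiczMacarini2012}.

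Finally I would verify the two remaining assertions. A constant shift of the $S^1$-coordinate arranges $\gamma(s) = (0, s/T)$; this is consistent with the dynamics, because solving $\alpha(R)=1$ and $\iota_R\,d\alpha = 0$ for $\alpha = \theta - K\,dt$ gives $R(q,t) = \big(\theta_q(X^B_K(q,t)) - K(q,t)\big)^{-1}\big(\partial_t + X^B_K(q,t)\big)$, so $R(0,t) = \tfrac1T\partial_t$ since $\theta_0 = 0$, $X^B_K(0,t)=0$ and $K(0,t)=-T$. The same formula shows that the $\partial_t$-component of $R$ is nonzero exactly when $\theta_q(X^B_K(q,t)) - K(q,t)\neq 0$; hence \eqref{eq:never_zero} is precisely the statement that $R$ is everywhere transverse to the slices $B\times\{t\}$. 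This transversality holds at $q=0$, where the quantity in \eqref{eq:never_zero} equals $T>0$, and therefore, by continuity and compactness of $S^1$, on a sufficiently small tube; it is in any case inherited from the transversality of the Reeb flow of $\alpha$ to a local hypersurface section through $\gamma(0)$. Shrinking $B$ to secure \eqref{eq:never_zero} preserves every other property, and the proof is complete.
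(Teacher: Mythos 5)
The paper does not give its own proof of this lemma; it simply points to \cite[Lemma~5.2]{HryniewiczMacarini2012}, which is exactly the reference you end with. Your sketch reconstructs that argument correctly: symplectic trivialisation of the normal bundle $((\ker\alpha)|_\gamma,d\alpha)$ over the circle (using connectedness of $\mathrm{Sp}(2n-2,\R)$), a relative Moser step to straighten the fibrewise symplectic form, a second fibre-preserving normalisation to absorb the leftover exact term $d_q g_t$ into the Hamiltonian $K$, and then the transversality estimate \eqref{eq:never_zero} from the value $T$ at $q=0$ together with continuity. Your bookkeeping that the $q=0$ normalisations $K(0,t)\equiv -T$, $d_qK(0,t)\equiv 0$ survive the successive fibre-preserving changes of coordinates (because each $\psi_t$, $\mathcal S_t$ fixes $0$ with identity linearisation, so $\partial_t\psi_t$ and $\partial_t\mathcal S_t$ vanish to first order at the origin, and $\theta_0=0$) is the point one has to watch, and you handle it correctly. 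One minor wording matter, which you have silently corrected in use: the paper calls $\theta$ the ``standard contact form on $B\subset\R^{2n-2}$,'' but since $B$ is even-dimensional and $d\theta$ is used as a symplectic form, $\theta$ is in fact the standard Liouville primitive of the standard symplectic form; your treatment is the right one.
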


We will also need the following trivial result.

\begin{lem}
\label{lem:reparametrise}
Let $Y \in \mathrm{Vect}(B)$ denote a smooth vector field on a manifold $B$, and let $b \in C^{\infty}(B)$ denote a smooth function. Let $X : = bY$. Then the flows $\varphi_Y^s$ and $\varphi_X^s$ of $Y$ and $X$ are related by
\[
\varphi_X^s (x) = \varphi_Y^{\beta(s,x)}(x), \qquad \mathrm{where} \quad \beta(s,x) = \int_0^s b( \varphi_X^r (x  ))\,dr.
\]
\end{lem}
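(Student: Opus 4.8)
The statement to prove is Lemma~\ref{lem:reparametrise}: if $X = bY$ then $\varphi_X^s(x) = \varphi_Y^{\beta(s,x)}(x)$ with $\beta(s,x) = \int_0^s b(\varphi_X^r(x))\,dr$.

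\medskip

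The plan is to verify directly that the right-hand side, call it $\psi(s) := \varphi_Y^{\beta(s,x)}(x)$, solves the ODE defining $\varphi_X^s(x)$, and then to invoke uniqueness of solutions to ODEs. First I would fix $x$ and observe that $\psi(0) = \varphi_Y^{0}(x) = x$, since $\beta(0,x) = 0$; this matches the initial condition for the flow line $s \mapsto \varphi_X^s(x)$. Next I would differentiate $\psi$ in $s$ using the chain rule: by the fundamental theorem of calculus, $\partial_s \beta(s,x) = b(\varphi_X^s(x))$, and since $\tfrac{d}{d\sigma}\varphi_Y^\sigma(x) = Y(\varphi_Y^\sigma(x))$, we get
\[
\frac{d}{ds}\psi(s) = \partial_s\beta(s,x)\cdot Y(\varphi_Y^{\beta(s,x)}(x)) = b(\varphi_X^s(x))\, Y(\psi(s)).
\]

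\medskip

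At this stage there is a subtlety: the factor $b(\varphi_X^s(x))$ is written in terms of $\varphi_X^s$, not $\psi$, so the above is not yet manifestly the ODE $\dot\psi = X(\psi) = b(\psi)Y(\psi)$. The clean way around this is to set up the argument as a fixed-point / uniqueness statement for the nonautonomous ODE along the prescribed curve: let $c(s) := \varphi_X^s(x)$, which by definition satisfies $\dot c(s) = b(c(s))Y(c(s))$ with $c(0) = x$. Define the time-reparametrisation $\beta(s) := \int_0^s b(c(r))\,dr$, a smooth function of $s$ with $\beta(0)=0$ and $\beta'(s) = b(c(s))$. Now consider $\tilde c(s) := \varphi_Y^{\beta(s)}(x)$. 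Then $\tilde c(0) = x$ and, by the chain rule as above, $\dot{\tilde c}(s) = \beta'(s) Y(\tilde c(s)) = b(c(s)) Y(\tilde c(s))$. So both $c$ and $\tilde c$ satisfy the linear (in the sense of having the same prescribed, $s$-dependent coefficient $b(c(s))$) ODE $\dot w(s) = b(c(s)) Y(w(s))$ with the same initial value $w(0) = x$. By uniqueness of solutions of this ODE, $c \equiv \tilde c$, which is exactly the claimed identity $\varphi_X^s(x) = \varphi_Y^{\beta(s,x)}(x)$. Finally, having established $c = \tilde c$, we may rewrite $\beta(s) = \int_0^s b(\varphi_X^r(x))\,dr$ as in the statement, so the two descriptions of $\beta$ agree.

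\medskip

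The only real point requiring care — the ``main obstacle,'' though it is minor — is the one just addressed: one must not try to directly match $\dot\psi(s)$ with $b(\psi(s))Y(\psi(s))$ before knowing $\psi = c$, since a priori the scalar factor is evaluated along $c$. Framing it as uniqueness for the ODE with coefficient $b(c(s))$ prescribed in advance circumvents this cleanly, and all regularity requirements (smoothness of $b$, $Y$, hence local existence and uniqueness of both flows on the ball $B$, which we may shrink if necessary) are automatic in the setting at hand. No further ingredients are needed.
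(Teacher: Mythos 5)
Your proof is correct, and it supplies an argument the paper itself omits: the paper labels this a ``trivial result'' and gives no proof at all. Your handling of the one genuine subtlety is exactly right: after differentiating $\tilde c(s) := \varphi_Y^{\beta(s,x)}(x)$ one obtains $\dot{\tilde c}(s) = b(\varphi_X^s(x))\,Y(\tilde c(s))$ rather than the autonomous equation $\dot{\tilde c} = b(\tilde c)Y(\tilde c)$, and the clean way around this is, as you do, to treat $a(s) := b(\varphi_X^s(x))$ as a prescribed smooth function of $s$, observe that both $s \mapsto \varphi_X^s(x)$ and $\tilde c$ solve the non-autonomous ODE $\dot w = a(s)Y(w)$ with $w(0)=x$, and then invoke uniqueness. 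This is the standard argument for such time-reparametrisation lemmas, and there is nothing to add.
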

One easily checks that the Reeb vector field is given by 
\begin{equation}
\label{eq:reeb_vector_field_in_these_coords}
R(q,t) = \frac{1}{ \theta_q (X^B_{K}(q,t))-  K(q,t)  } \left( \partial_t +  X^B_K(q,t) \right)   
\end{equation}
To avoid confusion we shall denote the flow of $X^B_K$ by $f_K^s$ (instead of say, $\varphi_K^s$). Thus from Lemma \ref{lem:reparametrise}, the Reeb flow $\varphi_R^s : B \times S^1 \to B \times S^1$ is given by
\begin{equation}
\label{eq:reeb_flow}
\varphi_R^s( q, t) = ( f_K^{\beta(s,q,t)}(q),  t+ \beta(s , q,t)),
\end{equation}
where $\beta $ is the function defined in Lemma \ref{lem:reparametrise} associated to 
\begin{equation}
\label{eq:reparam_factor}
b(q,t) : = \frac{1}{ \theta_q (X^B_{K}(q,t))-  K(q,t)  }.
\end{equation}
Using the notation of Lemma \ref{lem:local_model}, let us take 
\[
D :=  B \times \{ 0\}  \cong B
\] 
as an embedded hypersurface transverse to $\gamma$ at $\gamma(0)$. Then we claim that the Poincar\'e map $P: B \to B$ from \eqref{eq:first_return_P_gamma} is given by 
\begin{equation}
\label{eq:poincare_map_in_these_coords}
P(q)=  f_K(q),
\end{equation}
where $f_K = f_K^1$. Indeed, by definition, the map $P$ satisfies
\[
(P(q),0) = \varphi_{R}^{s_q}(q,0),
\]
where $s_q$ is the smallest positive number such that $\beta(s_q , q ,0) =1$, and hence \eqref{eq:poincare_map_in_these_coords} follows directly from \eqref{eq:reeb_flow}.

The map $\varphi$ may not necessarily preserve $B \times S^1$, but we can choose a smaller ball $B' \subset B$ that still contains the origin such that $\varphi(B' \times S^1) \subset  B \times S^1$. Let us write 
\begin{equation}
\label{eq:varphi=xi,a}
\varphi ( q,t) = ( \xi (q, t), a(q,t))
\end{equation}
in these coordinates (since we are only concerned with the germ of $\varphi$ near $\{0 \} \times S^1$, from now on we will abuse notation and think of $B' = B$). We now examine the map $A$ from \eqref{eq:the_varphi_poincare_map} in these coordinates. Recall to define $A$ we start with a point $(q,0) \in B \times \{0\} $. Then we flow along the Reeb flow to the first $s > 0$ such that the point $\varphi_R^s(0,x)$ has the property that $\varphi(\varphi_R^s(0,x)) \in B \times \{0\}$. In other words, we require that $a( \varphi_R^s(q,0)) = 0$. For this choice of $s$ we define $A(q) := \xi( \varphi_R^s(q,0))$. Explicitly, this means
\begin{equation}
\label{eq:varphi_Poincare_map}
A(q) = \xi ( f_K^{w(q)}(q), w(q)),
\end{equation}
where $w(q)$ is defined to be the smallest positive number such that 
\begin{equation}
\label{eq:a_component_of_varphi_Poincare}
a(f_K^{w(q)}(q),w(q)) = 0.
\end{equation}
Note that one can equivalently write $w(q) = \beta(s(q), q,0)$ for some function $s(q)$.  For later use let us note that as $\varphi$ commutes with $\varphi_R^s$, we have
\[
\begin{split}
  (A(q), 0) &  = \varphi( \varphi_R^s(q ,0)) \\
  & = \varphi_R^s(\varphi(q,0)) \\
  & = (f_K^{\beta(s,\varphi(q,0))}(\xi(q,0)), a(q,0)+ \beta(s, \varphi(q,0) ),
  \end{split}
  \]
  and hence we can alternatively define $A$ by
  \begin{equation}
    \label{eq:another_def_of_A}
    A(q) = \big(f_K^{a(q,0)}\big)^{-1}(\xi (q, 0)).
  \end{equation}
Let us now define another function $F : B \times [0,1] \to \R$ via the formula
\begin{equation}
\label{eq:the_function_F}
F(q,t) : = \int_0^t \left(  \theta_q (X^B_{K}(q,r))-  K(q,r)  \right) \circ f_K^r (q) \,dr.
\end{equation}
One easily checks that
\begin{equation}
\label{eq:relation_K_F}
(f_K^t)^* \theta - \theta = dF_t.
\end{equation}
The next result is elementary, and can be proved in a variety of ways (its statement should be intepreted as a sanity check!). The proof we give is a direct computation. 
\begin{lem}
\label{lem:its_a_ham}
The Poincar\'e map $A$  is a Hamiltonian diffeomorphism of the ball $B$. In fact,
\[
A^* \theta - \theta = dG,
\]
where $G$ is the autonomous Hamiltonian $G(q) : = F(q,w(q))$ and $w$ was defined in \eqref{eq:a_component_of_varphi_Poincare}.
\end{lem}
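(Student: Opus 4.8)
The plan is to exhibit $A$ as the composition of the strict contactomorphism $\varphi$ with a ``graph of the $K$-flow'' map, and then to transport the primitive $F$ of \eqref{eq:relation_K_F} along this composition, reading off $G$ at the end.

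First I would record the factorisation. Define $\Psi \colon B \to B \times S^1$ by $\Psi(q) := (f_K^{w(q)}(q), w(q))$, with $w$ the function from \eqref{eq:a_component_of_varphi_Poincare}. By the very definitions of $w$ and of the map in \eqref{eq:varphi_Poincare_map} one has $\varphi \circ \Psi = (A,0)$, i.e.\ $\varphi(\Psi(q)) = (A(q),0) \in B \times \{0\}$. Pulling back the contact form $\alpha|_N = \theta - K\,dt$ along $(A,0)$ kills the $K\,dt$-term, since $dt$ restricts to zero on $B \times \{0\}$, and leaves $A^*\theta$; hence, using that $\varphi$ is \emph{strict} so that $\varphi^*(\theta - K\,dt) = \theta - K\,dt$,
\[
A^*\theta \;=\; (A,0)^*(\theta - K\,dt) \;=\; \Psi^*\varphi^*(\theta - K\,dt) \;=\; \Psi^*(\theta - K\,dt).
\]
So the statement reduces to computing $\Psi^*(\theta - K\,dt)$.

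Next I would compute $\Theta^*(\theta - K\,dt)$ for the auxiliary map $\Theta(q,s) := (f_K^s(q), s)$, of which $\Psi$ is the restriction to the graph $\{ s = w(q) \}$. Separating the $ds$-part: the $dq$-part of $\Theta^*\theta$ is $(f_K^s)^*\theta$, which by \eqref{eq:relation_K_F} equals $\theta + d_qF(\cdot,s)$; the $ds$-part of $\Theta^*\theta$ is $\theta\big(\partial_s f_K^s(q)\big)\,ds = \theta\big(X^B_K(f_K^s(q),s)\big)\,ds$; and $\Theta^*(K\,dt) = K(f_K^s(q),s)\,ds$. Subtracting, and using that by \eqref{eq:the_function_F} one has $\partial_s F(q,s) = \big(\theta_q(X^B_K(q,s)) - K(q,s)\big)\circ f_K^s(q)$, the $ds$-parts combine into $\partial_s F(q,s)\,ds$ and the whole expression collapses to
\[
\Theta^*(\theta - K\,dt) \;=\; \theta + dF,
\]
with $\theta$ pulled back from $B$ and $d$ the full differential on $B \times S^1$. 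Restricting along $q \mapsto (q,w(q))$, whose $B$-component is the identity, then gives $\Psi^*(\theta - K\,dt) = \theta + d\big(F(q,w(q))\big)$, and combining with the previous display yields $A^*\theta - \theta = dG$ with $G(q) = F(q,w(q))$, as claimed.

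The calculation is routine; the only points needing care are bookkeeping rather than substance, and this is where I would be most careful: that $w$ is smooth and positive near the origin (implicit function theorem, using transversality of $\varphi^{-1}(D)$ to $\gamma$, already built into the definition of $A$); that $f_K^r(q) \in B$ for $r \in [0,w(q)]$ and $q$ near $0$ (true by continuity, since $f_K^r(0) = 0$ because $dK_t(0) \equiv 0$); and the periodicity accounting when $w(q) > 1$, so that it is cleanest to view $F$ and the composite flow as living on $B \times \R$ and to use that $K$ is $1$-periodic in $t$. None of this affects the logical skeleton above.
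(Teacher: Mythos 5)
Your proof is correct, but takes a genuinely different and cleaner route than the paper's. The paper argues by brute force: it expands $A^*\theta$ via the chain rule, writes down two auxiliary identities \eqref{eq:xi_relation} and \eqref{eq:a_relation} extracted from $\varphi^*\alpha=\alpha$, and then manipulates the resulting terms (labelled (I) and (II)) together with the differentiated constraint \eqref{eq:a=0} until the unwanted pieces cancel and the definition \eqref{eq:the_function_F} of $F$ appears. Your argument instead factors $A$ as $j\circ A=\varphi\circ\Psi$ with $\Psi(q)=(f_K^{w(q)}(q),w(q))$ and $j(q)=(q,0)$, so that strictness of $\varphi$ kills the entire $\varphi$-dependence in one stroke and the whole computation collapses to the pullback $\Psi^*(\theta-K\,dt)$; this is then done cleanly via the flow-box map $\Theta(q,s)=(f_K^s(q),s)$, for which $\Theta^*(\theta-K\,dt)=\theta+dF$ follows directly from \eqref{eq:relation_K_F} and the formula $\partial_sF(q,s)=\bigl(\theta(X^B_K)-K\bigr)\circ f_K^s(q)$ implicit in \eqref{eq:the_function_F}, after which restricting along $q\mapsto(q,w(q))$ finishes. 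What your approach buys is transparency: it makes it visible that the identity \lq\lq$A^*\theta-\theta$ is exact\rq\rq\ is really the statement \lq\lq$\Theta^*\alpha-\theta$ is exact\rq\rq\ conjugated by a strict contactomorphism, rather than a coincidence emerging from a long cancellation. The minor bookkeeping points you flag (smoothness of $w$ by transversality, containment of the flow segment in $B$, lifting to $B\times\R$ if $w$ exceeds $1$) are the right things to worry about and are all harmless here, since $w$ is $C^0$-close to $w(0)=\eta/T\in(0,1)$ on a small enough ball.
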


To prove the Lemma we use the fact that $\varphi$ preserves $\alpha$. Since $\varphi^* \alpha = \alpha$ and $\alpha = \theta - K dt$, we obtain the following two formulae relating the maps $\xi: B \times S^1 \to B$ and $a :B \times  S^1 \to B$. Given $(q,t) \in B \times S^1 $ and $(  \hat{q}, \hat{ t}) \in T_{(q,t)}(B \times S^1 )$, write $\hat{t} = c \partial_t$ for some $c \in \R$. Then using
\[
(\varphi^*\alpha)_{(q,t)}( \hat{q}, 0) = \alpha_{(q,t)}( \hat{q},0), 
\]
we obtain
\begin{equation}
\label{eq:xi_relation}
\theta_q (\hat{q}) = \theta_{\xi(q,t)} (D_1 \xi(q,t)[ \hat{q}]) - K (\varphi(q,t)) D_1 a(q,t) [ \hat{q}],
\end{equation}
and similarly from 
\[
(\varphi^*\alpha)_{(q,t)}( 0, \hat{t}) = \alpha_{(q,t)}(0 , \hat{t}), 
\]
we obtain
\begin{equation}
\label{eq:a_relation}
-K(q,t) c = \theta_{\xi(q,x)}(D_2 \xi (q,t)[ \hat{t}] ) - K( \varphi(q,t))D_2 a (q,t) c.
\end{equation}

\begin{proof}[Proof of Lemma \ref{lem:its_a_ham}]
Fix $(\hat{q}, \hat{t} = c \partial_t) \in T_{(q,t)}(B \times S^1)$. Write
\[
\Psi(q) = (f^{w(q)}_K(q), w(q)),
\]
so that $A(q)  = \xi( \Psi(q))$. Note that
\begin{equation}
\label{eq:DPsi}
D \Psi(q)[\hat{q}] = \left( Df_K^{w(q)}(q) [\hat{q}] + dw(q)[ \hat{q}]X_K^B(\Psi(q)), dw(q)[\hat{q}] \right) 
\end{equation}
We compute
\[
\begin{split}
(A^*\theta)_q( \hat{q}) & = \theta_{A(q)}( D A(q) [\hat{q}]) \\
& =  \theta_{A(q)}( D \xi( \Psi(q)) \circ D \Psi (q)[\hat{q}])\\
& =    \underset{ := \mathrm{\,(I)}}{\underbrace{\theta_{A(q)} \left[ D_1 \xi (\Psi(q))\left(Df_K^{w(q)}(q) [\hat{q}] + dw(q)[ \hat{q}]X_K^B(\Psi(q)) \right)  \right]}} \\
&  +  \underset{ := \mathrm{\,(II)}}{\underbrace{\theta_{A(q)} \left( D_2\xi(\Psi(q))dw(q)[\hat{q}] \right)}}
\end{split}
\]
Now using 
 \eqref{eq:xi_relation}, we see that 
\begin{equation}
\label{eq:computing_A}
\begin{split}
\mathrm{(I)} & =  \theta_{f_K^{w(q)}(q)} \left( Df_K^{w(q)}(q) [\hat{q}] + dw(q)[ \hat{q}]X_K^B(\Psi(q)) \right) \\
& +K(\varphi (\Psi(q))) D_1 a(\Psi(q)) \left( Df_K^{w(q)}(q) [\hat{q}] + dw(q)[ \hat{q}]X_K^B(\Psi(q))\right) .
\end{split}
\end{equation}
and similarly using
\eqref{eq:a_relation} we see that
\begin{equation}
\label{eq:computing_B}
 \mathrm{(II)} =K (\varphi(\Psi(q)) D_2 a (\Psi(q)) dw(q)[ \hat{q}]  -  K(\Psi(q)) dw(q)[ \hat{q}].
 \end{equation}
 
To proceed further we differentiate the equation
\[
a(\Psi(q)) = a(f_K^{w(q)}(q),w(q))=0
\]
with respect to $q$, which gives us
\begin{equation}
\label{eq:a=0}
D_1 a (\Psi(q)) \left[ Df_K^{w(q)}(q) [\hat{q}] + dw(q)[ \hat{q}]X_K^B(\Psi(q))  \right] + D_2 a (\Psi(q)) dw(q)[ \hat{q}] =0.
\end{equation}
Thus using \eqref{eq:a=0}, we see we can rewrite \eqref{eq:computing_A} as 
\begin{equation}
\label{eq:new_A}
\begin{split}
\mathrm{(I)} & =   \theta_{f_K^{w(q)}(q)} \left( Df_K^{w(q)}(q) [\hat{q}] + dw(q)[ \hat{q}]X_K^B(\Psi(q)) \right) \\
& - K(\varphi (\Psi(q))) D_2 a (\Psi(q)) dw(q)[ \hat{q}] .
\end{split}
\end{equation}
Now we are happy, since combining \eqref{eq:computing_B} and \eqref{eq:new_A}, we see that terms cancel, and
\[
\begin{split}
(A^* \theta)_q ( \hat{q}) &= \mathrm{(I)} + \mathrm{(II)} \\
& = \theta_{f_K^{w(q)}(q)} \left( Df_K^{w(q)}(q) [\hat{q}] + dw(q)[ \hat{q}]X_K^B(\Psi(q)) \right)  -  K(\Psi(q)) dw(q)[ \hat{q}] \\
& = dw(q)[ \hat{q}] (   \theta_q(X_K^B (\Psi(q))) - K (\Psi(q)) ) + \theta_q ( \hat{q}) + dF_{w(q)}(q)[\hat{q}],
\end{split}
\]
where the last line used \eqref{eq:relation_K_F}. But now we are done, since from \eqref{eq:the_function_F}, we see that
\[
dG(q)[ \hat{q}] = \frac{d}{dq} F(q, w(q)) [ \hat{q}] =  dF_{w(q)}(q)[\hat{q}] + dw(q)[ \hat{q}] (   \theta_q( X_K^B (\Psi(q))) - K(\Psi(q)) ) ,
\]
and hence $A^*\theta - \theta = dG$ as claimed.
\end{proof} 

There is another natural way the map $A$ shows up. Let us consider the \textbf{mapping cylinder} of the Poincar\'e map $P$:
\[
  E := B \times \R / \sim, \qquad (q,t) \sim (P(q), t -1).
\]  
The mapping cylinder $E$ is a trivial fibre bundle over $S^1$, with trivialisation 
\[
T : B \times S^1  \to E, \qquad T(q,t) := \left( (f_K^t)^{-1}(q ), t \right)   
\]
Let us denote by $\hat{ \varphi}_R^s : E \to E$ induced map, defined by $ \hat{\varphi}_R^s \circ T = T \circ \varphi_R^s $. Explicitly, 
\[
  \hat{\varphi}_R^s(q,t) = (q, t + \beta(s,f_K^t(q),t)),
\]
where $\beta$ is as \eqref{eq:reeb_vector_field_in_these_coords}. 
\begin{lem}
\label{lem:mapping_cylinder}
If $\hat{\varphi}: E \to E$ denotes the map induced by $\varphi$, then the first component of $\hat{\varphi}$ is given by $A$:
\[
  \hat{\varphi}(q,t) = (A(q), a(f_K^t(q),t)).
\]
Moreover one can write
\[
  a(f_K^t(q), t) =  \beta(h(q,t), f_K^{a(q,0)}(A(q)), a(q,0)),
\]
where $h(q,t)$ is the smallest positive number such that $\beta(h(q,t),q,0)=t$.
\end{lem}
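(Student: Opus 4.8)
The plan is to unwind the defining relation $\hat\varphi\circ T=T\circ\varphi$ and then exploit the fact that $\varphi$ commutes with the Reeb flow.

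First I would produce an explicit formula for $\hat\varphi$ in the coordinates $(q,t)$ on $E=B\times\R/\sim$. Since $T(q,t)=((f_K^t)^{-1}(q),t)$, one has $T^{-1}(q,t)=(f_K^t(q),t)$, so $\hat\varphi=T\circ\varphi\circ T^{-1}$ together with $\varphi(q,t)=(\xi(q,t),a(q,t))$ from \eqref{eq:varphi=xi,a} gives
\[
\hat\varphi(q,t)=T\big(\varphi(f_K^t(q),t)\big)=\Big(\big(f_K^{\,a(f_K^t(q),t)}\big)^{-1}\big(\xi(f_K^t(q),t)\big),\ a(f_K^t(q),t)\Big).
\]
This identifies the second component of $\hat\varphi$ with the claimed expression, and reduces the lemma to showing that the first component depends only on $q$ and equals $A(q)$.

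For the first component I would argue geometrically. The reparametrisation factor $b$ from \eqref{eq:reparam_factor} is everywhere positive: by \eqref{eq:never_zero} together with $K(0,\cdot)\equiv-T$, $dK_t(0)\equiv0$, the quantity $\theta(X^B_K)-K$ equals $T>0$ along $\gamma$, hence (after shrinking $B$) on all of $B\times S^1$. Consequently $s\mapsto\beta(s,q,0)$ is a surjective increasing diffeomorphism of $\R$, so by \eqref{eq:reeb_flow} the Reeb orbit through $(q,0)\in E$ is exactly the set $\{(q,t)\mid t\in\R\}$; in particular the $B$-coordinate is constant along Reeb orbits of $E$. Since $\varphi$ commutes with the Reeb flow, $\hat\varphi$ maps Reeb orbits to Reeb orbits, so the $B$-component of $\hat\varphi(q,t)$ does not depend on $t$. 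Evaluating at $t=0$, where $f_K^0=\mathrm{id}$, and invoking \eqref{eq:another_def_of_A}, this component equals $\big(f_K^{a(q,0)}\big)^{-1}(\xi(q,0))=A(q)$. (A purely computational alternative: write $(f_K^t(q),t)=\varphi_R^{h(q,t)}(q,0)$, push $\varphi$ through using commutativity, expand via \eqref{eq:reeb_flow}, and simplify with the group law of $f_K$ and \eqref{eq:another_def_of_A}; the dependence on $h(q,t)$ cancels.)

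For the ``moreover'' assertion I would keep track of the $t$-coordinate in this last computation. Writing $(f_K^t(q),t)=\varphi_R^{h(q,t)}(q,0)$ --- legitimate since $s\mapsto\beta(s,q,0)$ is a local diffeomorphism at $s=0$, with $h(q,t)$ the smallest positive time for which $\beta(h(q,t),q,0)=t$ --- commutativity gives $\varphi(f_K^t(q),t)=\varphi_R^{h(q,t)}(\varphi(q,0))=\varphi_R^{h(q,t)}\big(\xi(q,0),a(q,0)\big)$, and reading off the $t$-component with \eqref{eq:reeb_flow}, using $\xi(q,0)=f_K^{a(q,0)}(A(q))$ from \eqref{eq:another_def_of_A} and the cocycle property of $\beta$, yields $\beta\big(h(q,t),f_K^{a(q,0)}(A(q)),a(q,0)\big)$.

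The only real difficulty is bookkeeping. The symbol $f_K^s$ tacitly denotes the (time-dependent) Hamiltonian flow of $K_t$; the cocycle $\beta$ carries a base point that must be propagated through every substitution; and the quotient identifications defining $E$ need care. One must also shrink $B$ at the outset so that $w$, $h$ and the various first-return times are single-valued smooth functions. None of this is deep, but it is easy to lose a sign or an offset.
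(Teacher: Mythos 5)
Your argument is essentially the same as the paper's: write $\hat\varphi = T\circ\varphi\circ T^{-1}$ explicitly, push $\varphi$ through the Reeb flow via commutativity, evaluate at $t=0$, and invoke \eqref{eq:another_def_of_A}; the geometric observation that $\hat\varphi$ preserves the vertical Reeb fibres $\{q\}\times\R$ of $E$ is a pleasant way to see the $t$-independence of the first component, but it collapses to the same calculation the paper performs. One small bookkeeping point: carrying out the computation carefully yields $a(f_K^t(q),t)=a(q,0)+\beta\bigl(h(q,t),f_K^{a(q,0)}(A(q)),a(q,0)\bigr)$, so both your final formula and the Lemma as stated appear to drop the additive $a(q,0)$ that the paper's own derivation (equation \eqref{eq:varphi_less_messy}) produces; you can see the term cannot be absorbed by setting $t=0$, where $h=0$ and $\beta=0$ but $a(q,0)$ need not vanish.
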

\begin{proof}
By definition, writing $\varphi = ( \xi ,a)$ as before, the map $\hat{\varphi}$ is given by
\begin{equation}
\label{eq:varphi_messy}
  \hat{\varphi}(q,t) = \left(  \Big(f_K^{a(f_K^t(q),t)}\Big)^{-1}(\xi(f_K^t(q),t)), a(f_K^t(q),t)) \right) 
\end{equation}
Since $\varphi$ commutes with $\varphi_R^s$, so do $\hat{\varphi}$ and $\hat{\varphi}_R^s$, and we see that
\[
\begin{split}
\hat{\varphi}(q, \beta(h,q,0)) & = \hat{\varphi}(\hat{\varphi}_R^h(q,0)) \\
   &  = \hat{\varphi}_R^h( \hat{\varphi}(q,0)) \\
  & = \hat{\varphi}_R^h \left( \Big(f_K^{a(q,0)}\Big)^{-1}(\xi(q,0)), a(q,0))  \right) \\
 & = (A(q), a(q,0) + \beta(h ,  f_K^{a(q,0)}(A(q)), a(q,0))),
\end{split}
\]
where the last line used \eqref{eq:another_def_of_A}. Thus we can alternatively write \eqref{eq:varphi_messy} as
\begin{equation}
  \label{eq:varphi_less_messy}
  \hat{\varphi}(q,t) = (A(q), a(q,0) + \beta(h(q,t), f_K^{a(q,0)}(A(q)), a(q,0)),
\end{equation}
where $h(q,t)$ is the smallest positive number such that $\beta(h(q,t),q,0)=t$. 
\end{proof}

We will now get started on the proof of Proposition \ref{prop:reduction_to_poincare_map}.

\begin{proof}[Proof of Proposition \ref{prop:reduction_to_poincare_map}]
We have by definition that
\[
\HF^{\loc}(\widetilde{L}^c , \Gamma_k) = \HF^{\loc}( \Phi_{\widetilde{L}^c}^1 , P(\Gamma_k)).
\]
Since $\HF^{\loc}( \Phi_{\widetilde{L}^c}^1 , P(\Gamma_k))$ depends only on the germ of $\Phi_{\widetilde{L}^c}^1$ on a neighbourhood of $P(\Gamma_k)$, we can simplify things slightly and drop the various cutoff functions. Thus \textbf{for the rest of this section only} let us redefine
\[
\widetilde{H} : S \Sigma \times T^*\R \to \R, \qquad \widetilde{H}(x,r,\tau, \sigma) := \tau(r-1) + \frac{1}{2} \sigma^2,
\]
and write simply $\Phi$ for the Hamiltonian diffeomorphism of $S \Sigma \times T^*\R$ given by
\[
\Phi( x, r, \tau ,\sigma ) = ( \varphi(x), r, \tau , \sigma).
\]
Then 
\[
\HF^{\loc}( \Phi_{\widetilde{L}^c}^1 , P(\Gamma_k)) = \HF^{\loc}( \Phi \circ \Phi_{\widetilde{H}}^1 , P(\Gamma_k)).
\]
Set $S_k = P(\Gamma_k) \subset S \Sigma \times T^*\R$.  An isolating neighbourhood for $S_k$ can be taken for instance as
\[
\widetilde{N}_k = E \times (\tfrac{1}{2}, \tfrac{3}{2}) \times ( \eta + kT - \varepsilon, \eta + kT + \varepsilon) \times ( -\tfrac{1}{2}, \tfrac{1}{2}),
\]
where as before $E \cong B \times S^1$ is the mapping cylinder of $P$. The Hamiltonian diffeomorphism $\Phi_{\widetilde{H}} = \Phi_{\widetilde{H}}^1$ is given in a neighbourhood of $S_k \subset \widetilde{N}_k$ by
\[
\Phi_{\widetilde{H}}^s ( q, t, r , \tau ,\sigma) = \left( q,  t+ \beta(\tau s , f_K^t(q),t),  r, \tau - s \sigma - \tfrac{1}{2} s^2 (r-1 ), \sigma +s (r- 1)\right), 
\]
and thus the composition $\Phi \circ \Phi_{\widetilde{H}}^1$ is given by
\[
\Phi \circ \Phi_{\widetilde{H}}^1(q ,t ,r, \tau, \sigma) = \left(A(q), a\left(f_K^{t + \beta(\tau ,f_K^t(q),t)}(q),t + \beta (\tau, f_K^t(q), t)\right), r, \tau -\sigma - \tfrac{1}{2}(r-1), \sigma + r -1\right).
\]
Note by the definition of $E$ we can alternatively write this as 
\[
\Phi \circ \Phi_{\widetilde{H}}^1(q ,t ,r, \tau, \sigma) = \left(AP^k(q), a\left(f_K^{t + \beta(\tau ,f_K^t(q),t)}(q),t + \beta (\tau, f_K^t(q), t)\right) - k, r, \tau -\sigma - \tfrac{1}{2}(r-1), \sigma + r -1\right).
\]
To complete the proof of Proposition \ref{prop:reduction_to_poincare_map}, we must show that
\begin{equation}
\label{eq:reduction_in_local_coords}
 \HF^{\loc}( \Phi \circ \Phi_{\widetilde{H}}^1 , S_k) =  \HF^{\loc}( A  P^k , 0) \otimes \mathrm{H}^{\mathrm{sing}}(S^1 ; \Z_2)
\end{equation}
We will construct a family $ \{ \Psi_ \lambda \}_{ \lambda \in [ 0,1 ] }$ of (germs of)  diffeomorphisms of $\widetilde{N}_k$ such that 
\[
\Psi_0 = \Phi \circ \Phi_{\widetilde{H}}^1\] 
and such that
\[
\Psi_1( q, t,  r, \tau ,\sigma) = \left( A P^k (q) , t + \frac{1}{T}( \tau - kT  - \eta) , r, \tau - \sigma - \frac{1}{2} (r - 1),\sigma + r - 1 \right),
\]
and such that $S_k$ is a uniformly isolated set of fixed points for $\Psi_ \lambda$ for all $\lambda \in [0,1]$. Moreover at the same time we will construct a family of symplectic forms 
\[
\omega_ \lambda = \omega + d \zeta_ \lambda,
\]
where $\zeta_ \lambda \in \Omega^1(\widetilde{N}_k)$ are one-forms vanishing on $S_k$ satisfying 
\begin{equation}
\label{eq:zeta_req}
 \zeta_0 = 0, \qquad \zeta_1 = (1-r)\theta + (T + K)r dt, 
 \end{equation} 
so that in particular one has
\[
\omega_1 =  d \theta + T dr \wedge d t - d \sigma \wedge d \tau.
\]
In addition we will require that
\[
\Psi_ \lambda \in \mathrm{Ham}(\widetilde{N}_k , \omega_ \lambda)
\]
for all $\lambda \in [0,1]$.\\

Let us assume for the moment we have constructed such families $(\Psi_ \lambda , \zeta_ \lambda)$. It follows from Lemma \ref{lem:master_invariance} that
\[
\HF^{\loc}_{\omega_0}( \Psi_0 , S_k ) \cong \HF^{\loc}_{\omega_1}( \Psi_1 , S_k), 
\]
Next, we claim that
\[
\HF^{\loc}_{\omega_1}( \Psi_1 , S_k ) \cong \HF^{\loc}_{d \theta}( A  P^k , 0) \otimes \mathrm{H}(S^1 ; \Z_2).
\]
Indeed, if we write
\[
\Psi_1(q,t,r, \tau, \sigma) = ( AP^k (q), \Psi_1' (t, r, \tau, \sigma)),
\]
and $S_k = \{0 \} \times S_k' $ and $\omega_1 = d \theta + \omega_1'$, then by the K\"unneth formula in local Floer homology (Lemma \ref{lem:kunneth}), we have
\[
\HF^{\loc}_{\omega_1}( \Psi_1 , S_k ) \cong \HF^{\loc}_{d \theta}(AP^k ,0) \otimes \HF^{\loc}_{\omega_1'}(\Psi_1' , S_k' ).
\]
Finally, it is easy to check that $S_k '$ is a Morse-Bott set of fixed points for $\Psi_1'$, and thus $\HF^{\loc}_{\omega_1'}(\Psi_1' , S_k'  ) \cong \mathrm{H}^{\mathrm{sing}}(S_k ' ; \Z_2)$ cf. Example \ref{ex:morse_bott_component}.

It remains therefore to construct the families $\Psi_ \lambda$ and $\zeta_ \lambda$. There are several ways this can be done, and the argument is essentially contained in Section 3 of \cite{McLean2012}, compare also Section 6 of \cite{HryniewiczMacarini2012}. The idea is to homotope the function $b(q,t)$ from \eqref{eq:reparam_factor} via a family $\{ b_{\lambda} \}_{ \lambda \in [0,1]}$ of functions so that $b_0 = b$, $b_{\lambda}(0,t) = 1/T$ for all $\lambda$, and such that $b_1(q,t) \equiv 1/T$. An explicit, but unenlightening computation shows that for an appropriate choice of $b_{\lambda}$, we can choose a family $\{ \zeta_{\lambda} \}_{\lambda \in [0,1]}$ of  one-forms satisfying \eqref{eq:zeta_req} and a a family $ \{ \widetilde{H}_{\lambda} \}_{ \lambda \in [0,1]}$ of functions satisfying
\[
  \widetilde{H}_0 = \widetilde{H}, \qquad \widetilde{H}_1 = \widetilde{H}+ K,
\]
 such that the symplectic gradient $X_{\widetilde{H}_{\lambda} ; \omega_{\lambda}}$ of $\widetilde{H}_{\lambda}$ with respect to $\omega_{\lambda} := \omega + d \zeta_{\lambda}$ is of the form
\[
  X_{\widetilde{H}_{\lambda} ; \omega_{\lambda}}(q,t,r, \tau ,\sigma) = \tau b_{\lambda}(q,t) \left(  \partial_t + X_K^B(q,t) \right) - \sigma \partial_\tau+ (r-1)\partial_{\sigma}.  
\]
This means that the flow $\Phi^s_{\widetilde{H}_{\lambda}, \omega_{\lambda}} : \widetilde{N}_k \to \widetilde{N}_k$ is still of the form
\[
\Phi_{\widetilde{H}_{\lambda};\omega_{\lambda}}^s ( q, t, r , \tau ,\sigma)  
  = \left(  q,  t+ \beta_{\lambda}(\tau s , f_K^t(q),t),  r, \tau - s \sigma - \frac{1}{2} s^2 (r-1 ), \sigma +s (r- 1)\right),
\]
where $\beta_{\lambda}$ is the function obtained from $b_{\lambda}$ via Lemma \ref{lem:reparametrise}. In particular for $\lambda = 1$ one has 
\[
\Phi_{\widetilde{H}_1;\omega_1}^1 ( q, t, r , \tau ,\sigma)  
  = \left(  q,  t+ \frac{\tau}{T},  r, \tau - \sigma - \frac{1}{2}  (r-1 ), \sigma + (r- 1)\right).
\]
Moreover, using Lemma \ref{lem:mapping_cylinder}, one can find a family $a_{\lambda} : B \times S^1 \to S^1 $ of smooth functions such that $a_0 = a$, $a_{\lambda}(0,t) = t - \eta /T$ (mod 1) for all $\lambda$,  and such that  $a_1(q,t) \equiv t - \eta/T $ (mod 1), with the property that if $\Phi_{\lambda}$ is defined by 
\[
\Phi_{\lambda}(q,t, r, \tau ,\sigma) = (A(q),a_{\lambda}(f_K^t (q), t)), r, \tau ,\sigma ),
\]
 then $\Phi_{\lambda}$ is a family of $\omega_{\lambda}$-symplectomorphisms which commute with $\Phi_{\widetilde{H}_{\lambda} ;\omega_{\lambda}}^s$. Thus the composition $\Phi_{\lambda} \circ \Phi^1_{\widetilde{H}_{\lambda} ; \omega_{\lambda}}$ is given by 
\[
\Phi_{\lambda} \circ \Phi_{\widetilde{H}_{\lambda};\omega_{\lambda}}^1 ( q, t, r , \tau ,\sigma)  
  = \left( A(q), a_{\lambda}(f_K^{t + \beta_{\lambda}(\tau, f_K^t(q),t)}(q), t+ \beta_{\lambda}(\tau,  f_K^t(q),t)), r , \tau - \sigma - \frac{1}{2}(r-1), \sigma + r - 1 \right), 
\]
and in particular for $\lambda  = 1$ is simply 
\[
\begin{split}
\Phi_1\circ \Phi_{\widetilde{H}_1;\omega_1}^1 ( q, t, r , \tau ,\sigma)  
&   = \left( A(q), t + \frac{1}{T}(\tau - \eta ), r , \tau - \sigma - \frac{1}{2}(r-1), \sigma + r - 1 \right) \\
& = \left( AP^k(q),  \frac{1}{T}(\tau - kT - \eta ), r , \tau - \sigma - \frac{1}{2}(r-1), \sigma + r - 1 \right) .
\end{split}
\]
It follows directly from the construction that $S_k$ remains uniformly isolated during this deformation, and this completes the proof. 

\end{proof}

In the next section we will prove an extension of a result of Ginzburg and G\"urel  \cite{GinzburgGurel2010} on the persistence of local Floer homology. The next corollary is an immediate consequence of Proposition \ref{prop:reduction_to_poincare_map} and this result (stated as  Theorem \ref{thm:GG2} below). We continue to adopt the notation from Proposition \ref{prop:reduction_to_poincare_map}.

\begin{cor}
\label{cor:uniform_bound_for_us}
Suppose $\gamma : \R \to \Sigma$ is a closed invariant Reeb orbit for a strict contactomorphism $\varphi$. Assume that $\Gamma_k$ is an isolated subset of $\mathcal{P}_1(\widetilde{L}^c)$ for each $k \in \N$. Then there exists a constant $C > 0$ such that
\[
  \mathrm{rank\,}\HF^{\loc}(\widetilde{L}^c ; \Gamma_k) \le C, \qquad \forall\, k \in \N.
\]
\end{cor}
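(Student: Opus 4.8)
The plan is to feed Proposition~\ref{prop:reduction_to_poincare_map} into the persistence result Theorem~\ref{thm:GG2} (stated in the next section). By Proposition~\ref{prop:reduction_to_poincare_map} we have
\[
\HF^{\loc}(\widetilde{L}^c , \Gamma_k) \cong \HF^{\loc}(AP^k , p) \otimes \mathrm{H}(S^1 ; \Z_2),
\]
and since $\dim \mathrm{H}(S^1 ; \Z_2) = 2$, a uniform bound $\dim \HF^{\loc}(AP^k , p) \le C_0$ immediately yields $\dim \HF^{\loc}(\widetilde{L}^c , \Gamma_k) \le 2 C_0 =: C$ for all $k \in \N$. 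So the entire content of the corollary is the uniform bound on $\dim \HF^{\loc}(AP^k , p)$.

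To obtain that bound I would first check that the situation fits the hypotheses of Theorem~\ref{thm:GG2}: $P$ is a Hamiltonian diffeomorphism germ at $p$ (classical, and in the model of Lemma~\ref{lem:local_model} one simply has $P = f_K^1$), $A$ is a Hamiltonian diffeomorphism germ at $p$ by Lemma~\ref{lem:its_a_ham}, the germs of $A$ and $P$ commute (as shown above), and $p$ is an isolated fixed point of $AP^k$ for every $k$ — which is exactly the translation of the hypothesis that each $\Gamma_k$ is an isolated subset of $\mathcal{P}_1(\widetilde{L}^c)$, as recorded in the discussion preceding Proposition~\ref{prop:reduction_to_poincare_map}. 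Granting Theorem~\ref{thm:GG2}, it then applies to the commuting pair $(A,P)$ at $p$ and produces the constant $C_0$, finishing the proof.

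The one genuine point — and the reason Theorem~\ref{thm:GG2} is needed in place of the original Ginzburg--G\"urel statement — is that $\{AP^k\}_{k \in \N}$ is not an honest iterated family $\phi^k$ but a \emph{twisted} one. In spirit the bound still comes from the localisation argument of \cite[Section~3]{GinzburgGurel2010}: writing $D(AP^k)(p) = DA(p)\,(DP(p))^k$ with $DA(p)$ and $DP(p)$ commuting, only finitely many residue classes of $k$ produce eigenvalues of $AP^k$ that are roots of unity of a "new" order, and along each of the remaining classes the continuation map obtained by inserting or deleting a full turn of $\gamma$ identifies $\HF^{\loc}(AP^k , p)$ with $\HF^{\loc}(AP^{k'} , p)$ up to a grading shift; hence the ranks stay bounded. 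Making this robust for the composed germs $AP^k$ — in particular keeping all continuation data supported near $p$ — is the main obstacle, and it is precisely what Theorem~\ref{thm:GG2} is designed to isolate, so that within the body of the paper the present corollary is genuinely immediate.
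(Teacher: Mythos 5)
Your proposal is correct and follows the paper's route: reduce via Proposition~\ref{prop:reduction_to_poincare_map} to the local Floer homology of $AP^k$ at $p$, then invoke the Ginzburg--G\"urel persistence machinery for the commuting pair $(A,P)$ to obtain a uniform bound on $\mathrm{rank\,}\HF^{\loc}(AP^k,p)$, and multiply by $\dim \mathrm{H}(S^1;\Z_2)=2$. This is exactly the deduction the paper has in mind when it writes that the corollary ``is an immediate consequence of Proposition~\ref{prop:reduction_to_poincare_map} and this result.''

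One point worth sharpening in your exposition: Theorem~\ref{thm:GG2} itself only produces the isomorphism $\HF^{\loc}(\varphi\psi^k,x)\cong\HF^{\loc}(\psi,x)$ for \emph{admissible} $k>k_0$, so ``Granting Theorem~\ref{thm:GG2}, it $\dots$ produces the constant $C_0$'' slightly overstates matters. What you actually need to cite is Corollary~\ref{cor:uniform_bound}, which the paper extracts from Theorem~\ref{thm:GG2} precisely by the residue-class regrouping that you sketch in your final paragraph (non-admissible $k$ arise from finitely many pairs of common roots of unity of $DA(p)$ and $DP(p)$, and each residue class is handled by applying Theorem~\ref{thm:GG2} to the reparametrised pair $(\tilde\varphi_j,\tilde\psi_j)=(AP^{m_j},P^{p_j})$). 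So your last paragraph is not merely ``in spirit'' --- it is the actual argument, and it is recorded in the paper as the proof of Corollary~\ref{cor:uniform_bound}. Beyond that, all the hypotheses are correctly verified: $A$ is Hamiltonian by Lemma~\ref{lem:its_a_ham}, $P=f_K^1$ is Hamiltonian, their germs commute, $(B,d\theta)$ is exact hence symplectically aspherical, and isolation of $\Gamma_k$ in $\mathcal{P}_1(\widetilde{L}^c)$ translates into $p$ being an isolated fixed point of $AP^k$.
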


The proof of Theorem \ref{thm:infinitely_many} is by now a standard argument using Corollary \ref{cor:uniform_bound_for_us}.

\begin{proof}[Proof of Theorem \ref{thm:infinitely_many}]
Assume that $\varphi$ has only finitely many invariant Reeb orbits. We will show that the positive growth rate $\Gamma_+(M_1, \lambda_1)$ is at most 1. Since there are only finitely many invariant orbits, they are necessarily isolated (as invariant orbits). Let us enumerate these orbits as $\delta_1 ,\dots ,\delta_p, \gamma_1, \dots , \gamma_q$, where the orbits $\delta_j$ are not closed, and the orbits $\gamma_j$ are closed of minimal period $T_j$. As explained above, each $\delta_j$ gives rise to a unique component $\Delta_j \subset \mathcal{P}_1( \widetilde{L}^c)$, and each $\gamma_j$ givese rise to a family $\{ \Gamma_{j,k} \}_{k \in \N}$ of components of $\mathcal{P}_1( \widetilde{L}^c)$. By Corollary \ref{cor:uniform_bound_for_us}, there is a constant $C > 0$ such that for all $j, k$,
\[
  \mathrm{rank\,} \HF^{\loc}(\widetilde{L}^c , \Delta_j) \le C, \qquad  \mathrm{rank\,} \HF^{\loc}(\widetilde{L}^c , \Gamma_{j,k}) \le C.
\]
It then follows directly from Lemma \ref{lem:building_blocks} that the growth rate of $\Gamma_+( \widehat{\varphi})$ is at most 1. Theorem \ref{thm:HF=RFH} then completes the proof.
\end{proof}

\section{An extension of the  Ginzburg-G\"urel theorem}
\label{sec:GG}

\subsection{Statement of the theorem}

In this section we will prove a minor extension of a result of Ginzburg and G\"urel on the persistence of the local Floer homology groups under iteration of an isolated fixed point of a Hamiltonian diffeomorphism. 

\begin{defn}
Suppose that $\varphi$ is a diffeomorphism of a smooth manifold $M^m$ and $x \in M$ is a fixed point of $ \varphi$. Let $\lambda_1 ,\dots , \lambda_m$ denote the eigenvalues of $D \varphi(x)$ (with multiplicities). We say that an integer $k \in \N$ is \textbf{$(\psi,x)$-admissible} or simply \textbf{admissible} if 
\[
\lambda_i^k = 1 \quad \then \quad \lambda_i = 1, \qquad \forall \, i = 1, \dots m.
\]
\end{defn}

If $x$ is an isolated fixed point of $\varphi $ and $k$ is admissible then $x$ is necessarily also an isolated fixed point of $\varphi$ (\cite[Proposition 1.1]{GinzburgGurel2010}). The following result is proved in \cite[Theorem 1.1]{GinzburgGurel2010}. 

\begin{thm}
\label{thm:GG}
Let $( M, \omega)$ denote a symplectically aspherical manifold, and suppose $\psi$ is a Hamiltonian diffeomorphism of $(M, \omega)$. Suppose $x \in M$ is an isolated fixed point of $ \psi$. Then for each admissible $k$ one has
\begin{equation}
\label{persistence_of_loc_FH}
\HF^{\loc}( \psi^k ,x) \cong \HF^{\loc}( \psi ,x),
\end{equation}
i.e. the local Floer homology groups persist under iteration (up to a degree shift).
\end{thm}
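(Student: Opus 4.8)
The plan is to follow the strategy of Ginzburg and G\"urel, organised around the two formal properties of local Floer homology established above: its invariance under deformations keeping the fixed point uniformly isolated (Lemma~\ref{lem:master_invariance}) and the K\"unneth formula (Lemma~\ref{lem:kunneth}). Since everything is local we may pass to a Darboux chart and assume $x=0\in(\R^{2n},\omega_0)$, with $\psi=\phi_H^1$ for a compactly supported $H$; then $\psi^k=\phi_{H^{(k)}}^1$ for $H^{(k)}_t(y):=kH_{kt}(y)$, so $\psi$ and its iterates always come in smooth families.

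The first main step is to split off the non-degenerate part of the linearisation. Set $A:=D\psi(0)\in\Sp(2n)$ and decompose $\R^{2n}=V\oplus V'$ symplectically, with $V$ the generalised $1$-eigenspace of $A$ and $V'$ the sum of the remaining generalised eigenspaces, so that $1\notin\mathrm{spec}(A|_{V'})$; admissibility of $k$ forces $1\notin\mathrm{spec}(A^k|_{V'})$ as well. I would then construct a homotopy from $\psi$ to a split map $\psi_V\times A'$, where $A':=A|_{V'}$ and $\psi_V$ is a germ at $0\in V$ with unipotent linearisation, arranged so that $0$ stays \emph{uniformly} isolated both as a fixed point of the deforming diffeomorphism and of its $k$-th power. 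At the split endpoint this is automatic: a fixed point of $\psi_V\times A'$ near $0$ must have the form $(v,0)$ with $v$ fixed by $\psi_V$, since $A'$ has no eigenvalue $1$; the same bookkeeping, using admissibility, applies to $\psi_V^k\times(A')^k$, and in particular $0$ is isolated for $\psi_V$ because it is for $\psi$. Applying Lemma~\ref{lem:master_invariance} and then Lemma~\ref{lem:kunneth} to $\psi$ and to $\psi^k$ gives
\[
\HF^{\loc}(\psi,0)\cong\HF^{\loc}(\psi_V,0)\otimes\HF^{\loc}(A',0),\qquad
\HF^{\loc}(\psi^k,0)\cong\HF^{\loc}(\psi_V^k,0)\otimes\HF^{\loc}((A')^k,0).
\]
As $A'$ and $(A')^k$ are linear symplectic maps without eigenvalue $1$, each is a non-degenerate fixed point, i.e. a Morse--Bott component over a point, so by Example~\ref{ex:morse_bott_component} the factors $\HF^{\loc}(A',0)$ and $\HF^{\loc}((A')^k,0)$ are one-dimensional over $\Z_2$ and contribute only a degree shift. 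Hence it remains to treat the case $V'=0$, i.e. $A$ unipotent, for which every $k$ is admissible.

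In that totally degenerate case I would deform $A$ to the identity inside the unipotent cone, carrying $\psi$ along, and then perturb further so that $\psi=\phi_H^1$ for an \emph{autonomous} $H$ with $C^2$-small germ at $0$ and $0$ an isolated critical point — again keeping $0$ uniformly isolated for the diffeomorphism and its $k$-th power throughout. For such small autonomous Hamiltonians the standard local Floer-equals-Morse estimates identify $\HF^{\loc}(\phi_H^1,0)$ with the local Morse homology $\HM^{\loc}_*(H,0)$ of $H$ at $0$. Since $H$ is autonomous, $\phi_H^k=\phi_{kH}^1$, and for the fixed $k$ (with $H$ taken small enough) $kH$ is again a small autonomous Hamiltonian whose sublevel-set germ at $0$ agrees with that of $H$ up to reparametrisation, so $\HM^{\loc}_*(kH,0)\cong\HM^{\loc}_*(H,0)$. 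Concatenating the isomorphisms yields \eqref{persistence_of_loc_FH}.

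The main obstacle is the two normal-form homotopies just described, carried out so that $0$ remains uniformly isolated as a fixed point of the whole family \emph{and} of its $k$-th powers at the same time; keeping control of the $k$-th powers is precisely where admissibility enters and where the quantitative estimates of \cite{GinzburgGurel2010} are needed. By contrast, once these deformations are available the K\"unneth reduction and the Floer-equals-Morse computation are routine, and the standing assumptions that $(M,\omega)$ is symplectically aspherical with $c_1(TM)$ torsion are exactly what keep those routine steps clean.
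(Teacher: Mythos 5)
The paper does not actually prove Theorem~\ref{thm:GG}; it cites \cite[Theorem~1.1]{GinzburgGurel2010} and instead proves the extension Theorem~\ref{thm:GG2} (of which Theorem~\ref{thm:GG} is the case $\varphi=\mathrm{Id}$). The relevant comparison is therefore with that proof, which runs through the same three cases you set out: a non-degenerate case handled by Example~\ref{ex:morse_bott_component}, a K\"unneth reduction of the general case, and a maximally degenerate case handled by Theorem~\ref{thm:large_iterates_always_work} via generating functions.

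Your first paragraph (the K\"unneth splitting into the generalized $1$-eigenspace $V$ and its complement $V'$, the observation that admissibility keeps $1$ out of $\mathrm{spec}(A^k|_{V'})$, and the reduction to the unipotent case via Example~\ref{ex:morse_bott_component} and Lemma~\ref{lem:kunneth}) is correct and matches Case~3 of the paper's proof of Theorem~\ref{thm:GG2}.

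The genuine gap is in the second paragraph. You propose to ``deform $A$ to the identity inside the unipotent cone, carrying $\psi$ along, and then perturb further so that $\psi=\phi_H^1$ for an autonomous $H$ with $C^2$-small germ at $0$''. Neither step is available. First, one does not deform the linearisation $A$ to the identity: the correct operation (Lemma~\ref{lem:baby_linear_algebra}, Corollary~\ref{cor:parametrized_linear_algebra}, following \cite[Lemma~5.5]{Ginzburg2010}) is to \emph{conjugate} $\psi$ by a symplectic matrix $C$ so that $C\psi C^{-1}$ is $C^1$-close to $\mathrm{Id}$ on a small ball; conjugation does not change the local Floer homology and requires no isolation argument, whereas a deformation of the linearisation towards the identity has no reason to keep $0$ (let alone $0$ for all $\psi_s^k$) uniformly isolated, and at the endpoint is dangerously close to producing non-isolated fixed points. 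Second, and more fundamentally, a germ of a Hamiltonian diffeomorphism that is $C^1$-close to the identity is in general \emph{not} the time-$1$ map of an autonomous Hamiltonian: that would force $\psi$ to commute with a whole Hamiltonian $\R$-action and to preserve the level sets of $H$, a very restrictive and non-generic condition. No deformation of $\psi$ into the class of autonomous time-$1$ maps, uniformly isolating $0$ for both $\psi_s$ and $\psi_s^k$, is given or known. The tool that actually exists in this generality is the \emph{generating function} $G$ of $C\psi C^{-1}$ as in Definition following \eqref{eq:criterion_for_generating_function}: Theorem~\ref{thm:floer=morse} identifies $\HF^{\loc}(\psi,0)$ with $\HM^{\loc}(G,0)$. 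With this replacement the rest of your scheme does not become trivial either: the generating function $K_k$ of $\psi^k$ is \emph{not} $kG$, and showing $\HM^{\loc}(K_k,0)\cong\HM^{\loc}(kG,0)$ is exactly the content of the homotopy $H_k(z,t)=tK_k(z)+(1-t)kG(z)$ and the quantitative Steps~1--3 in the proof of Theorem~\ref{thm:large_iterates_always_work}. So the closing assertion that ``$kH$ agrees with $H$ up to reparametrisation'' does not substitute for those estimates. In short: the K\"unneth reduction is fine, but the maximally degenerate case requires symplectic conjugation plus generating functions, not a deformation to an autonomous model, and the heart of the theorem lives precisely in the estimates you defer.
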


\begin{defn}
Suppose that $\varphi$ and $\psi$ are two diffeomorphisms of a smooth manifold $M^m$, and $x$ is an isolated fixed point of both $\varphi$ and $\psi$. Assume the linear maps $D \varphi (0)$ and $D \psi(0)$ commute. Let $\lambda_1 ,\dots , \lambda_m$ denote the eigenvalues of $D \varphi(x)$ (with multiplicities), and $\mu_1 , \dots , \mu_m$ denote the eigenvalues of $D \psi(x)$ (with multiplicites), ordered so that for every $k \in \N$ the eigenvalues of $D (\varphi \psi^k)(x)$ are precisely $\lambda_1 \mu_1^k ,\dots , \lambda_m \mu_m^k$. We say an integer $k \in \N$ is \textbf{$(\varphi,\psi,x)$-admissible} or simply \textbf{admissible} if 
\[
\lambda_i \mu_i^k = 1 \quad \then \quad \lambda_i = 1 \text{ and } \mu_i = 1, \qquad \forall \, i = 1, \dots m.
\]
\end{defn}

Here we will prove the following generalisation of Theorem \ref{thm:GG}.
\begin{thm}
\label{thm:GG2}
Let $( M, \omega)$ denote a symplectically aspherical manifold, and let $\varphi $  and $\psi $ denote two Hamiltonian diffeomorphisms of $(M, \omega)$. Suppose $x \in M$ is an isolated fixed point of both $ \varphi $ and $\psi$, and that the linear maps $D \varphi(0)$ and $D \psi(0)$ commute. Then there exists $k_0 > 0$ with the property that for every $k > k_0$ which is $(\varphi,\psi,x)$-admissible, one has
\[
\HF^{\loc}( \varphi \psi^{k_0} ,x) \cong \HF^{\loc}( \psi ,x).
\]
\end{thm}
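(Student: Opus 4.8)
The plan is to mirror the proof of the Ginzburg--G\"urel theorem (Theorem \ref{thm:GG}) from \cite{GinzburgGurel2010}; the two new ingredients are a simultaneous normal form for the commuting linearisations $D\varphi(0)$ and $D\psi(0)$, and the observation that $(\varphi,\psi,x)$-admissibility forces the ``non-degenerate part'' of $\varphi\psi^{k}$ to contribute exactly as a non-degenerate fixed point would.

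\textbf{Step 1: a symplectic splitting.} Since $\varphi$ and $\psi$ are Hamiltonian, in a Darboux chart centred at $x=0$ the linear maps $A:=D\varphi(0)$ and $B:=D\psi(0)$ are commuting symplectic matrices. Using the standard fact that the generalised eigenspace $E_1$ of eigenvalue $1$ of a symplectic matrix is a symplectic subspace which is $\omega$-orthogonal to the span of the remaining generalised eigenspaces, applied first to $A$ and then to $B|_{E_1(A)}$, one obtains an $A$- and $B$-invariant symplectic splitting $\R^{2n}=V_0\oplus V_1$ with $V_0:=E_1(A)\cap E_1(B)$. As commuting matrices are simultaneously triangularisable over $\C$, with the eigenvalue ordering used in the definition of admissibility the eigenvalues of $AB^{k}|_{V_1}$ are the products $\lambda_i\mu_i^{k}$ with $(\lambda_i,\mu_i)\neq(1,1)$; so whenever $k$ is admissible, $AB^{k}|_{V_1}$ has no eigenvalue $1$, and $V_0$ is exactly the generalised eigenspace of eigenvalue $1$ of $AB^{k}$.

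\textbf{Step 2: reduction to the doubly-unipotent block.} As in \cite[\S3]{McLean2012} (compare the sketch of Lemma \ref{lem:kunneth}), $\HF^{\loc}$ depends only on the germ of the diffeomorphism at the fixed point, and after a germ-preserving modification of a generating Hamiltonian one may assume it splits with respect to the decomposition of the ambient manifold into the generalised eigenspace of $1$ of the linearisation and its symplectic complement. Applying this to $\varphi\psi^{k}$ (for admissible $k$) and to $\psi$, and combining the K\"unneth formula (Lemma \ref{lem:kunneth}) with the fact that a fixed point whose linearisation has no eigenvalue $1$ has local Floer homology of rank $1$ concentrated in a single degree, one obtains, up to a grading shift,
\[
\HF^{\loc}(\varphi\psi^{k},x)\;\cong\;\HF^{\loc}\bigl((\varphi\psi^{k})|_{V_0},0\bigr),
\]
together with an analogous product splitting of $\HF^{\loc}(\psi,x)$. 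This reduces the theorem to the block $V_0$, on which $A|_{V_0}$ and $B|_{V_0}$ are both unipotent.

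\textbf{Step 3: persistence on $V_0$, the main obstacle.} On $V_0$ the mean index of the relevant $1$-periodic orbit vanishes, both for $\psi|_{V_0}$ and for every $(\varphi\psi^{k})|_{V_0}$ (a unipotent linearisation has all rotation numbers zero), so the range of degrees supporting the local Floer homology does not drift with $k$. The core of the argument is to reproduce the Ginzburg--G\"urel continuity/compactness argument here: one interpolates between a Hamiltonian generating $(\varphi\psi^{k})|_{V_0}$ for large admissible $k$ and one generating $(\varphi\psi^{k_0})|_{V_0}$ --- using that, by commutativity of the germs of $\varphi$ and $\psi$, all the maps $\varphi\psi^{k}$ belong to a single one-parameter family --- checks that $0$ stays uniformly isolated along this homotopy, and then applies the homotopy-invariance of local Floer homology (Lemma \ref{lem:master_invariance}) together with the no-escape estimate behind point (1) \vpageref{page:explanation}. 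I expect this to be the main obstacle, since one must rule out, uniformly in $k$, both the creation of new $1$-periodic orbits near $0$ and the escape of Floer trajectories from an isolating neighbourhood; this is precisely where the proof of \cite{GinzburgGurel2010} has to be run essentially verbatim, only the bookkeeping being altered to account for the extra commuting factor $\varphi$. A final appeal to Theorem \ref{thm:GG} (the case $\varphi=\mathrm{Id}$) then identifies the resulting stable local Floer homology with $\HF^{\loc}(\psi,x)$ up to a shift, which is the asserted isomorphism.
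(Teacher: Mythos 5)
Your Steps 1 and 2 track the paper's ``general case" reduction closely: decompose along the generalised eigenspace of $1$, homotope to split form, apply K\"unneth, and note that the non-degenerate block contributes rank one. The serious gap is in Step 3, and it is twofold. First, the interpolation you propose (from $\varphi\psi^{k}$ to $\varphi\psi^{k_0}$) is aimed at the wrong target: even if carried out it would give only $\HF^{\loc}(\varphi\psi^{k},0)\cong\HF^{\loc}(\varphi\psi^{k_0},0)$, and you still need to identify this with $\HF^{\loc}(\psi,0)$. Your ``final appeal to Theorem \ref{thm:GG}'' does not supply that identification, because Theorem \ref{thm:GG} relates $\psi^{k}$ to $\psi$; with $\varphi\ne\mathrm{Id}$ it says nothing about $\varphi\psi^{k_0}$. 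The paper's homotopy instead goes \emph{directly} from the generating function $K_k$ of $\varphi\psi^{k}$ to the scalar multiple $kG$ of the generating function $G$ of $\psi$, via $H_k(\cdot,t)=tK_k+(1-t)kG$; the scaling invariance of local Morse homology then gives $\HM^{\loc}(kG,0)=\HM^{\loc}(G,0)\cong\HF^{\loc}(\psi,0)$, which is what closes the argument.

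Second, the uniform-isolation estimate is not obtained by running Ginzburg--G\"urel ``essentially verbatim''. In the one-map case the lower bound on the gradient along the homotopy comes from the factor $k$ alone; here one must beat the contribution of the fixed factor $\varphi$, and the decisive inequality is \eqref{eq:key!}, namely $k_0\min_z\|D\psi_s(z)-I\|-\max_z\|D\varphi_s(z)-I\|\ge\tfrac{1}{4s}$, which has no analogue in \cite{GinzburgGurel2010}. There is also a circularity one must break: one first fixes a $k_1$ and a simultaneous conjugation (Lemma \ref{lem:baby_linear_algebra}/Corollary \ref{cor:parametrized_linear_algebra}) so that all of $\varphi$, $\psi$, $\varphi\psi^{l}$ for $l\le k_1$ admit generating functions, but the threshold $k_0$ a priori depends on that choice --- this is resolved by the continuity estimate \eqref{eq:cs_is_continuous}. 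Neither point appears in your proposal, and they are exactly what makes the two-map case harder. (A smaller issue: you invoke commutativity of the \emph{germs} of $\varphi$ and $\psi$ to make the $\varphi\psi^{k}$ a one-parameter family; the theorem only assumes the linearisations commute. The paper's Case 3 does use germ-commutativity as well --- and it holds in the intended application --- but be aware it is not a stated hypothesis.)
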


\begin{rem}
In fact, in \cite{GinzburgGurel2010}, the authors prove rather more than is stated in Theorem \ref{thm:GG}. For instance, denoting the degree shift in Theorem \ref{thm:GG} by $m_k$, they show that the limit $ \lim_{k \to \infty} m_k /k$ converges to the \textbf{mean index} of $x$. See \cite{GinzburgGurel2010} for more information. We will not need these properties in this paper. However, the interested reader can find the analogue of these statements, together with their proofs, in \cite{Naef_thesis}.
\end{rem}

We will prove Theorem \ref{thm:GG2} in Section \ref{sec:completing_the_proof} below.  An easy corollary of Theorem \ref{thm:GG2} is the following result.

\begin{cor}
\label{cor:uniform_bound}
Let $( M, \omega)$ denote a symplectically aspherical manifold, and let $\varphi $  and $\psi $ denote two Hamiltonian diffeomorphisms of $(M, \omega)$. Suppose $x \in M$ is a fixed point of both $ \varphi $ and $\psi$, and the the linear maps $D \varphi(0)$ and $D \psi(0)$ commute, and that $x$ is an isolated fixed point of $\varphi \psi^k$ for all $k = 0 ,1, 2, \dots$. Then there exists a constant $C >0$ such that
\[
\mathrm{rank\,}\HF^{\loc}(\varphi \psi^k ,x) \le C, \qquad \text{for all } k  = 0 ,1 ,2 ,\dots.
\]
\end{cor}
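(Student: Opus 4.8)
The plan is to reduce to Theorem \ref{thm:GG2} by a finite case analysis according to the arithmetic of the eigenvalues of $D\varphi(x)$ and $D\psi(x)$. First I would fix the eigenvalues $\lambda_1,\dots,\lambda_m$ of $D\varphi(x)$ and $\mu_1,\dots,\mu_m$ of $D\psi(x)$, ordered (as in the definition of $(\varphi,\psi,x)$-admissibility) so that the eigenvalues of $D(\varphi\psi^k)(x)$ are $\lambda_i\mu_i^k$. The key observation is that the set of non-admissible exponents is controlled: for each index $i$, if $\mu_i$ is not a root of unity then there is at most one $k$ with $\lambda_i\mu_i^k=1$; and if $\mu_i$ is a primitive $d_i$-th root of unity, then $\lambda_i\mu_i^k=1$ forces $\lambda_i$ itself to be a $d_i$-th root of unity and the solution set of $k$ is a single residue class modulo $d_i$. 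Consequently the set of $k$ that are \emph{not} $(\varphi,\psi,x)$-admissible is contained in a finite union of arithmetic progressions determined by the common order $N := \mathrm{lcm}(d_i : \mu_i \text{ a root of unity})$ together with finitely many sporadic values.

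Next I would apply Theorem \ref{thm:GG2}: there is a $k_0$ such that for every admissible $k>k_0$ one has $\HF^{\loc}(\varphi\psi^{k_0},x)\cong\HF^{\loc}(\psi,x)$ — but I should be slightly careful, since what Theorem \ref{thm:GG2} literally gives is an isomorphism $\HF^{\loc}(\varphi\psi^{k_0},x)\cong\HF^{\loc}(\psi,x)$ for a \emph{fixed} large admissible $k_0$. To bound the rank for \emph{all} admissible $k$, I would re-run the argument: the proof of Theorem \ref{thm:GG2} actually produces, for each admissible $k$ beyond some threshold, an isomorphism $\HF^{\loc}(\varphi\psi^{k},x)\cong\HF^{\loc}(\psi,x)$ (this is the natural reading of the "persistence" statement), so in particular the ranks of $\HF^{\loc}(\varphi\psi^k,x)$ for admissible $k>k_0$ are all equal to $\mathrm{rank}\,\HF^{\loc}(\psi,x)$ — a single finite number, since local Floer homology of an isolated fixed point of a Hamiltonian diffeomorphism of a symplectically aspherical manifold is always finite-dimensional (it is the homology of a finitely generated complex).

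It then remains to handle the \emph{non-admissible} exponents, together with the finitely many small admissible ones $k\le k_0$. The small ones, being finite in number, contribute only finitely many values $\mathrm{rank}\,\HF^{\loc}(\varphi\psi^k,x)$, each finite by the same finite-generation fact, so their maximum is a finite constant. For the non-admissible exponents I would argue as follows: write a non-admissible $k$ as $k = qN + s$ with $0\le s < N$. Then $\psi^{qN}$ has the property that every eigenvalue $\mu_i$ which is a root of unity satisfies $\mu_i^{qN}=1$, so $D(\varphi\psi^k)(x) = D(\varphi\psi^s)(x)\cdot D(\psi^{qN})(x)$, and the "troublesome" unipotent-type directions are governed entirely by $\varphi\psi^s$ with $s$ ranging over a fixed finite set; meanwhile in the remaining directions $\mu_i$ is not a root of unity and $\lambda_i\mu_i^k$ avoids $1$ for all but finitely many $k$. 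Grouping $k$ by residue $s\bmod N$, the subfamily $\{\varphi\psi^s\cdot\psi^{qN}\}_{q}$ is itself of the form "fixed Hamiltonian diffeomorphism composed with iterates of the Hamiltonian diffeomorphism $\psi^N$", whose relevant eigenvalues $\mu_i^N$ are either $1$ or non-roots-of-unity, so Theorem \ref{thm:GG2} applies \emph{within each residue class} and gives, for $q$ large, $\HF^{\loc}(\varphi\psi^{s+qN},x)\cong\HF^{\loc}(\varphi\psi^{s+q_0N},x)$ for a fixed $q_0=q_0(s)$. Taking the maximum of $\mathrm{rank}\,\HF^{\loc}(\psi,x)$, the finitely many ranks for $k\le k_0$, and the finitely many ranks $\mathrm{rank}\,\HF^{\loc}(\varphi\psi^{s+q_0(s)N},x)$ over $0\le s<N$, yields the desired uniform constant $C$.

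The main obstacle I anticipate is the bookkeeping in the last paragraph: making the reduction-by-residue-class rigorous requires checking that $\varphi\psi^s$ and $\psi^N$ still have commuting linearisations (clear, since everything is a power/product of the two commuting operators $D\varphi(x)$, $D\psi(x)$) and that $x$ remains an isolated fixed point of $(\varphi\psi^s)(\psi^N)^q$ for every $q$ — this last point follows from the hypothesis that $x$ is isolated for $\varphi\psi^k$ for \emph{every} $k$, so no new isolation hypothesis is needed. Everything else is elementary arithmetic of eigenvalues plus two invocations of Theorem \ref{thm:GG2} and the finiteness of local Floer homology.
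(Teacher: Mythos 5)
Your argument is correct and follows essentially the same strategy as the paper: analyse the arithmetic of eigenvalues to see that non-admissible exponents fall (after finitely many sporadic cases) into arithmetic progressions, then re-parametrize each progression as $\{(\varphi\psi^{m})\cdot(\psi^{p})^{l}\}_{l}$ and invoke Theorem~\ref{thm:GG2} for the new pair. The only cosmetic difference is that you use a single common modulus $N=\mathrm{lcm}(d_i)$ and group by residue class, whereas the paper handles each common root of unity separately with its own pair $(\tilde\varphi_j,\tilde\psi_j)=(\varphi\psi^{m_j},\psi^{p_j})$, citing \cite[Lemma 6.5]{HryniewiczMacarini2012}; both give the same finite family of auxiliary applications of Theorem~\ref{thm:GG2}.
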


\begin{proof}
There are two ways an integer can fail to be admissible. Firstly, if there exists $p \in \N$ such that both $D \varphi(0)$ and $D \psi(0)$ have a $p$th root of unity as an eigenvalue, say $\lambda$ and $\mu$ respectively. Then if $m$ is the minimal positive integer such that $\lambda \mu^m = 1$ then every integer of the form $m + l p$ will fail to be admissible. Secondly, it could happen that $D \varphi(0)$ has an eigenvalue $\lambda$ which is not a root of unity, and $D \psi(0)$ has an eigenvalue $\mu = \lambda^{1/k}$. Then $k$ will also not be admissible. However for each eigenvalue $\lambda$ of $D \varphi(0)$ that is not a root of unity, the second possibility can only happen for at most one iterate $\psi^k$. Thus there exists $k_0 \in \N$ such that for $k \ge k_0$, the only way $k$ could fail to be admissible is via the first possibility. 

Now let us deal with the first possibility. Suppose for simplicity that $D \varphi(0)$ and $D \psi(0)$ both have a $p$th root of unity ($p > 1$) as an eigenvalue, say $\lambda$ and $\mu$, but no other common roots of unity for eigenvalues. Let $1 \le m \le p$ be the minimal positive integer such that $\lambda \mu^m =1$. Then as we already noted, every number of the form $m + l p$ will fail to be admissible. But in this case we simply set $\tilde{\varphi} := \varphi \psi^m$ and $\tilde{\psi} := \psi^p$. Then by assumption, every integer $k \ge (k_0 - m)/p$ is $(\tilde{\varphi} , \tilde{\psi}, x)$-admissible, and we can apply Theorem \ref{thm:GG2} to $\tilde{\varphi}$ and $\tilde{\psi}$. Finally, the general case is similar: if $D \varphi(0)$ and $D \psi(0)$ have multiple common roots of unity  as an eigenvalue, then it is easy to see that we can find finitely many pairs $(\tilde{\varphi}_j,\tilde{\psi}_j) $ all of the form $\tilde{\varphi}_j = \varphi \psi^{m_j}$ and $\tilde{\psi}_j = \psi^{p_j}$ for some integers $(m_j,p_j)$ such that every integer $k \in \N$ is either $(\varphi, \psi,x)$-admissible, or $(\tilde{\varphi}_j,\tilde{\psi}_j,x)$-admissible for some $j$. See for instance \cite[Lemma 6.5]{HryniewiczMacarini2012} for a proof of a similar statement. 
\end{proof}

\begin{rem}
\label{rem:removing_commuting_would_be_real_nice}
We conjecture that the assumption in Corollary \ref{cor:uniform_bound} that $D \varphi (0)$ and $D \psi (0)$ commute is superfluous (but not in Theorem \ref{thm:GG2}) As explained in the Introduction, this would allow us to extend the results of this paper to prove the existence of infinitely many geometrically distinct leaf-wise intersections for \textbf{all} Hamiltonian diffeomorphisms,  rather than just those arising from lifts of strict contactomorphisms; this is the statement of Conjecture \ref{conj:gromoll_meyer_leafwise}.
\end{rem}

\subsection{Generating functions and local Morse homology} 
\label{sec:generating_functions}

Before proving Theorem \ref{thm:GG2} we will need to recall some preliminaries on generating functions and local Morse homology. Let us begin by fixing once and for all some sign conventions. We will always use coordinates $(x_1 , \dots , x_n , y_1, \dots ,y_n)$ on $ \R^{2n}$, and we equip $ \R^{2n}$ with the canonical symplectic form
\[ \omega_0 = \sum_{j=1}^n dy_j \wedge dx_j.
\]
 We denote by $ \bar{\R}^{2n}$ the same space but endowed with the symplectic form $ - \omega_0$. The symplectic gradient $ X_F$ of a smooth function $ F : \R^{2n} \to \R$ is given as usual by $ \omega_0 ( X_F , \cdot) = - dF( \cdot)$, which implies that 
\begin{equation}
\label{eq:sym_grad}
  X_F (x ,y) = ( \partial_2 F( x, y) , - \partial_1 F( x, y)).
\end{equation}
We will also work with the cotangent bundle $T^* \R^{2n}$. We will often use the notation $(q,p)$ to indicate elements of $T^* \R^{2n}$, so that $ q \in \R^{2n}$ and $ p \in T_q^* \R^{2n}$. We endow $T^* \R^{2n}$ with the symplectic form $ \omega_{\mathrm{can}} = \sum_{j=1}^n dp_j \wedge dq_j$. 

\begin{defn}
\label{def:symplecto_of_type_GF}
Consider the symplectomorphism $ \Phi : \R^{2n} \times \bar{ \R}^{2n} \to T^* \R^{2n}$ defined by
  \begin{equation}
  \label{eq:fav_example}
  \Phi(x,y, \bar{x} , \bar{y}) : = ( \bar{x}, y, y- \bar{y}, \bar{x} - x).
  \end{equation}
Note that $ \Phi$ carries the diagonal $ \Delta \subset \R^{2n} \times \bar{ \R}^{2n}$ onto the zero section $O_{ \R^{2n}} \subset T^* \R^{2n}$, and if 
  \[
  \label{subspace_N}
  N : = \R^n \times \{0 \} \times \{ 0 \} \times \R^n,
  \] 
then $ \Phi(N) = T_0^*\R^{2n}$.
\end{defn}

Given a diffeomorphism of $ \R^{2n}$, we denote by  
\[
 \mathrm{gr}( \varphi) = \left\{ (x, y , \varphi(x,y)) \mid (x,y) \in \R^{2n} \right\}
 \]
the graph of $ \varphi$ in $ \R^{2n} \times \bar{ \R}^{2n}$. Note that $ \mathrm{gr}( \varphi)$ is a Lagrangian submanifold of $\R^{2n} \times \bar{ \R}^{2n}$ if and only if $ \varphi$ is a symplectomorphism.  Similarly if $F : \R^{2n} \to \R$ is a smooth function, we denote by  
  \[
  \mathrm{gr}( dF) = \left\{ (q , dF(q))  \mid q \in \R^{2n} \right\}
  \]
the graph of the one form $dF$ inside $T^* \R^{2n}$.
Writing $ q = (x,y)$, so that $ x,y \in \R^n$, we can alternatively write  
  \[
  \mathrm{gr}( dF) = \left\{ (x ,y , \partial_1 F(x,y), \partial_2 F(x,y)  ) \mid (x,y) \in \R^{2n} \right\}.
  \]
The submanifold $\mathrm{gr}(dF)$ is always a Lagrangian submanifold of $T^* \R^{2n}$.

\begin{defn}
Suppose $ \varphi$ is a Hamiltonian diffeomorphism defined on an open neighbourhood $U$ of the origin $0 \in \R^{2n}$. Assume that $\varphi(0) = 0$ and that
\begin{equation}
\label{eq:criterion_for_generating_function}
\| D \varphi (z) - \mathrm{Id} \| <  \frac{1}{2}, \qquad \forall\, z \in U.
\end{equation}
This condition ensures that the submanifold $\mathrm{gr}( \varphi)$ is  sufficiently $C^0$-close to the diagonal $ \Delta$ so that there is a well defined function $F$ such that
  \begin{equation}
  \label{eq:gen_fun_eq}
  \Phi( \mathrm{gr}( \varphi) )= \mathrm{gr}( dF).
  \end{equation}
We call $F$ the \textbf{generating function for $ \varphi$}. The function $F$ is unique up to a constant, and hence we normalise $F$ by requiring that $F(0) = 0$. Note that $0$ is a critical point of $F$. One can show that $0$ is an isolated fixed point of $\varphi$ if and only if $0$ is an isolated critical point of $F$, and that there exists a constant $C$ (independent of $\varphi$) such that
  \begin{equation}
  \label{C^2small}
  \| F \|_{C^2(U)} \le C  \| \varphi - \mathrm{Id} \|_{C^1(U)}.
  \end{equation}
\end{defn}

To make things more explicit, let us temporarily use the (awful) notation $ \varphi(x,y) = (x_{ \varphi}, y_{ \varphi})$. Then from \eqref{eq:fav_example} and \eqref{eq:gen_fun_eq}, we see that 
\[
(x_\varphi  , y, y - y_ \varphi , x_ \varphi  - x) = ( x, y , \partial_1 F( x ,y), \partial_2 F( x ,y).
\]
Using \eqref{eq:sym_grad}, this is equivalent to 
\begin{equation}
\label{eq:can_gen_fun_eq}
  \varphi(x,y) - (x,y) = X_F ( x_ \varphi ,y).
\end{equation}
It is convenient to introduce the auxilliary map $ f $ defined by
\begin{equation}
\label{eq:auxilliary_diffeo}
  f(x,y) = (x_ \varphi, y).
\end{equation}
Since $ \varphi$ is $C^1$-close to the identity the map $f$ is a diffeomorphism on a suitably small neighbourhood of the origin. Then we can rewrite \eqref{eq:can_gen_fun_eq} as
\begin{equation}
\label{eq:can_gen_fun_eq_2}
  \varphi(x,y) - (x,y) = X_F (  f (x,y)).
\end{equation}

In fact, in this section we will only ever be concerned with the case when our Hamiltonian diffeomorphism $\varphi$ is maximally degenerate, meaning that all the eigenvalues of $D \varphi (0)$ are equal to $1$. In this case by making an appropriate symplectic change of basis we can always ensure that \eqref{eq:criterion_for_generating_function} holds. To prove this fact we use the following lemma, which is  just symplectic  linear algebra. The statement and proof are almost identical to \cite[Lemma 5.5]{Ginzburg2010} (which deals with the case of a single symplectic matrix), but for the convenience of the reader we repeat the proof here. In the statement, $\| \cdot \|$ denotes any norm on $\mathrm{Sp}(W,\omega)$, for instance the restriction of the Euclidean norm on $\mathrm{GL}(W,\R) \cong \R^{\dim W^2}$. 

\begin{lem}
\label{lem:baby_linear_algebra}
Suppose $A, B \in \mathrm{Sp}(W, \omega)$ are two commuting symplectic matrices. Assume that all the eigenvalues of both $A$ and $B$ are equal to 1. Then for any $\varepsilon > 0$ there exists $C \in \mathrm{Sp}(W,\omega)$ such that $\|CAC^{-1} - I_W \| < \varepsilon$ and $ \| CBC^{-1} - I_W \| < \varepsilon$.
\end{lem}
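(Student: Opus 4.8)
The plan is to pass from the group $\mathrm{Sp}(W,\omega)$ to its Lie algebra $\mathfrak{sp}(W,\omega)$ and conjugate by a one‑parameter subgroup generated by a single grading element, exactly as in the single‑matrix case of \cite{Ginzburg2010}, the only extra input being a \emph{simultaneous} linearisation of the two logarithms.

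First I would reduce to the Lie algebra. Since $A$ is symplectic with all eigenvalues equal to $1$ it is unipotent, so $M:=\log A=\sum_{k\ge 1}\tfrac{(-1)^{k+1}}{k}(A-I_W)^k$ is a finite sum, is a nilpotent endomorphism of $W$, and satisfies $\exp(M)=A$; a short computation from $A^{\top}JA=J$ (where $J$ represents $\omega$) together with the injectivity of $\exp$ on unipotent matrices gives $M\in\mathfrak{sp}(W,\omega)$, and likewise $N:=\log B\in\mathfrak{sp}(W,\omega)$ is nilpotent with $\exp(N)=B$. Because $\log A$ and $\log B$ are polynomials in $A$ and $B$ and these commute, we get $[M,N]=0$; hence $\mathfrak{a}:=\mathrm{span}\{M,N\}\subset\mathfrak{sp}(W,\omega)$ is an abelian subalgebra, and every element $sM+tN$ of it is a nilpotent endomorphism of $W$ (a sum of commuting nilpotents).

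The key step is to place $M$ and $N$ simultaneously in the nilradical of a common Borel subalgebra. Since $\mathfrak a$ acts on $W$ by nilpotent operators, Engel's theorem furnishes a common null vector; iterating inside the symplectic reductions $v^{\perp_\omega}/\langle v\rangle$ produces a complete isotropic (Lagrangian) flag preserved by $\mathfrak a$, and nilpotency forces each element of $\mathfrak a$ to send each subspace of the flag into the previous one. Completing this to a full symplectic flag and taking its stabiliser $\mathfrak b\subset\mathfrak{sp}(W,\omega)$, we obtain $M,N\in\mathfrak n$, the nilradical of $\mathfrak b$ (alternatively, one may simply cite the standard fact that a subalgebra of a semisimple Lie algebra consisting of ad‑nilpotent elements is conjugate into the nilradical of a Borel). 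Choosing a Cartan subalgebra $\mathfrak h\subset\mathfrak b$ and a regular element $H\in\mathfrak h$ with $\alpha(H)>0$ for every root $\alpha$ whose root space lies in $\mathfrak n$, the operator $\mathrm{ad}_H$ is diagonalisable with strictly positive eigenvalues on $\mathfrak n$, so writing $M=\sum_\alpha M_\alpha$ and $N=\sum_\alpha N_\alpha$ in root spaces we have $e^{t\,\mathrm{ad}_H}M=\sum_\alpha e^{t\alpha(H)}M_\alpha\to 0$ and $e^{t\,\mathrm{ad}_H}N\to 0$ as $t\to-\infty$.

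Finally I would set $C:=\exp(tH)\in\mathrm{Sp}(W,\omega)$. Then $CAC^{-1}=\exp(\mathrm{Ad}_C M)=\exp(e^{t\,\mathrm{ad}_H}M)$ and $CBC^{-1}=\exp(e^{t\,\mathrm{ad}_H}N)$, and by continuity of $\exp$ both converge to $I_W$ as $t\to-\infty$; taking $t$ sufficiently negative yields $\|CAC^{-1}-I_W\|<\varepsilon$ and $\|CBC^{-1}-I_W\|<\varepsilon$ simultaneously. I expect the only genuine obstacle to be the simultaneous reduction in the key step: for one matrix it suffices to invoke Jacobson–Morozov to produce a grading element with $[H,M]=2M$, whereas for two commuting nilpotents one must instead exhibit a single semisimple $H$ acting with positive eigenvalues on both, which is precisely what the Engel/Borel argument provides; the remaining points — properties of $\log$ on unipotents, $M,N\in\mathfrak{sp}$, and $\mathrm{Ad}_{\exp(tH)}=e^{t\,\mathrm{ad}_H}$ — are routine.
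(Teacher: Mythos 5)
Your argument is correct, and it reaches the conclusion by a genuinely different route from the paper. The paper's proof is elementary matrix algebra: it inducts on $\dim W$ using $K := \ker(A-I_W)\cap\ker(B-I_W)$ and splits into three cases (whether $K$ contains a symplectic subspace, is isotropic but not Lagrangian, or is Lagrangian); in the nontrivial cases it writes $A$ and $B$ in block upper-triangular form with respect to a decomposition $K\oplus V\oplus U$ and conjugates by an explicit block-diagonal symplectic matrix $C = \mathrm{diag}(D, C_0, (D^*)^{-1})$ whose $D$ is made small. Your proof is Lie-theoretic: you pass to the commuting nilpotent logarithms $M,N\in\mathfrak{sp}(W,\omega)$, place them simultaneously in the nilradical $\mathfrak n$ of a Borel via Engel plus symplectic reduction (or the standard fact about nil-subalgebras), and then contract by $\mathrm{Ad}_{\exp(tH)}$ for a regular $H$ as $t\to-\infty$. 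Both proofs are ultimately the same geometric observation — the nilradical of a Borel in $\mathfrak{sp}$ is contracted to zero by a suitable torus element — but the paper realises the contraction by hand via the block-diagonal $C$ on a splitting adapted to $K$, while you realise it structurally via roots and a regular coweight. Your version is cleaner and extends immediately to any finite commuting family of unipotent symplectic matrices, and your remark about why the single-matrix Jacobson--Morozov argument cannot be applied naively to two matrices is exactly the right diagnosis; the paper's version stays closer to \cite[Lemma 5.5]{Ginzburg2010} and avoids any invocation of Borel and Cartan subalgebra theory (for instance the subtlety that over $\R$ one needs $\mathfrak{sp}(2n,\R)$ to be a split real form for the ``regular $H$ with all positive roots'' step). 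One small point worth spelling out if you write this up: in the Engel/symplectic-reduction step you should note that $v^{\perp_\omega}$ is $\mathfrak a$-invariant precisely because $M,N\in\mathfrak{sp}(W,\omega)$ and $Mv=Nv=0$, so that $\omega(v,Mw)=-\omega(Mv,w)=0$; this is what makes the reduction well defined.
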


\begin{proof}
We prove the result by induction on $\dim W$. If $\dim W = 2 $ then the result is trivial. For the inductive step, set $K:= \ker (A - I_W) \cap \ker (B-I_W) $. Since $A$ and $B$ commute, the nilpotent matrices $A - I_W$ and $B - I_W$ also commute, and elementary linear algebra tells us that $ K \ne \{ 0 \}$. There are three cases to consider:
\begin{enumerate}
\item $K$ contains a symplectic subspace $V$.
\item $K$ is an isotropic subspace but not  Lagrangian subspace of $W$.
\item $K$ is a Lagrangian subspace of $W$. 
\end{enumerate}
Case (1) is easy, since in this case we can split $W= V \oplus V^{\omega}$, where $V^{\omega} $ is the symplectic orthogonal of $V$. Both $A$ and $B$ preserve this splitting, and $A|_V = B|_V = I_V$. Now apply the inductive hypothesis to $A|_{V^{\omega}}$ and $B|_{V^{\omega}}$. \\
Case (2) is more complicated. Since $K$ is not Lagrangian, this time $K^{\omega}$ contains a symplectic subspace $V$ which is complementary to $K$. Fix an isotropic subspace $U$ of $W$ which is complementary to $K^{\omega}$. Both $A$ and $B$ preserve $K$ and $K^{\omega}$. Moreover since $U \cong W / K^{\omega} \cong K^*$, we see that  
\[
  A|_K = B|_K = I_K, \qquad A|_U = B|_U = I_U.
\]  
Both $A$ and $B$ induce maps $A_0 , B_0 \in \mathrm{Sp}(V,\omega_V)$, where $\omega_V := \omega|_{V \oplus V}$, such that  $A_0$ and $B_0$ have all their eigenvalues equal to 1. By the inductive hypothesis, we can choose $C_0 \in \mathrm{Sp}(V, \omega_V)$ such that $\| C_0 A_0 C_0^{-1} -I_V \| < \varepsilon/2$ and $\| C_0 B_0 C_0^{-1} -I_V\| < \varepsilon/2$. Thus with respect to this splitting we can write 
\[
  A =  \begin{pmatrix}
  I_K & X & Y  \\
  0 & A_0 & Z \\
  0 & 0 & I_U
  \end{pmatrix}, \qquad
  B =  \begin{pmatrix}
  I_K & X' & Y'  \\
  0 & B_0 & Z' \\
  0 & 0 & I_U
  \end{pmatrix}
\]
for some matrix operators  $X,X' : V \to K$, $Y,Y' : U  \to K$ and $Z,Z' : U \to V$. Now fix a matrix $D \in \mathrm{GL}(K, \R)$ and consider the symplectic matrix 
\[
  C :=  \begin{pmatrix}
  D & 0 & 0 \\
  0 & C_0 & 0 \\
  0 & 0 & (D^*)^{-1} 
  \end{pmatrix},
\]
where as above we identified $U$ with $K^*$. Then we have
\[
  C A C^{-1} =  \begin{pmatrix}
  I_K & D X C_0^{-1} & DYD^* \\
  0 & C_0 A_0 C_0^{-1} & C_0 Z D^* \\
  0 & 0 & I_U
  \end{pmatrix}, \qquad
  C B C^{-1} =  \begin{pmatrix}
  I_K & D X' C_0^{-1} & DY'D^* \\
  0 & C_0 B_0 C_0^{-1} & C_0 Z' D^* \\
  0 & 0 & I_U
  \end{pmatrix}
\]
Since $D$ is close to zero if and only if $D^*$ is, it is clear from this expression that we can now choose $D$ so that both $ \| C A C^{-1} - I_W \| < \varepsilon$ and $\| C B C^{-1} - I_W \| <\varepsilon$. \\
Finally Case (3) is like Case (2), only easier. In this case we choose a complementary Lagrangian subspace $L$ of $W$ and consider $A$ and $B$ with respect to the decomposition $W = K \oplus L$: 
\[
  A =  \begin{pmatrix}
  I_K & X \\
  0 & I_L 
  \end{pmatrix}, 
  \qquad
  B =  \begin{pmatrix}
  I_K & X' \\
  0 & I_L 
  \end{pmatrix},
\]
for $X, X' : L \to K$. We identify $L$ with $K^*$, and as before choose a map $D \in \mathrm{GL}(K)$. Then for 
\[
  C =  \begin{pmatrix}
  D & 0 \\
  0 & (D^*)^{-1}
  \end{pmatrix}
\]
we again have 
\[
  C A C^{-1} = 
   \begin{pmatrix}
  I_K & D X D^* \\
  0 & I_L 
  \end{pmatrix}, \qquad
    C B C^{-1} = 
   \begin{pmatrix}
  I_K & D X' D^* \\
  0 & I_L 
  \end{pmatrix}.
\]
As before, it is clear for an appropriate choice of $D$ we can make $ \|CAC^{-1} - I_W\| < \varepsilon$ and $ \| C B C^{-1} - I_W \| < \varepsilon$.
\end{proof} 

The next corollary is an immediate consequence of the preceding lemma, together with the observation that the proof shows that one can choose $C$ so that $C \mapsto \| C A C^{-1} - I_W \|$ is continuous. 

\begin{cor}
\label{cor:parametrized_linear_algebra}
Suppose $ \varphi$ and $\psi$ are two Hamiltonian diffeomorphisms defined on the ball $B(r_0)  \subset \R^{2n}$, and that  $0$ is an isolated fixed point of  both $\varphi$ and $\psi$. Assume in addition that the linear maps $D \varphi(0)$ and $D \psi(0)$  commute, and that all the eigenvalues of $D \varphi(0)$ and $D \psi(0)$ are equal to $1$. Then there exists $T > 0$ and two smooth paths 
\[
C : [T, + \infty) \to \mathrm{Sp}(2n), \qquad  \text{and} \qquad r : [T, + \infty) \to (0 ,r_0)
\]
such that $ \lim_{s \to + \infty} r(s) = 0$  with the following properties. For each $s \ge T$, let
\[
  \varphi_s : = C(s) \circ \varphi \circ C(s)^{-1}, \qquad \psi_s : = C(s) \circ \psi \circ C(s)^{-1},
\]
and set
\[
   c( \varphi_s) : = \| \varphi_s - \mathrm{Id}\|_{C^1(B(r(s)))} , \qquad  c(\psi_s) := \| \psi_s - \mathrm{Id}\|_{C^1(B(r(s)))} .
\]
Then for each $s \ge T$, 
\[
  c(\varphi_s) < \frac{1}{s}, \qquad c(\psi_s) < \frac{1}{s}, 
\]
and moreover there exists a constant $ R> 0$ such that for all $s \ge T$,
\begin{equation}
  \label{eq:cs_is_continuous}
  R c( \psi_s) - c (\psi_s) > \frac{1}{2s}.
\end{equation}
\end{cor}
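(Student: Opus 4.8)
The plan is to read the corollary off directly from Lemma~\ref{lem:baby_linear_algebra}, applied to the commuting unipotent matrices $A:=D\varphi(0)$ and $B:=D\psi(0)$, by following the conjugating matrix produced in its proof as a function of its parameters.

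First I would note that the proof of Lemma~\ref{lem:baby_linear_algebra} is completely constructive: in finitely many inductive steps it assembles the conjugating matrix out of explicit block matrices of the shape $\mathrm{diag}(D,C_0,(D^*)^{-1})$ (or $\mathrm{diag}(D,(D^*)^{-1})$), where $D=t\,I_K\in\mathrm{GL}(K)$ for a scalar $t$ and $C_0$ is furnished by the inductive hypothesis. Composing these over all the steps gives a matrix $C(\mathbf{t})\in\Sp(2n)$ depending smoothly on a tuple $\mathbf{t}\in(0,1]^m$ of such scalars, with the key feature that every off-diagonal block of $C(\mathbf{t})AC(\mathbf{t})^{-1}$ and of $C(\mathbf{t})BC(\mathbf{t})^{-1}$ is the corresponding block of $A$, resp.\ $B$, multiplied by a strictly positive power of some $t_j$ (the inductive $C_0$-blocks being absorbed recursively). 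Hence the two functions $\mathbf{t}\mapsto\|C(\mathbf{t})AC(\mathbf{t})^{-1}-I\|$ and $\mathbf{t}\mapsto\|C(\mathbf{t})BC(\mathbf{t})^{-1}-I\|$ are continuous and tend to $0$ as $\mathbf{t}\to 0$; this is exactly the ``observation'' referred to in the statement. Restricting to a smooth curve $s\mapsto\mathbf{t}(s)$ in $(0,1]^m$ with $\mathbf{t}(s)\to 0$ as $s\to+\infty$ and setting $C(s):=C(\mathbf{t}(s))$ produces a smooth path in $\Sp(2n)$ along which $\delta_\varphi(s):=\|C(s)AC(s)^{-1}-I\|$ and $\delta_\psi(s):=\|C(s)BC(s)^{-1}-I\|$ converge continuously to $0$.

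Next I would obtain the two upper bounds. The maps $\varphi_s:=C(s)\varphi C(s)^{-1}$ and $\psi_s:=C(s)\psi C(s)^{-1}$ fix $0$, with $D\varphi_s(0)=C(s)AC(s)^{-1}$ and $D\psi_s(0)=C(s)BC(s)^{-1}$, and each is $C^2$ on a ball whose size depends only on $\mathbf{t}(s)$. By Taylor's theorem, for $\rho$ small,
\[
\|\varphi_s-\mathrm{Id}\|_{C^1(B(\rho))}\le 2\,\delta_\varphi(s)+C_s\,\rho,
\]
and similarly for $\psi_s$, where $C_s$ bounds the relevant second derivatives. Choosing the curve $\mathbf{t}(s)$ so that $\delta_\varphi(s)<\tfrac{1}{4s}$ and $\delta_\psi(s)<\tfrac{1}{4s}$ for all $s\ge T$ (possible since both tend to $0$), and then picking $r(s)\searrow 0$ small enough, say $r(s)<\min\{1/s,(4sC_s)^{-1}\}$, forces $c(\varphi_s)<1/s$ and $c(\psi_s)<1/s$; the requirements that $r$ be smooth, positive and $r(s)\to 0$ are automatic.

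Finally, for the lower bound I would use that the $C^1$-norm always dominates the derivative at the fixed point, so $c(\psi_s)\ge\|D\psi_s(0)-I\|=\delta_\psi(s)$. Assuming $D\psi(0)\ne I$, the function $\delta_\psi$ is continuous, strictly positive for every parameter value, and tends to $0$, so by the intermediate value theorem the curve $\mathbf{t}(s)$ can be chosen (still with $\mathbf{t}(s)\to 0$) so that $\delta_\psi(s)\in[\tfrac{1}{8s},\tfrac{1}{4s}]$ for all $s\ge T$; then $c(\psi_s)\ge\tfrac{1}{8s}$, so $R\,c(\psi_s)-c(\psi_s)=(R-1)c(\psi_s)>\tfrac{1}{2s}$ as soon as $R>5$, which is \eqref{eq:cs_is_continuous}. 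In the degenerate case $D\psi(0)=I$ (where $\delta_\psi\equiv 0$) one argues instead at the level of the radius: $0$ remains an isolated fixed point of $\psi_s$, so $\rho\mapsto\|\psi_s-\mathrm{Id}\|_{C^1(B(\rho))}$ is continuous, non-decreasing and strictly positive for $\rho>0$, and one takes $r(s)$ to be a value of $\rho$ at which it equals $\tfrac{1}{8s}$ (such a value exists for $s$ large and necessarily tends to $0$), again yielding $c(\psi_s)=\tfrac{1}{8s}$. I expect the main obstacle to be precisely this last coordination: one must select the single parameter curve $\mathbf{t}(s)$ and the radius $r(s)$ so that simultaneously everything is smooth in $s$, the sizes $c(\varphi_s),c(\psi_s)$ stay below $1/s$, and yet $c(\psi_s)$ stays above a fixed multiple of $1/s$. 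What makes the last requirement compatible with the others is exactly the explicit, continuous dependence of the conjugation in Lemma~\ref{lem:baby_linear_algebra} on its scalar parameters, together with the sandwich $\delta_\psi(s)\le c(\psi_s)\le\delta_\psi(s)+O(r(s))$; all the genuine content therefore sits in the constructive nature of that lemma, with only a routine case distinction needed to cover the degenerate configurations.
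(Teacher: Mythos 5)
Your approach --- read $C(s)$ and $r(s)$ off the constructive, parameter-dependent proof of Lemma~\ref{lem:baby_linear_algebra} --- is exactly the one the paper indicates, and you correctly identify that the entire content of the corollary lies in coordinating the three estimates along a single parameter curve and a single shrinking radius $r(s)$; the paper itself offers no argument beyond the observation that the conjugating matrix can be chosen to depend continuously on its parameters.

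However, your treatment of the degenerate case $D\psi(0)=I$ has a genuine gap. You pin $r(s)$ by requiring $c(\psi_s)(r(s))=\tfrac{1}{8s}$, but this determines $r(s)$ from below while the constraint $c(\varphi_s)<1/s$ caps it from above, and the two demands need not be compatible for a uniform $R$. Suppose, as the hypotheses permit, that $\|D\varphi(z)-I\|\sim\|z\|^a$ and $\|D\psi(z)-I\|\sim\|z\|^b$ near $0$ with $a<b$ (say $a=1$, $b=2$): then $c(\psi_s)(r(s))\sim r(s)^b=\tfrac{1}{8s}$ forces $r(s)\sim s^{-1/b}$, whence $c(\varphi_s)(r(s))\sim s^{-a/b}\gg 1/s$. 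No symplectic conjugation changes the orders of vanishing $a,b$, so no choice of the parameter curve dissolves the conflict, and the constant $R$ in \eqref{eq:cs_is_continuous} would have to grow with $s$. Your intermediate-value argument in fact only resolves the coordination problem when $\delta_\psi(s)>0$, i.e.\ when $D\psi(0)\ne I$; the fully degenerate case requires either an explicit restriction to that regime, with an accompanying reduction in the applications (Case 2 of the proof of Theorem~\ref{thm:GG2} in particular admits $D\psi(0)=I$), or extra structure such as the commuting of the full germs $\varphi,\psi$ that is available in the application to Proposition~\ref{prop:reduction_to_poincare_map}. As a small side remark, \eqref{eq:cs_is_continuous} as printed reads $Rc(\psi_s)-c(\psi_s)$; given the way it is used to derive \eqref{eq:key!}, this is presumably a typo for $Rc(\psi_s)-c(\varphi_s)$, and the corrected form only tightens the coordination you must achieve.
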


Let us quickly recall the definition of local Morse homology.
\begin{defn}
Let $F : M \to \R$ denote a smooth function on a manifold $M$, and suppose $x \in M$ is an isolated critical point of $F$. Fix neighbourhoods $U \subset V \subset M$ of $x$ such that $\crit F \cap V = \{x \}$. Choose a $C^1$-small perturbation $G$ of $F$ such that $F = G$ outside $U$ and such that $G|_V$ is a Morse function on $V$. Fix a Riemannian metric $g$ on $M$ such that $g$ is Morse-Smale for $G|_V$. By construction every (broken) gradient flow line of $- \nabla_g G$ of $G$ whose asymptotes lie in $V$ never leave $U$. Thus the subspace of the Morse complex $\mathrm{CM}(G)$ of  $G$ generated by the critical points of $G$ in $V$ is a subcomplex, and hence it makes sense to speak of its homology. We denote it by $\HM^{\loc}(F,x)$ and call it the \textbf{local Morse homology} of $F$ at $x$. The notation makes sense since the usual continuation arguments show that the homology of the subcomplex of $\mathrm{CM}(G)$ generated by the critical points of $G$ in $V$ is independent of the perturbation $G$.
\end{defn} 

A key property of local Morse homology is that if $\{F_t \}_{t \in [0,1]}$ is a smooth family of smooth functions and $x \in M$ is a \textbf{uniformly isolated } critical point (i.e. there exists a neighbourhood $U \subset M$ of $x$ such that $\left( \bigcup_{t \in [0,1]} \crit F_t \right) \cap U = \{x \}$) then the local Morse homology groups of $F_t$ at $x$ are independent of $t$. We refer the reader to \cite[Section  3.1]{Ginzburg2010} for more information about local Morse homology. \\

The following theorem, which is due to Ginzburg \cite{Ginzburg2010}, connects the local Floer homology of a maximally degenerate isolated fixed point of a Hamiltonian diffeomorphism $\varphi$ with the local Morse homology of its generating function. We state only the special case that we need.

\begin{thm}
\label{thm:floer=morse}
Suppose $\varphi $ is a Hamiltonian diffeomorphism defined on a small neighbourhood $U$ of the origin in $\R^{2n}$. Assume that $\varphi$ has an isolated fixed point at $0$, and that all the eigenvalues of $D \varphi(0)$ are equal to $1$. There exists a constant $\varepsilon_0 > 0$ (depending on $\varphi$) with the following property. Given  $0 < \varepsilon < \varepsilon_0$, choose  $C \in \mathrm{Sp}(2n)$ and $r > 0$ such that $\| C \circ \varphi \circ C^{-1} - \mathrm{Id} \|_{C^1(B(r))} < \varepsilon $ (such $C, r$ exist by  Corollary \ref{cor:parametrized_linear_algebra}). Then if $F:B(r) \to \R$ denotes the generating function of $C \circ \varphi \circ C^{-1}$, one has 
\[
\HF^{\loc}(\varphi, 0) \cong \HM^{\loc}(F,0).
\]
\end{thm}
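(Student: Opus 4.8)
The plan is to follow Ginzburg \cite[Section 3]{Ginzburg2010} and reduce the statement to the classical comparison between fixed-point Floer homology at an isolated fixed point and the Morse homology of a generating function. The first step is to dispose of the conjugation: since $C \in \Sp(2n)$ is a linear symplectomorphism fixing $0$, conjugation by $C$ is a symplectic change of coordinates near the origin, under which the Hamiltonian action functional, the compatible almost complex structures and all the Floer moduli spaces transform naturally; as the Conley--Zehnder index is conjugation invariant, this yields a grading-preserving isomorphism $\HF^{\loc}(\varphi, 0) \cong \HF^{\loc}(C\varphi C^{-1}, 0)$. So, writing $\psi := C\varphi C^{-1}$, it remains to prove that $\HF^{\loc}(\psi, 0) \cong \HM^{\loc}(F, 0)$ for a Hamiltonian diffeomorphism $\psi$ of a ball $B(r) \subset \R^{2n}$ which is $C^1$-close to $\mathrm{Id}$, has all eigenvalues of $D\psi(0)$ equal to $1$ and an isolated fixed point at $0$, with generating function $F$ normalised so that $F(0) = 0$.

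Next I would interpolate between the Floer picture and the Morse picture. For $\lambda \in [0,1]$ set $L_\lambda := \Phi^{-1}(\mathrm{gr}(d(\lambda F))) \subset \R^{2n} \times \bar{\R}^{2n}$, with $\Phi$ the symplectomorphism of Definition \ref{def:symplecto_of_type_GF}. Since $\lambda F$ is $C^2$-small, $L_\lambda$ is $C^1$-close to the diagonal $\Delta$, so $L_\lambda = \mathrm{gr}(\psi_\lambda)$ for a symplectomorphism $\psi_\lambda$ with $\psi_0 = \mathrm{Id}$, $\psi_1 = \psi$; the isotopy $\{\psi_\lambda\}$ is Hamiltonian because the corresponding Lagrangian isotopy $\lambda \mapsto \mathrm{gr}(d(\lambda F))$ of $T^* \R^{2n}$ is exact. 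Applying $\Phi$ to $\Delta \cap \mathrm{gr}(\psi_\lambda)$ identifies $\mathrm{Fix}(\psi_\lambda)$ near $0$ with $\crit(\lambda F) = \crit F$ near $0$; in particular $0$ is a fixed point of every $\psi_\lambda$ and is uniformly isolated. By the $s$-dependent continuation argument underlying the well-definedness of local Floer homology (cf. statement (3) in Section \ref{subsec:def_of_local_FH}), $\HF^{\loc}(\psi_\lambda, 0)$ is independent of $\lambda \in (0,1]$, hence equal to $\HF^{\loc}(\psi, 0)$. It therefore suffices to treat small $\lambda > 0$, for which I would use the standard identification (going back to Floer) of fixed-point Floer homology with the Lagrangian Floer homology of the pair $(\Delta, \mathrm{gr}(\psi_\lambda))$ in $\R^{2n} \times \bar{\R}^{2n}$ and, transporting by $\Phi$, reduce to comparing the local Lagrangian Floer homology of the pair $(O_{\R^{2n}}, \mathrm{gr}(d(\lambda F)))$ near $0$ with $\HM^{\loc}(\lambda F, 0) \cong \HM^{\loc}(F, 0)$ (rescaling a function by a positive constant changes neither the critical set nor, up to reparametrisation, the negative gradient lines).

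The heart of the argument, and the main obstacle, is therefore the adiabatic-limit theorem identifying that local Lagrangian Floer homology with the local Morse homology of $F$: as $\lambda \to 0$, the $J$-holomorphic strips (for a suitable compatible almost complex structure $J$) with boundary on the zero section and on $\mathrm{gr}(d(\lambda F))$ must, after rescaling the fibre coordinate, converge exactly onto the negative gradient trajectories of $F$, with a bijective correspondence on the index-one level. The analytic ingredients are standard --- a priori energy estimates forcing the strips to concentrate near the zero section, exponential decay, and an implicit function theorem / gluing argument --- and no bubbling needs to be excluded, everything taking place inside a contractible region of $\R^{2n} \times \bar{\R}^{2n} \cong T^* \R^{2n}$. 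The one genuine subtlety is that $0$ may be a \textbf{degenerate} critical point of $F$ (equivalently, a degenerate fixed point of $\psi$); exactly as in \cite{Ginzburg2010}, this is handled by first replacing $F$ by a $C^2$-small Morse perturbation supported near $0$ --- which changes neither $\HM^{\loc}(F, 0)$ nor, through the corresponding perturbation of $\psi_\lambda$, the local Floer homology --- carrying out the comparison between honestly non-degenerate objects, and then transferring the conclusion back using the stability of both local homology theories under such perturbations.
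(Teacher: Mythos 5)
The paper does not actually supply a proof of Theorem \ref{thm:floer=morse}: it attributes the result to Ginzburg \cite{Ginzburg2010} and cites it as a black box, so there is no ``paper's own proof'' to compare against. Taking your sketch on its own merits, it is correct in outline and follows the standard route: (i) conjugation by $C \in \Sp(2n)$ is a symplectic coordinate change preserving local Floer homology and the Conley--Zehnder grading, reducing to the $C^1$-small map $\psi = C\varphi C^{-1}$; (ii) the family $\psi_\lambda$ with generating function $\lambda F$ keeps $\mathrm{Fix}(\psi_\lambda)$ near $0$ equal to $\crit F$, hence $0$ is uniformly isolated with constant action $\lambda F(0) = 0$, so the continuation argument applies on $\lambda \in (0,1]$ (you correctly observe that $\lambda = 0$ must be excluded); (iii) via $\Phi$ one passes to the local Lagrangian Floer homology of $(O_{\R^{2n}}, \mathrm{gr}(d(\lambda F)))$, and the Floer/Oh adiabatic-limit theorem identifies this with $\HM^{\loc}(F,0)$; (iv) degeneracy is disposed of by a $C^2$-small Morse perturbation supported near $0$, using the stability of both local theories. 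Two remarks worth keeping in mind: the continuation in (ii) invokes a family version of the invariance statement, which is a strictly stronger fact than the small-perturbation statement (3) the paper records (though it is standard and is indeed what the paper means by ``a suitable $s$-dependent version''); and the heart of the argument, the adiabatic limit in (iii), is invoked without proof --- that is reasonable as it is classical, but in the local setting one must additionally check that as $\lambda \to 0$ the rescaled strips remain confined to the chosen isolating neighbourhood, a point you touch on only implicitly. Ginzburg's own proof in \cite{Ginzburg2010} likewise proceeds through the generating-function/Morse comparison; your Lagrangian-Floer reformulation and adiabatic limit is one of the standard ways to carry out the key step, so I regard this as essentially the intended argument rather than a genuinely different route.
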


The next result, which is the main one of this section,  is a minor extension of  \cite[Claim 4.1, p339]{GinzburgGurel2010}, which deals with the case where $\varphi = \mathrm{Id}$. 

\begin{thm}
\label{thm:large_iterates_always_work}
Suppose $\varphi ,\psi$ are two Hamiltonian diffeomorphisms defined on a neighbourhood $U$ of $ 0 \in \R^{2n}$. Assume that $0$ is an isolated fixed point of all three of $\varphi$, $\psi$ and $\varphi \psi$, and assume that the linear maps $D \varphi(0)$ and $D \psi(0)$ commute, and that all the eigenvalues of $D \varphi(0)$ and $D \psi (0)$ are equal to $1$. Then for all $k$ sufficiently large, one has
\[
  \HF^{\loc}( \varphi \psi^k ,0) \cong \HF^{\loc}( \psi ,0 ).
\]
\end{thm}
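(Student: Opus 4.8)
The plan is to reduce everything, via Theorem \ref{thm:floer=morse} and Corollary \ref{cor:parametrized_linear_algebra}, to a statement about generating functions and local Morse homology, and then to run the collapsing argument of \cite{GinzburgGurel2010} with the extra composition factor $\varphi$ carried along. Since $D\varphi(0)$ and $D\psi(0)$ commute and are unipotent, each $D(\varphi\psi^k)(0)=D\varphi(0)D\psi(0)^k$ is unipotent, so every $\varphi\psi^k$ is maximally degenerate at $0$. First I would apply Corollary \ref{cor:parametrized_linear_algebra} to obtain paths $C(\cdot)$ and $r(\cdot)$ on $[T,+\infty)$ making the conjugates $\varphi_s,\psi_s$ simultaneously $C^1$-small on $B(r(s))$, with $c(\varphi_s),c(\psi_s)<1/s$ and $r(s)\to 0$. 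For $k$ large I would pick $s=s(k)\to\infty$ growing fast in $k$ (say $s(k)\geq k^{3}$), so that $\varphi_s\psi_s^k$ is defined and $C^1$-small on $B(r(s)/2)$ and --- as will come out of the discrete-action analysis below --- $0$ is an isolated fixed point of $\varphi\psi^k$. Then Theorem \ref{thm:floer=morse} together with conjugation-invariance of local Floer homology yields $\HF^{\loc}(\varphi\psi^k,0)\cong\HM^{\loc}(F_k,0)$ and $\HF^{\loc}(\psi,0)\cong\HM^{\loc}(G_s,0)$, up to grading shifts, where $F_k$ is the generating function of $\varphi_{s(k)}\psi_{s(k)}^k$ and $G_s$ that of $\psi_s$. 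It then suffices to prove $\HM^{\loc}(F_k,0)\cong\HM^{\loc}(G_{s(k)},0)$ up to a grading shift, for $k$ large.

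For this I would unfold $F_k$ by means of the classical composition formula for generating functions: the generating function of a composition of $C^1$-small diffeomorphisms is, up to a stabilization by a nondegenerate quadratic form (which changes local Morse homology only by the shift equal to the index of that form), the fibre sum of the generating functions of the factors. Iterating, $F_k$ becomes, up to such a stabilization, the discrete action $\mathcal{A}_k$ on $B(r(s)/2)\times(\R^{2n})^k$ built out of one copy of the generating function of $\varphi_s$ and $k$ copies of that of $\psi_s$. Its critical set near $0$ consists of the corresponding discrete orbits, and because all linearizations in sight are unipotent this set reduces to $\{0\}$ --- the local-dynamics input of \cite{GinzburgGurel2010} --- which is also where the isolation of $0$ for $\varphi\psi^k$ used above comes from. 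Now I would invoke the collapsing homotopy of \cite{GinzburgGurel2010}: for $k$ large and with the generating-function blocks made $C^2$-small relative to the spectral gap $\asymp k^{-2}$ of the coupling quadratic form --- which is exactly what the $k$-dependent choice of $s(k)$ provides --- one deforms $\mathcal{A}_k$, through functions having $0$ as a uniformly isolated critical point, onto a stabilization of the single block $G_s$; the lone $\varphi_s$-block is handled verbatim like the $\psi_s$-blocks, since it is made equally small, and its only effect is an additional nondegenerate quadratic summand, hence a further grading shift. Homotopy invariance of local Morse homology then delivers $\HM^{\loc}(F_k,0)\cong\HM^{\loc}(G_{s(k)},0)$, whence $\HF^{\loc}(\varphi\psi^k,0)\cong\HF^{\loc}(\psi,0)$ up to a grading shift, as claimed. (Alternatively, one may homotope $\varphi_s$ to $\psi_s$ through maximally degenerate germs --- legitimate because $D\varphi(0)-\mathrm{Id}$ and $D\psi(0)-\mathrm{Id}$ are commuting nilpotent matrices, so their affine interpolation remains nilpotent --- reducing $\varphi\psi^k$ to $\psi^{k+1}$ and then quoting Theorem \ref{thm:GG} directly; but keeping $0$ isolated along that homotopy requires the same smallness estimates.)

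The step I expect to be the main obstacle is keeping $0$ uniformly isolated along the collapsing homotopy as $k\to\infty$: the coupling form degenerates at rate $k^{-2}$, so the Morse-theoretic estimates of \cite{GinzburgGurel2010} must be carried out with the $C^2$-norms of the generating-function blocks made $o(k^{-2})$. This costs nothing here, because Corollary \ref{cor:parametrized_linear_algebra} allows one to shrink the domain $B(r(s))$ --- and with it those $C^2$-norms, which are $O(1/s)$ --- polynomially fast in $k$. Once that balance is secured, the presence of the extra factor $\varphi$ is cosmetic: every step of the argument of \cite{GinzburgGurel2010} goes through with at most an additional grading shift.
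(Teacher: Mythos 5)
Your route — unfolding the generating function of $\varphi_s\psi_s^k$ into the discrete action functional $\mathcal{A}_k$ and invoking the Ginzburg--G\"urel collapse — is genuinely different from the paper's proof, which never unfolds: the paper homotopes $H_{s,l}(\cdot,t)=tK_{s,l}+(1-t)lG_s$ directly between the generating function $K_{s,l}$ of $\varphi_s\psi_s^l$ and the rescaled single block $lG_s$, and proves uniform isolation of $0$ via the pointwise vector-field estimate $\|X_{H_{s,l}}\| \ge (l-2k_0-1)\|X_{G_s}(g_s(z))\| + \tfrac{1}{2s}\|z\|$. Your unfolding approach is the one the paper sketches in Remark \ref{rem:different_proof}: first reduce $\HM^{\loc}(K_k,0)$ to $\HM^{\loc}(F+kG,0)$, then compare $F+aG$ and $F+bG$ for large $a,b$ via Lemma \ref{lem:morse_computation}. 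Both approaches are legitimate, and yours is arguably conceptually cleaner, but it is not complete as written.

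The main error is your claim that ``the lone $\varphi_s$-block is handled verbatim like the $\psi_s$-blocks ... and its only effect is an additional nondegenerate quadratic summand.'' That cannot be the case: since all eigenvalues of $D\varphi_s(0)$ equal $1$, the generating function $F_s$ of $\varphi_s$ has \emph{vanishing} nondegenerate part at the origin, so the $\varphi_s$-block contributes a (possibly badly degenerate) germ, not a nondegenerate quadratic form. Equivalently, if you collapse $\mathcal{A}_k$ along the diagonal you land on $F_s + kG_s$ plus a genuinely nondegenerate stabilizing form, not on a stabilization of $kG_s$. Passing from $F_s+kG_s$ to $kG_s$ is a separate and nontrivial step: it is exactly what Lemma \ref{lem:morse_computation} does, and it requires knowing that for $z\ne 0$ near $0$ the ratio $\|X_{F_s}(z)\|/\|X_{G_s}(z)\|$ is uniformly controlled. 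The paper enforces precisely this control through the balance inequality \eqref{eq:key!}, $k_0\min_{B(r(s))}\|D\psi_s-I\| - \max_{B(r(s))}\|D\varphi_s-I\| \ge \tfrac{1}{4s}$, secured by the refined output \eqref{eq:cs_is_continuous} of Corollary \ref{cor:parametrized_linear_algebra}. Your remark that choosing $s=s(k)$ to grow polynomially ``costs nothing'' is too optimistic: making both $c(\varphi_s)$ and $c(\psi_s)$ of order $1/s$ does not by itself bound $\|X_{F_s}\|$ by a fixed multiple of $\|X_{G_s}\|$ — one must keep $\psi_s$ from degenerating \emph{faster} than $\varphi_s$ under the conjugation, which is the entire content of \eqref{eq:cs_is_continuous}. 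This is also the circularity the paper flags (the generating functions exist only after fixing $k_1$, yet the threshold $k_0$ must not exceed $k_1$), and your proposal does not explicitly break it.

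Finally, your parenthetical alternative — homotoping $\varphi_s$ to $\psi_s$ through unipotent germs to reduce to $\psi^{k+1}$ and quoting Theorem \ref{thm:GG} — does not work as stated. Interpolating the linearizations affinely is fine, but there is no mechanism making $0$ a \emph{uniformly isolated} fixed point of $\varphi_t\psi^k$ along the interpolation: isolation of the fixed point is a property of the full nonlinear germ, not of $D\varphi(0)$, and it can be lost under deformation of $\varphi$ even when the linearizations behave well. The ``same smallness estimates'' do not carry over, because the estimates in the paper compare the two fixed commuting factors $\varphi_s$ and $\psi_s$, not a moving one-parameter family interpolating between them.
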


\begin{proof}
Let us begin by giving a heuristic idea of the proof. We warn the reader that this argument contains a technical gap, which will be fixed below. Since $D \varphi (0)$ and $D \psi(0)$ commute and have all their eigevalues equal to 1, by choosing an appropriate symplectic basis we may assume that both $\varphi$, $\psi$ and each iterate $\varphi \psi^k$ have well defined generating functions $F, G$ and $K_k$ respectively. By assumption $0$ is an isolated fixed point of all of $F,G$ and $K_k$. Fix an iterate $k$, and consider the function 
\[
H_k(z,t) : =t K_k(z) + (1-t)k G(z).
\]
The main step in the proof will be to show that there exists an integer $k_0$ such that if $k \ge k_0$ then $0$ is a uniformly isolated fixed point of $H_k$. Therefore by invariance of local Morse homology one has 
\[
\HM^{\loc}(K_k, 0) \cong \HM^{\loc}(kG,0).
\]
From this the result follows, since by Theorem \ref{thm:floer=morse} one has  $\HF^{\loc}(\varphi \psi^k,0) \cong \HM^{\loc}(K_k,0)$ and $\HF^{\loc}(\psi,0) = \HM^{\loc}(G,0)$ (and clearly $\HM^{\loc}(G,0)$ is invariant under replacing $G$ by a scalar multiple of $G$). This argument is essentially the same as the argument in \cite[Claim 4.1, 339]{GinzburgGurel2010}. Unfortunately there is a small gap in the reasoning above (which does not occur in the setting studied in \cite[Claim 4.1, 339]{GinzburgGurel2010}). Namely, in reality in order to define the generating functions $F,G$ and $K_k$, we first fix an integer $k_1 \in \N$, and then choose a symplectic basis such that all of $\varphi, \psi$ and $\varphi \psi^l$ for $1 \le l \le k_1$ are sufficiently $C^1$-close to the identity so as to admit generating functions. And herein lies the problem:  a priori, the integer $k_0$ depends on the original integer $k_1$ we choose, and of course the argument is meaningless unless we can make sure that $k_0 < k_1$! Luckily it turns out that this can be done (this is the point of \eqref{eq:cs_is_continuous} in Corollary \ref{cor:parametrized_linear_algebra}), but it complicates the argument somewhat, and is the explanation for profligate use of $s$'s in the proof below.

Let $C: [T, + \infty) \to \mathrm{Sp}(2n)$ and $r : [T, + \infty) \to (0,r_0)$ be as in Corollary \ref{cor:parametrized_linear_algebra}. Choose an integer $k_0 > R$, where $R$ is as in \eqref{eq:cs_is_continuous}. Now select $k_1  \gg 2k_0$, and choose $s > T$ large enough so that for each $1 \le l \le k_1$, the maps $\varphi_s$, $\psi_s$ and $\varphi_s \psi_s^l$ all admit generating functions $F_s$, $G_s$ and $K_{s,l}$ respectively, with

\[
  \HF^{\loc}( \varphi \psi^l , 0) \cong \HM^{\loc}(K_{s, l}, 0), \qquad \mathrm{and} \qquad \HF^{\loc}(\psi,0) \cong \HM^{\loc}(G_{s},0 ).
\] 
Note that it follows from \eqref{eq:cs_is_continuous} and the fact that $k_0 > R$ that up to shrinking $r(s)$, we may additionally assume that:
\begin{equation}
\label{eq:key!}
   k_0 \min_{ z \in B(r(s))} \| D \psi_{s} (z) - I \| - \max_{z \in B(r(s))} \|D \varphi_{s}(z)- I \| \ge \frac{1}{4s}.
\end{equation}
We now prove the result in three stages. As before, let us define auxilliary functions $f_{s}, g_{s}, k_{s,l}$ so that
\[
  \varphi_{s}(z) -z = X_{F_{s}}(f_{s}(z)), \qquad \psi_{s}(z)- z = X_{G_{s}}(g_{s}(z)), \qquad \varphi_{s} \psi_{s}^l (z) - z = X_{K_{s,l}}(k_{s,l}(z )).
\]

\textbf{Step 1:}
We prove that for each $1 \le l \le k_1$, 
\begin{equation}
  \label{eq:step1bound}
  \| X_{K_{s,l}}(k_{s,l}(z)) - l X_{G_{s}}(g_{ s}(z)) - X_{F_{ s}}(f_{s}(z)) \| = O(s^{-1})  \| X_{G_{s}}(g_{s}(z))\| .
\end{equation}
We argue by induction on $l$.  Since for $l \ge 1$ one has
\[
  \varphi_{s}\psi_{s}^k (z ) - z = \left(  \varphi_{s} \psi_{s}^{k-1} - \mathrm{Id} \right)(\psi_{s}(z)) + \psi_{s}(z) - z, 
\]
we see that
\[
  X_{K_{s,l}}(k_{s,l}(z)) = X_{K_{s, l-1}}(k_{s,l-1}(\psi_{s}(z))) + X_{G_{s}}(g_{s}(z)).
\]
Thus for $l = 1$ we can estimate
\[
\begin{split}
\| X_{K_{ s,1} }(k_{s ,1}(z)) - X_{F_{s}} (f_{s}(z)) - X_{G_{s}}(g_{s}(z)) \|  & = \|   X_{F_{s}} (f_{s} ( \psi (z))) - X_{F_{s}}(f_{s}(z )) \| \\
& \le \|X_{F_{s}} \|_{C^1} \| f_{s} \|_{C^1} \| \psi_{s} (z) - z \| \\
& \le   \|X_{F_{s}} \|_{C^1} \| f_{s} \|_{C^1} \| X_{G_{s}}(g_{s}(z)) \| \\
& = O(s^{-1})   \| X_{G_{s}}(g_{s}(z)) \|.
\end{split}
\]
Now for the inductive step we argue as follows:
\[
\begin{split}
     \| X_{K_{s,l}}(k_{s,l}(z)) -  l X_{G_{s}}(g_{ s}(z)) - X_{F_{ s}}(f_{s}(z)) \| & = \| X_{K_{s,l-1}}(k_{s,l-1}(\psi_{s}(z)) -  (l-1) X_{G_{s}}(g_{ s}(z)) - X_{F_{ s}}(f_{s}(z)) \| \\
     & \le    \| X_{K_{s, l-1}}(k_{s,l-1}(\psi_{s}(z)) ) - X_{K_{s, l-1}}(k_{s, l-1}(z) )\|  \\
      & \quad + \| X_{K_{s,l-1}}(k_{s,l-1}(z)) -  (l-1) X_{G_{s}}(g_{ s}(z)) - X_{F_{ s}}(f_{s}(z)) \|\\
      & \le  \| X_{K_{s, l-1}} \|_{C^1} \| k_{s, l-1} \|_{C^1} \| \psi_{s}(z) - z \| + O(s^{-1})   \| X_{G_{s}}(g_{s}(z)) \| \\
      & = O(s^{-1})   \| X_{G_{s}}(g_{s}(z)) \| + O(s^{-1})   \| X_{G_{s}}(g_{s}(z)) \| .
\end{split}
\]
The claim follows. As a consequence we also obtain the following inequality for $1 \le l \le k_1$:
\begin{equation}
  \label{eq:boundonXKl}
  \begin{split}
   \| X_{K_{s,l}}(k_{s,l}(z)) \| & \le \| X_{K_{s,l}}(k_{s,l}(z)) - l X_{G_{s}}(g_{ s}(z)) - X_{F_{ s}}(f_{s}(z)) \|  +l  \|   X_{G_{s}}(g_{ s}(z)) \| + \| X_{F_{ s}}(f_{s}(z)) \| \\
   &  \le (l + O(s^{-1}) \| X_{G_{s}}(g_{s}(z))\| + \| X_{F_{s}}(f_{s}(z) ) \| .
   \end{split}
\end{equation}

\textbf{Step 2:} 
Now consider the vector field
\[
Y_{s, l}(z,t) : = t X_{K_{s, l}} (k_{s, l} (z)) + (1-t) lX_{G_{s}}(g_{s}(z)) ,
\]
and observe that by Step 1,
\begin{equation}
\label{eq:Ybound1}
\begin{split}
\| Y_{s, l} (z, t) \| & \ge   l \|  X_{G_{s}}(g_{s}(z)) \| -  \|  X_{F_{s}}(f_{s}(z)) \|   - \| X_{K_{s, l}} (k_{s,l}(z)) -  l X_{G_{ s}}(g_{s}(z)) - X_{F_{s}}(f_{s}(z)) \| \\
& \ge (l - O(s^{-1})) \| X_{G_{s}}(g_{s}(z)) \| - \| X_{F_{s}}(f_{s}(z))\|.
\end{split}
\end{equation}
We now introduce for  $t \in [0,1]$ the homotopy
\[
 H_{s, l}(z,t) := t K_{s, l}(z ) + (1-t)lG_{s}(z).
\]
We have
\[
\|Y_{s,l}(z,t)- X_{H_{s,l}}(z,t) \| \le \|X_{K_{s, l}}(k_{s, l}(z)) - X_{K_{s, l}}(z) \| + l  \| X_{G_{s}}(g_{s}(z)) - X_{G_{s}}(z) \| .
\]
Since
\[
\begin{split}
\| X_{K_{s, l}} (k_{s, l}(z)) - X_{K_{s, l}}(z) \| & \le \|X_{K_{s, l}} \|_{C^1} \| k_{s, l}(z) - z  \| \\
& \le \|X_{K_{s, l}}\|_{C^1} \| \varphi_{s} \psi_{s}^l(z) - z \|  \\
& \le \| X_{K_{s, l}} \|_{C^1} \| X_{K_{s, l}} (k_{s, l}(z)) \| \\
& = O(s^{-1})  \| X_{K_{s, l}} (k_{s, l}(z)) \| \\
& = O(s^{-1}) \left( \| X_{G_{s}}(g_{s}(z))\| + \| X_{F_{s}}(f_{s}(z) ) \| \right) ,
\end{split}
\]
where the last line used \eqref{eq:boundonXKl}, and similarly  
\[
 \| X_{G_{s}}(g_{s}(z)) - X_{G_{s}}(z) \| = O(s^{-1}) \| X_{G_{s}}(g_{s}(z)) \|
\]
we see that 
\begin{equation}
\label{eq:X_H_bound}
\begin{split}
\|X_{H_{s,l}}(z,t)\| & \ge \| Y_{s,l}(z,t)\| - \| Y_{s,l}(z,t)- X_{H_{s,l}}(z,t) \| \\
& \ge ( l - O(s^{-1}))\|  X_{G_{s}}(g_{s}(z)) \| - (1 + O(s^{-1}))\| X_{F_{s}}(f_{s}(z) ) \| \\
& \ge (l-1)\|  X_{G_{s}}(g_{s}(z)) \| - 2 \| X_{F_{s}}(f_{s}(z) ) \| .
\end{split}
\end{equation}

\textbf{Step 3:}
We now prove that for each $2k_0  + 1 < l \le k_1$, 
\begin{equation}
  \label{eq:result_of_key}
  (l-1) \| X_{G_{s}}(g_{s}(z)) \| -2\|X_{F_{s}}(f_{s}(z)) \| \ge (l - 2 k_0 -1 ) \|X_{G_{s}}(g_{s}(z)) \| + \frac{1}{2s} \|z \|.
\end{equation}
Indeed,  for any diffeomorphism $\theta$ of $B(r)$ with $\theta(0) = 0$ one has for $z \in B(r)$ that
\[
  \theta(z) =  \left( \int_0^1 D \theta ( t z) \,dt \right) \cdot z,
\]
and thus in particular
\[
  \left( \min_{ w \in B(r)} \| D \theta (w) \|  \right)  \| z \| \le \| \theta (z) \| \le \left( \max_{ w \in B(r)} \| D \theta (w) \|  \right) \| z \|.
\]
Thus applying this with $\theta = \varphi_{s} - \mathrm{Id}$ and $\theta = \psi_{s} - \mathrm{Id}$, and using \eqref{eq:key!}, we see:
\[
  \begin{split}
  (l-1) \|X_{G_{s}}(g_{s}(z)) \| & =   (l - 2k_0 - 1)  \|X_{G_{s}}(g_{s}(z)) \| + 2 k_0 \| \psi_{s}(z) - z \| \\
 & \ge (l - 2k_0 - 1) \|X_{G_{s}}(g_{s}(z)) \| +2 k_0 \min_{w \in B(r(s))} \| D \psi_{s}(w) -I \| \|z \|  \\
  & \ge (l -2 k_0 - 1) \|X_{G_{s}}(g_{s}(z)) \| +  2\max_{w \in B(r(s))} \| D \varphi_{s}(w) -I \| \|z \| + \frac{1}{2s} \|z \| \\ 
  &\ge(l - 2k_0 - 1) \|X_{G_{s}}(g_{s}(z)) \| + 2 \| \varphi_{s}(z ) - z  \| + \frac{1}{2s} \|z \| \\
   &  = (l -2 k_0 - 1) \|X_{G_{s}}(g_{s}(z)) \| +2 \|X_{F_{s}}(f_{s}(z)) \| + \frac{1}{2s} \|z \|,
\end{split}
\]
which establishes \eqref{eq:result_of_key}. Now we combine \eqref{eq:X_H_bound} and \eqref{eq:result_of_key} to see that for $2k_0  + 1 < l \le k_1$, one has
\[
\begin{split}
\|X_{H_{s,l}}(z,t) \| & \ge (l - 1) \| X_{G_{s}}(g_{s}(z)) \| -2\| X_{F_{s}}(f_{s}(z)) \| \\
& \ge ( l - 2 k_0 - 1 ) \|  X_{G_{s}}(g_{s}(z)) \| +\frac{1}{2s} \| z \| .
\end{split}
\]
Since $g_{s}$ is a  diffeomorphism which fixes $0$, one has $g_{s}(z) = 0$ if and only if $z = 0 $. Since by assumption $0$ is a uniformly isolated zero of $X_{G_{s}}$, it follows that for $s > 0$ sufficiently small and $z$ sufficiently close to $0$, one has
\[
X_{H_{s,l}}(z,t) = 0 \qquad \iff \qquad  z = 0.
\]
Thus by invariance of local Morse homology, one has 
\[
  \HM^{\loc}(H_{s,l}(\cdot, 0), 0) \cong \HM^{\loc}(H_{s, l}(\cdot ,1),0),
\] 
which is what we wanted to prove. 
\end{proof}

\begin{rem}
\label{rem:different_proof}
Here is another way to prove Theorem \ref{thm:GG2} which is perhaps conceptually simpler. Let us denote by $F$ the generating function of $\varphi$, $G_k$ the generating function of $\psi^k$ and $K_k$ the generating function of $\varphi \psi^k$. Then for $k$ sufficiently large, one can show that 
\[
\HM^{\loc}(K_k,0) \cong \HF^{\loc}(F+G_k) \cong \HM^{\loc}(F + kG).
\]
Then the proof of Theorem \ref{thm:GG2} is completed via the following simple lemma:
\begin{lem}
\label{lem:morse_computation}
Suppose $F,G : B(r) \to \R$ are two smooth functions. Assume that the origin is an isolated critical point of both $F$ and $G$. Then there exists a constant $C = C\left(F,G\right) \ge 0$ such that for any two real numbers $  a,b$ such that $C<a<b$, the local Morse homology groups $\mathrm{HM}^\loc  (F + aG , 0)$ and $\mathrm{HM}^ \loc (F + bG ,0)$ coincide.
\end{lem}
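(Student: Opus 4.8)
The plan is to deduce the statement from the invariance of local Morse homology under homotopies through uniformly isolated critical points, which was recalled above. First I would fix a radius $\rho_0 > 0$ with $\overline{B(\rho_0)} \subset B(r)$ such that $0$ is the only critical point of $F$ in $\overline{B(\rho_0)}$ and also the only critical point of $G$ there; this is possible since by hypothesis $0$ is an isolated critical point of both functions. Since $\nabla(F+tG)(0)=\nabla F(0)+t\,\nabla G(0)=0$ for every $t$, the point $0$ is always a critical point of $F+tG$, and the only thing that can spoil the homotopy is the appearance of \emph{further} critical points of $F+tG$ near $0$ as $t$ varies.

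The key observation is that a point $z$ with $0<\|z\|\le\rho_0$ is a critical point of $F+tG$ precisely when $\nabla F(z)=-t\,\nabla G(z)$; since $\nabla G(z)\neq 0$ and $t>0$, this forces $\nabla F(z)$ to be a negative multiple of $\nabla G(z)$, the multiple being exactly $-\tau(z)$ where $\tau(z):=\|\nabla F(z)\|/\|\nabla G(z)\|$. Thus, writing $\mathcal{R}\subset B(\rho_0)\setminus\{0\}$ for the locus where $\nabla F(z)$ and $\nabla G(z)$ point in opposite directions, the critical points of $F+tG$ in the punctured ball are exactly the level set $\{z\in\mathcal{R}\mid\tau(z)=t\}$. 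Let $\mathcal{B}$ be the set of finite numbers of the form $\lim_k\tau(z_k)$ for some sequence $z_k\in\mathcal{R}$ with $z_k\to 0$, and put $C:=\sup\mathcal{B}$ (with $C:=0$ if $\mathcal{B}=\emptyset$). Granting that $C<\infty$, fix $C<a<b$. Then $[a,b]\cap\mathcal{B}=\emptyset$, so by compactness of $\mathcal{R}\cap\{a\le\tau\le b\}$ together with continuity of $\tau$, this set is bounded away from $0$; choosing a ball $V$ centred at $0$ disjoint from it, we see that $0$ is the only critical point of $F+tG$ in $V$ for every $t\in[a,b]$. Hence $0$ is a uniformly isolated critical point of the smooth family $\{F+tG\}_{t\in[a,b]}$, and the invariance of local Morse homology yields $\HM^{\loc}(F+aG,0)\cong\HM^{\loc}(F+bG,0)$.

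It remains to show that $C<\infty$, and this is the main obstacle: one must rule out that the ratio $\tau=\|\nabla F\|/\|\nabla G\|$, restricted to the anti-parallel locus $\mathcal{R}$, has finite limit values tending to $+\infty$ along sequences approaching $0$ — equivalently, that the set of ``degenerate'' parameters $t$ near which critical points of $F+tG$ accumulate at $0$ is bounded. I would prove this by a compactness/finiteness argument: on $\mathcal{R}$ one has $\tau(z)=|\langle\nabla F(z),\nabla G(z)\rangle|/\|\nabla G(z)\|^2$, and since $\nabla F$ and $\nabla G$ both vanish at $0$, a finite limit of $\tau$ along a sequence tending to $0$ can only arise when numerator and denominator vanish to the same order; controlling these orders (for instance via the Taylor jets of $\nabla F$ and $\nabla G$ at $0$, or, in the real-analytic setting in which the relevant generating functions live, directly via the curve selection lemma) shows that the set of such limit values is bounded. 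I expect essentially all the work to lie in this finiteness step; the reduction to uniform isolation and the appeal to invariance of local Morse homology are routine.
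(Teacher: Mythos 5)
Your reduction to uniform isolation of $0$ along the family $\{F+tG\}_{t\in[a,b]}$, followed by homotopy-invariance of local Morse homology, is exactly the strategy of the paper's proof, and your restriction of the ratio to the anti-parallel locus $\mathcal{R}$ is in fact a correct refinement of the paper's treatment, which works with $h(x)=\|X_F(x)\|/\|X_G(x)\|$ on the whole punctured ball. You have also put your finger on the crux: one must show that the finite cluster values of $\tau|_{\mathcal{R}}$ at $0$ form a bounded set, i.e.\ $C<\infty$.

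This, however, is a genuine gap, both in your sketch and -- unacknowledged -- in the paper's own proof. The paper treats ``$h$ unbounded as $|x|\to 0$'' and ``$h$ bounded'' as exhaustive cases, and in the first case immediately picks $r_1$ with $h>2b$ on $\dot B(r_1)$, which silently reads ``unbounded'' as ``$h(x)\to\infty$''. The intermediate case $\liminf_{x\to 0}h<\limsup_{x\to 0}h=\infty$ is not covered, and that is precisely the case in which your $C$ is infinite. Your proposed ways to close the gap do not apply in the stated generality: Taylor jets say nothing if $\nabla F$ or $\nabla G$ vanish to infinite order, and the generating functions arising in the application are merely smooth, not real-analytic, so the curve selection lemma is unavailable. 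Worse, the boundedness of $C$ can really fail for smooth data. On $\R$, take $F'(x)=\mathrm{sgn}(x)\,e^{-1/x^2}$ and $G'(x)=-\mathrm{sgn}(x)\,e^{-1/x^2-|x|^{-1/2}\sin(1/|x|)}$: both are smooth, both have $0$ as their only zero, $\mathcal{R}$ is all of the punctured interval, and $\tau(x)=e^{|x|^{-1/2}\sin(1/|x|)}$ has cluster set $[0,\infty]$ at $0$. Consequently $(F+tG)'$ has zeros accumulating at $0$ for \emph{every} $t>0$, so $\HM^{\loc}(F+tG,0)$ is not even defined for any $t>0$, and the lemma as stated for arbitrary smooth $F,G$ is at best vacuous. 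Neither your argument nor the paper's establishes the lemma in the generality in which it is formulated; some further hypothesis on $F,G$ (or a quantitative nondegeneracy input from the application) is needed.
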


\begin{proof}
First, choose $ 0 < r_0 < r/2$ such that neither $F$ or $G$ have any critical points in the punctured ball $\dot{B}(2r_0) := B(2r_0) \backslash \{0\}$. Consider the function
\[
h:  \dot{B}(r_0) \to \R, \qquad h(x) := \frac{\| X_F(x)\| }{ \| X_G(x) \|}.
\]
Suppose that $h$ is unbounded as $ |  x | \to 0$. Consider the homotopy $F_s(x)=F(x)+(sb+(1-s)a)G(x)$. Choose $r_1 > 0$ such that $h(x) > 2b$ on $\dot{B}(r_1)$.Then since 
\[
\|X_{F_s}(x) \| \ge( h(x) - (sb+ (1-s)a))  \| X_G(x)\| > b\|X_G(x ) \| ,
\]
we see that $0$ is a uniformly isolated zero of $X_{F_s}$, whence $\HF^{\loc}(F_s, 0)$ is independent of $s$ as required (in this case one can take the contant $C(F,G) = 0$). 
Suppose now that $h$ is bounded. Since for any function $H$ and any $c >0$ the (local) Morse homology of $H$ and $c H$ are isomorphic, it suffices to show that $\HM^{\loc}(\tfrac{1}{a}F +G) \cong \HM^{\loc}(\tfrac{1}{b}F+G)$. For this we consider the homotopy $G_s(x) : = \tfrac{1}{sb + (1-s)a)}F + G$. Define  
\[
k:  \dot{B}(r_0) \to \R, \qquad k(x) := \frac{\| X_G(x)\| }{ \| X_F(x) \|} = \frac{1}{h(x)}.
\]
By assumption there exists $\varepsilon > 0$ such that $k(x) > \varepsilon$ for all $x \in \dot{B}(r_0)$. Set $C(F,G) = 2/\varepsilon$. Then for $C<a<b$ one has
\[
\| X_{G_s}(x) \| \ge (k(x) - \tfrac{1}{a}) \|X_F(x)\| \ge \tfrac{ \varepsilon}{2}\|X_F (x) \|,
\]
and so again  $0$ is a uniformly isolated zero of $X_{G_s}$, whence $\HF^{\loc}(G_s, 0)$ is independent of $s$. This completes the proof.
\end{proof}
\end{rem}

\subsection{The proof of Theorem \ref{thm:GG2}}
\label{sec:completing_the_proof}
With these preliminaries out of the way, let us get started on the proof of Theorem \ref{thm:GG2}. Since the statement is a purely local statement, we may assume without loss of generality that $(M, \omega) =( \R^{2n}, \omega_0)$ and that $x = 0$. Let us abbreviate
\[
A : = D \varphi(0), \qquad B := D \psi(0), 
\]
so that by assumption $A, B \in \mathrm{Sp}(2n)$ commute. Following \cite{GinzburgGurel2010}, to prove Theorem \ref{thm:GG2} we will first prove the result in two special cases. 

\textbf{Case 1: The non-degenerate case}\\
Fix an admissible $k$, and suppose that $AB^k$ has no eigenvalues equal to 1. Then $0$ is a non-degenerate fixed point of $\varphi \psi^k$, and in particular is a Morse(-Bott) component of $\mathrm{Fix}(\varphi \psi^k)$. Thus 
\[
\HF^{\loc}( \varphi \psi^k , 0) \cong \mathrm{H}(\{ \mathrm{pt} \} ; \Z_2),
\]
by Example \ref{ex:morse_bott_component}. \\

\textbf{Case 2: The maximally degenerate case}\\
Fix an admissible $k$, and suppose that all the the eigenvalues of $AB^k$ are equal to $1$. Thus since $k$ is admissible, all the eigenvalues of $A$ and all the eigenvalues of $B$ are also all equal to 1. Thus in this case every $k$ is necessarily admissible, and we must show that for all $k$ sufficiently large the local Floer homology groups $\HF^{\loc}(\varphi \psi^k ,0)$ are isomorphic (up to a degree shift). This follows directly from Theorem \ref{thm:large_iterates_always_work}. 

\textbf{Case 3: The general case}\\
Fix an admissible $k$. Write $\R^{2n} = V \oplus W $, where $V$ and $W$ are linear $AB^k$-invariant subspaces such that $AB^k|_V$ has all its eigenvalues equal to 1 and $AB^k|_W$ has no eigenvalues equal to 1. By the argument of \cite[Section 4.5]{GinzburgGurel2010}, we can homotope $\varphi \psi^k$ to a Hamiltonian diffeomorphism $\theta$ in such a way so that $0$ remains a uniformly isolated fixed point, and such that $\theta$ is split, i.e: $\theta(z) = (\theta_V(z) , \theta_W(z)) \in V \oplus W$. Then by the K\"unneth formula (Lemma \ref{lem:kunneth}), one has 
\[
\HF^{\loc}(\varphi \psi^k ,0) \cong \HF^{\loc}(\theta_V, 0) \otimes \HF^{\loc}(\theta_W,0).
\]
Since $0$ is a non-degenerate critical point of $\theta_W$, the argument above tells us that $\HF^{\loc}( \theta_W, 0) \cong \mathrm{H}(\{ \mathrm{pt} \} ; \Z_2)$. We would like to apply Case 2 to $\theta_V$, but in order to do this we must exhibit $\theta_V $ as a product $\theta_V \cong \theta_1 \circ \theta_2^k$, where the $\theta_j :  V \to V$
are Hamiltonian diffeomorphisms such that the linear maps $D \theta_1 (0)$ and $D \theta_2(0)$ commute and have all their eigenvalues equal to 1. To accomplish this, consider two more splittings: 
\[
  \R^{2n} = V_A \oplus W_A, \qquad \R^{2n} = V_B \oplus W_B,
\]
where $V_A$ and $W_A$ are are linear $A$-invariant subspaces such that $A|_{V_A}$ has all its eigenvalues equal to 1 and $A|_{W_A}$ has no eigenvalues equal to 1, and similarly for $B$. As above, by \cite[Section 4.5]{GinzburgGurel2010} we can homotope $\varphi$ and $\psi$ to maps $\theta_{ \varphi}$ and $\theta_{\psi}$ in such a way that $0$ remains a uniformly isolated fixed point, and such that $\theta_{\varphi} = (\theta_{ \varphi,1}, \theta_{ \varphi ,2})$ and $\theta_{\psi} = (\theta_{ \psi,1}, \theta_{ \psi ,2})$ are split with respect to these decompositions. 
Since $k$ is admissible, it readily follows that
\[
  V = V_A \cap V_B,
\]
and since $\varphi $ and $\psi$ commute, the maps $\theta_{\varphi ,1}$ and $\theta_{\psi,1}$ commute and preserve $V$. Thus $\theta_V \cong \theta_{\varphi, 1} \circ \theta_{ \psi ,1}^k$, and we can apply Case 2 to deduce that (for $k$ sufficiently large):
\[
  \HF^{\loc}( \varphi \psi^k) \cong \HF^{\loc}( \theta_V, 0) \cong \HF^{\loc}(\theta_{\varphi ,1}  \circ \theta_{\psi,1}^k ,0 ) \cong \HF^{\loc}( \theta_{\psi, 1},0) \cong \HF^{\loc}(\psi,0).
\] 
\section{$L^\infty$-estimates}
\label{sec:proof_of_thm_estimate}

In this section we will prove Theorem \ref{thm:estimate}. In a slightly different setting the proof is carried out in \cite[Section 4]{AbbondandoloMerry2014a}. There are some minor modifications required here, and hence we give a fairly complete proof below, omitting only those stages which are identical to their counterparts in \cite[Section 4]{AbbondandoloMerry2014a}. 
\begin{lem}
\label{lem:L_infty_1}
We show that for any  $\widetilde{z}= (z, \tau, \sigma) \in \Lambda(\widetilde{M})$ and any $ \varepsilon >0 $, one has the implication:
\begin{equation}
\label{eq:small_gradient}
\| \nabla \A_{ \widetilde{L}^c} (\widetilde{z}) \|_{ L^2(S^1)} < \frac{\varepsilon}{2} \qquad \then \qquad \max_{ t \in [0, 1/2]} | H(z(t)) | \le \varepsilon.
\end{equation}
\end{lem}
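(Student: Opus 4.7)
The plan is to decompose $\|\nabla \A_{\widetilde L^c}(\widetilde z)\|_{L^2}^2$ into its three components according to \eqref{eq:the_gradient} and to extract separately an ``averaged'' bound on $H\circ z$ (from the $\sigma$-equation) and an ``oscillation'' bound (from the $M$-component), then combine them via the probability measure $\kappa(t)\,dt$ on $[0,1/2]$.

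For the averaged bound, observe that the smooth $1$-periodic function $\sigma$ satisfies $\int_0^1 \sigma'(t)\,dt=0$. Since $\|\sigma' - \kappa H(z)\|_{L^2(S^1)} \le \|\nabla\A_{\widetilde L^c}(\widetilde z)\|_{L^2}< \varepsilon/2$, Cauchy--Schwarz gives
\[
  \Bigl|\int_0^1 \kappa(t)\, H(z(t))\,dt\Bigr| \;=\; \Bigl|\int_0^1\bigl(\kappa(t) H(z(t)) - \sigma'(t)\bigr)\,dt\Bigr| \;\le\; \|\sigma' - \kappa H(z)\|_{L^2}\cdot 1 \;<\;\tfrac{\varepsilon}{2}.
\]

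For the oscillation bound, since $\widehat{\varphi}$ is admissible, $L_t\equiv 0$ on $[0,1/2]$, hence $L^c_t\equiv 0$ and $X_{L^c_t}\equiv 0$ on this interval. Setting $v(t):= z'(t) - \tau\kappa(t)X_H(z(t))$, the $M$-component of the gradient on $[0,1/2]$ is $J_t(z,\tau)v$, whose $L^2$-norm is therefore also controlled by $\varepsilon/2$. Using $dH(X_H)\equiv 0$ one obtains
\[
  \tfrac{d}{dt}H(z(t)) \;=\; dH_{z(t)}(z'(t)) \;=\; dH_{z(t)}(v(t)), \qquad t\in[0,1/2].
\]
Because $H$ depends only on $r$ with $|H'|\le 1$, one can estimate $|dH(v)|\lesssim |v|_{J_t}$ using the $d\lambda$-compatibility of $J_t$ and its uniform bound \eqref{eq:acs_bounded}; a further Cauchy--Schwarz then gives that $H\circ z$ has oscillation of order $\varepsilon$ on $[0,1/2]$.

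To finish, since $\kappa\ge 0$ has total mass one and $\supp\kappa\subset[0,1/2]$, the integral mean value theorem produces $t_1\in[0,1/2]$ with $H(z(t_1)) = \int_0^1 \kappa(t) H(z(t))\,dt$, whence $|H(z(t_1))|<\varepsilon/2$ by Step~1. For any $t_0\in[0,1/2]$ the triangle inequality with the oscillation bound from Step~2 yields $|H(z(t_0))|\le \varepsilon$. The main obstacle is obtaining Step~2 with a clean enough constant that the combined estimate actually lands at $\varepsilon$ rather than at a larger multiple; this requires the structural facts that $X_H$ is a multiple of the Reeb field (so the $r$-component of $v$ coincides with $\dot r$), that $|H'|\le 1$, and that the admissibility of $\widehat{\varphi}$ kills the potentially large term $X_{L^c_t}$ precisely on the time interval where $\kappa$ is supported.
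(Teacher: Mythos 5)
Your strategy coincides with the paper's: the $\sigma$-equation yields an averaged bound on $H\circ z$ over $[0,1/2]$, the $M$-component of the gradient yields an oscillation bound there (using that $L^c_t\equiv 0$ on $[0,1/2]$ and $dH(X_H)=0$), and the two are combined. Your Step~1 is the paper's bound $\min_{t\in[0,1/2]}|H(z(t))|\le\|\nabla\A_{\widetilde{L}^c}(\widetilde z)\|_{L^2}$ in only slightly different packaging; both implicitly use $\kappa\ge 0$.

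The genuine gap is exactly where you flag it yourself. In Step~2 you only claim $|dH(v)|\lesssim|v|_{J_t}$ with an unspecified constant, and an unspecified constant here is fatal: it would only yield $\max_{[0,1/2]}|H(z(t))|\le C\varepsilon$, which is strictly weaker than the statement. The paper pins the constant to $1$ by first noting $dH = -\imath_{X_H}\,d\lambda$ (the sign convention of \eqref{eq:sym_grad}), so that, since $J_t$ is an isometry for the compatible metric $\langle\cdot,\cdot\rangle_{J_t}$,
\[
|dH_{z}(v)| = |d\lambda(X_H(z),v)| = |\langle J_t X_H(z),v\rangle_{J_t}| \le |X_H(z)|_{J_t}\,|v|_{J_t},
\]
then integrating over a time interval in $[0,1/2]$ and applying Cauchy--Schwarz to get an oscillation bound $\le\|X_H\|_{L^\infty(M)}\,\|\nabla\A_{\widetilde{L}^c}(\widetilde z)\|_{L^2(S^1)}$, and finally invoking $\|X_H\|_{L^\infty(M)}\le 1$, which comes from $X_H = (\partial H/\partial r)\,R$ together with $|\partial H/\partial r|\le 1$, cf.~\eqref{eq:X_H} and \eqref{eq:hamiltonian_H}. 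You name these ingredients verbally in your final sentence, but the chain of identities producing the explicit constant $1$ is precisely the content of the lemma, and it is the piece you have not written down.
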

\begin{proof}
We first prove the weaker statement that 
\begin{equation}
\label{eq:small_gradient_2}
\min_{ t \in [0,1/2]} | H(z(t)) | \le \| \nabla \A_{ \widetilde{L}^c} (\widetilde{z}) \|_{ L^2(S^1)}.
\end{equation}
This is clear if $H(z(t))=0$ for some $t\in[0,1/2]$. Thus without
loss of generality assume that $H(z(t))>0$ for all $t\in[0,1/2]$.
Then we have 
\begin{align*}
\min_{t\in[0,1/2]} | H(z(t)) | & =\min_{t\in[0,1/2]}H(z(t)) \\ 
& =\min_{t\in[0,1/2]}H(z(t))\int_{S^1} \kappa(t)dt\\
 & \leq\int_{S^1}\kappa(t)H(z(t))\,dt \\
 & \leq \| \sigma'- \kappa H(z) \| _{L^2(S^1)} \\
 & \leq \| \nabla \A_{ \widetilde{L}^c} (\widetilde{z}) \|_{ L^2(S^1)},
\end{align*}
where we used \eqref{eq:cutoff_function_kappa} in the second line and \eqref{eq:the_gradient} in the last line.  We now use \eqref{eq:small_gradient_2} to prove \eqref{eq:small_gradient}. Indeed, the hypotheses of \eqref{eq:small_gradient} together with \eqref{eq:small_gradient_2} tell us that
\[
\min_{t \in [0,1/2]} | H(z(t)) | <\frac{\varepsilon}{2}.
\]
If it is not the case that $| H(z(t))|<\varepsilon$ for
all $t\in[0,1/2]$ then there exists an interval $[t_{0},t_{1}]\subset[0,1/2]$
such that 
\[
\frac{\varepsilon}{2}\le | H(z(t)) | \leq\varepsilon,  \qquad \text{for all }t\in[t_0,t_1],
\]
with 
\[
\bigl|  H(z(t_0))- H(z(t_1)  \bigr|=\frac{\varepsilon}{2}.
\]
Then we estimate
\begin{align*}
\frac{\varepsilon}{2} 
& =|H(z(t_1))-H(z(t_0)) | \\
& =\left|\int_{t_0}^{t_1}\frac{d}{dt}H(z(t))\,dt\right|\\
 & \leq \int_{t_0}^{t_1} | dH(z)[z'] |dt=\int_{t_0}^{t_1} | d \lambda (X_{H}(z),z') | 
\, dt\\
 & \overset{(*)}{=} \int_{t_0}^{t_1}\left| d \lambda \left(X_H(z),z'- \tau X_{\kappa H}(z)-X_{L^c_t}(z) \right) \right|\, dt\\
 & =\int_{t_0}^{t_1}| X_{H} (z) |_{ J_t(z,\tau)}  | z'- \tau X_{\kappa H}(z)-X_{L^c_t}(z) |_{J_t(z,\tau)} \, dt \\
 & \le \| X_H \|_{L^{\infty}(M)} \int_{t_0}^{t_1}  | z'- \tau X_{\kappa H}(z)-X_{L^c_t}(z) |_{J_t(z,\tau)}  dt \\
 & \le \| X_H \|_{L^{\infty}(M)}  \| \nabla \A_{ \widetilde{L}^c} (\widetilde{z}) \|_{ L^2(S^1)},
\end{align*}
where $(*)$ used the fact that $d \lambda(X_H, X_H) = 0$ and that $X_{L^c_t} = 0 $ for $t \in [0,1/2]$.
Now \eqref{eq:small_gradient} follows, since from the definition \eqref{eq:hamiltonian_H}, one has $ \| X_H \|_{L^\infty(M} \le 1$. 
\end{proof}
\begin{lem}
\label{lem:L_infty_2}
Fix $c >C(\widehat{\varphi})$ and choose $r_0 > \max \{ 2 , c \}$ and assume that $I^*J$ is of contact type on $ \Sigma \times (r_0, + \infty)$.  Suppose that $\widetilde{z}=(z,\tau, \sigma)$ satisfies $z(S^1) \subset M_1 \cup_{ \Sigma} ( \Sigma \times (1,r_0])$ and $A,B >0$ are such that
\[
| \A_{\widetilde{L}^c}(\widetilde{z}) | \le A, \qquad \| \sigma \|_{L^2( S^1)} \le B.
\]
We prove that there exists a constant $C>0$ such that the implication
\begin{equation}
\label{eq:step2}
\| \nabla \A_{ \widetilde{L}^c} (\widetilde{z}) \|_{ L^2(S^1)} \le \frac{1}{8} \qquad \then \qquad \| \tau \|_{L^{\infty}(S^1)} \le C,
\end{equation}
holds. 
\end{lem}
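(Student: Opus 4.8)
The plan is to control $\tau$ by bounding its oscillation and its mean value $\bar\tau := \int_{S^1}\tau(t)\,dt$ separately.

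\emph{Step 1 (oscillation).} The hypothesis $\|\nabla\A_{\widetilde{L}^c}(\widetilde{z})\|_{L^2(S^1)}\le\tfrac18$ implies, reading off the third component of \eqref{eq:the_gradient}, that $\|\tau'-\sigma\|_{L^2(S^1)}\le\tfrac18$; combined with $\|\sigma\|_{L^2(S^1)}\le B$ this gives $\|\tau'\|_{L^2(S^1)}\le B+\tfrac18$, hence $\max_{S^1}\tau-\min_{S^1}\tau\le\|\tau'\|_{L^1(S^1)}\le\|\tau'\|_{L^2(S^1)}\le B+\tfrac18$. Therefore $\|\tau\|_{L^\infty(S^1)}\le|\bar\tau|+B+\tfrac18$, and it remains to bound $|\bar\tau|$.

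\emph{Step 2 (localising $z$ on $[0,\tfrac12]$).} I would apply Lemma \ref{lem:L_infty_1} with $\varepsilon=\tfrac14$ (legitimate, since $\tfrac18=\tfrac{\varepsilon}{2}$) to obtain $|H(z(t))|\le\tfrac14$ for all $t\in[0,\tfrac12]$. Since $H\equiv-\tfrac9{16}$ on $M\setminus S\Sigma$ and $H$ is increasing with $H(r)=r-1$ on $(\tfrac12,\tfrac32)$, this forces $z(t)\in S\Sigma$ with $r(t)\in[\tfrac34,\tfrac54]$, and in particular $\partial_r H(z(t))=1$ and $H(z(t))=r(t)-1$, for every $t$ in the support of $\kappa$ (which lies in $[0,\tfrac12]$ by \eqref{eq:cutoff_function_kappa}).

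\emph{Step 3 (extracting $\bar\tau$ from the action).} Write
\[
\A_{\widetilde{L}^c}(\widetilde{z})=\int_{S^1}z^*\lambda-\int_{S^1}\dot\tau\sigma\,dt-\int_{S^1}\tau\kappa(t)H(z)\,dt-\tfrac12\int_{S^1}\sigma^2\,dt-\int_{S^1}L^c_t(z)\,dt.
\]
By the first component of \eqref{eq:the_gradient} (using that $J_t$ is an isometry of its own metric), $z'=\tau\kappa(t)X_H(z)+X_{L^c_t}(z)+e$ with $\|e\|_{L^2(S^1)}\le\tfrac18$. Substituting this into $\int z^*\lambda=\int\lambda(z')\,dt$ and using that on $\supp\kappa$ one has $\lambda(X_H(z))=r\,\partial_r H(z)=r$ (recall $X_H=\partial_r H\cdot R$, $\lambda=r\alpha$, $\alpha(R)=1$ on $S\Sigma$), the contribution $\int_{S^1}\lambda(\tau\kappa X_H(z))\,dt=\int_{S^1}\tau\kappa\,r\,dt$ cancels against $-\int_{S^1}\tau\kappa H(z)\,dt=-\int_{S^1}\tau\kappa(r-1)\,dt$, leaving $\int_{S^1}\tau\kappa(t)\,dt$. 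Writing $\tau=\bar\tau+\delta$ with $\|\delta\|_{L^\infty}\le B+\tfrac18$ and using $\int_{S^1}\kappa=1$ gives $\int_{S^1}\tau\kappa\,dt=\bar\tau+\rho$ with $|\rho|\le(B+\tfrac18)\|\kappa\|_{L^1}$. All the remaining terms of $\A_{\widetilde{L}^c}(\widetilde{z})$ are bounded by constants depending only on $B$ and the fixed data $c,\widehat\varphi,r_0,\mathrm{J},\kappa$: indeed $|\int_{S^1}\dot\tau\sigma\,dt|\le\|\tau'\|_{L^2}\|\sigma\|_{L^2}$ and $\tfrac12\int_{S^1}\sigma^2\,dt=\tfrac12\|\sigma\|_{L^2}^2$ are controlled by $B$, while $\int_{S^1}\lambda(X_{L^c_t}(z))\,dt$, $\int_{S^1}\lambda(e)\,dt$ and $\int_{S^1}L^c_t(z)\,dt$ are bounded because $z(S^1)$ lies in the compact set $M_1\cup_\Sigma(\Sigma\times(1,r_0])$, on which $L^c_t$, $X_{L^c_t}$, $\lambda$, and — via \eqref{eq:acs_bounded} — the $\mathrm{J}_t$-norms are all uniformly bounded. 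Hence $|\A_{\widetilde{L}^c}(\widetilde{z})-\bar\tau|\le\mathrm{const}$, so $|\bar\tau|\le A+\mathrm{const}$, and together with Step 1 this yields $\|\tau\|_{L^\infty(S^1)}\le C$ for a constant $C=C(A,B)$ (depending also on $c,r_0,\widehat\varphi,\mathrm{J},\kappa$), as required.

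\emph{Main obstacle.} The one genuinely delicate point is Step 3 — making the cancellation between $\int z^*\lambda$ and $\int\tau\kappa H(z)\,dt$ actually go through. This is the perturbed version of the critical-point computation in the proof of Lemma \ref{lem:relating_spectrums}, and it relies on $\supp\kappa\subset[0,\tfrac12]$ together with Lemma \ref{lem:L_infty_1} to guarantee that $z$ stays in $S\Sigma$ with $r\in(\tfrac12,\tfrac32)$ on $\supp\kappa$, so that the normal form $H(r)=r-1$, $\partial_r H\equiv1$ is available there. Apart from $\supp\kappa\subset[0,\tfrac12]$, the only properties of $\kappa$ used are $\int_{S^1}\kappa=1$ and $\kappa\in L^1$.
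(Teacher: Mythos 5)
Your proof is correct and takes essentially the same strategy as the paper's: control the oscillation of $\tau$ via the third component of the gradient and the bound on $\|\sigma\|_{L^2}$, then pin down one value of $\tau$ using the action bound, Lemma~\ref{lem:L_infty_1}, and the first component of the gradient. The difference is in the execution of the second part: the paper bounds $\min_{[0,1/2]}|\tau|$ via the one-sided estimate $\lambda(X_H(z)) - H(z) \ge \tfrac34$ and the inequality $\int\tau\kappa\,dt \ge \min\tau$, while you instead exploit the exact identity $\lambda(X_H(z)) - H(z) = 1$ on $\mathrm{supp}\,\kappa$ (available because Lemma~\ref{lem:L_infty_1} forces $r(t)\in[\tfrac34,\tfrac54]$ there) to isolate $\int\tau\kappa\,dt = \bar\tau + O(B)$ and hence to bound the mean $\bar\tau$ directly. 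Your version is slightly cleaner in that it never needs $\kappa\ge 0$ (a property the paper uses implicitly in $\int_0^{1/2}\tau\kappa\,dt \ge \min\tau$ but does not state in the definition~\eqref{eq:cutoff_function_kappa}); the paper's version is marginally more economical in not decomposing $\int z^*\lambda$ term by term. Both give a constant $C$ depending on the same data $(A,B,c,\widehat{\varphi},r_0,\mathrm{J},\kappa)$.

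One small point worth spelling out in your Step~3: the bound on $\big|\int_{S^1}\lambda(e)\,dt\big|$ requires comparing $\lambda$ against the $J_t$-norm used to measure $e$, and this comparison is uniform precisely because $z$ is confined to the compact set $M_1\cup_\Sigma(\Sigma\times(1,r_0])$ and the $J_t$ are uniformly bounded by~\eqref{eq:acs_bounded}; this is the role played by the constant $N = r_0\|\lambda\|_{L^\infty(M_1)}$ in the paper's proof. You flagged this, so the gap is cosmetic rather than substantive.
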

\begin{proof}
First note that 
\begin{align*}
\| \tau' \|_{L^2(S^1)} & \le \|  \tau' - \sigma \|_{L^2(S^1)} + B \\
& \le \| \nabla \A_{ \widetilde{L}^c} \|_{L^2(S^1)} + B \\
& \le \frac{1}{8} + B.
\end{align*}
Next, we have
\begin{align*}
\left| \int_{S^1} z^* \lambda - \int_{S^1}\widetilde{L}^c(\widetilde{z}) \,dt \right| 
& \le | \A_{ \widetilde{L}^c}(\widetilde{z}) | + \left| \int_{S^1} \left\langle \tau', \sigma \right\rangle  \,dt \right| \\ 
& \le A + \| \sigma \|_{L^2(S^1)} \| \tau' \|_{L^2(S^1)} \\
& \le A + \frac{B}{8} + B^2.
\end{align*}
Set 
\[
N : = r_0 \| \lambda \|_{L^\infty(M_1)} < + \infty
\]
(note that in the line above we have written $M_1$ not $M$!).
Denote by 
\[
K:= \int_{S^1} \max_{z \in M} \left[ \lambda(X_{L_t^c}(z)) - L^c_t(z) \right]  \,dt .
\]
\begin{rem}
It follows from \eqref{eq:L_is_lifted_l} that  $(r \alpha)(X_{L_t}) = L_t$. Thus the constant $K$ is only non-zero due to the fact that we have introduced the cutoff function $\beta_c$.
\end{rem}
We now estimate
\begin{align*}
\left|\int_{S^1}\lambda(\tau \kappa X_H (z)-\tau\kappa H(z))dt\right| \le & {}  \left|\int_{S^1}z^*\lambda-\int_{S^1} \widetilde{L}^c_t(\widetilde{z }) \right|  \,dt + \left|\int_{S^1} \left[ \lambda(X_{L^c_t}(z)) - L^c_t(z) \right]\,dt \right|  \\ 
& + \left|\int_{S^1}\lambda(z'- \tau \kappa X_H(z)-X_{L^c_t}(z)dt\right|\\
 & {} \le A + \frac{B}{8} + B^2+ K \\ 
 & +N \int_{S^1} |z' - \tau  \kappa X_H(z) - X_{L^t_c}(z) |_{J_t} \, dt \\
 & \le A + \frac{B}{8} + B^2+ K + N \| \nabla \A_{\widetilde{L}^c}(\widetilde{z}) \|_{L^2(S^1)} \\
 & \le A + \frac{B}{8} + B^2+ K +  \frac{N}{8}.
\end{align*}
We claim that
\begin{equation}
\label{eq:min_tau}
\min_{ t \in [0,1/2]} | \tau(t) | \le \frac{4}{3}\left(A + \frac{B}{8} + B^2 + K +  \frac{N}{8}\right).
\end{equation}
Indeed, there is nothing to prove if $\tau(t)$ changes sign, so without loss of generality we may assume that $\tau(t) > 0$. Lemma \ref{lem:L_infty_1} tells us that the assumption that  $\| \nabla \A_{ \widetilde{L}^c} (\widetilde{z}) \|_{ L^2(S^1)} \le \tfrac{1}{8}$ implies $ - \tfrac{1}{4} \le H(z(t)) \le \tfrac{1}{4}$ for all $t \in [0,1/2]$, and hence 
\[
\lambda(X_H(z)) - H(z) \ge 1 - \tfrac{1}{4} \ge \tfrac{3}{4}.
\]
Then we have
\begin{align*}
A + \frac{B}{8} + B^2 + K +  \frac{N}{8} & \ge \left|\int_{S^1}\lambda(\tau \kappa  X_H(z)-\tau\kappa H(z))dt\right| \\
& \ge \frac{3}{4}\int_{0}^{1/2}\tau(t)\kappa(t)\,dt  \\
& \ge \frac{3}{4} \min_{t \in [0,1/2]} \tau(t),
\end{align*}
which proves \eqref{eq:min_tau}.
The proof is finally completed with
\begin{align*}
\| \tau \|_{L^\infty(S^1)} & \le \min_{t \in [0,1/2]} | \tau(t) | + \| \tau' \|_{L^1(S^1)} \\
& \le C:=  \frac{4}{3}\left( A + \frac{B}{8} + B^2 + K +  \frac{N}{8}\right) + \frac{1}{8}+ B.
\end{align*}
\end{proof}
\begin{lem}
\label{lem:L_infty_3}
If $ \widetilde{u} = (u , \eta ,\zeta)$ is any flow line satisfiying the assumptions of Theorem \ref{thm:estimate}, then for every $s\in \R$ one has
\begin{equation}
\label{eq:zeta_bound}
\| \zeta(s,\cdot)\|_{L^2(S^1)} \leq 3 \sqrt{A} + 1.
\end{equation} 
\end{lem}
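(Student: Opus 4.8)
The plan is to adapt the argument of \cite[Section 4]{AbbondandoloMerry2014a}: one extracts a differential inequality for the function $g(s) := \|\zeta(s,\cdot)\|_{L^2(S^1)}^2$ from the last two components of the Floer equation \eqref{eq:floer}, feeds in the action bound, and closes the estimate by a comparison argument. First I would record that, since $\widetilde{u}$ is a negative gradient flow line, the computation preceding \eqref{eq:energy} gives $\E(\widetilde{u}) = \sup_{s}\A_{\widetilde{L}^c}(\widetilde{u}(s)) - \inf_{s}\A_{\widetilde{L}^c}(\widetilde{u}(s)) \le 2A$; in particular $\int_{\R}\|\partial_s\eta(s,\cdot)\|_{L^2(S^1)}^2\,ds$ and $\int_{\R}\|\partial_s\zeta(s,\cdot)\|_{L^2(S^1)}^2\,ds$ are both at most $2A$.

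A clean preliminary observation is that $\zeta(s,\cdot)$ has zero mean for every $s$. Indeed, set $m(s):=\int_{S^1}\zeta(s,t)\,dt$; integrating the third line of \eqref{eq:floer} over $S^1$ (the $\partial_t\eta$ term drops out) gives $m'(s)+m(s)=0$, so $m(s)=m(0)e^{-s}$. On the other hand $m'(s)=\int_{S^1}\partial_s\zeta(s,t)\,dt$, hence $|m'(s)|\le\|\partial_s\zeta(s,\cdot)\|_{L^2(S^1)}$ and $m'\in L^2(\R)$ by the energy bound; since $s\mapsto m(0)e^{-s}$ belongs to $L^2(\R)$ only when $m(0)=0$, we conclude $m\equiv 0$. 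This makes the Wirtinger inequality $\|\zeta(s,\cdot)\|_{L^2(S^1)}\le(2\pi)^{-1}\|\partial_t\zeta(s,\cdot)\|_{L^2(S^1)}$ available when needed.

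Next I would differentiate $\tfrac12 g$. Using the third line of \eqref{eq:floer} to replace $\partial_s\zeta$, integrating by parts in $t$, and then using the second line $\partial_t\zeta=\kappa(t)H(u)-\partial_s\eta$, one obtains, with $h(s):=\|\eta(s,\cdot)\|_{L^2(S^1)}^2$,
\[
  \tfrac12 g'(s)-\tfrac12 h'(s)+g(s) \;=\; -\int_{S^1}\eta(s,t)\,\kappa(t)\,H(u(s,t))\,dt ,
\]
and, performing the same manipulation inside the explicit formula for $\A_{\widetilde{L}^c}$,
\[
  \tfrac12 g(s)+\tfrac12 h'(s) \;=\; \int_{S^1}u(s)^*\lambda - \int_{S^1}L^c_t(u(s))\,dt - \A_{\widetilde{L}^c}(\widetilde{u}(s)) .
\]
The point of these identities is that the right-hand sides are controlled: $|H|\le 1$, $\kappa$ is supported in $[0,\tfrac12]$ with $\int_0^{1/2}\kappa(t)\,dt=1$ by \eqref{eq:cutoff_function_kappa}, the action values satisfy $|\A_{\widetilde{L}^c}(\widetilde{u}(s))|\le A$, and the terms involving the map $u$ are controlled via the contact-type behaviour of $\mathrm{J}$ at infinity together with the behaviour of $L^c$ and $H$ at infinity, exactly as in \cite[Section 4]{AbbondandoloMerry2014a}. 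Combining these bounds with the elementary inequality $\tfrac{d}{ds}\sqrt{g(s)}\le\|\partial_s\zeta(s,\cdot)\|_{L^2(S^1)}$ (which follows from $\partial_t\eta=\partial_s\zeta+\zeta$ and Cauchy--Schwarz, the $-g(s)$ term being absorbed), the fact that $\zeta(s,\cdot)\to 0$ as $s\to\pm\infty$ (the ends of a finite-energy flow line are critical points of $\A_{\widetilde{L}^c}$, whose $\sigma$-component vanishes, cf.\ Lemma \ref{lem:where_the_orbits_live}), and a Gronwall/comparison argument, one arrives at $\|\zeta(s,\cdot)\|_{L^2(S^1)}\le 3\sqrt{A}+1$.

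I expect this last step to be the main obstacle. The difficulty is that $\zeta$ is coupled to $\eta$ (through the term $h'$) and to $u$ (through $\int_{S^1}\eta\,\kappa\,H(u)\,dt$, respectively $\int_{S^1}u^*\lambda$), so the estimate cannot be closed by inspecting the $(\eta,\zeta)$-block alone. Controlling the $u$-dependent terms requires precisely the structure used to prove the first conclusion of Theorem \ref{thm:estimate} (contact type of $\mathrm{J}$ at infinity, and the shape of $L^c$ and $H$ there), so in practice the three bounds asserted in Theorem \ref{thm:estimate} must be established simultaneously, bootstrapping through Lemmas \ref{lem:L_infty_1}--\ref{lem:L_infty_3}; this intertwining is what makes the argument of \cite[Section 4]{AbbondandoloMerry2014a} delicate, and it is the part that has to be checked carefully in the present setting.
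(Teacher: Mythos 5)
Your first step — showing $\zeta(s,\cdot)$ has zero mean — is correct and coincides with the paper's argument (the paper phrases it as: the only solution of $\zeta_\circ'+\zeta_\circ=0$ with $\zeta_\circ'\in L^2(\R)$ is the zero solution, while you argue via the explicit solution $m(0)e^{-s}$; these are the same observation). After that your plan diverges from the paper's proof, and in its present form it does not close.

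The step you rightly flag as the obstacle is in fact an obstacle: the Gronwall/comparison strategy built on $\tfrac{d}{ds}\|\zeta(s,\cdot)\|_{L^2(S^1)}\le\|\partial_s\zeta(s,\cdot)\|_{L^2(S^1)}$ and the asymptotics $\zeta\to 0$ as $s\to\pm\infty$ cannot give a \emph{quantitative} bound of the form $3\sqrt{A}+1$, because integrating $\partial_s\zeta$ from $-\infty$ to $s$ would require an $L^1(\R\times S^1)$-bound on $\partial_s\zeta$, whereas the energy bound only gives $\|\partial_s\zeta\|_{L^2(\R\times S^1)}\le\sqrt{2A}$. Likewise, the differential identities you write for $g=\|\zeta\|_{L^2}^2$ and $h=\|\eta\|_{L^2}^2$ couple $\zeta$ to $\eta$ and to $u$, and — as you observe — cannot be decoupled; this leads you to conclude that the three estimates in Theorem \ref{thm:estimate} must be bootstrapped simultaneously. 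That diagnosis is where your route goes astray: in the paper, Lemma \ref{lem:L_infty_3} is proved self-containedly, without reference to the $\eta$- or $u$-bounds.

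The key idea you are missing is the Chebyshev ``good slice'' trick. Since $\int_{\R}\|\nabla\A_{\widetilde{L}^c}(\widetilde{u}(s))\|^2\,ds=\E(\widetilde{u})\le 2A$, the set of $s$ with $\|\nabla\A_{\widetilde{L}^c}(\widetilde{u}(s))\|_{L^2(S^1)}>\sqrt{A}$ has Lebesgue measure at most $2$; hence for any $s$ there is $s_0$ with $|s-s_0|\le 1$ where the gradient is $\le\sqrt{A}$. At such a slice the second line of \eqref{eq:floer} gives $\|\partial_t\zeta(s_0,\cdot)\|_{L^2(S^1)}\le\|\nabla\A_{\widetilde{L}^c}(\widetilde{u}(s_0))\|_{L^2}+1\le\sqrt{A}+1$, and your zero-mean observation plus the Poincar\'e/Wirtinger inequality (which you set up but never deploy) yield $\|\zeta(s_0,\cdot)\|_{L^2(S^1)}\le\sqrt{A}+1$. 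Finally one propagates from $s_0$ to $s$ by $\|\zeta(s,\cdot)\|_{L^2}\le\|\zeta(s_0,\cdot)\|_{L^2}+\left|\int_{s_0}^s\|\partial_s\zeta(\sigma,\cdot)\|_{L^2}\,d\sigma\right|\le\sqrt{A}+1+|s-s_0|^{1/2}\|\partial_s\zeta\|_{L^2(\R\times S^1)}\le\sqrt{A}+1+\sqrt{2A}\le 3\sqrt{A}+1$, which is exactly where the restriction $|s-s_0|\le 1$ pays off. No appeal to the asymptotics at $\pm\infty$, to the $\eta$-bound, or to the behaviour of $u$ at infinity is needed.
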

\begin{proof}
First consider the function
\[
\zeta_\circ(s) := \int_{S^1}\zeta(s,t)\, dt.
\]
We claim $\zeta_{\circ} \equiv 0$. Indeed, by  \eqref{eq:floer}, $\zeta_{\circ}$ satisfies the ODE 
\begin{equation}
\label{eq:zeta_ODE}
\zeta_{\circ}' + \zeta_{\circ} = 0.
\end{equation} 
Moreover one has
\[
\begin{split}
\|\zeta_{\circ}'\|_{L^2(\R)}^2 &= \int_{- \infty}^{ + \infty} \left( \frac{d}{ds} \int_{S^1} \zeta(s,t)\, dt \right)^2\, ds  \\
& = \int_{- \infty}^{+ \infty} \left( \int_{S^1} \partial_s \zeta(s,t)\, dt \right)^2 \, ds \\
& \le  \int_{- \infty}^{+\infty} \int_{S^1} |\partial_s \zeta(s,t)|^2 \, dt \, ds  \\ 
&\le \int_{- \infty}^{+\infty}  \| \partial_s \widetilde{u} \|_{\mathrm{J}}^2 \, ds = \mathbbm{E}(\widetilde{u}) < 2A.
\end{split}
\]
The only solution to $\zeta_{\circ}$ to \eqref{eq:zeta_ODE} with $ \|  \zeta_{\circ}' \|_{L^2(\R)} < +\infty$ is the zero solution. Now consider the subset $\mathcal{S} \subset \R$ defined by
\[
\mathcal{S} := \left\{ s\in \R \mid \|\nabla \mathbbm{A}_{\widetilde{L}^c} (\widetilde{u}(s)) \|_{L^2(S^1)} \leq\sqrt{A} \right\}.
\]
By Chebychev's inequality, one has 
\[
|\R \setminus \mathcal{S}| \le \frac{1}{A} \int_{\R} \|\nabla \A_{\widetilde{L}^c} (\widetilde{u}(s)) \|_{\widetilde{J}}^2 \, ds =  \frac{1}{A} \E(\widetilde{u}) \leq 2,
\]
and hence given $s\in \R$, we can find $s_0\in \mathcal{S}$ such that  $|s-s_0|\leq 1$.
Using again \eqref{eq:floer}, we find that
\[
\begin{split}
\|\partial_t \zeta(s_0,\cdot) \|_{L^2(S^1)} &\le \|\partial_t \zeta(s_0,\cdot) - \kappa (\cdot) H(u(s_0,\cdot)) \|_{L^2(S^1)} + \|\kappa(\cdot) H(u(s_0,\cdot)) \|_{L^2(S^1)} \\ 
&\le \| \nabla \A_{\widetilde{L}^c} (\widetilde{u}(s_0)) \|_{L^2(S^1)} + 1 \\
& \le \sqrt{A} + 1.
\end{split}
\]
Since $\zeta(s_0,\cdot)$ has zero mean, the Poincar\'e inequality implies that
\[
\|\zeta(s_0,\cdot)\|_{L^2(S^1)} \le \|\partial_t \zeta(s_0,\cdot) \|_{L^2(S^1)} \leq  \sqrt{A} + 1.
\]
Moreover, since 
\[
\|\partial_s \zeta\|_{L^2(\R\times S^1)} \le \|\partial_s \widetilde{u}\|_{L^2(\R\times S^1)} = \sqrt{\mathbbm{E}(\widetilde{u})} \leq \sqrt{2A},
\]
we see that
\[
\begin{split}
\|\zeta(s,\cdot)\|_{L^2(S^1)} &= \|\zeta(s_0,\cdot)\|_{L^2(S^1)} + \int_{s_0}^s \frac{d}{d\sigma} \|\zeta(\sigma,\cdot)\|_{L^2(S^1)} \, d\sigma \\ 
&\le \sqrt{A} + 1+ \left| \int_{s_0}^s  \Bigl\| \frac{d}{d\sigma} \zeta(\sigma,\cdot) \Bigr\|_{L^2(S^1)} \, d\sigma \right| \\
&= \sqrt{A} + 1 + \left| \int_{s_0}^s  \Bigl( \int_{S^1} | \partial_s \zeta(\sigma,t)|^2\, dt \Bigr)^{1/2}\, d\sigma \right| \\ 
&\le \sqrt{A} + 1+ |s-s_0|^{1/2} \left| \int_{s_0}^s \int_{S^1} |\partial_s \zeta|^2 \, dt\, d\sigma \right|^{1/2} \\
&\le \sqrt{A} + 1 + \|\partial_s \zeta\|_{L^2(\R\times S^1)} \\
&\le \sqrt{A} + 1 + \|\partial_s \widetilde{u}\|_{L^2(\R\times S^1)} \\ 
&\le \sqrt{A} +1 + \sqrt{\E(\widetilde{u})} \\
& \le 3 \sqrt{A} +1,
\end{split}
\]
which finishes the proof of \eqref{eq:zeta_bound}. 
\end{proof}

Using Lemma \ref{lem:L_infty_3}, one can prove the following statement:
\begin{lem}
\label{lem:L_infty_4}
There exists a constant $B > 0$ such that if $ \widetilde{u} = (u , \eta ,\zeta)$ is any flow line satisfiying the assumptions of Theorem \ref{thm:estimate}, then for every $s\in \R$ one has
\begin{equation}
\label{eq:eta_bound}
\| \eta(s,\cdot)\|_{L^2(S^1)} \leq B.
\end{equation} 
\end{lem}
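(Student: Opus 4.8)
The plan is to bound $\|\eta(s,\cdot)\|_{L^2(S^1)}$ first on a large ``good'' subset of $\R$, using Lemma \ref{lem:L_infty_2}, and then to propagate the bound to all $s$ by the interpolation argument used at the end of the proof of Lemma \ref{lem:L_infty_3}. Two inputs will be needed. From Lemma \ref{lem:L_infty_3} we already have $\|\zeta(s,\cdot)\|_{L^2(S^1)} \le B_\zeta := 3\sqrt{A} + 1$ for every $s$. Moreover I would establish the confinement $u(\R\times S^1)\subset M_1\cup_\Sigma(\Sigma\times(1,r_0])$: on $\{r>r_0\}$ the terms $X_H(u)$ and $X_{L^c_t}(u)$ in the Floer equation \eqref{eq:floer} vanish (by \eqref{eq:X_H}, \eqref{eq:hamiltonian_H} and \eqref{eq:cutoff_function_beta}, $r_0$ being chosen large enough) and $\mathrm{J}$ is of contact type there, so $r\circ u$ is subharmonic on $\{r>r_0\}$; since by Lemma \ref{lem:where_the_orbits_live} the asymptotic $1$-periodic orbits lie in $\{r<e^c\}$, the maximum principle forces $r\circ u\le r_0$ everywhere. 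This is the argument of \cite[Section 4]{AbbondandoloMerry2014a}.

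Next I would pass to the good set. From $|\A_{\widetilde{L}^c}(\widetilde{u}(s))|\le A$ we get $\E(\widetilde{u}) = \sup_s\A_{\widetilde{L}^c}(\widetilde{u}(s)) - \inf_s\A_{\widetilde{L}^c}(\widetilde{u}(s)) \le 2A$, and Chebyshev's inequality applied to $\E(\widetilde{u}) = \int_\R\|\nabla\A_{\widetilde{L}^c}(\widetilde{u}(s))\|_{L^2(S^1)}^2\,ds$ shows that
\[
\mathcal{S} := \left\{ s\in\R \mid \|\nabla\A_{\widetilde{L}^c}(\widetilde{u}(s))\|_{L^2(S^1)} \le \tfrac{1}{16} \right\}
\]
has $|\R\setminus\mathcal{S}|\le 512A$. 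For $s_0\in\mathcal{S}$ the loop $\widetilde{z}:=\widetilde{u}(s_0,\cdot)$ satisfies all the hypotheses of Lemma \ref{lem:L_infty_2} with $B=B_\zeta$: the bound $|\A_{\widetilde{L}^c}(\widetilde{z})|\le A$, the confinement just established, $\|\zeta(s_0,\cdot)\|_{L^2(S^1)}\le B_\zeta$, and $\|\nabla\A_{\widetilde{L}^c}(\widetilde{z})\|_{L^2(S^1)}\le\tfrac{1}{8}$. Hence $\|\eta(s_0,\cdot)\|_{L^\infty(S^1)}\le C_0$, where $C_0 = C_0(A)$ is the constant of \eqref{eq:step2} with $B$ replaced by $B_\zeta$; in particular $\|\eta(s_0,\cdot)\|_{L^2(S^1)}\le C_0$.

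Finally I would propagate this to an arbitrary $s\in\R$. Since $|\R\setminus\mathcal{S}|\le 512A$ there is $s_0\in\mathcal{S}$ with $|s-s_0|\le 512A+1$, and since $\langle\cdot,\cdot\rangle_{\widetilde{J}_t}$ is a product metric one has $\|\partial_s\eta\|_{L^2(\R\times S^1)}^2\le\E(\widetilde{u})\le 2A$; therefore, exactly as in the last display of the proof of Lemma \ref{lem:L_infty_3},
\[
\|\eta(s,\cdot)\|_{L^2(S^1)} \le \|\eta(s_0,\cdot)\|_{L^2(S^1)} + |s-s_0|^{1/2}\,\|\partial_s\eta\|_{L^2(\R\times S^1)} \le C_0 + (512A+1)^{1/2}\sqrt{2A} =: B.
\]

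The only step demanding real work is the confinement of $u$: without it the term $\int_{S^1}z^*\lambda$ in $\A_{\widetilde{L}^c}$ is uncontrolled and Lemma \ref{lem:L_infty_2} cannot be invoked. Everything else is Chebyshev's inequality together with the fundamental theorem of calculus in the variable $s$.
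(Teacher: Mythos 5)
Your reconstruction is correct and follows the strategy that the paper's omitted proof (delegated verbatim to \cite[Lemma 4.4]{AbbondandoloMerry2014a}) necessarily uses, given the shape of Lemmas \ref{lem:L_infty_2} and \ref{lem:L_infty_3}: establish the $u$-confinement, run Chebyshev to isolate a good set of $s$ with small gradient, invoke Lemma \ref{lem:L_infty_2} there using the $\zeta$-bound of Lemma \ref{lem:L_infty_3} as the input constant $B$, then interpolate to all $s$ via $\|\partial_s\eta\|_{L^2(\R\times S^1)}^2\le\E(\widetilde{u})\le 2A$. Your choice of threshold $\tfrac{1}{16}$ in the definition of $\mathcal{S}$ is slightly wasteful (anything $\le\tfrac{1}{8}$ works and gives a smaller bound on $|\R\setminus\mathcal{S}|$), but that affects only the size of the final constant $B$, not the validity of the argument.

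The one thing worth highlighting explicitly, which you do note in your final paragraph, is that the logical ordering of the paper is a little misleading: the confinement of $u$ is announced only at the end of Section~\ref{sec:proof_of_thm_estimate}, yet Lemma \ref{lem:L_infty_2} assumes it and hence this lemma needs it at the slices $s_0\in\mathcal{S}$. Your resolution is the correct one: on $\{r>r_0\}$ the terms $X_H(u)$ and $X_{L^c_t}(u)$ drop out and $J_t(\cdot,\tau)$ becomes independent of $t$ and $\tau$, so the $u$-component satisfies an unperturbed holomorphic curve equation there and the maximum principle for $r\circ u$ is available without any a priori control on $\eta$ or $\zeta$. This independence is what makes the ordering (confinement first, then Lemmas \ref{lem:L_infty_3}--\ref{lem:L_infty_4}, then $L^\infty$-bounds) non-circular, and you have identified it correctly.
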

The proof is omitted, as it is word-for-word identical to the proof of \cite[Lemma 4.4]{AbbondandoloMerry2014a}. 
\begin{prop}
\label{prop:L_infty_5}
There exists a constant $C > 0$ such that if $ \widetilde{u} = (u , \eta ,\zeta)$ is any flow line satisfiying the assumptions of Theorem \ref{thm:estimate}, then one has
\begin{equation}
\label{eq:L_infty_bound}
\| \eta\|_{L^\infty(\R \times S^1)} \le C, \qquad \| \zeta\|_{L^\infty(\R \times S^1)} \le C.
\end{equation} 
\end{prop}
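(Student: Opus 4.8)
The plan is to upgrade the $L^2$-bounds from Lemmata \ref{lem:L_infty_3} and \ref{lem:L_infty_4} to $L^\infty$-bounds using an elliptic bootstrap / Calder\'on--Zygmund argument applied to the last two components of the Floer equation \eqref{eq:floer}. First I would observe that the pair $(\eta,\zeta)$ solves the inhomogeneous Cauchy--Riemann-type system
\[
\partial_s \eta + \partial_t \zeta = \kappa(t) H(u), \qquad \partial_s \zeta - \partial_t \eta = - \zeta,
\]
where, by Theorem \ref{thm:estimate}'s third conclusion (which at this point we may assume, since the $u$-statement is proved separately in the course of the argument, cf. Lemma \ref{lem:L_infty_2} and the discussion following it), the image $u(\R\times S^1)$ is contained in $M_1 \cup_\Sigma (\Sigma\times(1,r_0])$, a compact set. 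Consequently the right-hand side $\kappa(t)H(u) - (0,\zeta)$ of this system is controlled in $L^2_{\mathrm{loc}}$ by $\|\zeta\|_{L^2}$, which is uniformly bounded by Lemma \ref{lem:L_infty_3}, and $|\kappa(t)H(u)|\le \|\kappa\|_{L^\infty}$ pointwise since $|H|\le 1$.

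The key steps, in order: (1) record the uniform $L^2$-bounds $\|\zeta(s,\cdot)\|_{L^2(S^1)}\le 3\sqrt A+1$ and $\|\eta(s,\cdot)\|_{L^2(S^1)}\le B$ from the previous two lemmata, together with $\E(\widetilde u)\le 2A$, which also gives $\|\partial_s\widetilde u\|_{L^2(\R\times S^1)}\le\sqrt{2A}$; (2) fix a point $(s_0,t_0)$ and work on the fixed-size parabolic cylinder $Q=(s_0-1,s_0+1)\times S^1$; apply interior $L^p$-elliptic estimates for the $\bar\partial$-operator with zeroth-order term (equivalently, view the system as $\overline{\partial}$ applied to $\zeta + i\eta$ plus a bounded multiplication operator) to conclude $\|(\eta,\zeta)\|_{W^{1,2}(Q')}\le C(\|(\eta,\zeta)\|_{L^2(Q)} + \|\kappa H(u)\|_{L^2(Q)})\le C(A)$ on a slightly smaller cylinder $Q'$; (3) since $W^{1,2}$ in two dimensions embeds into every $L^p$ but not into $L^\infty$, I would bootstrap once more: the right-hand side is now in $L^p$ for all $p<\infty$ (indeed $\kappa H(u)\in L^\infty$ and $\zeta\in L^p$ by the $W^{1,2}\hookrightarrow L^p$ embedding), so elliptic $L^p$-regularity gives $(\eta,\zeta)\in W^{1,p}(Q'')$ for some $p>2$, hence by Morrey's embedding $W^{1,p}\hookrightarrow C^0$ we obtain $\|(\eta,\zeta)\|_{L^\infty(Q'')}\le C(A)$; (4) since the constant $C(A)$ produced is uniform in $(s_0,t_0)$ (the cylinder has fixed size and all the input bounds are $(s,t)$-independent), taking the supremum over all base points yields \eqref{eq:L_infty_bound}.

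The main obstacle I expect is step (3): handling the borderline Sobolev exponent correctly. The naive single application of elliptic regularity lands exactly in $W^{1,2}$, which in dimension $2$ just fails to embed into $L^\infty$, so one genuinely needs the second bootstrap, and for that one must feed back the improved integrability of $\zeta$ into the zeroth-order term. This is routine but must be done carefully — in particular one should make sure the elliptic estimate used is the interior one with constant depending only on the size of the cylinders (not on $\widetilde u$), which is fine here because the cylinders have fixed geometry and the operator is $\bar\partial$ plus a fixed bounded multiplication. A secondary point is that the $u$-component bound $u(\R\times S^1)\subset M_1\cup_\Sigma(\Sigma\times(1,r_0])$ asserted in Theorem \ref{thm:estimate} is needed to know $\kappa(t)H(u)$ is genuinely bounded and that $u$ stays in a region where the almost complex structure and metric are uniformly controlled; this is established by the maximum-principle / convexity argument of \cite[Section 4]{AbbondandoloMerry2014a} adapted via the contact-type condition \eqref{ct}, and I would simply cite that, noting that the only new feature here is the explicit form of $\widetilde H$ and $L^c$, which does not affect the argument. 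Finally I would remark that combining this Proposition with Lemma \ref{lem:L_infty_2} and the $u$-statement completes the proof of Theorem \ref{thm:estimate}.
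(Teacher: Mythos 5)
Your proposal is correct and follows essentially the same strategy as the paper: complexify to $f=\eta+i\zeta$ (the paper's choice; your $\zeta+i\eta$ is the same up to conventions), observe that $f$ solves an inhomogeneous Cauchy--Riemann equation with right-hand side controlled by $\kappa H(u)$ and $\zeta$, and run a two-step Calder\'on--Zygmund bootstrap on fixed translates of $(k,k+1)\times S^1$ to pass from the uniform $L^2$-bounds of Lemmas \ref{lem:L_infty_3} and \ref{lem:L_infty_4} to $W^{1,2}$, then via the embedding $W^{1,2}\hookrightarrow L^p$ to $W^{1,p}$ for $p>2$, and finally via $W^{1,p}\hookrightarrow L^\infty$ to the desired bound. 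You also correctly identified the borderline-exponent issue that forces the second bootstrap step, which is precisely the point of the paper's nested estimates \eqref{eq:est_1}--\eqref{eq:est_3}.
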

\begin{proof}
Consider the smooth function
\[
f : \R \times S^1 \to \C, \qquad f := \eta + i \zeta.
\]
We will show that $\|f\|_{L^{\infty}(\R\times S^1)}$ is uniformly bounded. Lemma \ref{lem:L_infty_3} and Lemma \ref{lem:L_infty_4} imply that there exists a constant $D >0$ such that
\begin{equation}
\label{boundf}
\|f(s,\cdot)\|_{L^2(S^1)} \leq D, \qquad \text{for all } s\in \R.
\end{equation}
From \eqref{eq:floer}, we see that 
\begin{equation}
\label{eq:cr}
\overline{\partial} f = \kappa H(u) + i \zeta,
\end{equation}
where
\[
\overline{\partial} = \partial_s + i \partial_t
\]
is the Cauchy-Riemann operator. Suppose $I_1 \subset I_2$ are open intervals such that $I_1$ is bounded and its closure is contained in $I_2$. The Calderon-Zygumund theorem implies that for any $ 1 <p < + \infty$, there exists a constant $m = m(p, I_1, I_2)>0$ such that
\begin{equation}
\label{eq:cz}
\|f\|_{W^{1,p}(I_1 \times S^1)} \le m \left( \|\overline{\partial} f\|_{L^p(I_2 \times S^1)} + \|f\|_{L^2(I_2 \times S^1)} \right).
\end{equation}
Fix now some $2 < p < + \infty$ and $k \in \Z$. Then by the Sobolev embedding theorem there exists a constant $b= b(p) >0$ such that 
\begin{equation}
\begin{split}
\label{eq:est_1}
\|f \|_{L^{\infty}((k,k+1) \times S^1)} & \le b \|f \|_{W^{1,p}((k,k+1) \times S^1)} \\
& \le bm \left( \|\overline{\partial} f\|_{L^p((k-1,k+2) \times S^1)} + \|f\|_{L^2((k-1,k+2) \times S^1)} \right) \\
& \le bm  \|\overline{\partial} f\|_{L^p((k-1,k+2) \times S^1)} + bmD.
\end{split}
\end{equation}
Using \eqref{eq:cr} and the Sobolev embedding theorem again, we see there exists a constant $e=e(p)>0 $ such that:
\begin{equation}
\label{eq:est_2}
\begin{split}
\|\overline{\partial} f\|_{L^p((k-1,k+2) \times S^1)} &= \| \kappa H(u) + i \zeta \|_{L^p((k-1,k+2) \times S^1)} \\
& \le 3^{1/p}+ \|\zeta\|_{L^p(I_1 \times S^1)} \\
 &\le 3^{1/p}  + e \|f\|_{W^{1,2}((k-1,k+2) \times S^1)}.
\end{split}
\end{equation}
Now we are in business, since by applying \eqref{eq:cz} again, this time with $p = 2$ and corresponding constant $m'$ that 
\begin{equation}
\label{eq:est_3}
\begin{split} 
\|f\|_{W^{1,2}((k-1,k+2) \times S^1)} &\leq m' \left( \|\overline{\partial} f\|_{L^2((k-2,k+3) \times S^1)} + \|f\|_{L^2((k-1,k+2) \times S^1)} \right) \\ 
&\le m' \left( \sqrt{5}  + \|\zeta\|_{L^2((k-2, k+3)\times S^1)} + \|f\|_{L^2((k-2,k+3) \times S^1)} \right) \\ 
&\le \sqrt{5} m' ( 1 + B +D ).
\end{split}
\end{equation}
Combining \eqref{eq:est_1}, \eqref{eq:est_2} and \eqref{eq:est_3}, we see that $f$ is uniformly bounded in $L^\infty((k,k+1) \times S^1)$. Since $k$ was arbitary and all the constants involved depend only on the length of the intervals involved, we obtain the desired $L^\infty$-bound. 
\end{proof}

To complete the proof of Theorem \ref{thm:estimate} one needs only show why the $u$-component cannot escape $M_1 \cup_{ \Sigma } ( \Sigma \times (1,r_0])$. This is a standard maximum principle argument; see for instance \cite[Section 4.2]{AbbondandoloMerry2014a}.

\section{Exact magnetic flows} 
\label{sec:exact_magnetic_flows}

In this section we prove Theorem \ref{thm:magnetic} from the Introduction. Let us recall the setup. Suppose $Q$ is a closed manifold and $\Omega$ is an \textbf{closed} 2-form on $Q$. One should think of $\Omega$ as representing a \textbf{magnetic field}. We use $\Omega$ to build a  \textbf{twisted symplectic form} $\omega = d \lambda + \pi^* \Omega$, on $T^*Q$, where as before $\lambda$ is the canonical Liouville 1-form. Suppose  $H:T^*Q \to \R$ is a \textbf{Tonelli} Hamiltonian: this means that $H$ is smooth function on $T^*Q$ which is $C^2$-strictly convex and superlinear on the fibres of $T^*Q$. We are interested in studying the flow of $\phi_H^t : T^*Q \to T^*Q$ of the symplectic gradient $X_H$ of $H$, taken with respect to the twisted symplectic form $\omega$. For instance, if $H(q,p) = \frac{1}{2} | p |^2 +U(q)$ is a mechanical Hamiltonian of the form kinetic plus potential energy, then $\phi_H^t$ can be thought of as modelling the motion of a charged particle in a magnetic field. We refer the reader to \cite{Ginzburg1996} for an in-depth treatment of magnetic flows in symplectic geometry. 

Given $e > 0$, let $\Sigma_e : = H^{-1}(e) \subset T^*Q$. Since $H$ is autonomous, the flow $\phi_H^t : T^*Q \to T^*Q$ of the symplectic gradient $X_H$ preserves the energy level $\Sigma_e$. A \textbf{magnetic geodesic} $\gamma : \R \to Q$ of energy $e$ is the projection to $Q$ of an orbit of $\phi_H^t|_{\Sigma_e}$.

Let us denote by $\mathcal{G}(H,\Omega)$ the group of \textbf{symmetries} of the system:
\[
  \mathcal{G}(H,\Omega) := \left\{  f \in \mathrm{Diff}(Q) \mid f^*\Omega = \Omega, \text{ and } H(f(q),p) = H(q, p \circ Df(q)), \ \forall \,(q,p) \in T^*Q \right\}.
  \]
For instance, if $H(q,p) = \frac{1}{2}| p |^2 + U(q)$ is a mechanical Hamiltonian, then elements of $\mathcal{G}(H,\Omega)$ are simply the isometries of $(Q,g)$ that preserve the 2-form $\Omega$ and the potential $U$. Let $\mathcal{G}_0(H,\Omega)$ denote the connected component of $\mathcal{G}(H,\Omega)$ containing $\mathrm{Id}$.

Assume now that $\Omega$ is \textbf{exact}. We define the \textbf{strict Ma\~n\'e critical value} $c_0= c_0(H, \Omega)$ by
\begin{equation}
\label{eq:mcv}
  c_0 := \inf_{ \theta} \sup_{q \in Q} H(q, -\theta_q),
\end{equation}
where the infimum\footnote{The fact that one takes $-\theta$ in the definition of $c_0$ is due to our sign conventions.} is over the set of all primitives $\theta$ of $\Omega$.  

\begin{lem}
\label{lem:rct}
If $e > c_0$ then $\Sigma_e \subset T^*Q$ is a hypersurface of restricted contact type in the symplectic manifold $(T^*Q, \omega)$.
\end{lem}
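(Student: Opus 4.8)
The plan is to exhibit, for $e>c_0$, a primitive $\theta$ of $\Omega$ on $Q$ with respect to which the twisted symplectic form becomes exact with a globally-defined primitive that restricts to a contact form on $\Sigma_e$. Concretely, since $\Omega$ is exact we may pick a primitive $\theta\in\Omega^1(Q)$, and consider the $1$-form $\lambda_\theta:=\lambda+\pi^*\theta$ on $T^*Q$. Then $d\lambda_\theta=d\lambda+\pi^*d\theta=d\lambda+\pi^*\Omega=\omega$, so $\lambda_\theta$ is a global primitive of the twisted form. I would recall that in the fibre coordinates $\lambda_\theta$ is just the canonical Liouville form shifted by the section $-\theta$, i.e. the pullback of $\lambda$ under the fibrewise translation $T_\theta(q,p)=(q,p-\theta_q)$, and its associated Liouville vector field $Y_\theta$ (defined by $\imath_{Y_\theta}\omega=\lambda_\theta$) is the image under $T_\theta$ of the radial vector field $p\,\partial_p$ on $T^*Q$.

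The heart of the argument is then to show that for $e>c_0$ one can choose $\theta$ so that $Y_\theta$ is transverse to $\Sigma_e$, pointing outward. Transversality of $Y_\theta$ to $\Sigma_e=H^{-1}(e)$ amounts to $dH(Y_\theta)\ne0$ along $\Sigma_e$, equivalently $\lambda_\theta(X_H)\ne0$ along $\Sigma_e$ since $\omega(Y_\theta,X_H)=dH(Y_\theta)=-\lambda_\theta(X_H)$. Writing things in the translated coordinates, $\lambda_\theta(X_H)$ at a point of $\Sigma_e$ equals the pairing of the momentum $p-\theta_q$ with the fibre-derivative $\partial_pH$. By $C^2$-strict fibrewise convexity and superlinearity of the Tonelli Hamiltonian $H$, the function $p\mapsto H(q,p)$ on each fibre has a unique minimum, and for $e$ above this minimum the sublevel $\{H(q,\cdot)\le e\}$ is a strictly convex body; the translated Liouville field is radial from the point $\theta_q$, so it is transverse to the boundary sphere $\{H(q,\cdot)=e\}$ precisely when $\theta_q$ lies in the \emph{interior} of that convex body, i.e. when $H(q,\theta_q)<e$. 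The definition of the strict Ma\~n\'e critical value (with the sign convention $H(q,-\theta_q)$ in \eqref{eq:mcv}) is exactly what guarantees: if $e>c_0$ then there is a primitive $\theta$ with $\sup_{q}H(q,-\theta_q)<e$; replacing $\theta$ by $-\theta$ (still a primitive of $\Omega$ up to sign, which one absorbs into the convention) gives $H(q,\theta_q)<e$ for all $q$, hence $\theta_q$ lies strictly inside the fibre convex body over every $q\in Q$. Compactness of $Q$ then yields the strict inequality $\lambda_\theta(X_H)\ne0$ uniformly on $\Sigma_e$, with a definite sign, so $Y_\theta$ is transverse and outward-pointing.

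Having established transversality, I would conclude as follows: $\Sigma_e$ is then of contact type with contact form $\alpha_e:=\lambda_\theta|_{\Sigma_e}$, and it is of \emph{restricted} contact type because the primitive $\lambda_\theta=\lambda+\pi^*\theta$ of $\omega$ is defined on all of $T^*Q$, not merely near $\Sigma_e$ — this is precisely the distinction between ``contact type'' and ``restricted contact type.'' I would also note for completeness that the compact region $D(\Sigma_e)$ bounded by $\Sigma_e$ (the fibrewise star-shaped — indeed fibrewise convex — domain in the translated coordinates) together with $\lambda_\theta$ forms a Liouville domain, which is the form in which this lemma is used later. The main obstacle is the transversality step: one must be careful with the sign convention in \eqref{eq:mcv} and with the passage from the pointwise convexity estimate $H(q,\theta_q)<e$ to the geometric statement that the radial-from-$\theta_q$ vector field meets the convex sphere transversally; this requires knowing that for $e$ above the fibrewise minimum the level $\{H(q,\cdot)=e\}$ bounds a strictly convex body and that a Liouville field radial from an interior point of a strictly convex body is transverse to its boundary, which is an elementary but not entirely trivial convexity fact that I would state and invoke rather than reprove in detail. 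Everything else — exactness of $\lambda_\theta$, the identification with a translated canonical form, and the upgrade from contact type to restricted contact type — is formal.
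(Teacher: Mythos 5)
Your proposal is correct and follows essentially the same route as the paper: choose a primitive $\theta$ with $\sup_q H(q,-\theta_q)<e$, pass to the global primitive $\lambda+\pi^*\theta$ of $\omega$, and show $(\lambda+\pi^*\theta)(X_H)>0$ on $\Sigma_e$. The one convexity step you chose to ``state and invoke rather than reprove'' is precisely where the paper's proof does its work, and in a cleaner one-dimensional form: it sets $h(s):=H(q,sp-(1-s)\theta_q)$, notes $(\lambda+\pi^*\theta)_q(X_H(q,p))=h'(1)$, and observes that convexity of $h$ together with $h(0)\le e-\varepsilon<e=h(1)$ forces $h'(1)\ge\varepsilon$ — this is exactly the ``radial field from an interior point meets the convex boundary transversally'' fact restricted to the line through $-\theta_q$ and $p$, so you should restrict to this chord rather than appeal to a general convex-body lemma (and be a bit more careful with the sign: the Liouville field of $\lambda+\pi^*\theta$ is radial from $-\theta_q$, not $\theta_q$).
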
 
\begin{proof}
Suppose $\theta$ is a primitive of $\Omega$ satisfying $\sup_{q \in Q}H(q, -\theta_q) \le e - \varepsilon$ for some $\varepsilon > 0$. Then we claim that  
\[
 (\lambda + \pi^* \theta )(X_H)|_{\Sigma_e} > 0.
 \]
 For this, fix $(q,p) \in \Sigma_e $ and let $h(s):= H(q, sp - (1-s)\theta_q)$. Then one computes that $(\lambda + \pi^*\theta)_q(X_H(q,p)) = h'(1)$. Since $H$ is Tonelli the function $h$ is convex. Since $h(0) \le e - \varepsilon$ and $h(1) = e$, we must have $h'(1) \ge \varepsilon$ as required. 
 \end{proof}
  
The main step in the proof of Theorem \ref{thm:magnetic} is the following result. 
\begin{prop}
\label{prop:invariance}
Suppose $e > c_0$. Then there exists a primitive $\theta$ of $\Omega$ such that
\[
  f^*\theta = \theta, \qquad \forall \, f \in \mathcal{G}_0(H, \Omega),
\] 
and such that 
\[
  \sup_{q \in Q}H(q, - \theta_q) \le e.
\] 
\end{prop}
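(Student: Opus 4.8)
The plan is to start from an arbitrary primitive $\theta_0$ of $\Omega$ with $\sup_{q}H(q,-(\theta_0)_q)\le e-\varepsilon$ (which exists since $e>c_0$, by the definition \eqref{eq:mcv} of $c_0$), and then to \emph{average} $\theta_0$ over the group $\mathcal{G}_0(H,\Omega)$ to produce a $\mathcal{G}_0$-invariant primitive. The key point is that $\mathcal{G}_0(H,\Omega)$ is a connected Lie group which, being a closed subgroup of $\mathrm{Diff}(Q)$ acting on the closed manifold $Q$, is compact (this follows from the classical Myers--Steenrod type argument: $\mathcal{G}_0$ preserves the fibrewise-convex Hamiltonian $H$, hence is a group of ``isometries'' for the associated Finsler-type structure, so it is compact). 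Let $d\mu$ denote the normalised Haar measure on $G:=\mathcal{G}_0(H,\Omega)$. Define
\[
  \theta := \int_{G} f^*\theta_0 \, d\mu(f).
\]

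First I would check that $\theta$ is again a primitive of $\Omega$: since each $f\in G$ satisfies $f^*\Omega=\Omega$, we have $d(f^*\theta_0)=f^*d\theta_0=f^*\Omega=\Omega$ for every $f$, and integrating over $G$ against a probability measure preserves this, so $d\theta=\Omega$. Next I would verify $G$-invariance: for $g\in G$, using left-invariance of Haar measure,
\[
  g^*\theta = \int_G g^*f^*\theta_0\,d\mu(f) = \int_G (fg)^*\theta_0\,d\mu(f) = \int_G h^*\theta_0\,d\mu(h) = \theta,
\]
which is exactly $f^*\theta=\theta$ for all $f\in\mathcal{G}_0(H,\Omega)$.

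It remains to bound $\sup_q H(q,-\theta_q)$. Fix $q\in Q$. By the symmetry condition defining $\mathcal{G}(H,\Omega)$, for each $f\in G$ one has $H(q,p\circ Df(q)) = H(f(q),p)$, equivalently $H(q, (f^*\theta_0)_q) = H(q, (\theta_0)_{f(q)}\circ Df(q)) = H(f(q),(\theta_0)_{f(q)})$; applying this to $-\theta_0$ gives $H(q,-(f^*\theta_0)_q) = H(f(q),-(\theta_0)_{f(q)}) \le e-\varepsilon$. Now $-\theta_q = \int_G -(f^*\theta_0)_q\,d\mu(f)$ is an average of covectors each lying in the sublevel set $\{p\in T_q^*Q : H(q,p)\le e-\varepsilon\}$, which is convex because $H$ is Tonelli (fibrewise $C^2$-strictly convex). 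Hence $-\theta_q$ lies in this same sublevel set, i.e.\ $H(q,-\theta_q)\le e-\varepsilon\le e$. Taking the supremum over $q$ finishes the proof.

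The main obstacle I anticipate is the compactness of $\mathcal{G}_0(H,\Omega)$, which is what makes the averaging argument legitimate; one should argue that $\mathcal{G}(H,\Omega)$ is a closed subgroup of the isometry group of a Finsler (or Riemannian, in the mechanical case) metric built from $H$ --- for instance the metric whose unit co-ball at $q$ is $\{p:H(q,p)\le e\}$, which $\mathcal{G}$ preserves --- and invoke the theorem that isometry groups of closed manifolds are compact Lie groups, so its identity component $\mathcal{G}_0$ is a compact connected Lie group and carries a Haar probability measure. Everything else is the routine interplay of ``$d$ commutes with averaging'', ``Haar measure is invariant'', and ``convex sublevel sets are closed under averaging'' sketched above. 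I would also remark (as the paper does for the weakly exact case) that nothing here requires $\Omega$ itself to be $G$-invariant beyond $f^*\Omega=\Omega$, which is part of the definition of $\mathcal{G}$.
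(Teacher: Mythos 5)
Your proposal matches the paper's proof in essence: the paper also averages an arbitrary primitive over $\mathcal{G}_0(H,\Omega)$ with respect to Haar measure, and the compactness of $\mathcal{G}_0(H,\Omega)$ is the key enabling fact --- the paper simply cites \cite[Proposition~5]{Maderna2002} for it rather than sketching a Myers--Steenrod-type argument, which is cleaner since your sketch risks circularity (the Finsler structure you want $\mathcal{G}$ to preserve needs the origin in the interior of $\{H(q,\cdot)\le e\}$, which is only guaranteed after shifting by a primitive $\theta$, but such a $\theta$ need not a priori be $\mathcal{G}$-invariant). Your explicit use of the fibrewise convexity of $H$ and the $\mathcal{G}$-symmetry identity $H(q,-(f^*\theta_0)_q)=H(f(q),-(\theta_0)_{f(q)})$ to show the averaged covector stays in the sublevel set is a genuine improvement in exposition: the paper's one-line claim that ``$\sup_{q}H(q,-\theta_q)=\sup_{q}H(q,-\theta'_q)$'' should really be an inequality $\le$ justified exactly by this convexity argument.
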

\begin{proof}
The group $\mathcal{G}_0(H,\Omega)$ is a connceted  compact Lie group, according to \cite[Proposition 5]{Maderna2002}, and thus carries a left-invariant Haar measure $m$. Thus given any primitive $\theta$ of $\Omega$, we can average it to form a new primitive $\theta'$
\[
  \theta' : = \int_{\mathcal{G}_0(H,\Omega)} f^* \theta \,dm(f).
\]
By construction one has $\sup_{q \in Q}H(q, - \theta_q) = \sup_{q \in Q} H(q, -\theta_q')$, and the result follows.
\end{proof}

We can now prove Theorem \ref{thm:magnetic}, which we restate here for the convenience of the reader. 
\begin{thm}
\label{thm:magneticproof}
 Suppose $Q$ is a closed connected  manifold with the property that the Betti numbers of the free loop space $ \Lambda (Q)$ are asymptotically  unbounded. Suppose $e > c_0(H, \Omega)$. Then given any  symmetry $f \in \mathcal{G}_0(H, \Omega)$, there exist infinitely many invariant magnetic geodesics with energy $e$.
 \end{thm}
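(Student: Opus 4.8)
The plan is to deduce Theorem \ref{thm:magneticproof} from Theorem \ref{thm:actual_theorem} by exhibiting the energy level $\Sigma_e$, together with the lifted symplectomorphism $\phi_f$, as an instance of the abstract setup: a Liouville-fillable contact manifold with large positive growth rate, equipped with a strict contactomorphism that is contact-isotopic to the identity. First I would invoke Proposition \ref{prop:invariance} to pick a primitive $\theta$ of $\Omega$ which is invariant under every $f \in \mathcal{G}_0(H,\Omega)$ and satisfies $\sup_{q\in Q} H(q,-\theta_q) \le e$. Since $\Omega = d\theta$, the diffeomorphism $T^*Q \to T^*Q$, $(q,p)\mapsto (q,p+\pi^*\theta)$ pulls back the twisted form $\omega = d\lambda + \pi^*\Omega$ to the canonical form $d\lambda$; under this identification $\Sigma_e$ becomes the hypersurface $\widetilde\Sigma_e = \{(q,p) : H(q,p-\theta_q) = e\}$, which by the (strict) Ma\~n\'e inequality $\sup_q H(q,-\theta_q) \le e$ is fibrewise star-shaped in $(T^*Q, d\lambda)$. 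In particular $\widetilde\Sigma_e$ is of restricted contact type with contact form $\alpha := \lambda|_{\widetilde\Sigma_e}$, it bounds a fibrewise star-shaped domain $D(\widetilde\Sigma_e)$, and the restriction of the Reeb flow of $\alpha$ reparametrises the flow $\phi_H^t|_{\Sigma_e}$, so that closed Reeb orbits correspond to closed magnetic geodesics of energy $e$ and, more generally, invariant Reeb orbits correspond to invariant magnetic geodesics.

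Next I would check that the lift $\phi_f$ of $f\in\mathcal{G}_0(H,\Omega)$ descends, after the shift, to a \emph{strict} contactomorphism of $(\widetilde\Sigma_e,\alpha)$. The map $\phi_f(q,p) = (f(q), p\circ Df(q)^{-1})$ satisfies $\phi_f^*\lambda = \lambda$ always; what needs checking is that the shift intertwines $\phi_f$ with the corresponding lift on the twisted side, and this is exactly where the invariance $f^*\theta = \theta$ is used: $\phi_f$ carries $\pi^*\theta$ to $\pi^*\theta$, so it commutes with the shift, hence preserves $\widetilde\Sigma_e$ and restricts to a strict contactomorphism $\varphi_f := \phi_f|_{\widetilde\Sigma_e}$. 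Because $f\in\mathcal{G}_0(H,\Omega)$, there is a smooth path $f_s$ in $\mathcal{G}_0(H,\Omega)$ from $\mathrm{Id}$ to $f$; applying Proposition \ref{prop:invariance} one obtains a single $\theta$ invariant under the whole path (the averaged primitive is invariant under all of $\mathcal{G}_0$, so the same $\widetilde\Sigma_e$ works for every $f_s$), whence $\varphi_{f_s}$ is a contact isotopy of $(\widetilde\Sigma_e,\alpha)$ from the identity to $\varphi_f$. Thus $\varphi_f$ is a strict contactomorphism that is contact-isotopic to the identity.

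Finally I would supply the growth-rate input. Since $\widetilde\Sigma_e$ is a fibrewise star-shaped hypersurface in $T^*Q$ and the Betti numbers of $\Lambda(Q)$ are asymptotically unbounded, point (7) in the list of properties of Rabinowitz Floer homology gives $\Gamma_+(D(\widetilde\Sigma_e), \lambda|_{D(\widetilde\Sigma_e)}) > 1$. Hence $(\widetilde\Sigma_e,\alpha)$ is Liouville-fillable with a filling of positive growth rate $>1$, and Theorem \ref{thm:actual_theorem} applies to $\varphi_f$, producing infinitely many $\varphi_f$-invariant Reeb orbits. Translating back through the two identifications (the fibre-shift and the Reeb-vs-Hamiltonian reparametrisation) yields infinitely many $f$-invariant magnetic geodesics of energy $e$, as claimed.

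The routine points are the verification that the shift by $\pi^*\theta$ is a symplectomorphism $(T^*Q,\omega)\to(T^*Q,d\lambda)$ and that it conjugates the twisted and untwisted lifts of $f$ — these are short computations. The step deserving the most care is confirming that $\varphi_{f_s}$ really is a \emph{contact} isotopy (i.e. that each $\varphi_{f_s}$ preserves the \emph{same} hypersurface $\widetilde\Sigma_e$ and varies smoothly), which is exactly why one needs the averaged primitive $\theta$ to be invariant under all of $\mathcal{G}_0(H,\Omega)$ simultaneously rather than merely under the single map $f$; this is the one place where the compactness of $\mathcal{G}_0(H,\Omega)$ (via \cite{Maderna2002}) and the averaging argument of Proposition \ref{prop:invariance} are essential. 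I do not expect a genuine obstacle, since each of these ingredients is already in place; the work is purely in assembling them correctly.
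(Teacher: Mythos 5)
Your argument is correct and reaches the same conclusion via essentially the same ingredients, but you supply one intermediate step that the paper handles differently. Both proofs invoke Proposition~\ref{prop:invariance} to produce a $\mathcal{G}_0(H,\Omega)$-invariant primitive $\theta$ of $\Omega$, and both feed the resulting strict contactomorphism $\varphi_f$ of $\Sigma_e$ (contact-isotopic to the identity precisely because $\theta$ is invariant under the whole connected group, as you correctly emphasise) into Theorem~\ref{thm:actual_theorem}. Where you diverge is in establishing the growth-rate hypothesis: you perform the explicit fibre-shift $(q,p)\mapsto(q,p+\theta_q)$ to identify $(T^*Q,\omega)$ with $(T^*Q,d\lambda)$, exhibit the shifted hypersurface $\widetilde\Sigma_e$ as fibrewise star-shaped (using convexity of $H$ and the Ma\~n\'e inequality), check that the shift conjugates $\phi_f$ with itself when $f^*\theta=\theta$, and then invoke property~(7) from Section~\ref{sec:Floer_homology}. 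The paper instead works directly with $(\Sigma_e, (\lambda+\pi^*\theta)|_{\Sigma_e})$ as a restricted-contact-type hypersurface in $(T^*Q,\omega)$ (Lemma~\ref{lem:rct}) and cites the external computation of $\RFH_*(\Sigma_e,T^*Q)$ from \cite{AbbondandoloSchwarz2009,Merry2011a,BaeFrauenfelder2010} to obtain $\Gamma_+>1$. The two routes are of course equivalent (the fibre-shift is exactly how those cited computations are carried out), but yours has the advantage of being self-contained within the paper, reducing to the already-stated property~(7) rather than outsourcing the growth-rate input; the cost is that you must verify the conjugation identity $S\circ\phi_f=\phi_f\circ S$, which you correctly identify as equivalent to $f^*\theta=\theta$. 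No gap.
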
 

\begin{proof}
By Lemma \ref{lem:rct} and Proposition \ref{prop:invariance}, we can choose an $f$-invariant primitive $\theta$ of $\Omega$ for which $\Sigma_e$ is a hypersurface of restricted contact type with respect to the primitive $\lambda + \pi^* \theta$ of $\omega$. The lifted symplectomorphism $\phi_f : T^*Q \to T^*Q$ defined by 
 \[
    \phi_f :T^*Q \to T^*Q , \qquad  \phi_f(q, p) = (f(q), p \circ Df(q)^{-1}),
  \] 
  restricts to define a strict contactomorphism of $(\Sigma_e, ( \lambda + \pi^* \theta)|_{\Sigma_e})$ which is contact-isotopic to the identity. The result now follows from Theorem \ref{thm:actual_theorem} and the computation of the Rabinowitz Floer homology of the pair $(\Sigma_e,T^*Q)$ in \cite{AbbondandoloSchwarz2009,Merry2011a,BaeFrauenfelder2010}. 
\end{proof}

\bibliographystyle{amsalpha}
\bibliography{/Users/iLyX/Dropbox/LaTeX/willmacbibtex}
\end{document}